\def\PP{\mathbb{P}}
\def\RR{\mathbb{R}}
\def\bftau{{\boldsymbol \tau}}
\def\bfxi{{\boldsymbol \xi}}
\def\bfi{{\boldsymbol i}}
\def\bfp{{\boldsymbol p}}
\def\bfn{{\boldsymbol n}}
\def\cardB{{\cal N}}
\def\bfu{\textbf{u}}
\def\d{\mathrm{d}}
\def\nknots{{n_\mathrm{el}}}
\def\opt{\mathrm{opt}}
\def\indeigk{l}
\def\indeigkone{l_1}
\def\indeigktwo{l_2}
\def\prec{{\bar{p}}}
\def\bfprec{{\boldsymbol \prec}}
\newtheorem{theorem}{Theorem}[section]
\newtheorem{lemma}{Lemma}[section]
\newtheorem{proposition}{Proposition}[section]
\newtheorem{corollary}{Corollary}[section]
\theoremstyle{remark}\newtheorem{remark}{Remark}[section]
\theoremstyle{remark}\newtheorem{example}{Example}[section]
\numberwithin{equation}{section}
\begin{document}

\title{Application of optimal spline subspaces for the removal of spurious outliers in isogeometric discretizations}

\author[1]{Carla Manni\thanks{manni@mat.uniroma2.it}}
\author[2]{Espen Sande\thanks{espen.sande@epfl.ch}}
\author[1]{Hendrik Speleers\thanks{speleers@mat.uniroma2.it}}

\affil[1]{\small Department of Mathematics, University of Rome Tor Vergata, Italy}
\affil[2]{\small Institute of Mathematics, EPFL, Lausanne, Switzerland}

\maketitle

\begin{abstract}
We show that isogeometric Galerkin discretizations of eigenvalue problems related to the Laplace operator subject to any standard type of homogeneous boundary conditions have no outliers in certain optimal spline subspaces. Roughly speaking, these optimal subspaces are obtained from the full spline space defined on certain uniform knot sequences by imposing specific additional boundary conditions.
The spline subspaces of interest have been introduced in the literature some years ago when proving their optimality with respect to Kolmogorov $n$-widths in $L^2$-norm for some function classes. The eigenfunctions of the Laplacian --- with any standard type of homogeneous boundary conditions --- belong to such classes. Here we complete the analysis of the approximation properties of these optimal spline subspaces. In particular, we provide explicit $L^2$ and $H^1$ error estimates with full approximation order for Ritz projectors in the univariate and in the multivariate tensor-product setting. 
Besides their intrinsic interest, these estimates imply that, for a fixed number of degrees of freedom, all the eigenfunctions and the corresponding eigenvalues are well approximated, without loss of accuracy in the whole spectrum when compared to the full spline space. Moreover, there are no spurious values in the approximated spectrum. In other words, the considered subspaces provide accurate outlier-free discretizations in the univariate and in the multivariate tensor-product case.
This main contribution is complemented by an explicit construction of B-spline-like bases for the considered spline subspaces. The role of such spaces as accurate discretization spaces for addressing general problems with non-homogeneous boundary behavior is discussed as well.

\medskip
\emph{Keywords:} isogeometric analysis; outlier-free discretizations; optimal spline subspaces; error estimates; eigenvalue problems
\end{abstract}


\section{Introduction}
Isogeometric analysis (IgA) was introduced in \cite{Hughes:2005} with the aim of unifying computer aided design (CAD) and finite element analysis (FEA). It has rapidly become a mainstream analysis methodology within computational engineering and also stimulated new research in geometric design.
The core idea in IgA is to use the same discretization and representation tools for the design as well as for the analysis (in an isoparametric environment), providing a true design-through-analysis methodology~\cite{Cottrell:09,Hughes:2005}.

The isogeometric approach based on B-splines/NURBS shows important advantages over classical $C^0$ FEA. In particular, the inherently high smoothness of B-splines and NURBS allows for a higher accuracy per degree of freedom. This behavior has been numerically observed in a wide range of applications, and recently a mathematical explanation has been given thanks to error estimates in spline spaces with constants that are explicit in the polynomial degree $p$ and the global smoothness $C^k$, the parameters defining the spline spaces; see \cite{BeiraoDaVeiga:2012,Bressan:2019,Sande:2019,Sande:2020}.

Spectral analysis can be used to study the error in each eigenvalue and eigenfunction of a numerical discretization of an eigenvalue problem.
For a large class of boundary and initial-value problems the total discretization error on a given mesh can be recovered from its spectral error \cite{Hughes:2014,Hughes:2008}. It is argued in \cite{Garoni:symbol} that this is of primary interest in engineering applications since practical computations are not performed in the limit of mesh refinement. Usually the computation is performed on a few, or even just a single mesh, and then the asymptotic information deduced from classical error analysis is insufficient. It is more relevant to understand which eigenvalues/eigenfunctions are well approximated for a given mesh size.

The isogeometric approach for eigenvalue problems has been investigated in \cite{Cottrell:2006} and several follow-up papers; see, e.g., \cite{Garoni:symbol,Hughes:2014,Hughes:2008}. Therein, extensive comparisons of the spectral approximation properties between classical finite elements and isogeometric elements have been performed. It turns out that
the isogeometric elements improve the overall accuracy of the spectral approximation significantly.
In FEA, the upper part of the discrete spectrum is inaccurate: high-order $p$ elements produce so-called \emph{optical branches}, which cause deteriorating accuracy of the higher modes. On the contrary, it has been observed that maximally smooth spline discretizations of degree $p$ on uniform grids remove the optical branches from the discrete
spectrum, which almost entirely converges for increasing $p$. More generally, the spectral discretization using uniform B-splines of degree $p$ and smoothness $C^k$, $0\leq k\leq p-1$, presents roughly $p-k$ branches and only a single branch (the so-called \emph{acoustical branch}) converges to the true spectrum \cite{Garoni:symbol,Hughes:2014}.
Convergence in $p$ for the $L^2$-projection of the eigenfunctions onto spline spaces has been analyzed in \cite{Sande:2019}. For general smoothness $C^k$ and for a fixed number of degrees of freedom, the error estimates in \cite{Sande:2019,Sande:2020} ensure convergence of the $L^2$-projection for increasing $p$ only for a part of the eigenfunctions. The number of those well-approximated eigenfunctions decreases as the maximal grid spacing increases (for a fixed number of degrees of freedom), so favoring maximal smoothness on uniform grids, in complete agreement with the numerical observations. 

Maximally smooth spline spaces on uniform grids are hence an excellent choice for addressing eigenvalue problems. Yet, they still present a flaw: 
a very small portion of the frequencies are poorly approximated and the corresponding computed values are much larger than the exact ones. These spurious values are usually referred to as \emph{outliers} \cite{Cottrell:2006}.
The number of outliers increases with the degree $p$. However, for fixed $p$, it is independent of the degrees of freedom for univariate problems, while a ``thin layer'' of outliers is observed in the multivariate setting; see \cite{Cottrell:09,Garoni:symbol} and references therein. We refer the reader to \cite{Gallistl:2017} for an analysis of the asymptotic growth of the largest outliers.
Outliers persist, in the same amount although with mitigated magnitude, when considering isogeometric methods based on problem-dependent spline spaces like generalized B-splines which allow for exact representations of some trigonometric functions \cite{Roman:2017}.

Outlier-free discretizations are appealing, not only for their superior description of the spectrum of the continuous operator, but also for their beneficial effects in various contexts, such as an efficient selection of time-steps in (explicit) dynamics and robust treatment of wave propagation. For a fixed degree, the challenge is to remove outliers without loss of accuracy in the approximation of all eigenfunctions.

We also note that there is a widespread awareness that outliers are related to the treatment of boundary conditions; these introduce small-rank perturbations in the matrices arising in the considered discretization process \cite{Garoni:2020,Garoni:symbol}. 

Eigenfunction approximation in case of the Laplacian with periodic boundary conditions in the space of periodic splines has been theoretically addressed in \cite[Section~4]{Sande:2019}. It has been proved that
for maximal smoothness $C^{p-1}$ and uniform grids, the Galerkin approximations, in the periodic $n$-dimensional spline space of degree $p$, converge in $p$ to the first $n$ or $n-1$ eigenfunctions (depending on the parity of $n$) of the Laplacian with periodic boundary conditions. This implies that the periodic case incurs at most one outlier. 
It was actually conjectured in \cite{Sande:2019} that no outliers can appear in such a context.

In this paper we complete the theory started in \cite{Sande:2019} and prove that for the optimal spline spaces described in \cite{Floater:2017,Floater:2018}, as suggested in \cite{Sande:2019}, there are no outliers in isogeometric Galerkin discretizations of the eigenvalue problem related to the Laplace operator with classical boundary conditions (Dirichlet/Neumann/mixed) in the univariate and in the multivariate tensor-product case. Roughly speaking, these optimal spline spaces are obtained from the standard spline space by imposing specific homogeneous boundary conditions and using certain uniform knot sequences.

There are few empirical proposals for outlier removal in the recent literature. In \cite{Cottrell:2006} it was observed that a suitable non-linear parameterization of the domain, obtained through a uniform
distribution of the control points, seems to eliminate outliers
for any $p$. This is actually related to a special treatment of the boundary because a uniform spacing of the control points (i.e., of the Greville abscissae) in the context of open knots implies large boundary knot intervals. However, some drawbacks of this approach have recently been identified in \cite{Hiemstra:2021}, for example, that this non-linear parameterization can lead to worse approximation of the lower modes. A similar method was also proposed in \cite{Chan:2018} where the authors numerically tested the use of so-called \emph{smoothed knots} (i.e., approximations to non-uniform knots that give rise to certain $n$-width optimal spline spaces) for explicit time-stepping and outlier-removal. These smoothed knots are very similar to those in \cite{Cottrell:2006} (they also have larger boundary knot intervals) and it is reported in \cite{Chan:2018} that these knots lead to worse approximation of lower modes as well. We remark that a slightly better approximation of the mentioned non-uniform $n$-width optimal knots can be obtained using the algorithm in \cite{Bressan:2020}, however, one would expect them to suffer from the same problems when used in outlier-removal.

More recently, other interesting contributions have been presented in \cite{Deng:2021,Hiemstra:2021}, where the authors exploit the imposition of suitable additional boundary conditions in a similar manner to the optimal spline spaces from \cite{Floater:2017,Floater:2018,Sande:2019}.
In \cite{Deng:2021} a penalization of specific high-order derivatives near the
boundary is proposed to remove the outliers from the isogeometric approximation of the spectrum of the Laplace operator. This penalization approach deeply mitigates the spurious frequencies. In \cite{Hiemstra:2021} the same high-order derivatives at the boundary are strongly set equal to zero, by using suitable spline subspaces as trial spaces. The approach is also tested for fourth-order operators. There is a clear numerical evidence that the strong imposition of the additional boundary conditions removes the outliers without affecting the accuracy of the approximation for the global spectrum.

Both the approaches in \cite{Deng:2021,Hiemstra:2021} rely on the observation that the exact eigenfunctions satisfy additional homogeneous boundary conditions and on the intuition that adding such features of the exact solution in the discretization (in weak or strong form) would help in fixing the outlier issue. However, a theoretical foundation of the proposed procedures and a proper analysis of the approximation properties of the used spline subspaces and so of the accuracy of the whole process are missing.

The spline subspaces used in \cite{Hiemstra:2021} as trial spaces are not new in the literature. They are the ``reduced spline spaces'' considered in \cite[Section~5.2]{Sande:2020} (see also \cite{Sande:2019,Takacs:2016} for some special cases). Moreover, depending on the parity of the degree $p$, they coincide with optimal spline spaces (in the sense of Kolmogorov $n$-widths) investigated in \cite{Floater:2018}.

The theory of Kolmogorov $n$-widths is an interesting framework to examine approximation properties. It defines and gives a characterization of optimal $n$-dimensional spaces for approximating function classes and their associated norms \cite{Babuska:2002,Kolmogorov:36,Pinkus:85}.
Kolmogorov $n$-widths and optimal subspaces with respect to the $L^2$-norm were studied in \cite{Evans:2009} with the aim of (numerically) assessing the approximation properties of smooth splines in IgA. 
In a recent sequence of papers \cite{Floater:2017,Floater:per,Floater:2018}, it has been proved that subspaces of smooth splines of any degree on uniform grids, identified by suitable boundary conditions, are optimal subspaces for $L^2$ Kolmogorov $n$-width problems for certain function classes of importance in IgA and FEA. 
The results in \cite{Floater:2017,Floater:2018} were then applied in \cite{Bressan:2019} to show that, for uniform grids, $k$-refined spaces in IgA provide a better accuracy per degree of freedom than $C^0$ FEA and $C^{-1}$ discontinuous Galerkin spaces in almost all cases of practical~interest.

The theory of Kolmogorov $n$-widths and optimal subspaces is closely related to spectral analysis. Assume $A$ is a function class defined in terms of an integral operator $K$. Then, the space spanned by the first $n$ eigenfunctions of the self-adjoint operator $KK^*$ is an optimal subspace for $A$. This is naturally connected to a differential operator through the kernel of $KK^*$ being a Green's function. 
By using this general framework, in \cite[Section~7]{Sande:2019} we analyzed how well the eigenfunctions of a given differential operator are approximated in $n$-dimensional optimal subspaces. In particular, for fixed dimension $n$,
we identified the optimal spline subspaces that converge in $L^2$-norm to spaces spanned by the first $n$ eigenfunctions of the Laplacian subject to different types of boundary conditions, as their degree $p$ increases. 
Error estimates in $L^2$-norm for approximation in such (optimal) spline subspaces of functions belonging to $H^1$ and $H^1_0$ are provided in \cite{Sande:2020}. 

Here we continue the theoretical investigation in \cite{Sande:2019,Sande:2020} and 
we prove that the strategy of using optimal spline spaces leads to accurate outlier-free isogeometric Galerkin discretizations for the spectrum of the Laplacian with Dirichlet/Neumann/mixed boundary conditions in the univariate and in the multivariate tensor-product case. 
More precisely, 
\begin{itemize}
\item we discretize the eigenvalue problem in optimal spline subspaces identified in terms of vanishing high-order derivatives at the boundary, as suggested in \cite{Sande:2019,Sande:2020};
\item we provide error estimates for Ritz projectors in such optimal spline subspaces;
\item we exploit the above estimates to show that the considered Galerkin discretizations are outlier-free, without loss of accuracy in the whole spectrum when compared to the full spline space;
\item we produce explicit expressions of B-spline-like bases for the spline subspaces of interest to be used in practical simulations.
\end{itemize}
In \cite[Remark 7.1]{Sande:2019} these spaces were already identified as the right candidate spaces for outlier-free discretizations, but the necessary theoretical results were not provided. In that paper we only fully addressed the simpler case of periodic boundary conditions.
It turns out that our outlier-removal strategy is very similar (and actually identical in several cases) to the one proposed in \cite{Hiemstra:2021}. However, our path towards it is more theoretical
and allows us to equip the numerical process with a solid mathematical foundation.

A main question is to what extent the proposed outlier-free discretizations can be fruitfully used for addressing general problems with non-homogeneous boundary behavior. As stated above, the outlier-free discretizations are based on strong imposition of additional homogeneous boundary conditions for some derivatives up to a certain order which depends on the degree $p$. While these additional boundary conditions are intrinsically satisfied by the exact eigenfunctions we are dealing with, it is clear that this is not the case when considering the exact solution of a general (second-order) problem. A plain discretization in outlier-free spline subspaces in general leads to a substantial loss of approximation power compared to the corresponding full spline space and this discrepancy worsens as the spline degree increases.

To overcome this issue, for problems identified by sufficiently smooth data, we propose a suitable data-correction process for the missing boundary derivatives analogous to the classical reduction from non-homogeneous to homogeneous Dirichlet boundary conditions. When coupled with this boundary data correction, we prove that the discretization in outlier-free spline subspaces achieves full approximation order both in the univariate and in the multivariate tensor-product case. We note that a special case of this approach has also been suggested in \cite[Appendix~B]{Hiemstra:2021} for a one-dimensional problem.

The remainder of the paper is divided in seven sections and is organized as follows. In Section~\ref{sec:eigenvalue-problem} we summarize the necessary notation and preliminaries on the eigenvalue problems of interest and their Galerkin discretizations. Section~\ref{sec:counting-outliers} provides theoretical upper bounds, as a function of the degree $p$, for the number of outliers when the discretization process is performed in the usual (full) spline spaces; these upper bounds almost perfectly match the number of outliers numerically observed.
In Section~\ref{sec:outlier-free} we present suitable spline subspaces and we prove that they ensure outlier-free discretizations while enjoying full accuracy. It turns out that these outlier-free subspaces are optimal spline spaces in the sense of the $L^2$ Kolmogorov $n$-widths. 
Section~\ref{sec:general-BC} explains how to compensate the homogeneous high-order boundary derivatives --- which characterize the outlier-free spline subspaces --- for general problems identified by smooth data in order to maintain full accuracy in the complete discretization process.
An explicit construction of a B-spline-like basis for the considered outlier-free subspaces is described in Section~\ref{sec:bsplines} by exploiting the properties of cardinal B-splines.
Numerical tests validating the theoretical proposals are collected in Section~\ref{sec:numerics}. We conclude in Section~\ref{sec:conclusion} with some final remarks.
For a smoother reading of the paper, the technical details and the proofs of the error estimates used in Section~\ref{sec:outlier-free} are postponed to Appendix~\ref{Appendix:A}. The proofs of the error estimates used in Section~\ref{sec:general-BC} are postponed to Appendix~\ref{Appendix:B}.

Throughout the paper, for real-valued functions $f$ and $g$ we denote the norm and inner product on $L^2:=L^2(a,b)$ by
$$ \| f\|^2 := (f,f), \quad (f,g) := \int_a^b f(x) g(x)\,\d x, $$
and we consider the Sobolev spaces
$$ H^r:= H^r(a,b)=\{u\in L^2 : \, u^{(\alpha)}  \in L^2(a,b),\, \alpha=1,\ldots,r\}. $$
For the sake of simplicity, we set $(a,b)=(0,1)$.


\section {Second-order eigenvalue problems}\label{sec:eigenvalue-problem}
We consider the second-order eigenvalue problem related to the Laplace operator in the univariate and in the multivariate tensor-product case.

\subsection{Univariate case}\label{sec:eigenvalue-problem-1D}
We consider the second-order equation
\begin{equation}
\label{eq:eigenvalue-equation-1D}
-u''= \omega^2 u, \quad \text{in } (0,1),
\end{equation}
and the following standard boundary conditions:
\begin{itemize}
\item Dirichlet boundary conditions (also referred to as fixed or type 0 boundary conditions),
\begin{equation}
\label{type-0-BC}
u(0)=u(1)=0;
\end{equation}
\item Neumann boundary conditions (also referred to as free or natural or type 1 boundary conditions),
	\begin{equation}
	\label{type-1-BC}
	u'(0)=u'(1)=0;
	\end{equation}
\item a combination of the previous ones (also referred to as mixed or type 2 boundary conditions),
\begin{equation}
\label{type-2-BC}
u(0)=u'(1)=0.
\end{equation}
\end{itemize}
The non-trivial exact solutions of \eqref{eq:eigenvalue-equation-1D} subject to one of the boundary conditions \eqref{type-0-BC}--\eqref{type-2-BC} form a numerable set of trigonometric functions, respectively,
\begin{alignat}{3}
\label{eq:eig-Laplace-type-0-BC}
u_\indeigk(x) &:= \sin(\omega_\indeigk x),\quad &\omega_\indeigk &:= \indeigk \pi, \quad &\indeigk &= 1,2,\ldots \\
\label{eq:eig-Laplace-type-1-BC}
u_\indeigk(x) &:= \cos(\omega_\indeigk x),\quad &\omega_\indeigk &:= \indeigk \pi, \quad &\indeigk &= 0,1,2,\ldots \\
\label{eq:eig-Laplace-type-2-BC}
u_\indeigk(x) &:= \sin(\omega_\indeigk x),\quad &\omega_\indeigk &:= (\indeigk-1/2)\pi, \quad &\indeigk &=1,2,\ldots
\end{alignat}

For the sake of brevity and simplicity of presentation, in the following we will focus on Dirichlet boundary conditions; the treatment of the other cases is completely analogous. Therefore, we will focus on the problem
\begin{equation}\label{eq:prob-eigenv-1D}
\left\{ \begin{aligned}
-u'' &= \omega^2 u, \quad \text{in } (0,1), \\
u(0) &=0, \quad  u(1) = 0,
\end{aligned} \right.
\end{equation}
whose non-trivial exact solutions are given in \eqref{eq:eig-Laplace-type-0-BC}.

The weak form of problem \eqref{eq:prob-eigenv-1D} reads as follows: find non-trivial $u\in H^1_0(0,1)$ and $\omega^2\in \mathbb{R}$ such that
$$ (u',v') = \omega^2 (u,v), \quad \forall v \in H^1_0(0,1). $$
According to the Galerkin approach, we choose a finite-dimensional subspace $\mathbb{V}_h$ of $H^1_0(0,1)$ spanned by the basis $\{\varphi_1,\ldots,\varphi_{n_h}\}$ and
we find approximate values $\omega_h$ to $\omega$ by solving
\begin{equation} \label{eq:eig-problem-galerkin}
S_h \bfu_h = (\omega_h)^2 M_h \bfu_h,
\end{equation}
where the matrices $S_h$ and $M_h$ consist of the elements
$$
S_{h,i,j}:= \int_0^1 \varphi'_j(s)\varphi'_i(s)\,\d s,\quad
M_{h,i,j} := \int_0^1 \varphi_j(s) \varphi_i(s)\,\d s,\quad
i,j=1,\ldots,n_h.
$$
This means that each $(\omega_h)^2$ is an eigenvalue of the matrix $ M_h^{-1} S_h$.
Then, for $\indeigk=1, \ldots, {n_h}$, an approximation of the frequency $\omega_\indeigk$ is obtained by considering
the square root of the $\indeigk$-th eigenvalue of $M_h^{-1} S_h$, denoted by $\omega_{h,\indeigk}$. Here we assume that those eigenvalues are given in ascending order.
Similarly, for $\indeigk=1, \ldots, {n_h}$, an approximation of the eigenfunction $u_\indeigk$ is obtained by considering
\begin{equation}
\label{eq:approx-eigenfun}
u_{h,l}(x):=\sum_{i=1}^{n_h} u_{h,\indeigk,i}\varphi_i(x),
\end{equation}
where
$\bfu_{h,\indeigk}:=(u_{h,\indeigk,1},\ldots, u_{h,\indeigk,{n_h}})$ is the $\indeigk$-th eigenvector of $M_h^{-1} S_h$.
Of course, a proper normalization is needed.
More information on this eigenvalue problem can be found in \cite{Boffi:2010}.

We are interested in selecting the discretization spaces such that all the first $n_h$ eigenfunctions and eigenvalues are approximated well.
In this perspective we will choose the approximation space as a proper subspace of maximally smooth spline spaces on the unit interval.

\subsection{Multivariate tensor-product case} 
We now consider the eigenvalue problem related to the Laplace operator in the unit cube of $\mathbb{R}^d$, namely
\begin{equation}
\label{eq:eigenvalue-equation}
-\Delta u= \omega^2 u, \quad \text{in } (0,1)^d,
\end{equation}
subject to homogeneous Dirichlet/Neumann/mixed boundary conditions similar to the ones discussed in the univariate case.
It is easy to verify that the $d$-variate eigenvalues and eigenfunctions are given by, respectively, the sum and the product of the corresponding univariate ones. 

Again, for the sake of brevity and simplicity of presentation, in the following we will focus on Dirichlet boundary conditions and $d=2$; the treatment of the other cases is completely analogous. Therefore, we will focus on the problem
\begin{equation}\label{eq:prob-eigenv}
\left\{ \begin{aligned}
-\Delta u&= \omega^2 u, \quad \text{in } (0,1)^2, \\
	u_{|\partial \Omega} &= 0,
\end{aligned} \right.
\end{equation}
whose non-trivial exact solutions are given by
\begin{equation}\label{eq:eig-Laplace2D-type-0-BC}
u_{\indeigkone,\indeigktwo}(x_1,x_2):=\sin(\indeigkone\pi x_1)\sin(\indeigktwo\pi x_2), \quad (\omega_{\indeigkone,\indeigktwo})^2:=(\indeigkone\pi)^2+(\indeigktwo\pi)^2, \quad \indeigkone,\indeigktwo=1,2, \ldots
\end{equation}
Note that 
\begin{equation}
\label{eq:eig-Laplace2D}
u_{\indeigkone,\indeigktwo}(x_1,x_2)=u_{\indeigkone}(x_1)u_{\indeigktwo}(x_2), \quad 
(\omega_{\indeigkone,\indeigktwo})^2=(\omega_{\indeigkone})^2+(\omega_{\indeigktwo})^2, \quad \indeigkone,\indeigktwo=1,2, \ldots,
\end{equation}
where $u_{\indeigkone},u_{\indeigktwo}$ and $\omega_{\indeigkone},\omega_{\indeigktwo}$ are the univariate counterparts defined in \eqref{eq:eig-Laplace-type-0-BC}.

In order to discretize problem \eqref{eq:prob-eigenv}, it is natural to consider a finite-dimensional tensor-product discretization space 
$\mathbb{V}_{h_1,h_2}:=\mathbb{V}_{h_1}\otimes\mathbb{V}_{h_2}$.
Then, the Galerkin method amounts to solve the following problem: find $\bfu_{h_1,h_2}$ and $\lambda_{h_1,h_2}$ such that
$$
(S_{h_1}\otimes M_{h_2}+M_{h_1}\otimes S_{h_2})\bfu_{h_1,h_2}=\lambda_{h_1,h_2} (M_{h_1}\otimes M_{h_2})\bfu_{h_1,h_2},
$$
where $S_h$ and $M_h$ are univariate stiffness and mass matrices, respectively, defined as in \eqref{eq:eig-problem-galerkin}.
It is known that, using the so-called \emph{fast diagonalization method} \cite{Horn:2013,Sangalli:2016},
the corresponding eigenvectors and eigenvalues can be represented in matrix form as 
$$
U_{h_1}\otimes U_{h_2}, 
\quad
\Lambda_{h_1}\otimes I_{h_2} + I_{h_1}\otimes \Lambda_{h_2},
$$
respectively, where $U_h$ is the matrix of the eigenvectors associated with the univariate discretization \eqref{eq:eig-problem-galerkin}, $\Lambda_h$ is the diagonal matrix of the corresponding eigenvalues, and $I_h$ is the identity matrix of the same size as $\Lambda_h$. Thus, very similar to \eqref{eq:eig-Laplace2D}, our approximations take the form
\begin{equation}\label{eq:approx-eig-Laplace2D}
u_{h_1,h_2,\indeigkone,\indeigktwo}(x_1,x_2) = u_{h_1,\indeigkone}(x_1)u_{h_2,\indeigktwo}(x_2), 
\quad
(\omega_{h_1,h_2,\indeigkone,\indeigktwo})^2=(\omega_{h_1,\indeigkone})^2+(\omega_{h_2,\indeigktwo})^2,
\end{equation}
where $u_{h_1,\indeigkone},u_{h_2,\indeigktwo}$ and $\omega_{h_1,\indeigkone},\omega_{h_2,\indeigktwo}$ are the univariate counterparts described in Section~\ref{sec:eigenvalue-problem-1D}.

The above decomposition does not only reduce the computational cost of the discrete spectrum but also provides a natural safe way to match the approximate eigenfunctions/eigenvalues with the exact ones. This was also observed in \cite{Hiemstra:2021}.


\section{Maximally smooth spline approximations and outliers}
\label{sec:counting-outliers}
Suppose $\bftau := (\tau_0,\ldots,\tau_{\nknots})$ is a sequence of (break) points that partition the interval $[0,1]$ in $\nknots$ elements, i.e.,
\begin{equation*}
0=:\tau_0 < \tau_1 < \cdots < \tau_{\nknots-1} < \tau_{\nknots}:= 1,
\end{equation*}
and let
$I_j := [\tau_{j-1},\tau_{j})$,
$j=1,\ldots,\nknots-1$, and $I_\nknots := [\tau_{\nknots-1},\tau_{\nknots}]$.
Let $\PP_p$ be the space of polynomials of
degree at most $p$. For $0\leq k\leq p-1$, we define the space $\mathbb{S}^k_{p,\bftau}$ of splines of degree $p$ and smoothness $k$ by
$$ \mathbb{S}^k_{p,\bftau } := \{s \in C^{k}[0,1] : s|_{I_j} \in \PP_p,\, j=1,\ldots,\nknots \}, $$
and we set
\begin{equation}
\label{eq:spline-max-smooth}
 \mathbb{S}_{p,\bftau} := \mathbb{S}^{p-1}_{p,\bftau}. 
\end{equation}

From classical spline approximation theory we know that for any $u\in H^{r}$ and any $\bftau$ there exists $s_p\in \mathbb{S}^k_{p,\bftau}$
 such that
 \begin{equation} \label{eq:classical-err-est}
 \|(u-s_p)^{(\ell)} \|\leq C(p,k,\ell,r)h^{r-\ell} \| u^{(r)} \|, \quad 0\leq \ell \leq r\leq p+1, \quad \ell\leq k+1\leq p,
 \end{equation}
where
\begin{equation*}
h:=\max_{j=1,\ldots,\nknots} h_j, \quad h_j:=\tau_{j}-\tau_{j-1}.
\end{equation*}
The above estimates can be generalized to any $L^q$-norm; see, e.g., \cite{Lyche:18,Schumaker2007}.

In the important case of maximally smooth splines ($k=p-1$), the constant in \eqref{eq:classical-err-est} admits simple explicit expressions. In particular, it has been proved in \cite[Theorems~1.1 and~3.1]{Sande:2019} that
 for any $u\in H^{r}$ and any $\bftau$ there exists $s_p\in \mathbb{S}_{p,\bftau}$ such that 
\begin{align*}
\| u-s_p \|&\leq \left(\frac {h}{\pi}\right)^{r} \| u^{(r)} \|,  
\\
\|(u-s_p)' \|&\leq \left(\frac {h}{\pi}\right)^{r-1} \|u^{(r)} \|, 
\end{align*}
for all $p\geq \max\{r-1,1\}$.
More generally, the results in \cite[Theorem~3 and Lemma~2]{Sande:2021} ensure that the above inequalities still hold if we want to approximate $u$ by a spline enjoying the same boundary conditions as $u$. This is summarized in the following theorem.

\begin{theorem}\label{thm:Qspline}
Let $u\in H^r(0,1)$ be given.
For any $\bftau$ and $q=0,\ldots,\min\{p,r\}$ there exists a projector $Q_p^{q}$ onto $\mathbb{S}_{p,\bftau}$ such that
\begin{equation*}
((Q_p^qu)^{(q)}, v^{(q)})=(u^{(q)}, v^{(q)}), \quad \forall v\in \mathbb{S}_{p,\bftau},
\end{equation*}
\begin{equation*}
(Q_p^q)^{(\ell)}(0)= u^{(\ell)}(0), \quad (Q_p^q)^{(\ell)}(1)= u^{(\ell)}(1), \quad \ell=0,\ldots,q-1,
\end{equation*}
and 
\begin{equation*}
\|(u-Q^{q}_pu)^{(\ell)}\| \leq \left(\frac {h}{\pi}\right)^{r-\ell}\|u^{(r)}\|, \quad \ell=0,\ldots,q,
\end{equation*}
for all $p\geq \max\{r-1,2q-1\}$. 
\end{theorem}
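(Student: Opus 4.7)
The plan is to construct $Q_p^q u$ via a constrained minimization problem and then extract the three claimed properties one by one. I would define $Q_p^q u$ as the unique minimizer of $\|(u - s)^{(q)}\|$ over the affine set
$$ \{s \in \mathbb{S}_{p,\bftau} : s^{(\ell)}(0) = u^{(\ell)}(0),\ s^{(\ell)}(1) = u^{(\ell)}(1),\ \ell = 0, \ldots, q-1\}. $$
Nonemptiness of this set is where the hypothesis $p \geq 2q - 1$ enters: the $2q$ Hermite-type conditions at the two endpoints must be realizable inside $\mathbb{S}_{p,\bftau}$, which has dimension at least $p + 1$. Uniqueness comes from a Poincaré-type inequality on the homogeneous subspace, where $\|\cdot^{(q)}\|$ is a genuine norm.

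The boundary interpolation conditions on $Q_p^q u$ are enforced by the constraint, and the first-order optimality condition directly produces the Ritz identity $((Q_p^q u)^{(q)}, v^{(q)}) = (u^{(q)}, v^{(q)})$ for every $v$ in the homogeneous subspace. To upgrade this to every $v \in \mathbb{S}_{p,\bftau}$, I would decompose $v = v_0 + \phi$, where $\phi$ is the polynomial Hermite interpolant of degree $\leq 2q - 1$ matching the endpoint derivatives of $v$ up to order $q - 1$. The polynomial $\phi$ sits inside $\mathbb{S}_{p,\bftau}$ thanks to $p \geq 2q - 1$, and $\phi^{(q)} \in \PP_{q-1}$. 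It therefore remains to check that $(u - Q_p^q u)$ is $L^2$-orthogonal to $\PP_{q-1}$, which follows by integrating by parts $q$ times and invoking the matched boundary data to annihilate every boundary contribution.

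For the top-order bound $\ell = q$, I would observe that $\{v^{(q)} : v \in \mathbb{S}_{p,\bftau}\} = \mathbb{S}^{p-q-1}_{p-q,\bftau}$ is itself a maximally smooth spline space, and that the extended Ritz identity identifies $(Q_p^q u)^{(q)}$ as the $L^2$-projection of $u^{(q)}$ onto it. Applying the sharp $L^2$-projection estimate of \cite[Theorems~1.1 and~3.1]{Sande:2019} to $u^{(q)} \in H^{r-q}$ then delivers $\|(u - Q_p^q u)^{(q)}\| \leq (h/\pi)^{r-q}\|u^{(r)}\|$, provided $p - q \geq (r - q) - 1$, i.e.\ $p \geq r - 1$. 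For the lower orders $\ell < q$, I would run an Aubin--Nitsche duality argument, writing $\|(u - Q_p^q u)^{(\ell)}\|$ as the supremum over test functions $g$ of $(u - Q_p^q u, (-1)^\ell g^{(\ell)})/\|g\|$, where the boundary terms from integration by parts vanish because the error has $q$ matched endpoint derivatives, then subtracting a suitable spline projection of the dual solution and using Ritz orthogonality to trade one power of $h/\pi$ per application.

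The main obstacle will be these lower-order bounds at the sharp rate $(h/\pi)^{r-\ell}$. A direct Poincaré iteration on the error $e = u - Q_p^q u$ only gives $\|e^{(\ell)}\| \leq (1/\pi)^{q-\ell} \|e^{(q)}\|$, which is off by a factor $h^{q-\ell}$. Recovering the sharp exponent forces the duality argument to be run against a spline test space which itself admits projectors satisfying the very estimates we are proving, leading naturally to an induction on the derivative order along the lines of \cite[Theorem~3 and Lemma~2]{Sande:2021}. Equally, care must be taken that the polynomial Hermite decomposition used to extend the Ritz identity remains compatible with each induction step.
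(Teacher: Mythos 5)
First, a point of calibration: the paper does not prove this theorem at all --- it is stated as a summary of \cite[Theorem~3 and Lemma~2]{Sande:2021} --- so your attempt has to be measured against that reference rather than against an in-paper argument. Your construction (minimize $\|(u-s)^{(q)}\|$ subject to Hermite data at the endpoints) does produce the projector of that reference: the constraint set is nonempty because $\PP_{2q-1}\subseteq\mathbb{S}_{p,\bftau}$ once $p\geq 2q-1$, uniqueness holds because the only $w\in\PP_{q-1}$ with $q$ vanishing derivatives at $0$ is $w=0$, and the extension of the Ritz identity to all of $\mathbb{S}_{p,\bftau}$ via the Hermite polynomial splitting is sound --- though what you need, and what your integration by parts actually delivers, is $(e^{(q)},g)=0$ for $g\in\PP_{q-1}$, not ``$e\perp\PP_{q-1}$'' as written. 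The identification $D^q\mathbb{S}_{p,\bftau}=\mathbb{S}_{p-q,\bftau}$ (by the dimension count $\dim D^q\mathbb{S}_{p,\bftau}=\nknots+p-q$) and the resulting proof of the case $\ell=q$ by viewing $(Q_p^qu)^{(q)}$ as the $L^2$-projection of $u^{(q)}$ are correct.

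The genuine gap is the cases $\ell=0,\ldots,q-1$, which carry the substantive content of the theorem and which you leave as a plan rather than a proof --- you say so yourself. As sketched (Aubin--Nitsche against a dual boundary-value problem, plus an induction on $\ell$), the plan would force you to re-derive the machinery of \cite{Sande:2019,Sande:2021} without naming its key step. That step is an adjoint/operator-norm identity, not a classical duality argument. Write $e:=u-Q_p^qu$, $P:=$ the $L^2$-projection onto $\mathbb{S}_{p-q,\bftau}$, and $Kf(x):=\int_0^xf$, $K^*f(x):=\int_x^1f$. The matched boundary data give $e^{(\ell)}=K^{q-\ell}e^{(q)}$ for $\ell=0,\ldots,q-1$, and the Ritz identity gives $e^{(q)}=(I-P)u^{(q)}$, so $e^{(\ell)}=K^{q-\ell}(I-P)\,(I-P)u^{(q)}$ and hence $\|e^{(\ell)}\|\leq\|K^{q-\ell}(I-P)\|\,\|(I-P)u^{(q)}\|$. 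Since $P$ is self-adjoint, $\|K^{q-\ell}(I-P)\|=\|(I-P)(K^*)^{q-\ell}\|$, and for $\|f\|\leq1$ the function $v=(K^*)^{q-\ell}f$ satisfies $\|v^{(q-\ell)}\|\leq1$, so the $L^2$-projection estimate of \cite[Theorems~1.1 and~3.1]{Sande:2019} with $r=q-\ell$ bounds this operator norm by $(h/\pi)^{q-\ell}$, provided $p-q\geq\max\{q-\ell-1,1\}$ --- which in the worst case $\ell=0$ is exactly the hypothesis $p\geq 2q-1$ that your sketch attributes only to the Hermite interpolation. Combined with the $\ell=q$ bound this yields $(h/\pi)^{r-\ell}\|u^{(r)}\|$ in one stroke, with no induction and no dual problem. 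Until something of this kind is written out, the lower-order estimates in your proposal are asserted, not proven.
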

In our context we are interested in the case $q=1$ in the previous theorem. In this case, from \cite[Section~3]{Sande:2021} we also know the stability estimate
\begin{equation}
\label{eq:stab}
\|(Q_p^1u)'\|\leq \|u'\|, \quad u\in H^1(0,1).
\end{equation}

Natural discretization spaces for \eqref{eq:prob-eigenv-1D} are the subspaces
of $H_0^1(0,1)$ given by maximally smooth splines vanishing at the two ends of the interval, i.e., 
\begin{equation}
\label{eq:space-BC}
\mathbb{S}_{p,\bftau,0}^0:=\{s\in \mathbb{S}_{p,\bftau}: s(0)=s(1)=0 \},
\end{equation}
for $p\geq1$ and $\nknots>2-p$.
For such a space we have
$$n_h=\dim(\mathbb{S}_{p,\bftau,0}^0)=\nknots+p-2.$$
As mentioned in the introduction, it has been observed in \cite{Cottrell:2006} that the spline space \eqref{eq:space-BC} is an excellent choice for the Galerkin discretization of \eqref{eq:prob-eigenv-1D} when considering a uniformly distributed sequence of break points $\bftau$. However, it has also been observed that with such a choice there is still a very small portion of the frequencies poorly approximated and the corresponding computed values are much larger than the exact values. This is illustrated in Figure~\ref{fig:demo-outliers} for spline spaces of degree $p=5$ and different dimensions $n_h$. 
These spurious values are called \emph{outliers}.
Their number is independent of the grid size (and so of the dimension of the spline space for fixed $p$). Other examples can be found in Section~\ref{sec:numerics-1D}; see in particular Figures~\ref{fig:eigenvalues1D.odd}(b)--(c) and \ref{fig:eigenvalues1D.even}(c).
Furthermore, fixing the dimension, it has been observed that the number of outliers increases with $p$ and their approximations deteriorate in $p$, while the approximation of the rest of the spectrum improves.
The growth of the largest outlier in the case $p\to\infty$ has been analyzed in \cite{Gallistl:2017}.

\begin{figure}[t!]
\centering
\subfigure[$n_h=25$]{\includegraphics[height=4.1cm]{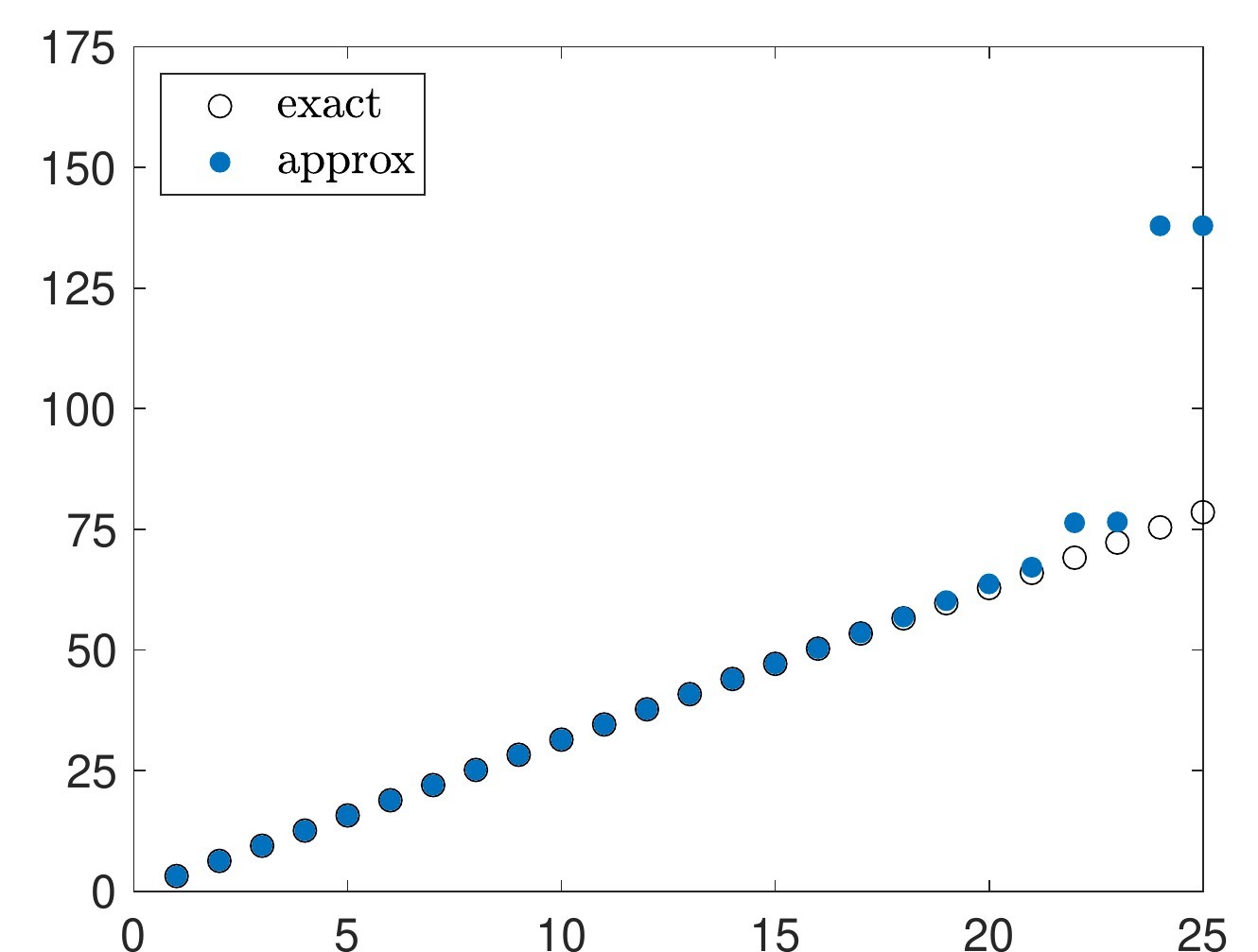}}\hspace*{0.1cm}
\subfigure[$n_h=50$]{\includegraphics[height=4.1cm]{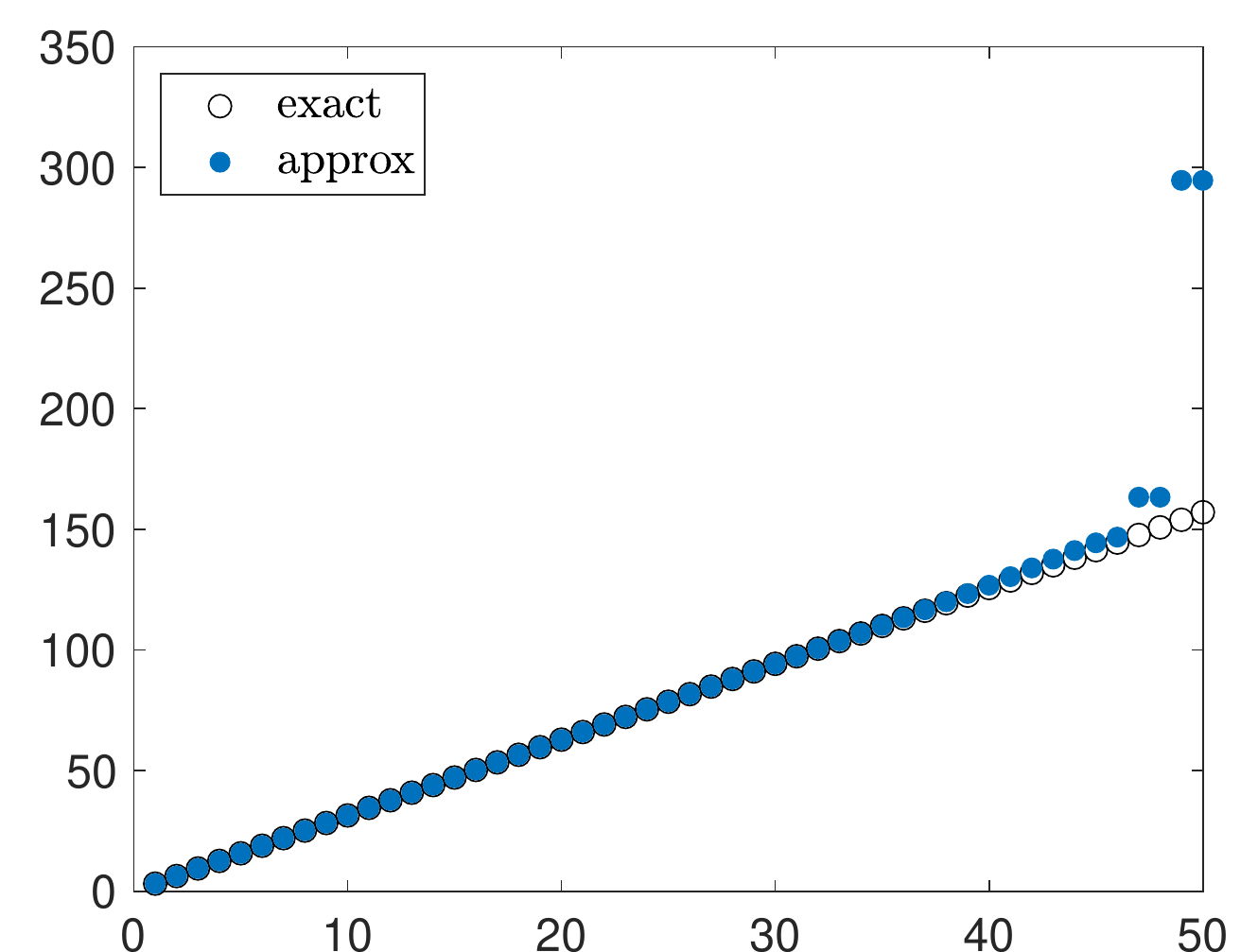}}\hspace*{0.1cm}
\subfigure[$n_h=100$]{\includegraphics[height=4.1cm]{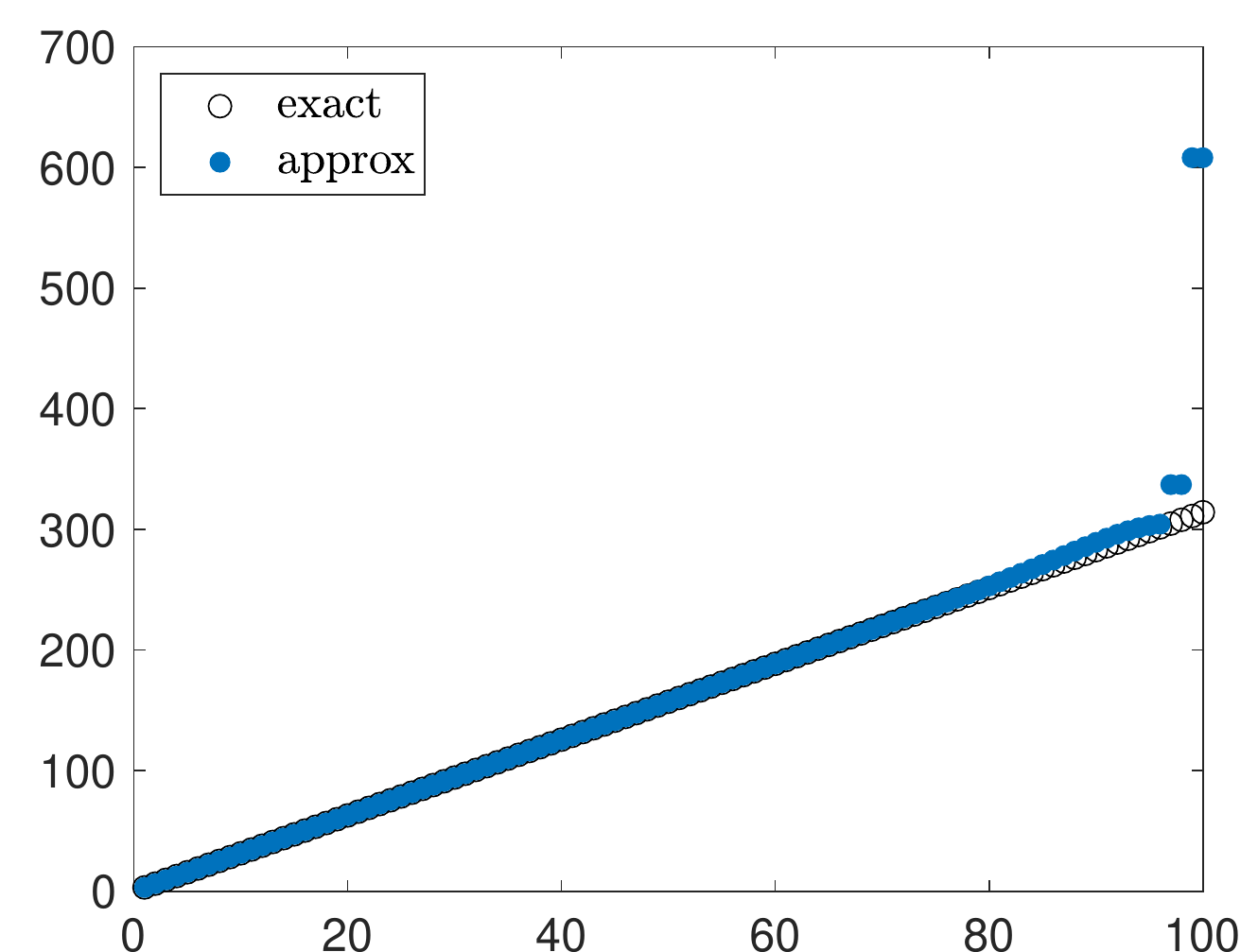}} \\
\subfigure[$n_h=25$]{\includegraphics[height=4.1cm]{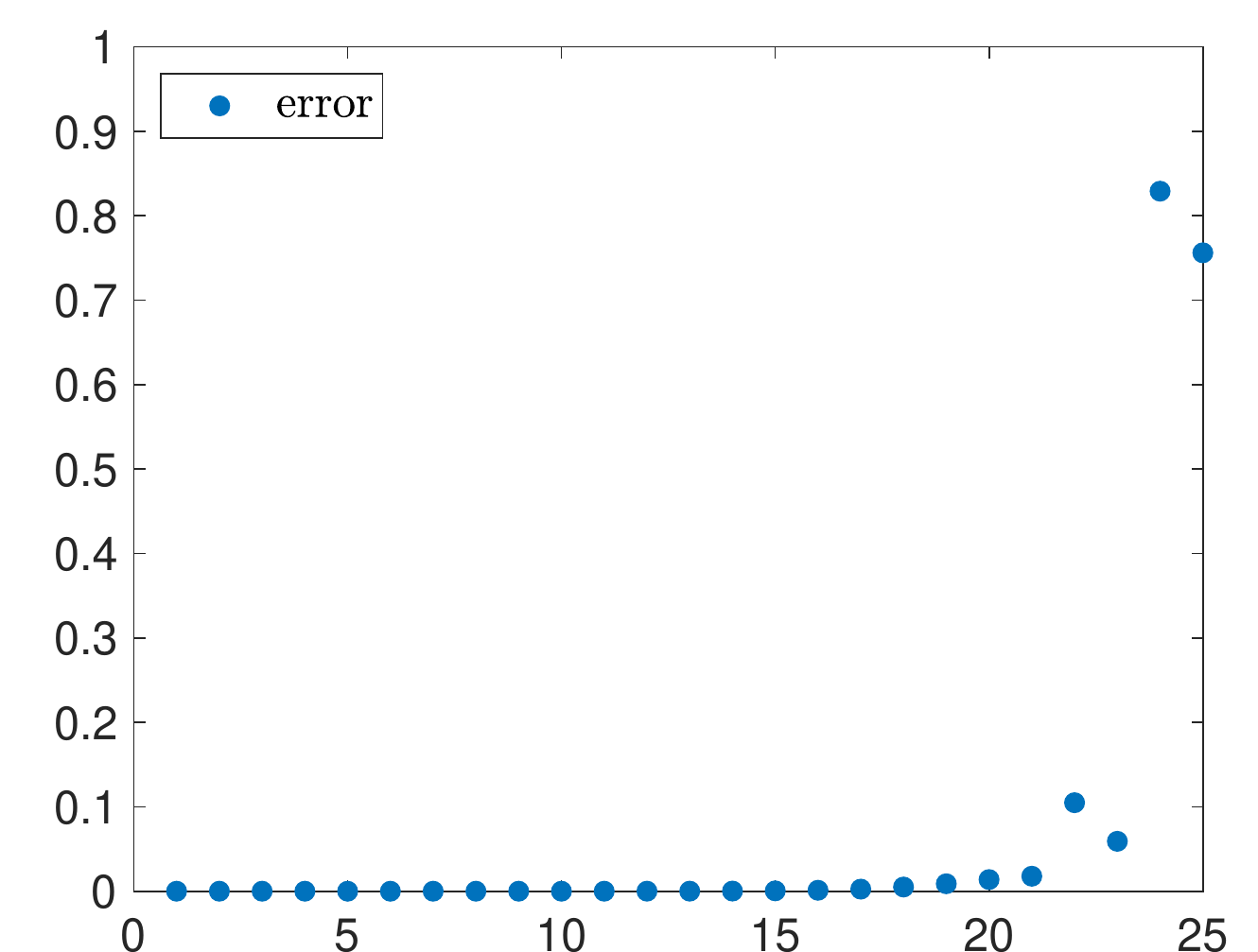}}\hspace*{0.1cm}
\subfigure[$n_h=50$]{\includegraphics[height=4.1cm]{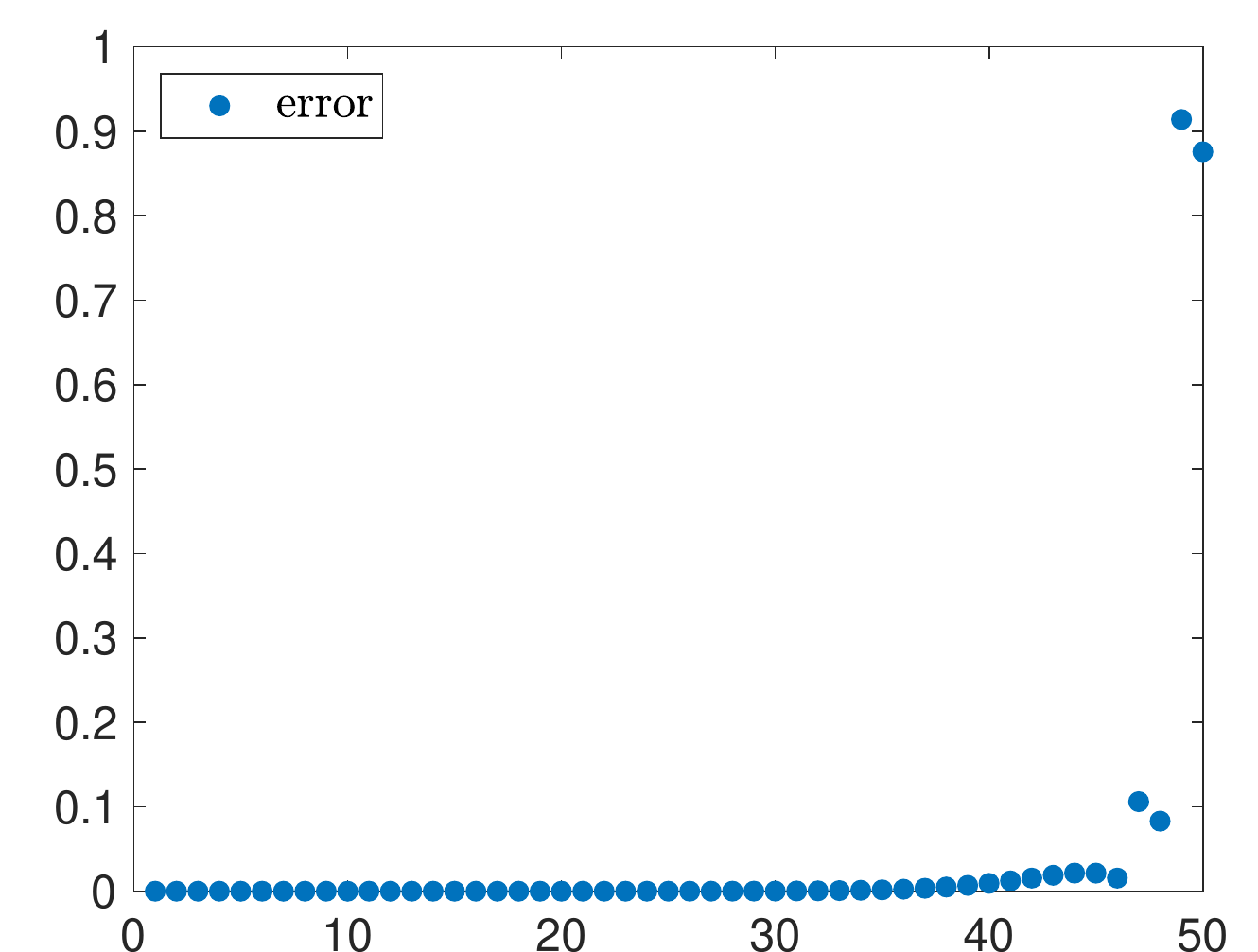}}\hspace*{0.1cm}
\subfigure[$n_h=100$]{\includegraphics[height=4.1cm]{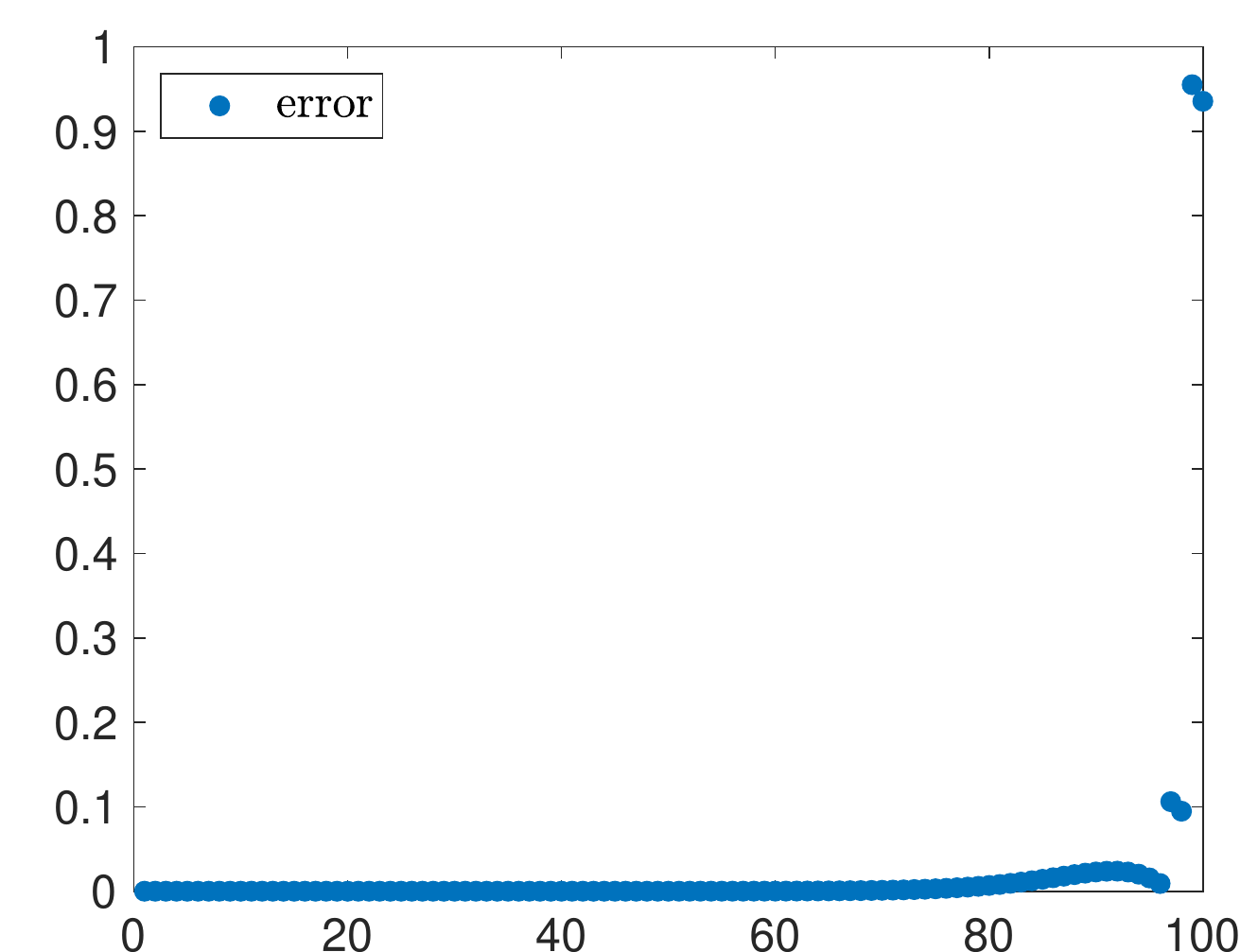}}
\caption{Top row (a)--(c): Exact frequencies and their approximations corresponding to the maximally smooth spline spaces $\mathbb{S}_{p,\bftau,0}^0$ of degree $p=5$ defined on uniform grids and dimensions $n_h=25,50,100$. Bottom row (d)--(f): Relative frequency errors for the same approximations.
In this case there are clearly four spurious (overestimated) values, called \emph{outliers}, independently of the dimension $n_h$.} \label{fig:demo-outliers}
\end{figure}

The presence of a small number of outliers for such spline spaces is in a close agreement with the available theoretical error estimates. More precisely, from \cite[Section~8]{Boffi:2010} or \cite[Chapter~6]{Strang:2008} we know that the error between the eigenfunction $u_\indeigk$ in \eqref{eq:eig-Laplace-type-0-BC} and its approximation $u_{h,\indeigk}$ given by the Galerkin method in \eqref{eq:approx-eigenfun} can be bounded by the error between $u_\indeigk$ and $Q_p^1u_\indeigk$ defined in Theorem~\ref{thm:Qspline}. Since $u_\indeigk\in C^\infty$, using Theorem~\ref{thm:Qspline} with $r=p+1$ we deduce
\begin{equation*}
\|u_\indeigk- u_{h,\indeigk}\| \leq C_\indeigk \left(\frac {h}{\pi}\right)^{p+1}\|u^{(p+1)}\|
=\frac{C_\indeigk}{\sqrt{2}} (h\indeigk)^{p+1},
\end{equation*}
for some constant $C_l$.
Therefore, convergence in $p$ of the approximate eigenfunction $u_{h,\indeigk}$ is ensured whenever
$$
h\indeigk<1.
$$
For fixed dimension of the approximation space, i.e., for fixed $\nknots$ and $p$, the value of $h$ is minimized when the break points are uniformly distributed, 
\begin{equation}\label{eq:knots-uniform}
  \tau_i=\frac{i}{\nknots}, \quad i=0,\ldots,\nknots,
\end{equation}
and so 
$$h=\frac{1}{\nknots}.$$
Thus, under the assumption of uniform grid spacing, convergence in $p$ of the approximated eigenfunction $u_{h,\indeigk}$ to the exact eigenfunction $u_\indeigk$ is ensured for
$$ \indeigk=1, \ldots, \nknots-1.$$
The arguments in \cite[Section~8]{Boffi:2010}, see also Remark~\ref{rmk:stab} (in Appendix~\ref{Appendix:A}) and \eqref{eq:stab}, show that convergence is ensured also for the corresponding eigenvalues.

A similar discussion can be carried out for natural and mixed boundary conditions. For the former the natural discretization space is the full spline space $\mathbb{S}_{p,\bftau}$,
while for the latter the space 
\begin{equation}
 \label{eq:space-left}
 \{s\in \mathbb{S}_{p,\bftau}: s(0)=0 \}
\end{equation}
has to be considered. Taking into account that the dimension of such spaces amounts to $\nknots+p$ and $\nknots+p-1$, respectively, and keeping in mind the expressions of the eigenfunctions in \eqref{eq:eig-Laplace-type-1-BC} and \eqref{eq:eig-Laplace-type-2-BC}, we can summarize the above discussion as follows.

\medskip
\begin{tcolorbox}
Consider problem \eqref{eq:eigenvalue-equation-1D} with boundary conditions \eqref{type-0-BC}, \eqref{type-1-BC} or \eqref{type-2-BC}.
Let $\mathbb{V}_{h}$ be the spline space \eqref{eq:space-BC}, \eqref{eq:spline-max-smooth} or \eqref{eq:space-left}, respectively, with $\bftau$ defined as in \eqref{eq:knots-uniform}.
The approximations of the eigenvalues obtained by
finding $u\in \mathbb{V}_{h}$ and $\omega^2\in\RR$ such that
\begin{equation*}
( u', v') = \omega^2 (u, v), \quad \forall v\in \mathbb{V}_{h},
\end{equation*}
incur at most the following number of outliers:
\begin{itemize}
   \item $p-1$ for Dirichlet boundary conditions;
   \item $p$ for Neumann boundary conditions;
   \item $p-1$ for mixed boundary conditions.
\end{itemize}
\end{tcolorbox}

\pagebreak
\begin{remark}
According to \cite[Table~3]{Hiemstra:2021} the number of outliers observed numerically is
\begin{itemize}
   \item $2\lfloor \frac {p-1}{2}\rfloor$ for Dirichlet boundary conditions;
   \item $2\lfloor \frac {p}{2}\rfloor$ for Neumann boundary conditions;
   \item $ p-1$ for mixed boundary conditions.
\end{itemize}
Our theoretical upper bounds on the number of outliers are closely related to the numerically observed ones and exhibit an exact match in the majority of the cases. 
\end{remark}

Outlier-free discretizations can be achieved by identifying spline subspaces of dimension $n_h$ that ensure $L^2$ and $H^1$ convergence in $p$ for the approximations of the first $n_h$ eigenfunctions of the problem we are dealing with.
In the next section we propose a solution to this problem based on optimal spline spaces.
 
\section{Optimal spline spaces have no outliers}
\label{sec:outlier-free}
In this section we extend the results of \cite[Section~7]{Sande:2019} to prove that there are no outliers in the Galerkin eigenvalue approximation for the Laplacian with various boundary conditions when using spline subspaces that are optimal with respect to the Kolmogorov $n$-width in $L^2$-norm.
 
We denote by $X$ the $L^2$-projector onto a finite-dimensional subspace $\mathbb{X}$ of $L^2$.
For a subset $A$ of $L^2$, let
$$ E(A, \mathbb{X}) := \sup_{u \in A} \|u-Xu\| $$
be the distance to $A$ from $\mathbb{X}$ relative to the $L^2$-norm.
Then, the Kolmogorov $L^2$ $n$-width
of $A$ is defined by
$$ d_n(A) := \inf_{\substack{\mathbb{X}\subset L^2\\ \dim \mathbb{X}=n}} E(A, \mathbb{X}). $$
If $\mathbb{X}$ has dimension at most $n$ and satisfies
\begin{equation*}
d_n(A) = E(A, \mathbb{X}),
\end{equation*}
then we call $\mathbb{X}$ an \emph{optimal} subspace for $d_n(A)$.

\begin{example}
	Let $A=\{u\in H^r : \|u^{(r)}\|\leq 1\}$.
	Then, by looking at $u/\|u^{(r)}\|$, for functions $u\in H^r$, we have for any subspace $\mathbb{X}$ of $L^2$, the sharp estimate
	\begin{equation*}
	\|u-Xu\|\leq E(A, \mathbb{X})\|u^{(r)}\|.
	\end{equation*}
	Here $E(A, \mathbb{X})$ is the smallest possible constant for the subspace $\mathbb{X}$.
	Moreover, if $\mathbb{X}$ is optimal for the $n$-width of $A$, then
	\begin{equation}\label{ineq:optimal}
	\|u-Xu\|\leq d_n(A)\|u^{(r)}\|,
	\end{equation}
	and $d_n(A)$ is the smallest possible constant over all $n$-dimensional subspaces $\mathbb{X}$.
\end{example}

For all $p\geq r-1$, let us consider the function classes
 \begin{equation}\label{eq:Hr}
 \begin{aligned}
 H^r_0&:=\{u\in H^r :\, u^{(\alpha)}(0)=u^{(\alpha)}(1)=0,\ \ 0\leq \alpha<r,\ \ \alpha \text{ even}\},
 \\
 H^r_1&:=\{u\in H^r :\, u^{(\alpha)}(0)=u^{(\alpha)}(1)=0,\ \ 0\leq \alpha<r,\ \ \alpha \text{ odd}\},
 \\
 H^r_2&:=\{u\in H^r :\, \partial^{\alpha_0} u(0)=\partial^{\alpha_1} u(1)=0,\ \ 0\leq \alpha_0,\alpha_1<r,\ \
 \alpha_0 \text{ even}, \ \ \alpha_1 \text{ odd}\},
 \end{aligned}
 \end{equation}
and
\begin{equation*}
\begin{aligned}
A^r_0&:=\{u\in H^r_0:\, \|u^{(r)}\|\leq 1\},
\\
A^r_1&:=\{u\in H^r_1:\, \|u^{(r)}\|\leq 1\},
\\
A^r_2&:=\{u\in H^r_2:\, \|u^{(r)}\|\leq 1\}.
\end{aligned}
\end{equation*}
By using the representation of these function classes in terms of repeated applications of suitable integral operators, it has been shown \cite{Pinkus:85} that the $n$-dimensional space
consisting of the first $n$ eigenfunctions of the Laplacian satisfying the boundary conditions \eqref{type-0-BC}--\eqref{type-2-BC}  (i.e., the first $n$ functions in each of the sequences \eqref{eq:eig-Laplace-type-0-BC}--\eqref{eq:eig-Laplace-type-2-BC}) are optimal for $A^r_i$, $i=0,1,2$, respectively.
Moreover,
 \begin{equation} \label{eq:n-width}
 d_n(A^r_0)=\left(\frac{1}{(n+1)\pi}\right)^r, 
 \quad
 d_n(A^r_1)=\left(\frac{1}{n\pi}\right)^r,
 \quad
 d_n(A^r_2)=\left(\frac{2}{(2n+1)\pi}\right)^r.
\end{equation}

For $0\leq \ell\leq p$ let us now consider
the following subspaces of $\mathbb{S}_{p,\bftau}$ identified by certain derivatives vanishing at the boundary:
\begin{equation} \label{eq:allS}
\begin{aligned}
\mathbb{S}_{p,\bftau,0}^\ell &:= \{s\in \mathbb{S}_{p,\bftau} :\, s^{(\alpha)}(0)=s^{(\alpha)}(1)=0,\ \ 0\leq \alpha\leq \ell,\ \ \alpha \text{ even}\}, \\
\mathbb{S}_{p,\bftau,1}^\ell &:= \{s\in \mathbb{S}_{p,\bftau} :\, s^{(\alpha)}(0)=s^{(\alpha)}(1)=0,\ \ 0\leq \alpha\leq\ell,\ \ \alpha \text{ odd}\}, \\
\mathbb{S}_{p,\bftau,2}^\ell&:= \{s\in \mathbb{S}_{p,\bftau} :\, s^{(\alpha_0)}(0)= s^{(\alpha_1)}(1)=0,\ \ 0\leq \alpha_0,\alpha_1\leq \ell, \ \ \alpha_0 \text{ even}, \ \ \alpha_1 \text{ odd}\}.
\end{aligned}
\end{equation}
With the aim of constructing optimal spline spaces we focus on the special (degree-dependent) sequences of break points $\bftau_{p,n,i}^\opt$, $i=0,1,2$, where
\begin{equation*} 
\begin{aligned}
\bftau_{p,n,0}^\opt &:= \begin{cases}
\left(0,\frac{1\vphantom{1/2}}{n+1},\frac{2}{n+1},\ldots,\frac{n}{n+1},1\right),\quad\ &p \text{ odd},\\[0.2cm]
\left(0,\frac{1/2}{n+1},\frac{3/2}{n+1},\ldots,\frac{n+1/2}{n+1},1\right),\quad\ &p \text{ even},
\end{cases}\\[0.2cm]
\bftau_{p,n,1}^\opt &:= \begin{cases}
\left(0,\frac{1/2}{n},\frac{3/2}{n},\ldots,\frac{n-1/2}{n},1\right),\qquad &p \text{ odd},\\[0.2cm]
\left(0,\frac{1\vphantom{1/2}}{n},\frac{2}{n},\ldots,\frac{n-1}{n},1\right),\qquad &p \text{ even},
\end{cases}\\[0.2cm]
\bftau_{p,n,2}^\opt &:= \begin{cases}
\left(0,\frac{2\vphantom{1/2}}{2n+1},\frac{4}{2n+1},\ldots,\frac{2n}{2n+1},1\right),\quad &p \text{ odd},\\[0.2cm]
\left(0,\frac{1\vphantom{1/2}}{2n+1},\frac{3}{2n+1},\ldots,\frac{2n-1}{2n+1},1\right),\quad &p \text{ even},
\end{cases}
\end{aligned}
\end{equation*}
and the spaces
\begin{equation}
\label{eq:opt-spaces}
\mathbb{S}_{p,n,i}^\opt:=\mathbb{S}_{p,\bftau_{p,n,i}^\opt,i}^p, \quad i=0,1,2.
\end{equation}
We also define the corresponding grid sizes
$$
h_{p,n,0}^\opt:=\frac{1}{n+1}, \quad  h_{p,n,1}^\opt:=\frac{1}{n}, \quad h_{p,n,2}^\opt:=\frac{2}{2n+1}.
$$
Note that all the spaces $\mathbb{S}_{p,n,i}^\opt$, $i=0,1,2$ have dimension $n$ and from \eqref{eq:n-width} we get 
\begin{equation}
\label{eq:h-nwidths}
d_n(A_i^r)=\left(\frac{ h_{p,n,i}^\opt}{\pi}\right)^r, \quad i=0,1,2.
\end{equation}
Actually, it was shown in \cite[Theorem~2]{Floater:2018} that for all $p\geq r-1$ and $r\geq 1$ the spline spaces 
$\mathbb{S}_{p,n,i}^\opt$ are optimal for the function classes $ A^r_i$, $ i=0,1,2$, respectively. 
Therefore, \eqref{ineq:optimal} and \eqref{eq:h-nwidths} immediately give
 the following result.
 \begin{theorem}
 \label{thm:L2-error}
Let $S_{p,n,i}^\opt$ be the $L^2$-projector onto
$\mathbb{S}_{p,n,i}^\opt$, $i=0,1,2$. Then, for $u\in H^r_i$ and $p\geq r-1$ we have
 \begin{equation*} 
 \|u-S_{p,n,i}^\opt u\| \leq \left(\frac{h_{p,n,i}^\opt}{\pi}\right)^{r}\|u^{(r)}\|.
 \end{equation*}
\end{theorem}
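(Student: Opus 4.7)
The plan is to chain together three ingredients that are already in place in the excerpt: the optimality of the spline spaces $\mathbb{S}_{p,n,i}^{\opt}$ proved in \cite{Floater:2018}, the sharp-constant inequality \eqref{ineq:optimal} that relates optimality to approximation on the unit-ball classes $A_i^r$, and the closed-form expression \eqref{eq:h-nwidths} for the $n$-width in terms of $h_{p,n,i}^{\opt}/\pi$. No new analysis is really needed; the work is just to justify the sentence ``Therefore, \eqref{ineq:optimal} and \eqref{eq:h-nwidths} immediately give the following result.''

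First, I would record the setup. For $p\geq r-1$ and $r\geq 1$, invoke \cite[Theorem~2]{Floater:2018} to conclude that the $n$-dimensional space $\mathbb{S}_{p,n,i}^{\opt}$ is an optimal subspace for $d_n(A_i^r)$, that is,
\begin{equation*}
E(A_i^r,\mathbb{S}_{p,n,i}^{\opt}) = d_n(A_i^r).
\end{equation*}
Next, I would apply \eqref{ineq:optimal} (whose derivation by rescaling $u\mapsto u/\|u^{(r)}\|$ is sketched in the example preceding the theorem). The same rescaling argument works verbatim for the subclass $A_i^r\subset A^r$ because $H_i^r$ is a linear space: for any $u\in H_i^r$ with $\|u^{(r)}\|\neq 0$, the normalized function $u/\|u^{(r)}\|$ lies in $A_i^r$, and by linearity of the $L^2$-projector $S_{p,n,i}^{\opt}$,
\begin{equation*}
\|u - S_{p,n,i}^{\opt}u\| \leq d_n(A_i^r)\,\|u^{(r)}\|.
\end{equation*}
Finally, substitute $d_n(A_i^r) = (h_{p,n,i}^{\opt}/\pi)^{r}$ from \eqref{eq:h-nwidths} to obtain the stated estimate.

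The only loose end is the degenerate case $\|u^{(r)}\|=0$, in which $u$ is a polynomial of degree at most $r-1$ satisfying the boundary conditions defining $H_i^r$. Since $p\geq r-1$, any such polynomial already lies in the full spline space $\mathbb{S}_{p,\bftau_{p,n,i}^{\opt}}$, and the boundary conditions in \eqref{eq:Hr} are exactly (a subset of) those cutting out $\mathbb{S}_{p,n,i}^{\opt}$ in \eqref{eq:allS} at $\ell=p$; hence $u\in\mathbb{S}_{p,n,i}^{\opt}$, so $S_{p,n,i}^{\opt}u=u$ and both sides of the inequality vanish. There is no real obstacle in this proof: the substantive work was done in \cite{Floater:2018} (the optimality of these spline spaces) and in the preceding example (the scaling trick turning an $n$-width statement into an estimate with a seminorm on the right-hand side). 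My theorem is essentially a one-line corollary combining those two facts with the explicit formula for $h_{p,n,i}^{\opt}$.
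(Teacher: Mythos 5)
Your proof is correct and takes essentially the same route as the paper, which derives the theorem in one line by combining the optimality of $\mathbb{S}_{p,n,i}^\opt$ from \cite[Theorem~2]{Floater:2018} with the rescaling estimate \eqref{ineq:optimal} and the explicit $n$-width formula \eqref{eq:h-nwidths}. Your extra treatment of the degenerate case $\|u^{(r)}\|=0$ is a harmless addition (and also follows directly from the scaling argument, since $tu\in A_i^r$ for all $t>0$ then forces $\|u-S_{p,n,i}^\opt u\|=0$).
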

In particular, we have convergence in $p$ in $L^2$-norm of the $L^2$-projections of the functions
\begin{equation*} 
\begin{gathered}
\{\sin(\pi x), \sin(2\pi x), \ldots,\sin(n\pi x)\},\\
\{1,\cos(\pi x), \ldots,\cos((n-1)\pi x)\},\\
\{\sin((1/2)\pi x), \sin((3/2)\pi x), \ldots, \sin((n-1/2)\pi x)\}
\end{gathered}
\end{equation*}
onto $\mathbb{S}_{p,n,i}^\opt$, $i=0,1,2$, respectively; see also \cite{Sande:2019}.

To establish convergence of the standard Galerkin approximation we need the following theorem about error estimates for derivatives, analogous to Theorem~\ref{thm:L2-error}. It generalizes the results proved in \cite[Theorem~4.1]{Sande:2019} for the less technical periodic case towards the subspaces $ \mathbb{S}_{p,n,i}^\opt$, $i=0,1,2$.

\begin{theorem}
	\label{thm:error-der}
	Let $u\in H^r_i$, $i=0,1,2$ for $r\geq 1$ be given. Then, for all $p\geq \max\{r-1,1\}$ there exists $R_{p,n,i}^\opt u\in\mathbb{S}_{p,n,i}^\opt$ such that 
\begin{equation*} 
\|(u-R_{p,n,i}^\opt u)^{(\ell)}\| \leq \left(\frac{h_{p,n,i}^\opt}{\pi}\right)^{r-\ell}\|u^{(r)}\|, \quad \ell=0,1.
\end{equation*}
\end{theorem}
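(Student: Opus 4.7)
The plan is to construct $R^{\opt}_{p,n,i} u$ explicitly as the antiderivative of the $L^2$-projection of $u'$ onto a suitable optimal spline space of one lower degree, with the integration constant chosen so that $R^{\opt}_{p,n,i} u \in \mathbb{S}^{\opt}_{p,n,i}$. The key structural observation is that differentiation swaps the parity of the vanishing-derivative conditions defining the function classes in \eqref{eq:Hr}: for $u \in H^r_i$ one has $u' \in H^{r-1}_{i^*}$, where $0^* = 1$, $1^* = 0$, and $2^*$ is type $2$ with the roles of the two endpoints swapped. A parallel swap occurs at the spline level: inspection of the formulae for $\bftau^{\opt}_{p,n,i}$ in both parities of $p$ reveals that the derivative operator $D$ maps $\mathbb{S}^{\opt}_{p,n,i}$ into $\mathbb{S}^{\opt}_{p-1,m,i^*}$, where $m=n+1, n-1, n$ for $i=0,1,2$ respectively (the target space being reflected across $x=1/2$ in the mixed case). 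Moreover $h^{\opt}_{p,n,i} = h^{\opt}_{p-1,m,i^*}$, so the $L^2$-error bounds transfer exactly, and antidifferentiation of an element of $\mathbb{S}^{\opt}_{p-1,m,i^*}$ with an admissible constant yields an element of $\mathbb{S}^{\opt}_{p,n,i}$.

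With this in place, I set $R^{\opt}_{p,n,i} u$ equal to the antiderivative of $S^{\opt}_{p-1,m,i^*}\, u'$, fixing the constant by $R^{\opt}_{p,n,i} u(0) = u(0)$ for $i=0,2$ and by $\int (R^{\opt}_{p,n,i} u - u) = 0$ for $i=1$; in each case the remaining boundary conditions required for membership in $\mathbb{S}^{\opt}_{p,n,i}$ are then automatic. Consequently $(u - R^{\opt}_{p,n,i} u)' = u' - S^{\opt}_{p-1,m,i^*} u'$, and the $\ell = 1$ estimate follows at once by applying Theorem~\ref{thm:L2-error} to $u' \in H^{r-1}_{i^*}$ (valid for $p-1 \geq r-2$, i.e.\ $p \geq r-1$). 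For $\ell = 0$, the inclusion $D \mathbb{S}^{\opt}_{p,n,i} \subset \mathbb{S}^{\opt}_{p-1,m,i^*}$ together with the defining property of $S^{\opt}_{p-1,m,i^*}$ gives the Galerkin orthogonality $((u - R^{\opt}_{p,n,i} u)', v') = 0$ for all $v \in \mathbb{S}^{\opt}_{p,n,i}$. Given $w \in L^2$ (mean-zero in the Neumann case), I let $\phi$ solve $-\phi'' = w$ with the type-$i$ boundary conditions; then $\phi \in H^2_i$ and $\|\phi''\| = \|w\|$. Integration by parts, whose boundary terms vanish by the compatibility of the BC, followed by Galerkin orthogonality, gives $(u - R^{\opt}_{p,n,i} u, w) = ((u - R^{\opt}_{p,n,i} u)', (\phi - R^{\opt}_{p,n,i} \phi)')$; applying the $\ell = 1$ estimate to both factors (the second with $r=2$, which needs $p \geq 1$) delivers the target $(h^{\opt}_{p,n,i}/\pi)^r$ bound.

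The main obstacle is the parity- and type-dependent book-keeping underlying the derivative swap $(p,n,i) \mapsto (p-1,m,i^*)$: for each of the six combinations of the parity of $p$ and $i \in \{0,1,2\}$ one must verify that the optimal knot sequences of $\mathbb{S}^{\opt}_{p,n,i}$ and $\mathbb{S}^{\opt}_{p-1,m,i^*}$ match (up to reflection for $i=2$), that the dimensions are compatible with the admissible integration constants (differing by one exactly when constants are absorbed, as in the Dirichlet and Neumann cases), and that the boundary-derivative conditions transfer correctly under $D$. Once this case analysis is settled --- presumably in Appendix~\ref{Appendix:A} --- the argument above applies uniformly in $i$ and delivers both bounds.
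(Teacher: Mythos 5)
Your construction is the same one the paper uses, just unpacked by hand. The paper's proof of Theorem~\ref{thm:error-der} is a pointer to Proposition~\ref{prop:err} in Appendix~\ref{Appendix:A}, where the Ritz projector is defined as $R_{\mathbb{X}_\prec}:=KY_{\prec-1}\partial$ in \eqref{def:genRitz} --- i.e.\ exactly your ``antiderivative of the $L^2$-projection of $u'$ onto the optimal space of one lower degree'' --- and the $\ell=1$ bound is obtained, as in your argument, by applying the $L^2$-optimality of the lower-order space to $u'$. The differences are in packaging. First, all of your parity/type bookkeeping (the swap $(p,n,i)\mapsto(p-1,m,i^*)$, the matching of knot sequences and grid sizes, the inclusion $D\,\mathbb{S}_{p,n,i}^\opt\subset\mathbb{S}_{p-1,m,i^*}^\opt$) is not verified case by case in the paper: it is absorbed into the abstract framework $\mathbb{X}_\prec=K(\mathbb{Y}_{\prec-1})$, $\mathbb{Y}_\prec=K^*(\mathbb{X}_{\prec-1})$ of Example~\ref{ex:our-classes}, with the identification of these spaces with the concrete $\mathbb{S}_{p,n,i}^\opt$ delegated to the cited result of Floater--Sande; your proposal would have to carry out the six-case verification explicitly, which is doable but is genuine work you have only sketched. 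Second, for the $\ell=0$ bound the paper writes $u-R_{\mathbb{X}_\prec}u=K(I-Y_{\prec-1})^2\partial u$ and uses idempotency of $I-Y_{\prec-1}$ together with the adjoint-norm identity $\|K(I-Y_{\prec-1})\|=\|(I-Y_{\prec-1})K^*\|=\lambda_{n+1}^{1/2}$, whereas you run an Aubin--Nitsche duality argument with an auxiliary problem $-\phi''=w$. These are the same computation in different clothes (your $\phi'$ is, up to sign, $K^*w$, and your Galerkin-orthogonality step reproduces the insertion of the second factor $I-Y_{\prec-1}$), but your version costs you extra checks --- solvability and compatibility of the dual problem, vanishing of the boundary terms in the integration by parts, and the mean-zero normalizations in the Neumann case --- all of which you handle correctly. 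Net: the proposal is correct and recovers the paper's estimates with the same constants; what the paper's operator-theoretic route buys is that the boundary conditions and the degree-lowering structure never have to be checked explicitly, while your route is more self-contained and closer to standard finite-element practice.
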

\begin{proof}
The result follows from Example~\ref{ex:our-classes} and Proposition~\ref{prop:err} (in Appendix~\ref{Appendix:A}) with
\begin{equation*}
\begin{aligned}
R_{p,n,0}^\opt :=&R_{\mathbb{Y}_p},  &&\mathbb{S}_{p,n,0}^\opt =\mathbb{Y}_p,
\\
R_{p,n,1}^\opt :=&P_0\oplus R_{\mathbb{X}_p},\ \  &&\mathbb{S}_{p,n,1}^\opt=\PP_0\oplus\mathbb{X}_p,
\\
R_{p,n,2}^\opt :=&R_{\mathbb{X}_p},  &&\mathbb{S}_{p,n,2}^\opt=\mathbb{X}_p,
\end{aligned}
\end{equation*}
taking into account \eqref{eq:h-nwidths} and \eqref{eq:nwidth-r}. 
Here $P_0$ stands for the $L^2$-projector onto $\PP_0$.
\end{proof}

Then, Corollaries~\ref{cor:Strang} and \ref{cor:galerkin-A} (in Appendix~\ref{Appendix:A}) imply the following result.

\begin{proposition}\label{pro:error-eigvals}
For any $\indeigk=1,\dots,n$, let $u_\indeigk, \omega_\indeigk$ be an exact solution of problem \eqref{eq:eigenvalue-equation-1D} with boundary conditions \eqref{type-0-BC}, \eqref{type-1-BC} or \eqref{type-2-BC},
and let $u_{h,\indeigk},\omega_{h,\indeigk}$ be their approximation 
obtained by
finding $u\in \mathbb{S}_{p,n,i}^\opt$ and $\omega^2\in\RR$ such that
\begin{equation*}
( u', v') = \omega^2 (u, v), \quad \forall v\in \mathbb{S}_{p,n,i}^\opt,
\end{equation*}
for $i=0,1,2$, respectively. Then,
\begin{equation}\label{ineq:eigvals}
\omega_\indeigk \leq \omega_{h,\indeigk} \leq \frac{\omega_\indeigk}{1-\left(\frac{\omega_\indeigk}{\omega_{n+1}}\right)^{p+1}}.
\end{equation}
Moreover,
\begin{equation*}
\frac{\|u_\indeigk-u_{h,\indeigk}\|}{\|u_\indeigk\|} \leq 2(1+\rho_\indeigk) \left(\frac{\omega_\indeigk}{\omega_{n+1}}\right)^{p+1},
\end{equation*}
where $\|u_\indeigk\|=\|u_{h,\indeigk}\|$, $(u_\indeigk,u_{h,\indeigk})>0$ and
\begin{equation*}
\rho_\indeigk := \max_{\substack{i=1,\ldots,n \\ i\neq \indeigk}} \frac{\omega^2_\indeigk}{|\omega^2_\indeigk - \omega^2_{h,i}|}
\end{equation*}
is the $\indeigk$-th separation constant.
\end{proposition}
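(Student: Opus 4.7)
The plan is to reduce the two inequalities to the abstract Galerkin eigenvalue estimates referenced in the statement, namely Corollaries~\ref{cor:Strang} and \ref{cor:galerkin-A} in Appendix~\ref{Appendix:A}. These Strang--Fix--type results will bound the discrete eigenvalue $\omega_{h,\indeigk}$ and the discrete eigenfunction $u_{h,\indeigk}$ in terms of the best approximation of the exact eigenfunction $u_\indeigk$ within the trial space; the separation constant $\rho_\indeigk$ enters only in the eigenfunction estimate. The left inequality $\omega_\indeigk\le\omega_{h,\indeigk}$ is the standard conforming-Galerkin monotonicity obtained from the min--max principle and requires no approximation argument.

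The concrete input for these abstract bounds comes from Theorem~\ref{thm:error-der}, which supplies an operator $R_{p,n,i}^\opt$ into $\mathbb{S}_{p,n,i}^\opt$ with optimal-order error in both $L^2$ and $H^1$. I would apply it to $u=u_\indeigk$ with the maximal choice $r=p+1$, which is admissible because the exact eigenfunctions are $C^\infty$ and, by construction, lie in $H^r_i$ for every $r$. Two elementary observations then convert the generic bound into the specific exponent $(\omega_\indeigk/\omega_{n+1})^{p+1}$ that appears in the proposition. First, each eigenfunction is a single trigonometric mode, so $\|u_\indeigk^{(r)}\|=\omega_\indeigk^r\|u_\indeigk\|$ for every $r\ge 0$. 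Second, a direct inspection of the definitions of $h_{p,n,i}^\opt$ and of the eigenfrequencies $\omega_\indeigk$ under each boundary-condition type yields the common identity $h_{p,n,i}^\opt/\pi=1/\omega_{n+1}$. Combining these two identities with Theorem~\ref{thm:error-der} for $\ell=0,1$ gives
\begin{equation*}
\|(u_\indeigk-R_{p,n,i}^\opt u_\indeigk)^{(\ell)}\| \le \omega_\indeigk^\ell\left(\frac{\omega_\indeigk}{\omega_{n+1}}\right)^{p+1}\|u_\indeigk\|, \qquad \ell=0,1,
\end{equation*}
and substitution into the abstract corollaries then delivers both the upper bound on $\omega_{h,\indeigk}$ and the $L^2$ bound on $u_{h,\indeigk}$.

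The main obstacle will be matching the algebraic form of the abstract bounds to the precise expressions stated in the proposition. The appearance of the \emph{geometric} denominator $1-(\omega_\indeigk/\omega_{n+1})^{p+1}$ in the eigenvalue estimate, rather than an additive correction, signals that the Strang-type argument will normalize a Rayleigh quotient by $\|R_{p,n,i}^\opt u_\indeigk\|$ and then use the reverse triangle inequality $\|R_{p,n,i}^\opt u_\indeigk\|\ge\|u_\indeigk\|-\|u_\indeigk-R_{p,n,i}^\opt u_\indeigk\|$ to produce that denominator; I would verify the precise form of this normalization when the corollary is consulted. The factor $2(1+\rho_\indeigk)$ in the eigenfunction bound is the classical separation-constant expression, so it should emerge directly from the abstract corollary with no additional work. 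Finally, in the Neumann case where the first eigenfrequency vanishes, both estimates are trivially satisfied since the constant function already lies in $\mathbb{S}_{p,n,1}^\opt$, so this edge case requires no special treatment.
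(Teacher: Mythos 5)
Your proposal is correct and follows essentially the same route as the paper, whose proof of this proposition is precisely the invocation of Corollaries~\ref{cor:Strang} and~\ref{cor:galerkin-A}: the min--max principle gives $\omega_\indeigk\le\omega_{h,\indeigk}$, the Rayleigh-quotient normalization by $\|R_{\mathbb{X}_p}u_\indeigk\|$ produces the geometric denominator, and the separation constant enters only through the classical eigenfunction estimate. Your translation dictionary --- $r=p+1$, $\|u_\indeigk^{(r)}\|=\omega_\indeigk^{r}\|u_\indeigk\|$, and $h_{p,n,i}^\opt/\pi=1/\omega_{n+1}$ for each boundary-condition type --- is exactly the identification (via $\lambda_\indeigk=\omega_\indeigk^{-2}$ and $\prec=p$) that the paper leaves implicit when passing from the abstract corollaries to the stated bounds.
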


Observe that the error estimate for the approximation of the exact frequencies $\omega_\indeigk$ by the $\omega_{h,\indeigk}$ in \eqref{ineq:eigvals} only depends on the value of the exact frequencies $\omega_\indeigk$ and $\omega_{n+1}$. Since $\omega_n<\omega_{n+1}$, we have that $\omega_{h,\indeigk}\to\omega_\indeigk$ for all $\indeigk =1,\ldots,n$ as the degree $p$ of the optimal spline spaces increases. Moreover, $\omega_{n+1}<\omega_{n+2}<\dots$, so for a fixed index $\indeigk$ we have that $\omega_{h,\indeigk}\to\omega_\indeigk$ as the  grid size $h$ decreases, since $h\to 0$ is equivalent to $n\to\infty$. See Remark~\ref{rmk:eigval-rate} for a further discussion on this.

The error of the eigenfunctions in $H^1$-seminorm can be deduced from Proposition~\ref{pro:error-eigvals} and the so-called \emph{Pythagorean eigenvalue error theorem} \cite[page~233]{Strang:2008}:
\begin{equation*}
\frac{\|(u_\indeigk-u_{h,\indeigk})'\|^2}{\| u'_\indeigk\|^2}= \frac{\|u_\indeigk-u_{h,\indeigk}\|^2}{\|u_\indeigk\|^2} + \frac{\omega^2_{h,\indeigk}-\omega^2_\indeigk}{\omega^2_{\indeigk}},
\end{equation*}
where $\|u_\indeigk\|=\|u_{h,\indeigk}\|$ and $(u_\indeigk,u_{h,\indeigk})>0$.

We can summarize the above results as follows.

\medskip
\begin{tcolorbox}
Consider problem \eqref{eq:eigenvalue-equation-1D} with boundary conditions \eqref{type-0-BC}, \eqref{type-1-BC} or \eqref{type-2-BC}.
The approximations of the eigenvalues obtained by finding $u\in \mathbb{S}_{p,n,i}^\opt$ and $\omega^2\in\RR$ such that
\begin{equation*}
( u', v') = \omega^2 (u, v), \quad \forall v\in \mathbb{S}_{p,n,i}^\opt,
\end{equation*}
for $i=0,1,2$, respectively, have no outliers.
\end{tcolorbox}

\begin{remark}\label{rmk:other-reduced-space}
The subspaces $\mathbb{S}_{p,\bftau,i}^{p-1}$, $i=0,1$, introduced for uniform knot sequences in \cite{Sogn:2018,Takacs:2016} and further analyzed in \cite[Section~5.2]{Sande:2020}, were considered for outlier removal in \cite{Hiemstra:2021}. Observe that $\mathbb{S}_{p,\bftau,0}^p \subseteq {\mathbb{S}}_{p,\bftau,0}^{p-1}$ where equality holds for $p$ odd and that $\mathbb{S}_{p,\bftau,1}^p \subseteq {\mathbb{S}}_{p,\bftau,1}^{p-1}$ where equality holds for $p$ even; see their definitions in \eqref{eq:allS}. Hence, for a fixed type of boundary condition (Dirichlet/Neumann/mixed) these subspaces are very similar to our optimal spaces but can slightly differ in the partition and in the maximum order of vanishing derivatives at the boundary, depending on the parity of the degree $p$.
 In Section~\ref{sec:numerics} we will also numerically illustrate their performance with respect to outliers.
\end{remark}

\begin{remark}\label{rmk:eigval-rate}
While the error estimate in \eqref{ineq:eigvals} is sufficient to deduce a good approximation of all $n$ eigenvalues of the Laplacian with any of the boundary conditions \eqref{type-0-BC}, \eqref{type-1-BC} or \eqref{type-2-BC}, the asymptotic rate of convergence is not sharp. From Corollary \ref{Cor:RT} we can get the error estimate
	\begin{equation}\label{ineq:eigvals-RT}
	\omega_\indeigk \leq \omega_{\prec,\indeigk} \leq \frac{\omega_\indeigk}{\left(1-2\sqrt{\indeigk}\left(\frac{\omega_\indeigk}{\omega_1}\right)^{2}\left(\frac{\omega_\indeigk}{\omega_{n+1}}\right)^{2p}\right)^{1/2}},
	\end{equation}
for all choices of $\indeigk$ and $p$ such that
\begin{align*}
\sqrt{\indeigk}\left(\frac{\omega_\indeigk}{\omega_1}\right)^{2}\left(\frac{\omega_\indeigk}{\omega_{n+1}}\right)^{2p}<\frac{1}{2}.
\end{align*}
The estimate in \eqref{ineq:eigvals-RT} gives a sharper asymptotic rate of convergence as $h\to0$ (i.e., as $n\to\infty$) or as $p\to\infty$. However, since we are in this paper mainly concerned with the outliers, it is more relevant to consider the case when $\indeigk$ is large (i.e., close to $n$), and in this case estimate \eqref{ineq:eigvals-RT} is only applicable for very large degrees $p$.
\end{remark}

We end the section by extending the univariate results towards higher dimensions.
Let us consider tensor-product spline spaces of the form
\begin{equation}\label{eq:spline-tensor}
\mathbb{S}_{\bfp,\bfn,\bfi}^\opt:=\mathbb{S}_{p_1,n_1,i_1}^\opt\otimes\mathbb{S}_{p_2,n_2,i_2}^\opt \otimes\cdots\otimes\mathbb{S}_{p_d,n_d,i_d}^\opt, \quad i_1,\ldots,i_d=0,1,2,
\end{equation}
according to the type of boundary conditions. Thanks to the decomposition of the approximate eigenfunctions/eigenvalues like in \eqref{eq:approx-eig-Laplace2D} and the exact ones like in \eqref{eq:eig-Laplace2D}, both in terms of their univariate counterparts, we immediately arrive at the following multivariate result.

\medskip
\begin{tcolorbox}
Consider problem \eqref{eq:eigenvalue-equation} with homogeneous Dirichlet/Neumann/mixed boundary conditions.
The approximations of the eigenvalues obtained by
finding $u\in \mathbb{S}_{\bfp,\bfn,\bfi}^\opt$ and $\omega^2\in\RR$ such that
\begin{equation*}
( \nabla u, \nabla v) = \omega^2 (u, v), \quad \forall v\in \mathbb{S}_{\bfp,\bfn,\bfi}^\opt,
\end{equation*}
for appropriate choices of $i_1,\ldots,i_d$ according to the type of boundary conditions, have no outliers.
\end{tcolorbox}


\section{Approximations with non-homogeneous boundary} 
\label{sec:general-BC}
The optimal spline subspaces in \eqref{eq:opt-spaces} 
provide outlier-free approximations for the eigenfunctions of the corresponding eigenvalue problems still maintaining the accuracy of the full spline space because the additional boundary conditions identifying such subspaces are satisfied by the exact eigenfunctions. However, it is clear that their direct use will result in a loss of accuracy for approximating solutions that do not satisfy those boundary conditions; see also Examples~\ref{ex:convergence1D-boundary} and \ref{ex:convergence2D-boundary}.

Let us focus on the problem 
\begin{equation}
\label{eq:Laplace}
	\left\{ \begin{aligned}
	-\Delta u &= f, \quad \text{in } \Omega:=(0,1)^d, \\
	u_{|\partial \Omega} &= 0,
	\end{aligned} \right.
\end{equation}
and on approximations of its solution obtained by the Galerkin method using (tensor products of) the outlier-free subspace $\mathbb{S}_{p,n,0}^\opt$. 
Given a smooth right-hand side $f$, in this section we propose and analyze a possible strategy to recover the full approximation order of the usual spline subspace of $H^1_0(\Omega)$, i.e., (tensor products of) $\mathbb{S}_{p,\bftau,0}^0$ for some $\bftau$.
A special case of this approach has also been suggested in \cite[Appendix~B]{Hiemstra:2021} for the one-dimensional problem with $f=1$. Here we consider any dimension and any sufficiently smooth $f$.
The treatment of other types of boundary conditions is similar.

\subsection{Univariate case}\label{sec:general-BC-1D}
We first consider the case $d=1$. 
We recall that the optimal spline space $\mathbb{S}_{p,n,i}^\opt$ defined in \eqref{eq:opt-spaces} has full approximation power for functions taken from the space $H^r_i$ for $i=0,1,2$; see Theorem~\ref{thm:error-der} (and also Theorem~\ref{thm:L2-error}). In the following we are addressing functions $u$ that do not satisfy the boundary conditions on (even) derivatives of the space $H^r_0$. As mentioned before, the other types of boundary conditions can be treated in a similar way.

For sufficiently smooth $f$, 
let $s_u\in \mathbb{S}_{p,\bftau,0}^0$ be such that for even values of $\alpha$, $2\leq \alpha\leq p,$  
 \begin{align*}
 (s_u)^{(\alpha)}(0) &= (u)^{(\alpha)}(0)=-f^{(\alpha-2)}(0), 
 \\
 (s_u)^{(\alpha)}(1) &= (u)^{(\alpha)}(1)=-f^{(\alpha-2)}(1). 
\end{align*}
We can then write the solution of \eqref{eq:Laplace} as
 $$
 u=u_0+s_u,
 $$
where $u_0$ solves the problem
\begin{equation*}
 \left\{ \begin{aligned}
 - u''_0  &= f+s''_u, \quad \text{in } (0,1), \\
 u_{0} (0) &= u_0(1)=0,
 \end{aligned} \right.
\end{equation*}
and
$$
 (u_0)^{(\alpha)}(0)=(u_0)^{(\alpha)}(1)=0, \quad 0\leq \alpha\leq p,\ \ \alpha  \text{ even}.
$$
Thus, it suffices to construct the Galerkin approximation of $u_0$ in $\mathbb{S}_{p,n,0}^\opt$. The approximation power of the space $\mathbb{S}_{p,n,i}^\opt$ (see Theorem~\ref{thm:error-der}) ensures no loss of approximation order with respect to the Galerkin approximation of $u$ in the usual space $\mathbb{S}_{p,\bftau,0}^0$.
 
For sequences of break points with $\nknots> p+1$, the construction of $s_u$ is straightforward. As an example, it can be obtained by solving the following Hermite interpolation problem: find
$$
   s_u=\sum_{i=-p}^0 c_iN_{i,\bfxi}^p+\sum_{i=\nknots-p-1}^{\nknots-1} c_iN_{i,\bfxi}^p, \\
$$
such that
\begin{alignat}{3}
   \label{eq:su-even}
    (s_u)^{\alpha}(z)&=u^{(\alpha)}(z), \quad &\alpha&=0,2,4,\ldots, 2\lfloor \tfrac {p}{2}\rfloor, \quad &z&=0,1,
    \\
    \label{eq:su-odd}
    (s_u)^{\alpha}(z)&=0,\quad &\alpha&=1,3,5,\ldots, 2\lfloor \tfrac {p-1}{2}\rfloor+1,\quad  &z&=0,1,
\end{alignat}
where the B-splines $N_{i,\bfxi}^p$ are defined in Section~\ref{sec:basis-full}. Due to the support property of B-splines, for $\nknots> p+1$ the above problem decouples in two independent linear systems of size $p+1$ each. These two linear systems are unisolvent because each of them corresponds to Taylor interpolation in the polynomial space $\PP_p$. Moreover, when using an open knot sequence $\bfxi$, the above systems are triangular. Although of limited practical interest, we remark that a function $s_u\in \mathbb{S}_{p,\bftau}$ satisfying \eqref{eq:su-even} can be obtained also in case $\nknots\leq p+1$ by removing some of the conditions on the odd derivatives in order to match the dimension of the space.

\subsection{Multivariate tensor-product case}\label{sec:general-BC-2D}
In the multivariate setting, we start by showing that the tensor-product spline space $\mathbb{S}_{\bfp,\bfn,\bfi}^\opt$ defined in \eqref{eq:spline-tensor} has full approximation power for functions taken from the tensor-product space
\begin{equation}\label{eq:H-tensor}
H^r_{\bfi}:=H^r_{i_1} \otimes H^r_{i_2}\otimes\cdots\otimes H^r_{i_d}.
\end{equation}
We let $\Omega:=(0,1)^d$ and denote the $L^2$-norm on $\Omega$ by $\|\cdot\|_{\Omega}$. We define the grid size as 
$$
h_{\bfp,\bfn,\bfi}^\opt:=\max\left\{h_{p_1,n_1,i_1}^\opt,h_{p_2,n_2,i_2}^\opt,\ldots,h_{p_d,n_d,i_d}^\opt\right\}.
$$
For simplicity we state the following theorem in the case $d=2$.
From Proposition~\ref{prop:tensorRitz} (in Appendix~\ref{Appendix:B}) we can deduce the following generalization of Theorem~\ref{thm:error-der} to the tensor-product case. 
\begin{theorem}\label{thm:error-der-2d}
Let $u\in H^r_{i_1}\otimes H^r_{i_2}$, $i_1,i_2=0,1,2$ for $r\geq 1$ be given. Then, for all $p_1,p_2\geq \max\{r-1,1\}$ there exists $R_{p_1,n_1,i_1}^\opt\otimes R_{p_2,n_2,i_2}^\opt u\in\mathbb{S}_{p_1,n_1,i_1}^\opt\otimes \mathbb{S}_{p_2,n_2,i_2}^\opt$ such that 
\begin{equation*} 
\|u-R_{p_1,n_1,i_1}^\opt\otimes R_{p_2,n_2,i_2}^\opt u\|_{\Omega} \leq \left(\frac{h_{\bfp,\bfn,\bfi}^\opt}{\pi}\right)^{r}\left(\|\partial_1^ru\|_{\Omega}+\|\partial_2^ru\|_{\Omega} 
 +\min\left\{\|\partial_1\partial_2^{r-1}u\|_{\Omega}, \,\|\partial_1^{r-1}\partial_2u\|_{\Omega}\right\}\right),
\end{equation*}
and
\begin{equation*}
\begin{aligned}
\|\partial_1(u-R_{p_1,n_1,i_1}^\opt\otimes R_{p_2,n_2,i_2}^\opt u)\|_{\Omega}
&\leq \left(\frac{h_{\bfp,\bfn,\bfi}^\opt}{\pi}\right)^{r-1}\left(\|\partial_1^ru\|_{\Omega}+\|\partial_1\partial_2^{r-1}u\|_{\Omega}\right), 
\\
\|\partial_2(u-R_{p_1,n_1,i_1}^\opt\otimes R_{p_2,n_2,i_2}^\opt u)\|_{\Omega}
&\leq \left(\frac{h_{\bfp,\bfn,\bfi}^\opt}{\pi}\right)^{r-1}\left(\|\partial_1^{r-1}\partial_2u\|_{\Omega}+\|\partial_2^ru\|_{\Omega}\right).
\end{aligned}
\end{equation*}
\end{theorem}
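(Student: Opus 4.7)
The plan is to invoke Proposition~\ref{prop:tensorRitz} from Appendix~\ref{Appendix:B}, a general tensor-product estimate for Ritz-type projectors, using the univariate ingredients furnished by Theorem~\ref{thm:error-der}. Write $R_j := R_{p_j,n_j,i_j}^\opt$ for $j = 1,2$. Since $R_1$ acts only on $x_1$ and $R_2$ only on $x_2$, the operator $\partial_1$ commutes with $R_2$ and $\partial_2$ commutes with $R_1$, which is the structural fact that underlies the entire analysis.

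For the $L^2$ estimate I would begin from the symmetric identity
\begin{equation*}
I - R_1\otimes R_2 \,=\, (I - R_1) \,+\, (I - R_2) \,-\, (I - R_1)(I - R_2)
\end{equation*}
and the triangle inequality. Integrating the univariate bound of Theorem~\ref{thm:error-der} in the orthogonal coordinate gives $\|(I - R_j) u\|_\Omega \leq (h_{p_j,n_j,i_j}^\opt/\pi)^r\|\partial_j^r u\|_\Omega$ for $j = 1,2$, handling the first two terms. For the cross term, applying the univariate bound to $R_1$ with approximation order $1$ and using commutation yields
\begin{equation*}
\|(I - R_1)(I - R_2)u\|_\Omega \,\leq\, \frac{h_{p_1,n_1,i_1}^\opt}{\pi}\,\|(I - R_2)\partial_1 u\|_\Omega \,\leq\, \frac{h_{p_1,n_1,i_1}^\opt}{\pi}\left(\frac{h_{p_2,n_2,i_2}^\opt}{\pi}\right)^{r-1}\|\partial_1\partial_2^{r-1}u\|_\Omega,
\end{equation*}
where in the second step Theorem~\ref{thm:error-der} is applied in $x_2$ to $\partial_1 u \in H^{r-1}_{i_2}$. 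Swapping the roles of $R_1$ and $R_2$ gives the companion bound with $\|\partial_1^{r-1}\partial_2 u\|_\Omega$; taking the minimum and estimating each $h_{p_j,n_j,i_j}^\opt\leq h_{\bfp,\bfn,\bfi}^\opt$ produces the $\min$ term in the statement.

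For the $H^1$-type estimates I would switch to the asymmetric decomposition $I - R_1\otimes R_2 = (I - R_1) + R_1(I - R_2)$ and differentiate. Applied to $\partial_1$: the first piece is controlled directly by Theorem~\ref{thm:error-der} with $\ell = 1$, contributing $(h_{p_1,n_1,i_1}^\opt/\pi)^{r-1}\|\partial_1^r u\|_\Omega$. For the second piece, an $H^1$-stability property of the form $\|\partial_1 R_1 w\|_\Omega \leq \|\partial_1 w\|_\Omega$ (analogous to \eqref{eq:stab}) combined with the commutation $\partial_1(I - R_2) = (I - R_2)\partial_1$ reduces matters to the univariate $L^2$ estimate for $R_2$ applied in $x_2$ to $\partial_1 u$, giving $(h_{p_2,n_2,i_2}^\opt/\pi)^{r-1}\|\partial_1\partial_2^{r-1} u\|_\Omega$. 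The $\partial_2$-estimate is perfectly symmetric.

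The principal technical obstacle I anticipate is verifying the $H^1$-stability of the univariate projector $R_{p,n,i}^\opt$ in a form compatible with tensorization, since this property is not contained in the statement of Theorem~\ref{thm:error-der} and must be supplied either by Proposition~\ref{prop:tensorRitz} or through the explicit construction in Appendix~\ref{Appendix:A} (paralleling \eqref{eq:stab} for $Q_p^1$). A secondary but necessary bookkeeping point is to check that the tensor-product class $H^r_{i_1}\otimes H^r_{i_2}$ is closed under the partial derivatives invoked in the cross-term arguments, e.g.\ that $\partial_1 u(x_1,\cdot) \in H^{r-1}_{i_2}$ for a.e.\ $x_1$; this holds because the boundary conditions in one coordinate constrain only derivatives in that coordinate and are preserved under differentiation in the other.
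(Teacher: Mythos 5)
Your proposal is correct and takes essentially the same route as the paper: the result is obtained there by invoking Proposition~\ref{prop:tensorRitz}, whose proof (via Lemma~\ref{lem:tensorRitz}) combines the univariate Ritz estimates of Proposition~\ref{prop:err} with the commutation of $\partial_i$ and the projector in the other variable and with the stability bounds \eqref{ineq:stab:a}--\eqref{ineq:stab:b} --- precisely the ingredients you identify, including the $H^1$-stability you flag as the main obstacle, which follows immediately from $\partial R_{\mathbb{X}_\prec}=Y_{\prec-1}\partial$. Your symmetric inclusion--exclusion splitting for the $L^2$ bound differs only cosmetically from the paper's asymmetric decomposition $(I-R_1)+R_1(I-R_2)$: the cross term you control via the $r=1$ univariate estimate is exactly the correction term $\lambda_{n+1}^{1/2}\,\|(I-R_{\mathbb{X}_{\prec_2}})\partial_1 u\|_{\Omega}$ that the paper extracts from \eqref{ineq:stab:b}, and both yield the same three terms.
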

\begin{proof}
For simplicity of notation, we will only consider the case $p_1=p_2=p$, $n_1=n_2=n$ and $i_1=i_2=i$.
In this case the result follows from Example~\ref{ex:our-classes} (in Appendix~\ref{Appendix:A}) and Proposition~\ref{prop:tensorRitz} (in Appendix~\ref{Appendix:B}) with
\begin{equation*}
\begin{aligned}
R_{p,n,0}^\opt :=&R_{\mathbb{Y}_p},  &&\mathbb{S}_{p,n,0}^\opt =\mathbb{Y}_p,
\\
R_{p,n,1}^\opt :=&P_0\oplus R_{\mathbb{X}_p},\quad  &&\mathbb{S}_{p,n,1}^\opt=\PP_0\oplus\mathbb{X}_p,
\\
R_{p,n,2}^\opt :=&R_{\mathbb{X}_p},  &&\mathbb{S}_{p,n,2}^\opt=\mathbb{X}_p,
\end{aligned}
\end{equation*}
taking into account \eqref{eq:h-nwidths} and \eqref{eq:nwidth-r}. 
\end{proof}
Using standard arguments we can conclude that the Galerkin method applied to problem \eqref{eq:Laplace} in the reduced spline space \eqref{eq:spline-tensor} has full approximation order for any solution belonging to the space \eqref{eq:H-tensor}.
Note that the Ritz projection $R u\in\mathbb{S}_{\bfp,\bfn,\bfi}^\opt$ of $u\in H^1_{\bfi}$ related to the Laplace operator, i.e., given by
\begin{equation*}
(\nabla R u,\nabla v) = (\nabla u, \nabla v), \quad \forall v\in \mathbb{S}_{\bfp,\bfn,\bfi}^\opt,
\end{equation*}
is the best approximation to $u$ in the $H^1$-seminorm and
potentially different from the one considered in Theorem~\ref{thm:error-der-2d}.

The strategy in Section \ref{sec:general-BC-1D} can now be extended to the tensor-product setting. Let us outline the strategy in the case $d=2$ and Dirichlet boundary conditions. In this case we are interested in discretizing problem \eqref{eq:Laplace} in $\mathbb{S}_{p_1,n_1,0}^\opt\otimes \mathbb{S}_{p_2,n_2,0}^\opt$. For $s\in \mathbb{S}_{p_1,n_1,0}^\opt\otimes \mathbb{S}_{p_2,n_2,0}^\opt$ we have
 \begin{align*}
 \partial_{x_1}^{\alpha_1}\partial_{x_2}^{\alpha_2}s(0,x_2) &= \partial_{x_1}^{\alpha_1}\partial_{x_2}^{\alpha_2}s(1,x_2)=0, \quad 0\leq \alpha_1\leq p_1, \ \ \alpha_1 \ \text{even}, \quad 0\leq \alpha_2\leq p_2,
 \\
 \partial_{x_1}^{\alpha_1}\partial_{x_2}^{\alpha_2}s(x_1,0) &= \partial_{x_1}^{\alpha_1}\partial_{x_2}^{\alpha_2}s(x_1,1)=0, \quad 0\leq \alpha_2\leq p_2, \ \ \alpha_2 \ \text{even}, \quad 0\leq \alpha_1\leq p_1.
\end{align*}
We can compensate such behavior by approximating $u$ by $u_{0,h}+s_u$ where $u_{0,h}$ is the Galerkin approximation in $\mathbb{S}_{p_1,n_1,0}^\opt\otimes \mathbb{S}_{p_2,n_2,0}^\opt$ of the solution of
\begin{equation*}
\left\{ \begin{aligned}
-\Delta u_0 &= f+\Delta s_u, \quad \text{in } \Omega:=(0,1)^2, \\
u_{0|\partial \Omega} &= 0,
\end{aligned} \right.
\end{equation*}
and $s_u$ is a function, possibly belonging to 
$\mathbb{S}_{p_1,\bftau_{p_1,n_1,0}^\opt}\otimes \mathbb{S}_{p_2,\bftau_{p_2,n_2,0}^\opt}$, 
that approximates with the same approximation order of the space the corresponding boundary derivatives of $u$, i.e.,
\begin{align*}
 \partial_{x_1}^{\alpha_1}\partial_{x_2}^{\alpha_2}u(0,x_2),\quad & \partial_{x_1}^{\alpha_1}\partial_{x_2}^{\alpha_2}u(1,x_2), \quad 0\leq \alpha_1\leq p_1, \ \ \alpha_1 \ \text{even}, \quad 0\leq \alpha_2\leq p_2,
 \\
 \partial_{x_1}^{\alpha_1}\partial_{x_2}^{\alpha_2}u(x_1,0),\quad &  \partial_{x_1}^{\alpha_1}\partial_{x_2}^{\alpha_2}u(x_1,1), \quad 0\leq \alpha_2\leq p_2, \ \ \alpha_2 \ \text{even}, \quad 0\leq \alpha_1\leq p_1.
\end{align*}
For smooth data $f$, the above derivatives can be easily deduced from \eqref{eq:Laplace} by repeated differentiation. As an example, for $0\leq \alpha_1\leq p_1$, $\alpha_1 \text{ even}$, $0\leq \alpha_2\leq p_2$, we have
$$
  \partial_{x_1}^{\alpha_1}\partial_{x_2}^{\alpha_2}u(z,x_2)=\sum_{r=1}^{\alpha_1/2}(-1)^r\partial_{x_1}^{\alpha_1-2r}\partial_{x_2}^{\alpha_2+2(r-1)} f(z,x_2)+(-1)^{\alpha_1/2}\partial_{x_2}^{\alpha_1+\alpha_2}u(z,x_2), \quad z=0,1.
$$
The last term in the above expression is zero due to the imposed homogeneous Dirichlet boundary conditions in \eqref{eq:Laplace}; this is not the case for general non-homogeneous Dirichlet boundary conditions.
Once the necessary derivatives of $u$ on the boundary of $\Omega$ are computed, a discrete least squares approach in $\mathbb{S}_{p_1,\bftau_{p_1,n_1,0}^\opt}\otimes \mathbb{S}_{p_2,\bftau_{p_2,n_2,0}^\opt}$ can be used to obtain $s_u$.
Note that only the derivatives of the form
 $$
  \partial_{x_1}^{\alpha_1}u(z,x_2), \quad \partial_{x_2}^{\alpha_2}u(x_1,z), \quad z=0,1,\quad 0\leq\alpha_1\leq p_1,\quad 0\leq\alpha_2\leq p_2, \quad \alpha_1,\alpha_2\ \text{even},
 $$
have to be deduced from \eqref{eq:Laplace} and used to identify $s_u$ because they determine all the remaining ones by direct differentiation. 

The above error estimates for the reduced tensor-product spline space ensure no loss of approximation order with respect to the Galerkin approximation of $u$ in the non-reduced space. 
 

\section{B-spline-like bases for outlier-free spline spaces}
\label{sec:bsplines}
In this section we construct a B-spline-like basis for the outlier-free spline spaces we are interested in. 
As explained in Section~\ref{sec:counting-outliers}, uniform grids are the most relevant in the outlier-free context.
For the space $\mathbb{S}_{p,\bftau,1}^{p-1}$, on a uniform knot sequence $\bftau$, such a basis was first presented in \cite{Takacs:2016} for sufficiently small values of $h$ compared to the degree $p$. Subsequently, plots of B-spline-like bases for $\mathbb{S}_{p,n,i}^{\rm opt}$ for all $i=0,1,2$, together with a very brief explanation on how to construct them, were presented in \cite{Floater:2018}. An algorithmic construction in the spirit of MDB-spline extraction was later described in \cite{Hiemstra:2021} for a general knot sequence, but again for sufficiently small values of $h$ compared to the degree $p$. Here we remove the restriction between $h$ and $p$, and consider uniform knot sequences to obtain explicit extraction formulas in terms of cardinal B-splines.
For the sake of brevity, we just focus on the subspaces $\mathbb{S}_{p,\bftau,0}^p$ and 
$ \mathbb{S}_{p,\bftau,0}^{p-1}$ where the interior break points are equally spaced; the other types of subspaces can be treated similarly. 

\subsection{B-spline bases for full spline spaces}\label{sec:basis-full}
The full spline space $\mathbb{S}_{p,\bftau}$ in \eqref{eq:spline-max-smooth} is usually represented in terms of the classical B-spline basis which is defined through a knot sequence.
For $p\ge0$ and $\nknots\ge1$, consider the knot sequence
\begin{equation}\label{eq:knots}
\bfxi:=\{\xi_{-p}\leq\cdots\leq\xi_{0}<\xi_{1}<\cdots<\xi_{\nknots-1}<\xi_{\nknots}\leq\cdots\leq\xi_{\nknots+p}\},
\end{equation}
such that $\xi_{0}\leq 0<\xi_{1}$ and $\xi_{\nknots-1}<1\leq \xi_{\nknots}$.
This knot sequence allows us to define $\nknots+p$ B-splines of degree $p$,
\begin{equation}
\label{eq:B-splines}
N_{i,\bfxi}^p:\RR\rightarrow \RR, \quad i=-p,\ldots,\nknots-1,
\end{equation}
defined recursively as follows: for $-p \le i\le \nknots+p-1$,
	\begin{equation*}
	N_{i,\bfxi}^0(x):=\begin{cases}
	1, & x \in [\xi_i,\xi_{i+1}), \\
	0, & \text{otherwise};
	\end{cases}
	\end{equation*}
	for $1\le k\le p$ and $-p\le i\le \nknots+p-1-k$,
	\begin{equation*}
	N_{i,\bfxi}^k(x):=\frac{x-\xi_i}{\xi_{i+k}-\xi_i}N_{i,\bfxi}^{k-1}(x)+\frac{\xi_{i+k+1}-x}{\xi_{i+k+1}-\xi_{i+1}}N_{i+1,\bfxi}^{k-1}(x),
	\end{equation*}
	where a fraction with zero denominator is assumed to be zero.
It is well known that the B-splines $N_{i,\bfxi}^p$, $i=-p,\ldots,\nknots-1$, are linearly independent and they enjoy the following properties (see, e.g.,~\cite{deBoor2001,Lyche:18}).
\begin{itemize}
	\item Local support:
	\begin{equation*}
	\text{supp}(N_{i,\bfxi}^p)=[\xi_i,\xi_{i+p+1}], \quad i=-p,\ldots,\nknots-1.
	\end{equation*}
	\item Smoothness: 
	\begin{equation*}
	N_{i,\bfxi}^p \in C^{p-1}(0,1), \quad i=-p,\ldots,\nknots-1.
	\end{equation*}
	\item Differentiation: 
	\begin{equation*}
	\left(N_{i,\bfxi}^p(x)\right)' = p\left(\frac{N_{i,\bfxi}^{p-1}(x)}{\xi_{i+p}-\xi_i}-
	\frac{N_{i+1,\bfxi}^{p-1}(x)}{\xi_{i+p+1}-\xi_{i+1}}\right), \quad i=-p,\ldots,\nknots-1, \quad p \geq 1.
	\end{equation*}
	\item Non-negative partition of unity:
	\begin{equation*}
	N_{i,\bfxi}^p(x)\ge0, \quad i=-p,\ldots,\nknots-1, \qquad \sum_{i=-p}^{\nknots-1}N_{i,\bfxi}^p(x)=1,\quad x\in[0,1).
	\end{equation*}
\end{itemize}
Usually, open knots are employed to identify the B-splines in \eqref{eq:B-splines}, i.e.,
$$\xi_{-p}=\dots=\xi_0=0,\quad 1=\xi_\nknots=\dots=\xi_{\nknots+p},$$
because with such a configuration it is straightforward to identify a basis for the subspace $\mathbb{S}_{p,\bftau,0}^0$. However, in our context only the case of uniform grid spacing is of interest because it minimizes the number of outliers, as discussed in Section~\ref{sec:counting-outliers}. Therefore, it is natural to consider B-splines with uniform knots, i.e., the B-splines $N_{i,\bfxi}^p$, $i=-p,\ldots,\nknots-1$, are the restriction to the interval $[0,1]$ of uniformly shifted and scaled versions of a single shape function, the so-called \emph{cardinal B-spline} $\cardB_p:\RR\rightarrow \RR$, where
\begin{equation*}
\cardB_0(t) := \begin{cases}
1, & t \in [0, 1), \\
0, & \text{otherwise},
\end{cases}
\end{equation*}
and
\begin{equation*}
\cardB_p (t) := \frac{t}{p} \cardB_{p-1}(t) + \frac{p+1-t}{p} \cardB_{p-1}(t-1), \quad p \geq 1.
\end{equation*}
The cardinal B-spline $\cardB_p$ belongs to $C^{p-1}(\RR)$ and is supported on the interval $[0,p+1]$. It is a symmetric function with respect to the midpoint of its support, i.e.,
\begin{equation}\label{eq:symmetry}
\cardB_p\biggl(\frac{p+1}{2}+t\biggr)=\cardB_p\biggl(\frac{p+1}{2}-t\biggr).
\end{equation}
For other common properties of cardinal B-splines, we refer the reader to \cite{Lyche:18}.

From now on, we consider the knot sequence \eqref{eq:knots} equally spaced, so that we have
\begin{equation}\label{eq:BcardB}
N^{p}_{i,\bfxi}(x) =\cardB_{p}\biggl(\frac{\nknots(x-\xi_i)}{\xi_\nknots-\xi_0}\biggr), \quad i=-p,\ldots,\nknots-1,
\end{equation}
and we explicitly construct a basis for the spaces $\mathbb{S}_{p,\bftau,0}^p$ and $\mathbb{S}_{p,\bftau,0}^{p-1}$ in case their interior break points are equally spaced. The basis elements are expressed as linear combinations of the B-splines in \eqref{eq:BcardB}.

\subsection{B-spline-like bases for optimal spline spaces} \label{sec:basis-optimal}
In this subsection we construct a basis for optimal spline spaces of the form $\mathbb{S}_{p,\bftau,0}^p$ defined in \eqref{eq:opt-spaces}.
Let us first address the case $p$ odd. We note that in this case $\mathbb{S}_{p,\bftau,0}^p=\mathbb{S}_{p,\bftau,0}^{p-1}$ and $\dim(\mathbb{S}_{p,\bftau,0}^p)=\nknots-1$.
We select the knots in \eqref{eq:knots} as
\begin{equation}
\label{eq:knots:0:odd}
\xi_i=\frac{i}{\nknots}, \quad  i=-p,\ldots, \nknots+p,
\end{equation}
and observe that with such a choice we have
$$
\bftau=\bftau_{p,\nknots-1,0}^\opt,
$$
so that $\mathbb{S}_{p,\bftau,0}^p=\mathbb{S}_{p,\nknots-1,0}^\opt$.
Then, we consider the set of B-spline-like spline functions
 \begin{equation}
 \label{eq:B-spline-0-odd}
\{ N^{p}_{i,\bfxi,0}, \  i=1,\ldots, \nknots-1\},
 \end{equation}
 defined by
\begin{equation}
\label{eq:basis-0-odd}
\begin{bmatrix}
N^{p}_{1,\bfxi,0}\\
N^{p}_{2,\bfxi,0} \\
\vdots \\
N^{p}_{\nknots-1,\bfxi,0}
\end{bmatrix}:=
\begin{bmatrix}
\underbrace{\overbrace{\cdots\bigg|\,L_{\nknots-1}\,\bigg|\,L_{\nknots-1}\,\bigg|}^{\frac{p+1}{2}}I_{\nknots-1}\overbrace{\bigg|\,R_{\nknots-1}\,\bigg|\,R_{\nknots-1}\,\bigg|\cdots}^{\frac{p+1}{2}}}_{\nknots+p}
\end{bmatrix}
\begin{bmatrix}
N^{p}_{-p,\bfxi}\\
\vdots \\
N^{p}_{0,\bfxi}\\
N^{p}_{1,\bfxi}\\
\vdots \\
N^{p}_{\nknots-1,\bfxi}
\end{bmatrix},
\end{equation}
where
\begin{equation}
\label{eq:LR-odd=even}
	L_m:=
	\begin{bmatrix}
	\,I_m\,\bigg|\,\boldsymbol{0}_m\,\bigg|\,-J_m\,\bigg|\,\boldsymbol{0}_m\,
	\end{bmatrix},
\quad
		R_m:=
	\begin{bmatrix}
	\boldsymbol{0}_m\,\bigg|\,-J_m\,\bigg|\,\boldsymbol{0}_m\,\bigg|\,I_m\,
	\end{bmatrix}.
	\end{equation}
Here 
$I_m$ denotes the identity matrix of size $m$, $J_m$ denotes the exchange matrix of size $m$, i.e., the matrix with $1$ along the anti-diagonal, and $\boldsymbol{0}_m$ is the zero (column) vector of length $m$.
The notation in the matrix in \eqref{eq:basis-0-odd} has to be interpreted as follows (see also Example~\ref{ex:basis-0-odd}):
\begin{itemize}
	\item take the identity matrix of size $\nknots-1$;
	\item construct the matrices $L_{\nknots-1}$ and $R_{\nknots-1}$ as in \eqref{eq:LR-odd=even};
	\item add copies of the matrix $L_{\nknots-1}$ to the left of the central identity matrix and take the first $\frac{p+1}{2}$ columns to the left of the identity matrix;
	\item add copies of the matrix $R_{\nknots-1}$ to the right of the central identity matrix and take the first $\frac{p+1}{2}$ columns to the right of the identity matrix.
\end{itemize}

\begin{figure}[t!]
\centering
\subfigure[B-spline-like functions]{\includegraphics[height=4.1cm]{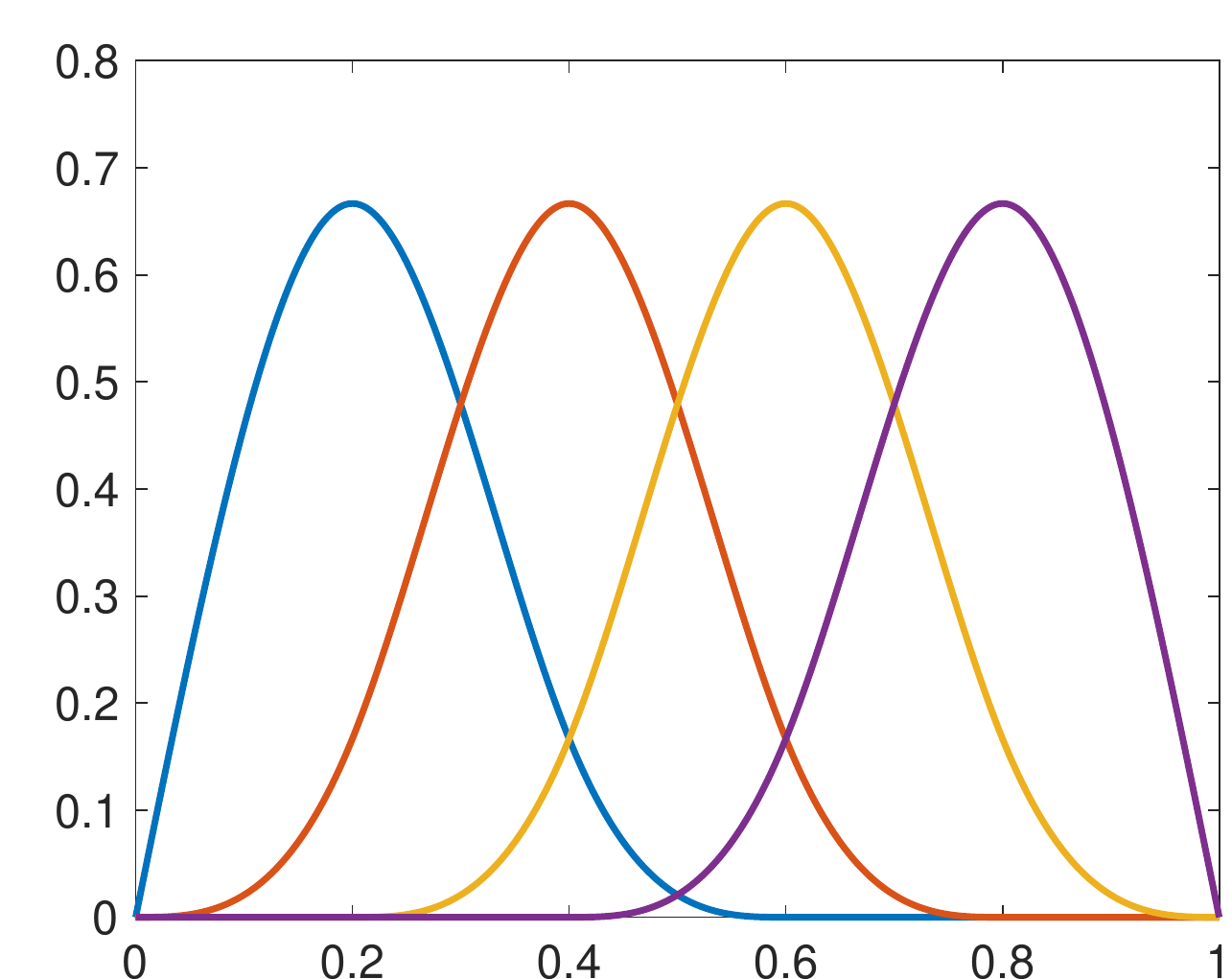}}\hspace*{0.1cm}
\subfigure[second derivatives]{\includegraphics[height=4.1cm]{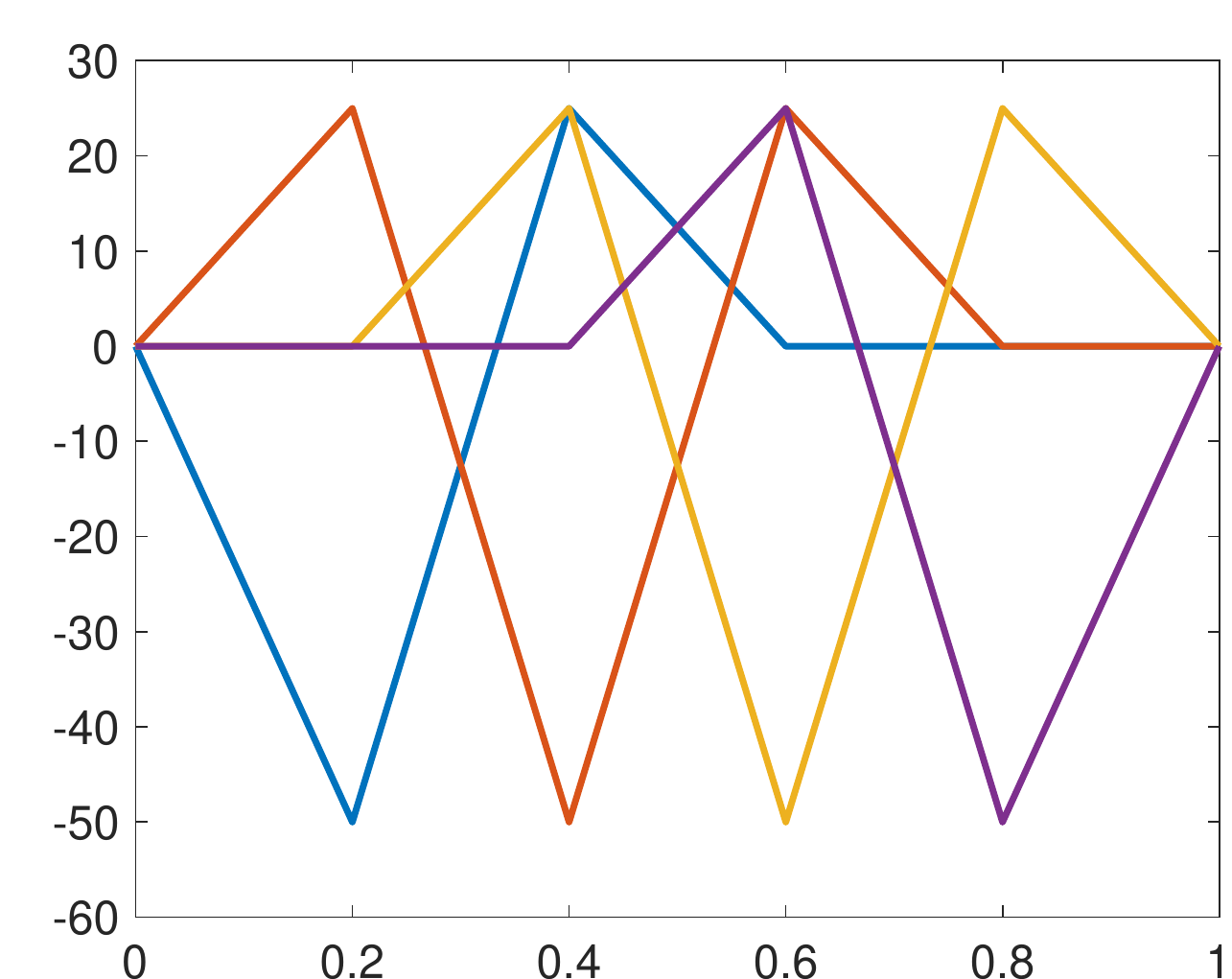}}
\caption{Example~\ref{ex:basis-0-odd}: B-spline-like functions and their second derivatives for the space $\mathbb{S}_{p,\nknots-1,0}^\opt$ with $p=3$ and $\nknots=5$.} \label{fig:basis.0.odd:a}
\bigskip
\centering
\subfigure[B-spline-like functions]{\includegraphics[height=4.1cm]{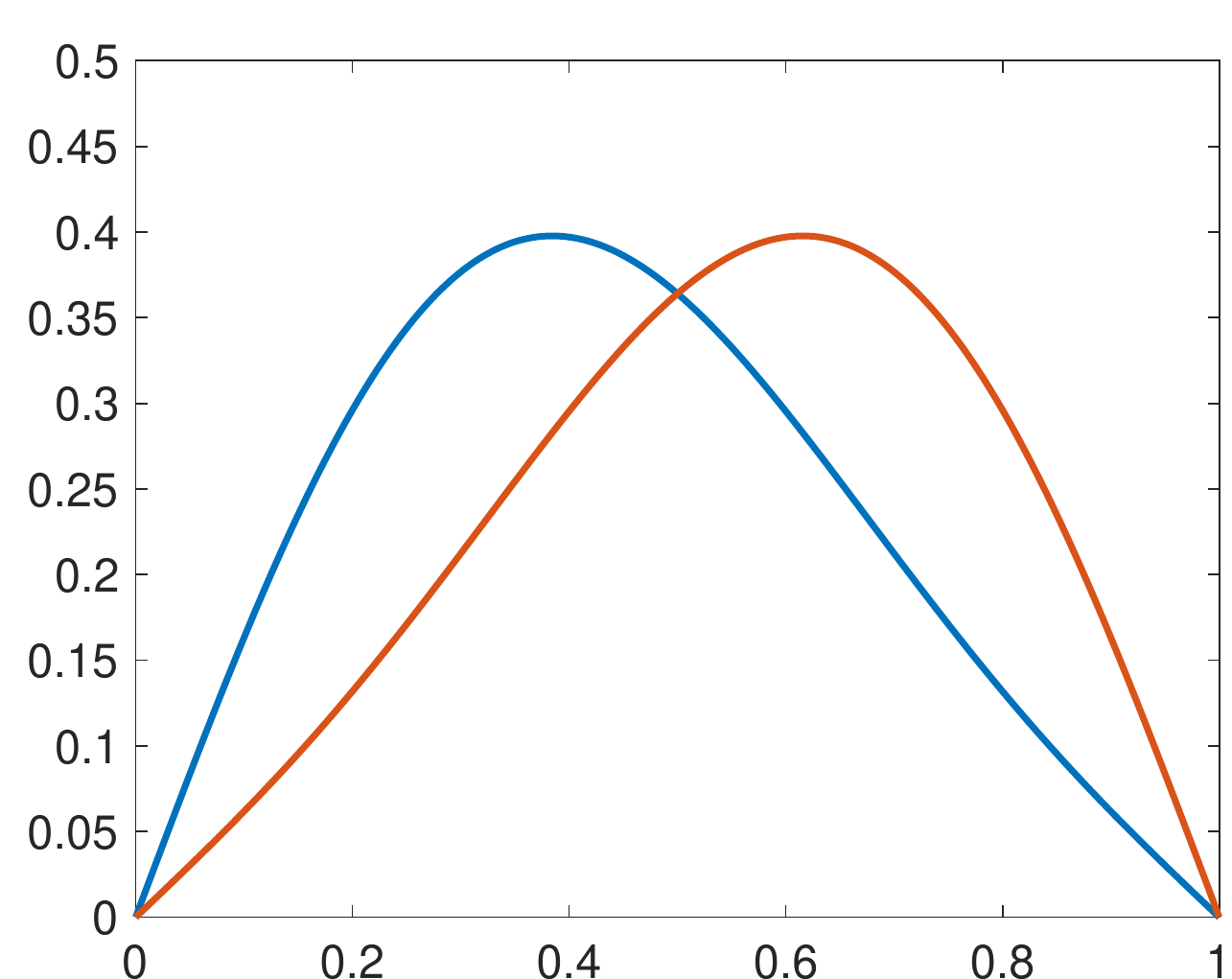}}\hspace*{0.1cm}
\subfigure[second derivatives]{\includegraphics[height=4.1cm]{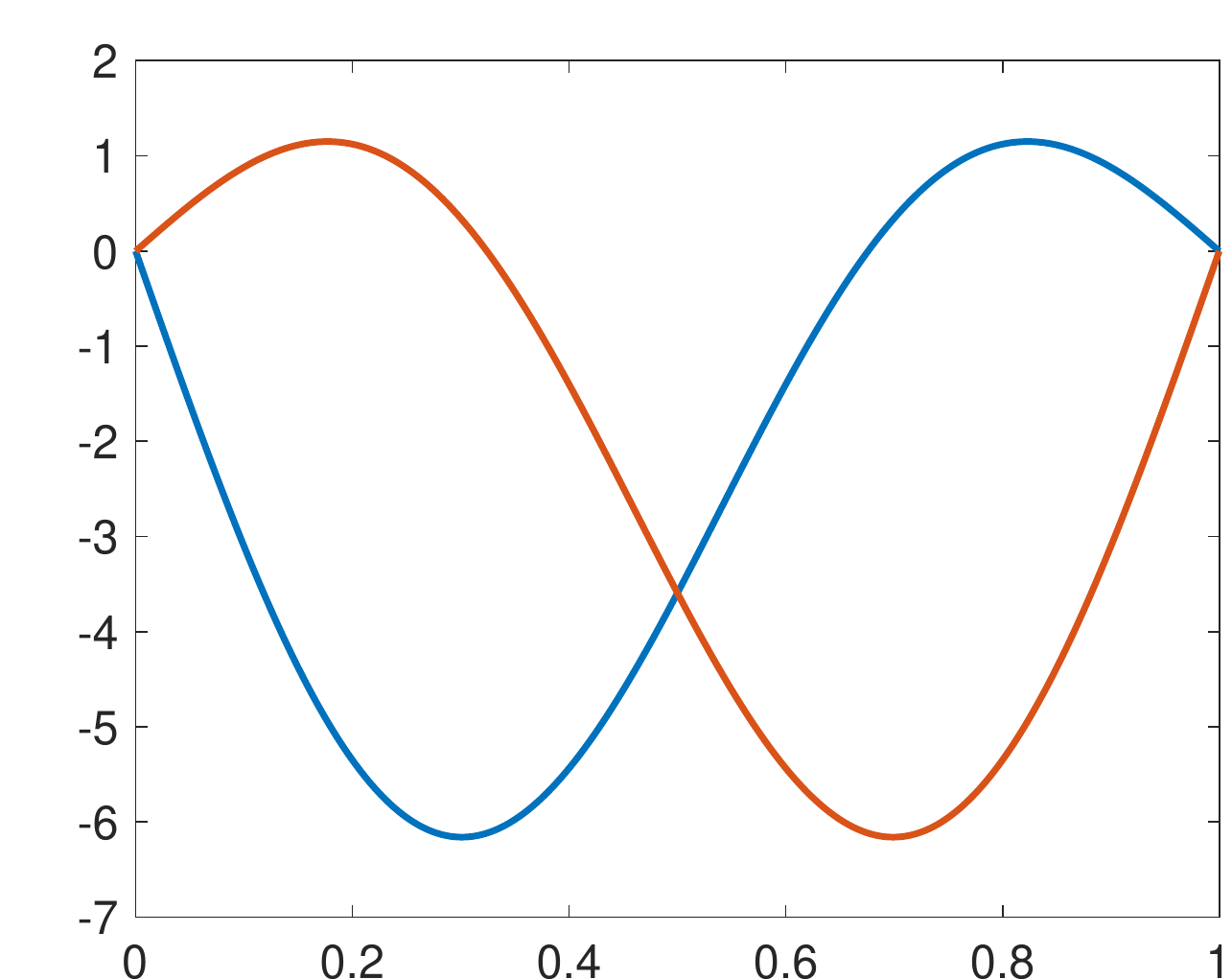}}\hspace*{0.1cm}
\subfigure[fourth derivatives]{\includegraphics[height=4.1cm]{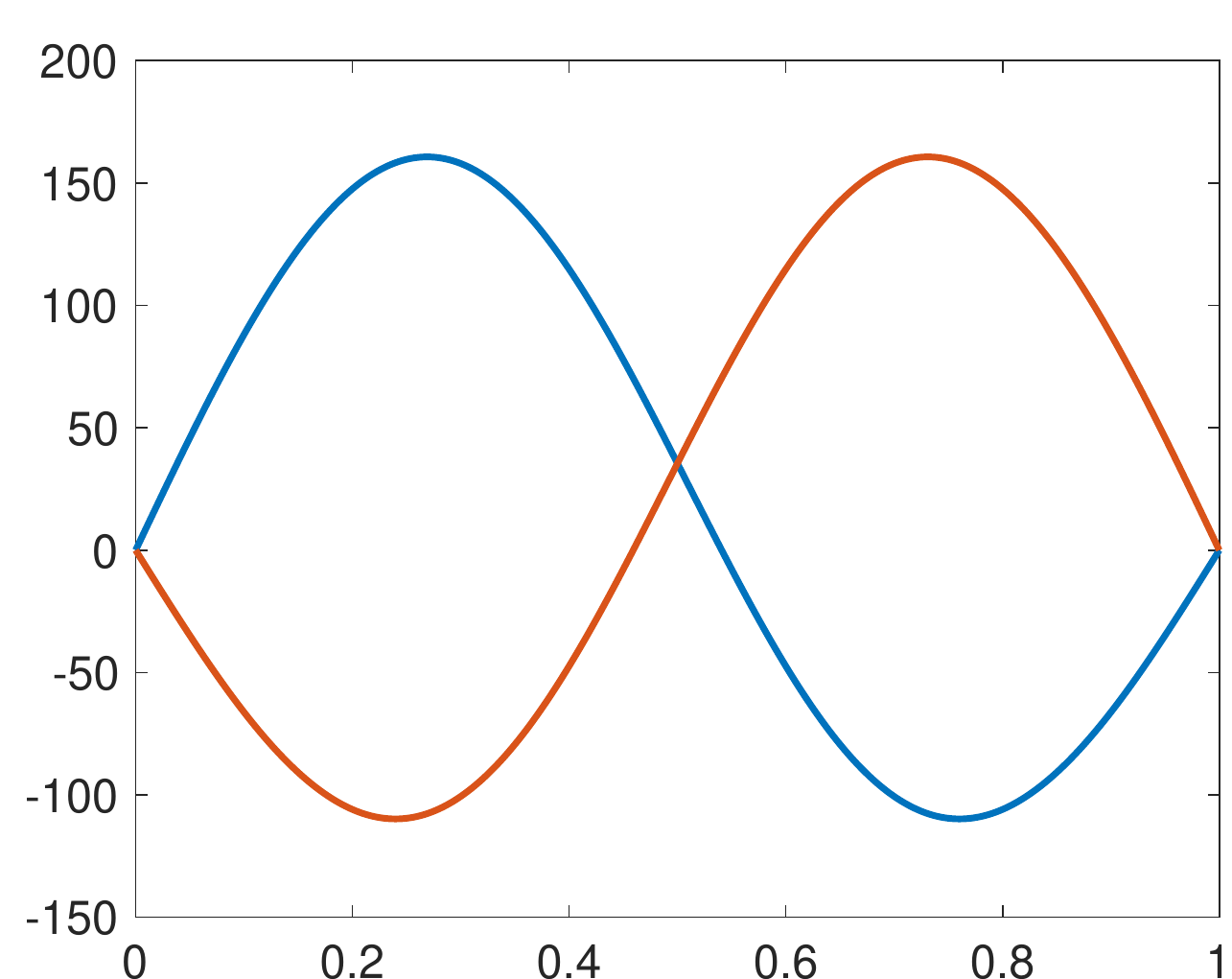}}\\
\subfigure[sixth derivatives]{\includegraphics[height=4.1cm]{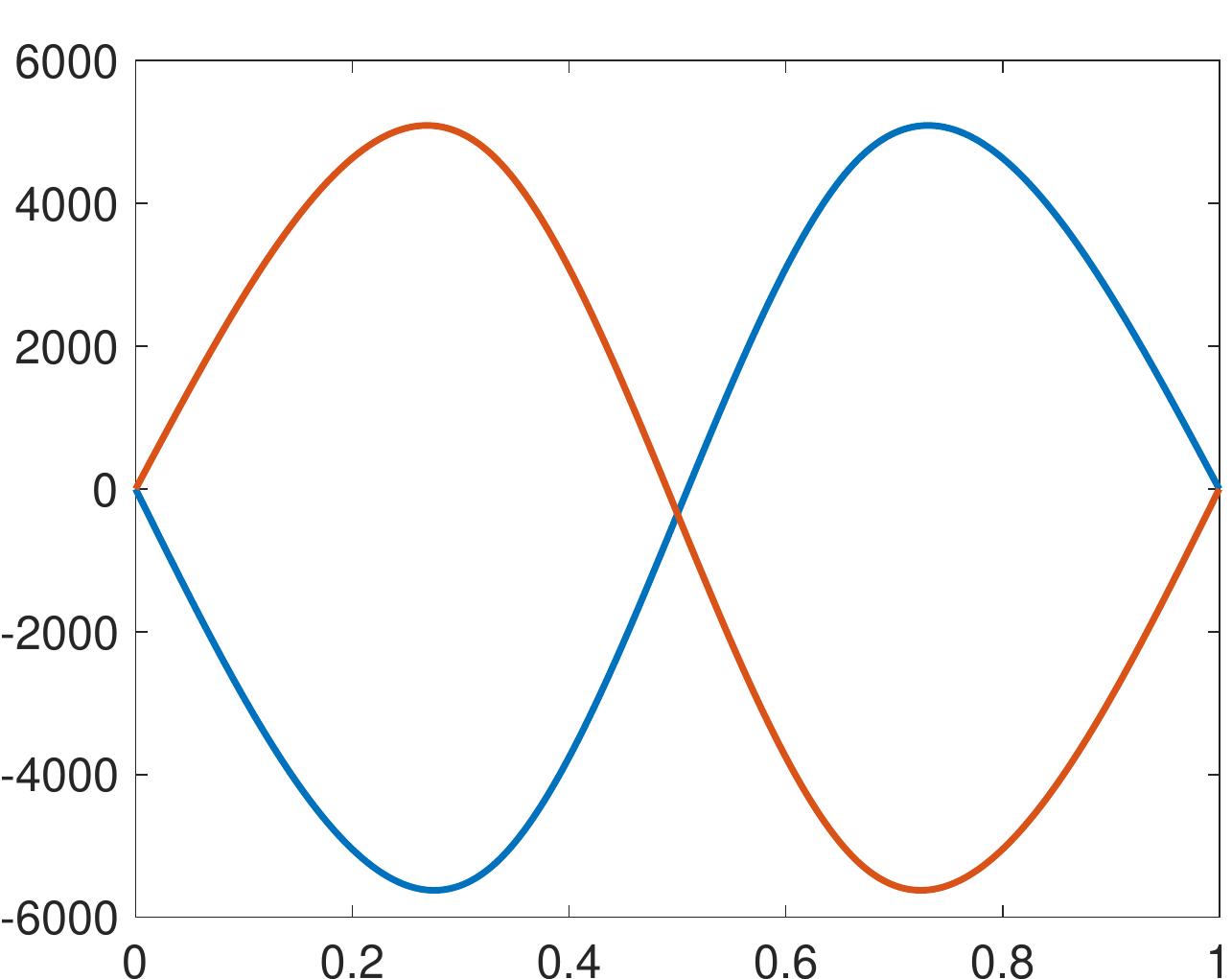}}\hspace*{0.1cm}
\subfigure[eighth derivatives]{\includegraphics[height=4.1cm]{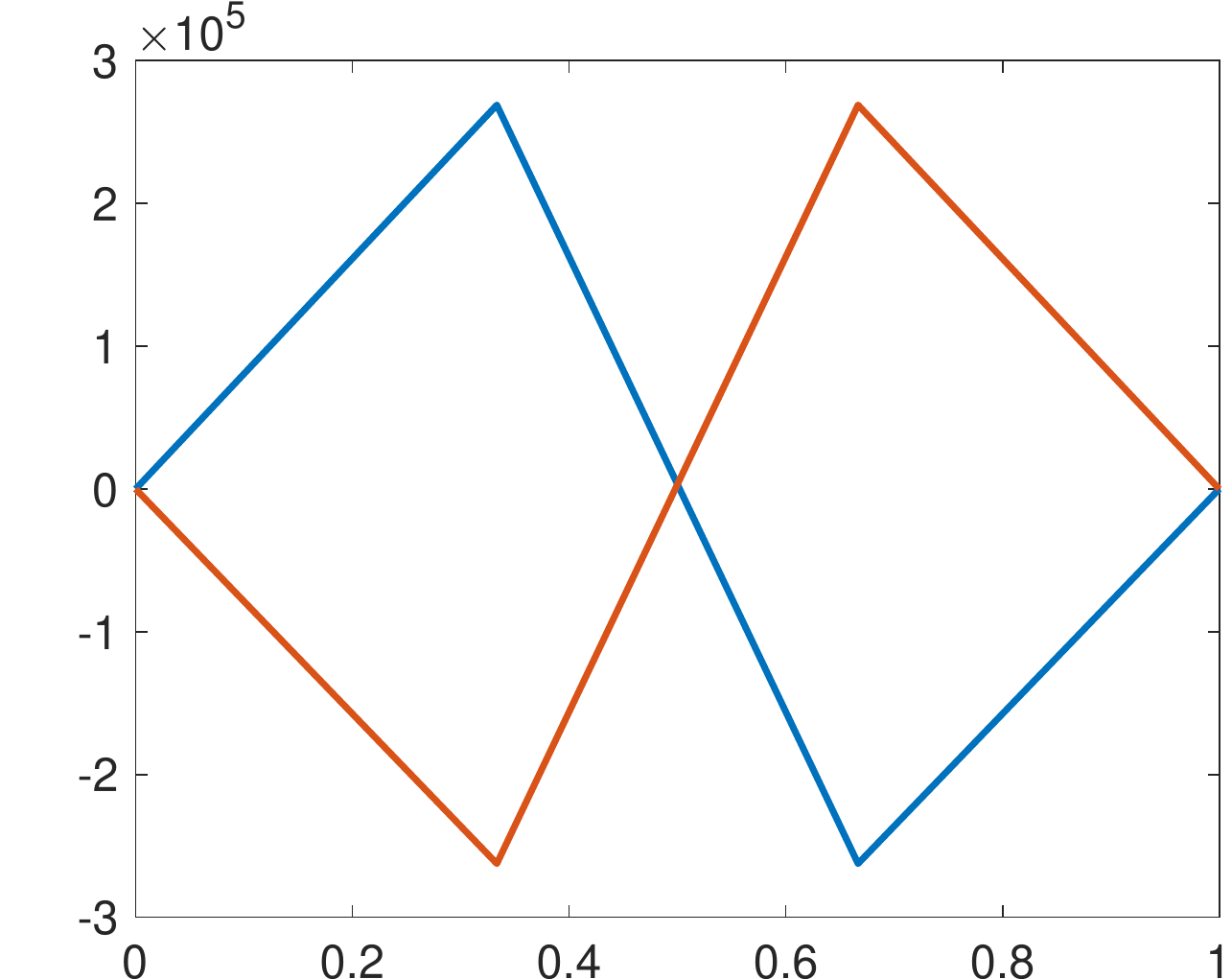}}
\caption{Example~\ref{ex:basis-0-odd}: B-spline-like functions and their even order derivatives for the space $\mathbb{S}_{p,\nknots-1,0}^\opt$ with $p=9$ and $\nknots=3$.} \label{fig:basis.0.odd:b}
\end{figure}
\begin{example}
	\label{ex:basis-0-odd}
	For $p=3$ and $\nknots=5$, the matrix in \eqref{eq:basis-0-odd} has $4$ rows and $8$ columns, and it takes the form
	\begin{equation}
	\label{eq:ex-matrix-1}
	\begin{bmatrix}
		-1&0&1&0&0&0&0&0
		\\
		 0&0&0&1&0&0&0&0
		 \\
		 0&0&0&0&1&0&0&0
		 \\
		 0&0&0&0&0&1&0&-1
	\end{bmatrix}.
	\end{equation}
	For $p=9$ and $\nknots=3$, the matrix in \eqref{eq:basis-0-odd} has $2$ rows and $12$ columns, and it takes the form
	\begin{equation}
	\label{eq:ex-matrix-2}
	\begin{bmatrix}
	0&0&0&-1&0&1&0&0&0&-1&0&1
\\
	1&0&-1&0&0&0&1&0&-1&0&0&0
	\end{bmatrix}.
\end{equation}
	The graph of the corresponding B-spline-like functions and their even order derivatives are depicted in Figures~\ref{fig:basis.0.odd:a} and~\ref{fig:basis.0.odd:b}. One clearly notices that the functions satisfy the boundary conditions of the space $\mathbb{S}_{p,\nknots-1,0}^\opt$.
\end{example}

We now address the case $p$ even. We note that here $\dim(\mathbb{S}_{p,\bftau,0}^p)=\nknots-2$.
We select the knots in \eqref{eq:knots} as
\begin{equation}
\label{eq:knots:0:even}
\xi_i=\frac{i-1/2}{\nknots-1}, \quad  i=-p,\ldots,\nknots+p,
\end{equation}
and observe that with such a choice we have
$$
\bftau=\bftau_{p,\nknots-2,0}^\opt,
$$
so that $\mathbb{S}_{p,\bftau,0}^p=\mathbb{S}_{p,\nknots-2,0}^\opt$.
Then, we consider the set of B-spline-like functions
\begin{equation}
\label{eq:B-spline-0-even}
\{ N^{p}_{i,\bfxi,0}, \  i=1,\ldots, \nknots-2\},
\end{equation}
defined by
\begin{equation}
\label{eq:basis-0-even}
\begin{bmatrix}
N^{p}_{1,\bfxi,0}\\
N^{p}_{2,\bfxi,0} \\
\vdots \\
N^{p}_{\nknots-2,\bfxi,0}
\end{bmatrix}:=
\begin{bmatrix}
\underbrace{\overbrace{\cdots\,\bigg|\,L_{\nknots-2}\,\bigg|\,L_{\nknots-2}\,\bigg|}^{\frac{p}{2}+1} I_{\nknots-2}\overbrace{\bigg|\,R_{\nknots-2}\bigg|\,R_{\nknots-2}\,\bigg|\,\dots}^{\frac{p}{2}+1}}_{\nknots+p}
\end{bmatrix}
\begin{bmatrix}
N^{p}_{-p,\bfxi}\\
\vdots \\
N^{p}_{0,\bfxi}\\
N^{p}_{1,\bfxi}\\
\vdots \\
N^{p}_{\nknots-1,\bfxi}
\end{bmatrix}.
\end{equation}
Further details for the above constructions can be found in \cite{DiVona:2019}.

\begin{figure}[t!]
\centering
\subfigure[B-spline-like functions]{\includegraphics[height=4.1cm]{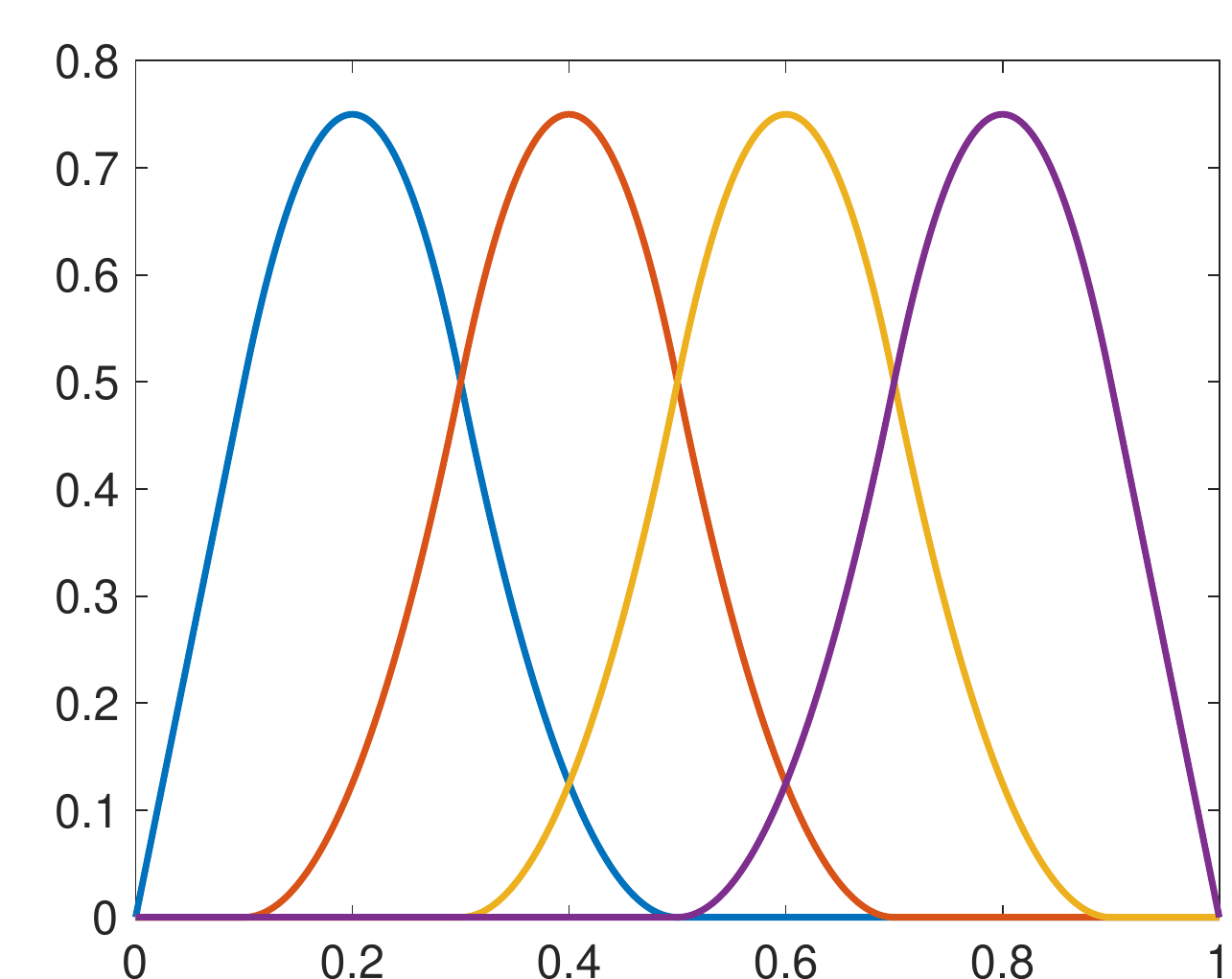}}\hspace*{0.1cm}
\subfigure[second derivatives]{\includegraphics[height=4.1cm]{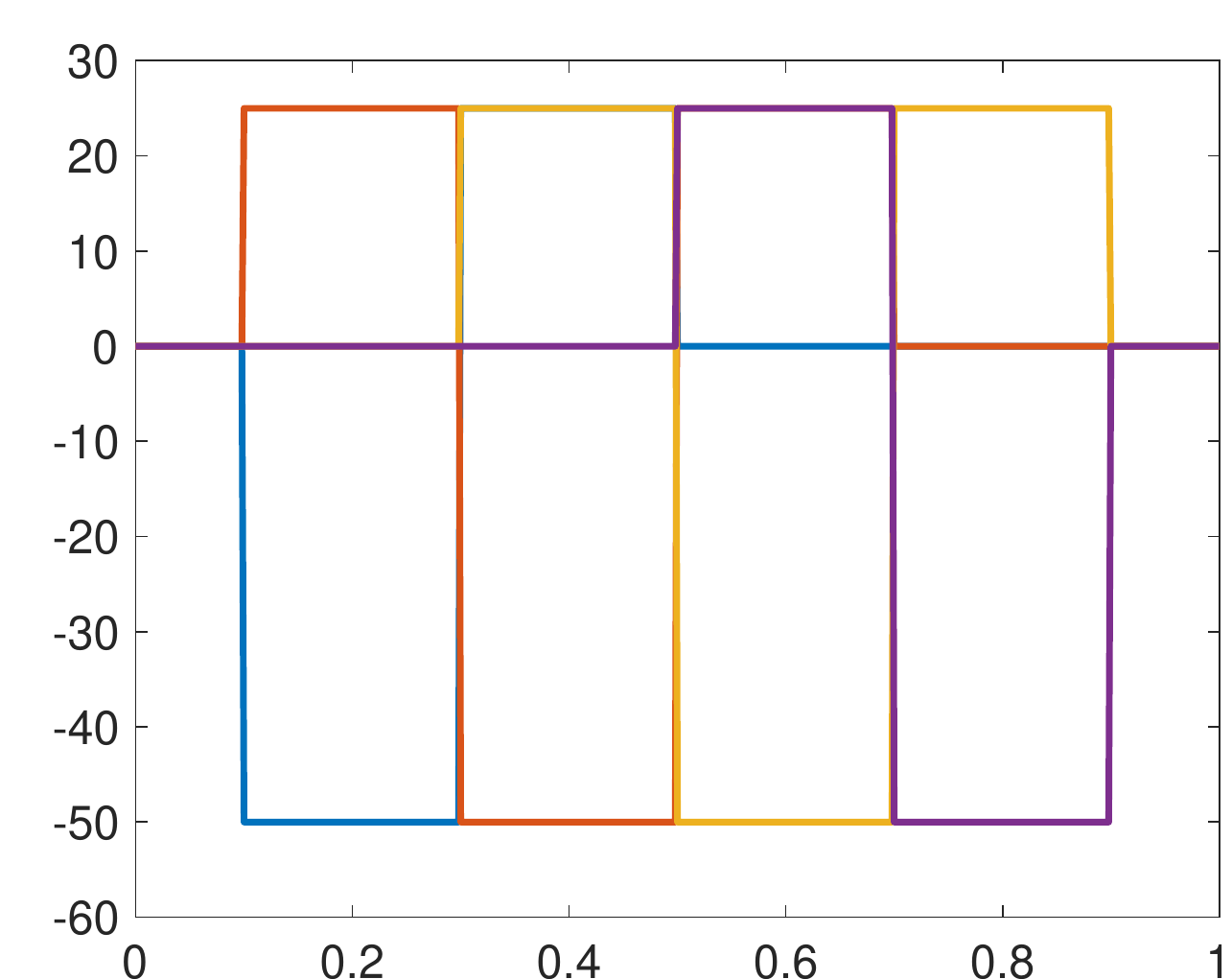}}
\caption{Example~\ref{ex:basis-0-even}: B-spline-like functions and their second derivatives for the space $\mathbb{S}_{p,\nknots-2,0}^\opt$ with $p=2$ and $\nknots=6$.} \label{fig:basis.0.even:a}
\bigskip
\centering
\subfigure[B-spline-like functions]{\includegraphics[height=4.1cm]{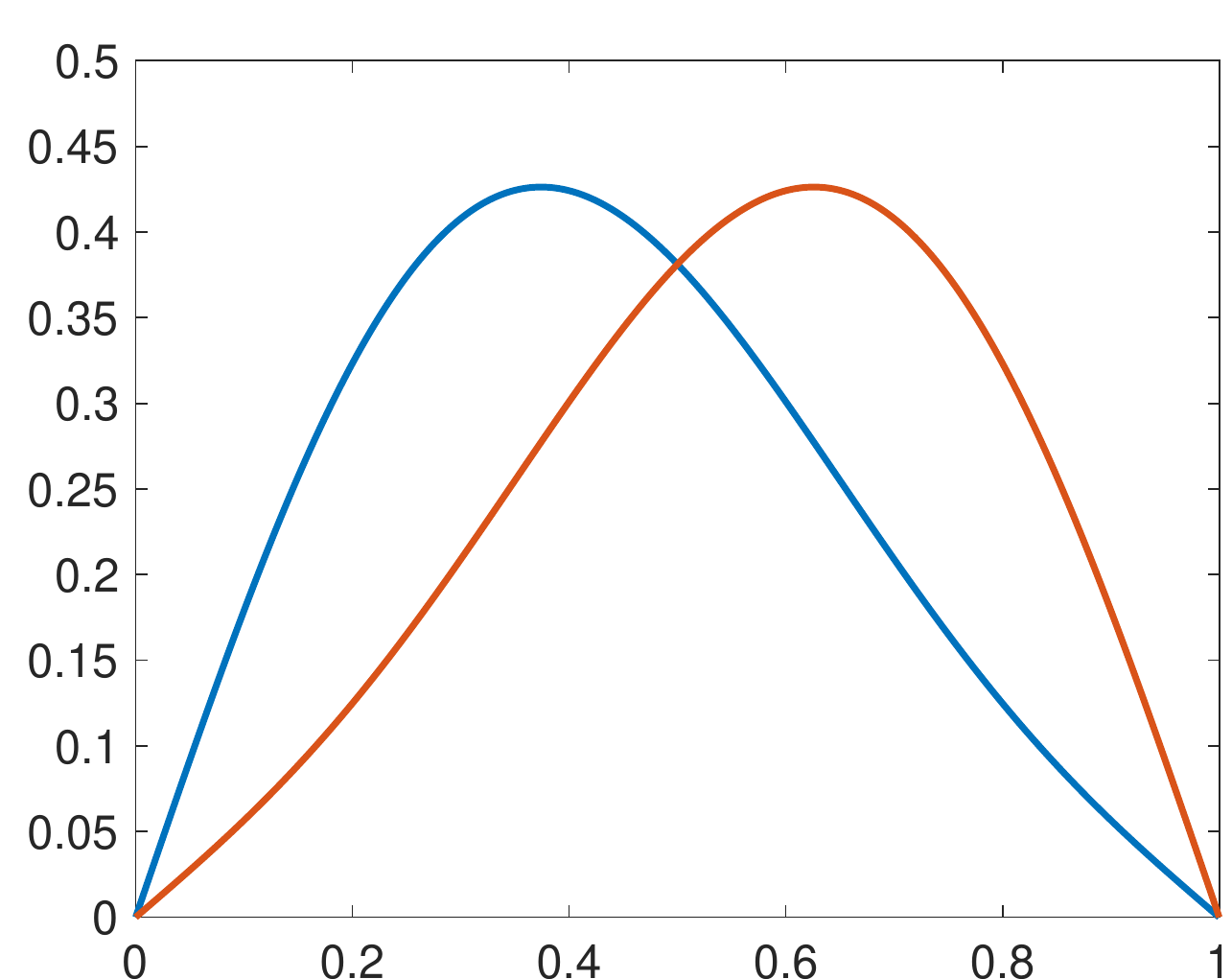}}\hspace*{0.1cm}
\subfigure[second derivatives]{\includegraphics[height=4.1cm]{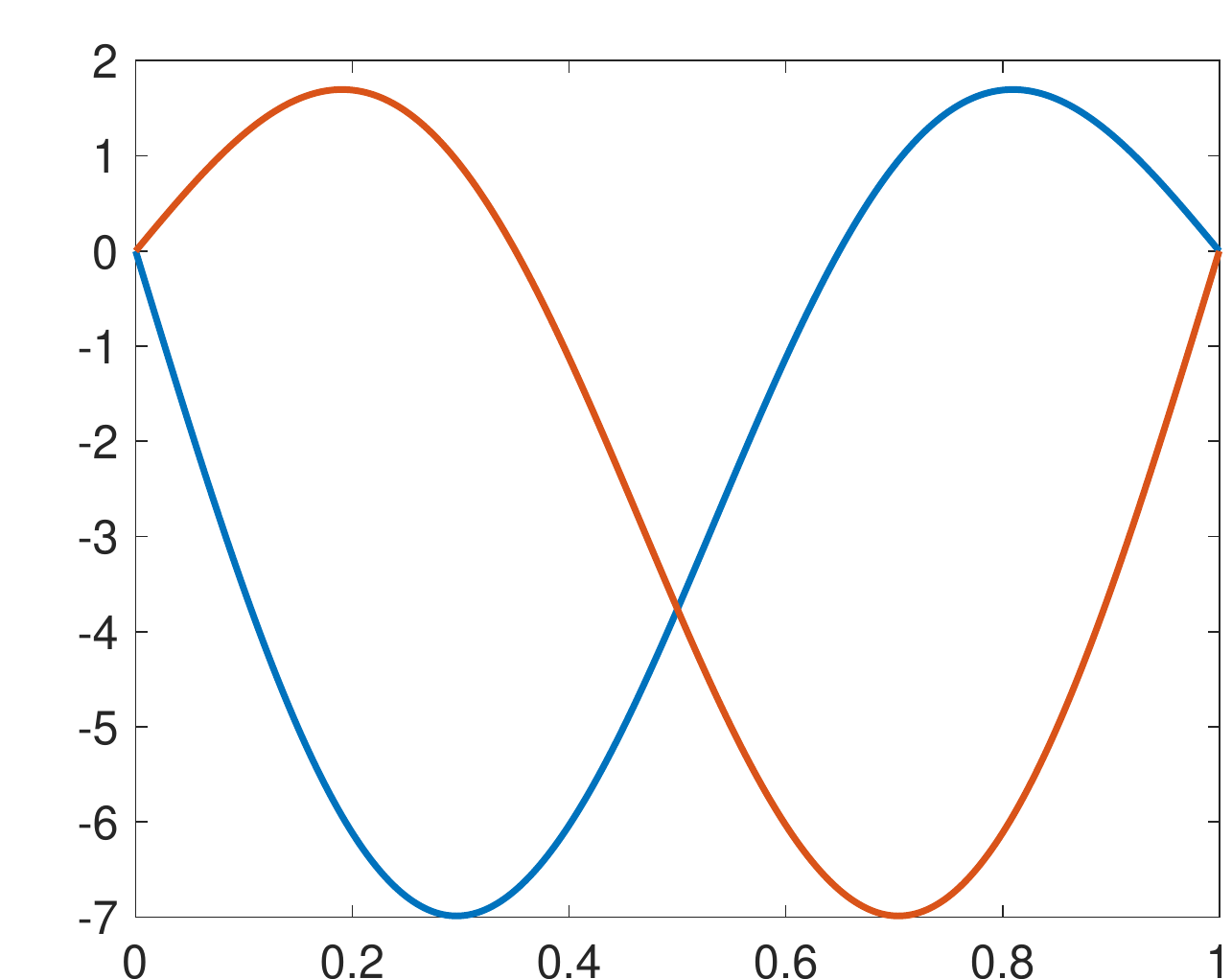}}\hspace*{0.1cm}
\subfigure[fourth derivatives]{\includegraphics[height=4.1cm]{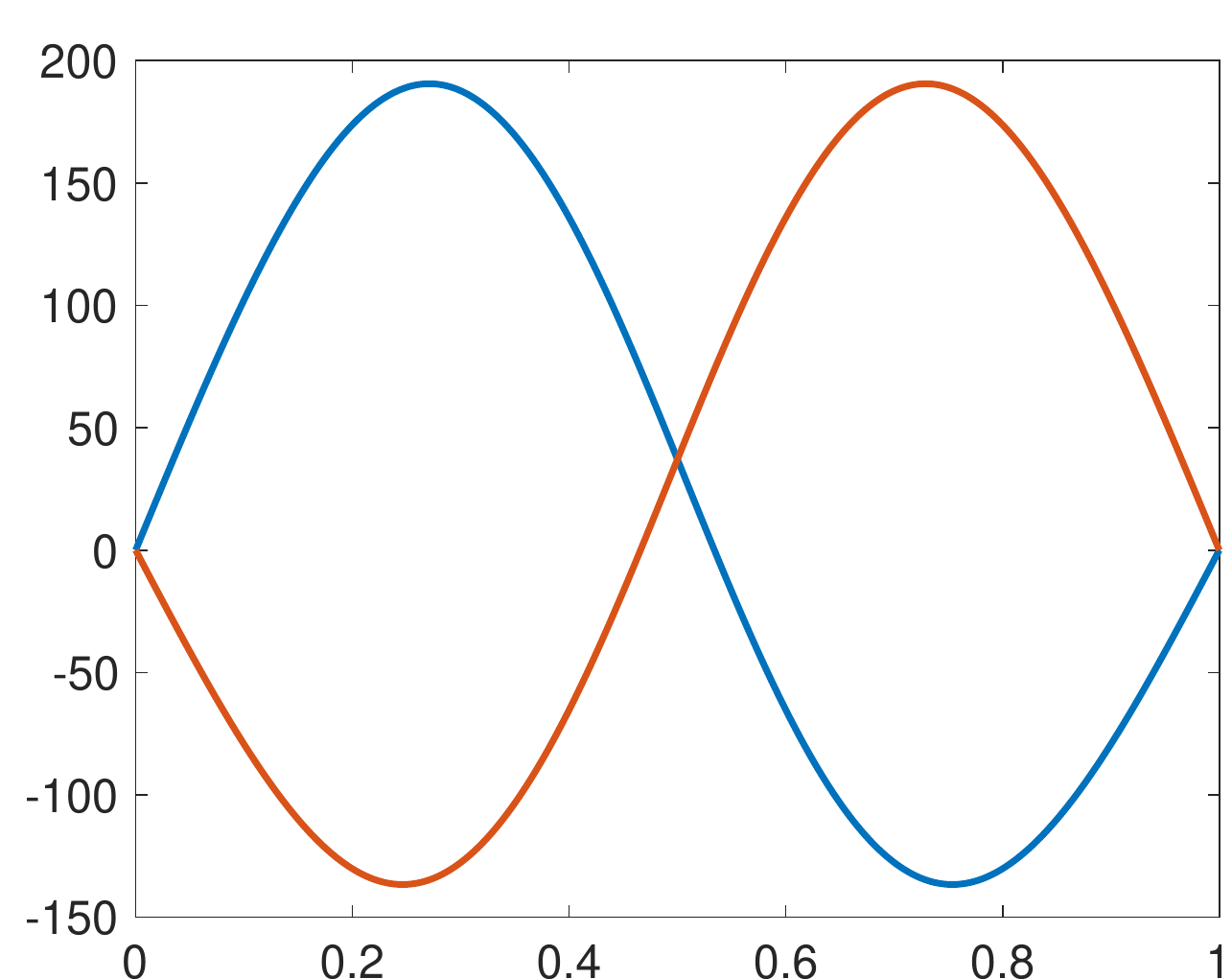}}\\
\subfigure[sixth derivatives]{\includegraphics[height=4.1cm]{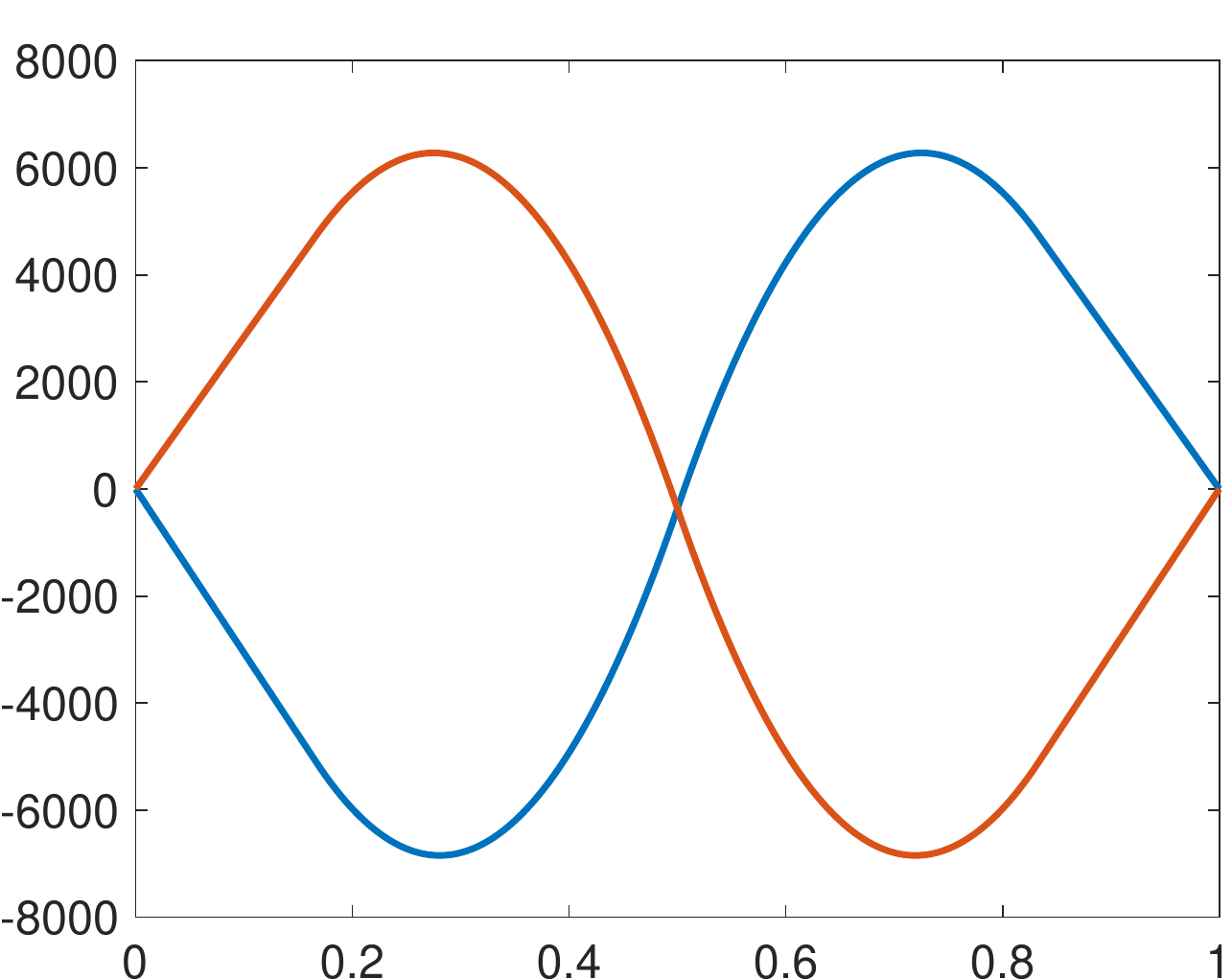}}\hspace*{0.1cm}
\subfigure[eighth derivatives]{\includegraphics[height=4.1cm]{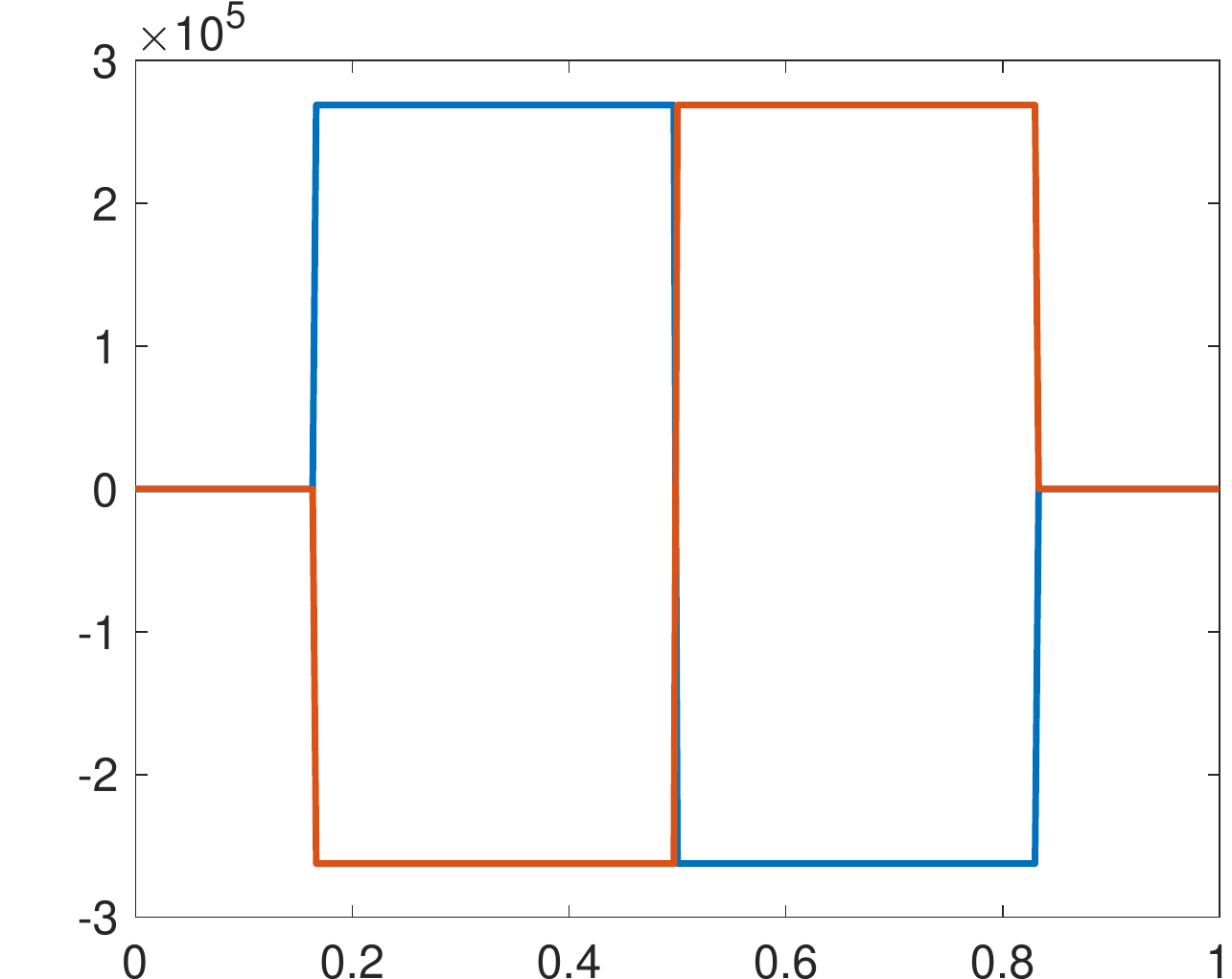}}
\caption{Example~\ref{ex:basis-0-even}: B-spline-like functions and their even order derivatives for the space $\mathbb{S}_{p,\nknots-2,0}^\opt$ with $p=8$ and $\nknots=4$.} \label{fig:basis.0.even:b}
\end{figure}
\begin{example}
	\label{ex:basis-0-even}
	For $p=2$ and $\nknots=6$ the matrix in \eqref{eq:basis-0-even} has $4$ rows and $8$ columns, and it is equal to the matrix \eqref{eq:ex-matrix-1}.
	For $p=8$ and $\nknots=4$ the matrix in \eqref{eq:basis-0-even} has $2$ rows and $12$ columns, and it is equal to the matrix \eqref{eq:ex-matrix-2}.
	The graph of the corresponding B-spline-like functions and their even order derivatives are depicted in Figures~\ref{fig:basis.0.even:a} and~\ref{fig:basis.0.even:b}. One clearly notices that the functions satisfy the boundary conditions of the space $\mathbb{S}_{p,\nknots-2,0}^\opt$.
\end{example}

Finally, we show that the above sets of B-spline-like functions form a basis of our optimal spline spaces.

\begin{proposition}
\label{prop:basis-0}
The functions defined in \eqref{eq:B-spline-0-odd} are a basis of the optimal space $\mathbb{S}_{p,\nknots-1,0}^\opt$ for $p$ odd.
Likewise, the functions defined in \eqref{eq:B-spline-0-even} are a basis of the optimal space $\mathbb{S}_{p,\nknots-2,0}^\opt$ for $p$ even.
\end{proposition}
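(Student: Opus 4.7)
The plan is to verify three things for the sets in \eqref{eq:B-spline-0-odd}--\eqref{eq:B-spline-0-even}: correct cardinality, linear independence, and membership in the target subspace. The first two are immediate. The cardinality matches since $\dim\mathbb{S}_{p,\nknots-1,0}^\opt = \nknots-1$ for $p$ odd and $\dim\mathbb{S}_{p,\nknots-2,0}^\opt = \nknots-2$ for $p$ even. The central identity block in \eqref{eq:basis-0-odd}--\eqref{eq:basis-0-even} certifies that the coefficient matrix has full row rank, so the $N^p_{i,\bfxi,0}$ inherit the linear independence of the ambient B-splines $\{N^p_{j,\bfxi}\}$.

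The core of the proof is to check the boundary conditions $(N^p_{i,\bfxi,0})^{(\alpha)}(0) = (N^p_{i,\bfxi,0})^{(\alpha)}(1) = 0$ for every even $\alpha\in[0,p]$. The key tool is the cardinal B-spline symmetry \eqref{eq:symmetry}, which, together with \eqref{eq:BcardB} and the knot choices \eqref{eq:knots:0:odd}--\eqref{eq:knots:0:even}, yields reflection identities
\begin{equation*}
(N^p_{j,\bfxi})^{(\alpha)}(0) = (-1)^\alpha (N^p_{j_0^*,\bfxi})^{(\alpha)}(0), \qquad (N^p_{j,\bfxi})^{(\alpha)}(1) = (-1)^\alpha (N^p_{j_1^*,\bfxi})^{(\alpha)}(1),
\end{equation*}
where $(j_0^*,j_1^*) = (-j-p-1,\,2\nknots-j-p-1)$ when $p$ is odd and $(j_0^*,j_1^*) = (-j-p,\,2\nknots-2-j-p)$ when $p$ is even. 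Because the sign is $+1$ for even $\alpha$, any spline $s=\sum_j c_j N^p_{j,\bfxi}$ whose coefficient sequence is antisymmetric under both involutions $j\mapsto j_0^*$ and $j\mapsto j_1^*$ automatically satisfies the required vanishing conditions.

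What remains is to recognize the block patterns of \eqref{eq:basis-0-odd}--\eqref{eq:basis-0-even} as producing such coefficient vectors. A clean conceptual route is to extend the coefficients of each $N^p_{i,\bfxi,0}$ from $\{-p,\dots,\nknots-1\}$ to all $j\in\ZZ$ by imposing $c_{j_0^*}=-c_j$ and $c_{j+T}=c_j$, with $T=2\nknots$ for $p$ odd and $T=2(\nknots-1)$ for $p$ even. The resulting function $\tilde s_i$, obtained by summing $c_j$ times the corresponding cardinal B-spline of \eqref{eq:BcardB} over all $j\in\ZZ$, is a $C^{p-1}$ spline on $\RR$ that is $2$-periodic and antisymmetric about $0$; hence its even derivatives vanish at every integer. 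Since cardinal B-splines indexed outside $\{-p,\dots,\nknots-1\}$ have support disjoint from $(0,1)$, one has $\tilde s_i|_{[0,1]} = N^p_{i,\bfxi,0}$, yielding the desired boundary conditions. The main obstacle I anticipate is the finite but intricate combinatorial check---by direct inspection of $L_m$ and $R_m$ in \eqref{eq:LR-odd=even}---that the block pattern $\big|L_m\big|L_m\big|\dots\big|I_m\big|\dots\big|R_m\big|R_m\big|$, truncated to $\tfrac{p+1}{2}$ (resp.\ $\tfrac{p+2}{2}$) columns on each side, realizes exactly the orbit of $j_i:=i-\tfrac{p+1}{2}$ (with signs $\pm 1$) under the group generated by $j\mapsto j_0^*$ and $j\mapsto j+T$, restricted to $\{-p,\dots,\nknots-1\}$; the parity of $p$ enters through the values of $T$ and of the number of blocks on each side.
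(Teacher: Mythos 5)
Your proposal is correct and follows essentially the same route as the paper's proof: matching cardinality against the known dimension, linear independence from the full row rank of the extraction matrices together with the linear independence of the translated cardinal B-splines, and the vanishing of even-order boundary derivatives via the symmetry \eqref{eq:symmetry}. Your antisymmetric, periodic extension of the coefficient sequences is a clean way to organize the boundary-condition verification that the paper dispatches as ``a direct check,'' and the reflection indices and periods you give are the correct ones; the remaining combinatorial inspection of the blocks $L_m$, $R_m$ that you flag is exactly that finite check, at the same level of detail the paper itself leaves implicit.
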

\begin{proof}
Let us start by recalling that $\mathbb{S}_{p,\bftau,0}^p=\mathbb{S}_{p,\nknots-1,0}^\opt$ for $p$ odd and $\mathbb{S}_{p,\bftau,0}^p=\mathbb{S}_{p,\nknots-2,0}^\opt$ for $p$ even,
where
$\bftau$ is such that 
$$
\tau_i=\xi_i,\quad  i=1,\ldots,\nknots-1,
$$
and $\bfxi$ is defined in \eqref{eq:knots:0:odd} and \eqref{eq:knots:0:even}, respectively.
The functions in \eqref{eq:B-spline-0-odd} and \eqref{eq:B-spline-0-even} clearly belong to $\mathbb{S}_{p,\bftau}$ for the considered sequences of break points. From the symmetry property of cardinal B-splines, see \eqref{eq:symmetry}, a direct check shows that the functions $ \{ N^{p}_{i,\bfxi,0}\}$  satisfy the additional boundary conditions identifying the subspace $\mathbb{S}_{p,\bftau,0}^p$. The considered functions are linearly independent because the translates of a cardinal B-spline are linearly independent and the matrices in \eqref{eq:basis-0-odd} and \eqref{eq:basis-0-even} have maximum rank. Therefore, the considered functions are a basis of $\mathbb{S}_{p,\bftau,0}^p$ because their number equals the dimension of the space. 
\end{proof}

\subsection{B-spline-like bases for other reduced spline spaces} 
We now construct a basis for the space $\mathbb{S}_{p,\bftau,0}^{p-1}$ defined by uniformly spaced break points $\bftau$, i.e.,
\begin{equation*}
\tau_i=\frac{i}{\nknots}, \quad i=0,\ldots, \nknots.
\end{equation*}
It has been numerically observed in \cite{Hiemstra:2021} that these spaces are outlier-free; see also Remark~\ref{rmk:other-reduced-space}.
The only case to be treated is $p$ even since the odd degree case is the same as before in Section~\ref{sec:basis-optimal}. In the even degree case we have $\dim(\mathbb{S}_{p,\bftau,0}^{p-1})=\nknots$, and denote the corresponding space by $\overline{\mathbb{S}}_{p,\nknots,0}$.
We select the knots in \eqref{eq:knots} as
\begin{equation*}
 \xi_i=\frac{i}{\nknots}, \quad  i=-p,\ldots, \nknots+p.
\end{equation*}
Then, we consider the set of B-spline-like functions
\begin{equation}
 \label{eq:B-spline-uniform-even}
 \{ \overline{N}^{p}_{i,\bfxi,0}, \ i=1,\ldots, \nknots\},
\end{equation}
defined by
\begin{equation}
 \label{eq:basis-uniform-even}
 \begin{bmatrix}
\overline{N}^{p}_{1,\bfxi,0}\\
\overline{N}^{p}_{2,\bfxi,0} \\
 \vdots \\
\overline{N}^{p}_{\nknots,\bfxi,0}
 \end{bmatrix}:=
 \begin{bmatrix}
 \underbrace{\overbrace{\cdots\,\bigg|\,\overline{L}_{\nknots}\,\bigg|\,\overline{L}_{\nknots}\,\bigg|}^{\frac{p}{2}} I_{\nknots}\overbrace{\bigg|\,\overline{R}_{\nknots}\,\bigg|\,\overline{R}_{\nknots}\,\bigg|\,\dots}^{\frac{p}{2}}}_{\nknots+p}
 \end{bmatrix}
 \begin{bmatrix}
 N^{p}_{-p,\bfxi}\\
 \vdots \\
 N^{p}_{0,\bfxi}\\
 N^{p}_{1,\bfxi}\\
 \vdots \\
 N^{p}_{\nknots-1,\bfxi}
 \end{bmatrix},
\end{equation}
where 
\begin{equation*}
\overline{L}_m:=
 \begin{bmatrix}
 \,I_m\,\big|\,-J_m\,
 \end{bmatrix},
 \quad
 \overline{R}_m:=
 \begin{bmatrix}
 -J_m\,\big|\,I_m\,
 \end{bmatrix}.
\end{equation*}
 
\begin{figure}[t!]
\centering
\subfigure[B-spline-like functions]{\includegraphics[height=4.1cm]{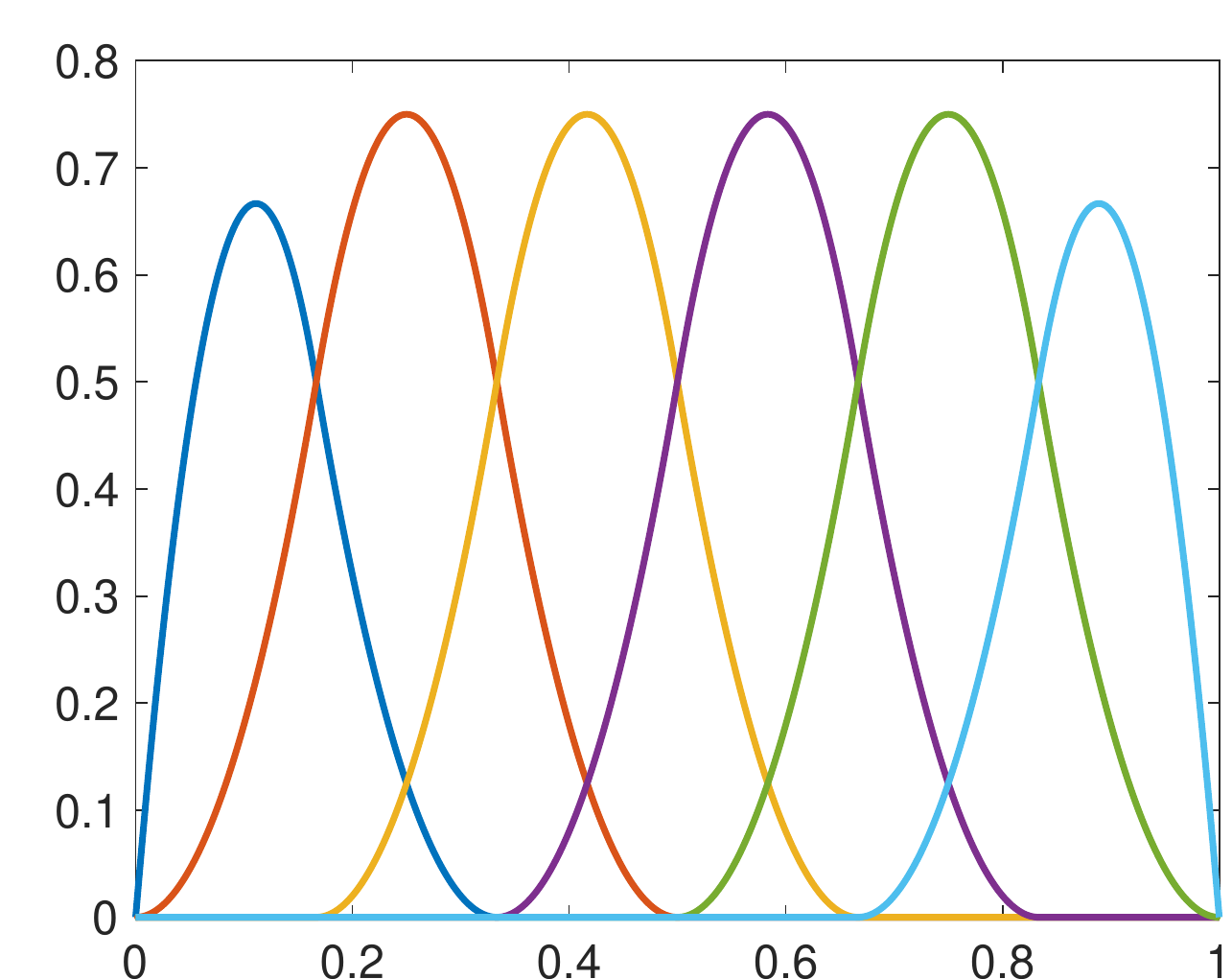}}\hspace*{0.1cm}
\subfigure[second derivatives]{\includegraphics[height=4.1cm]{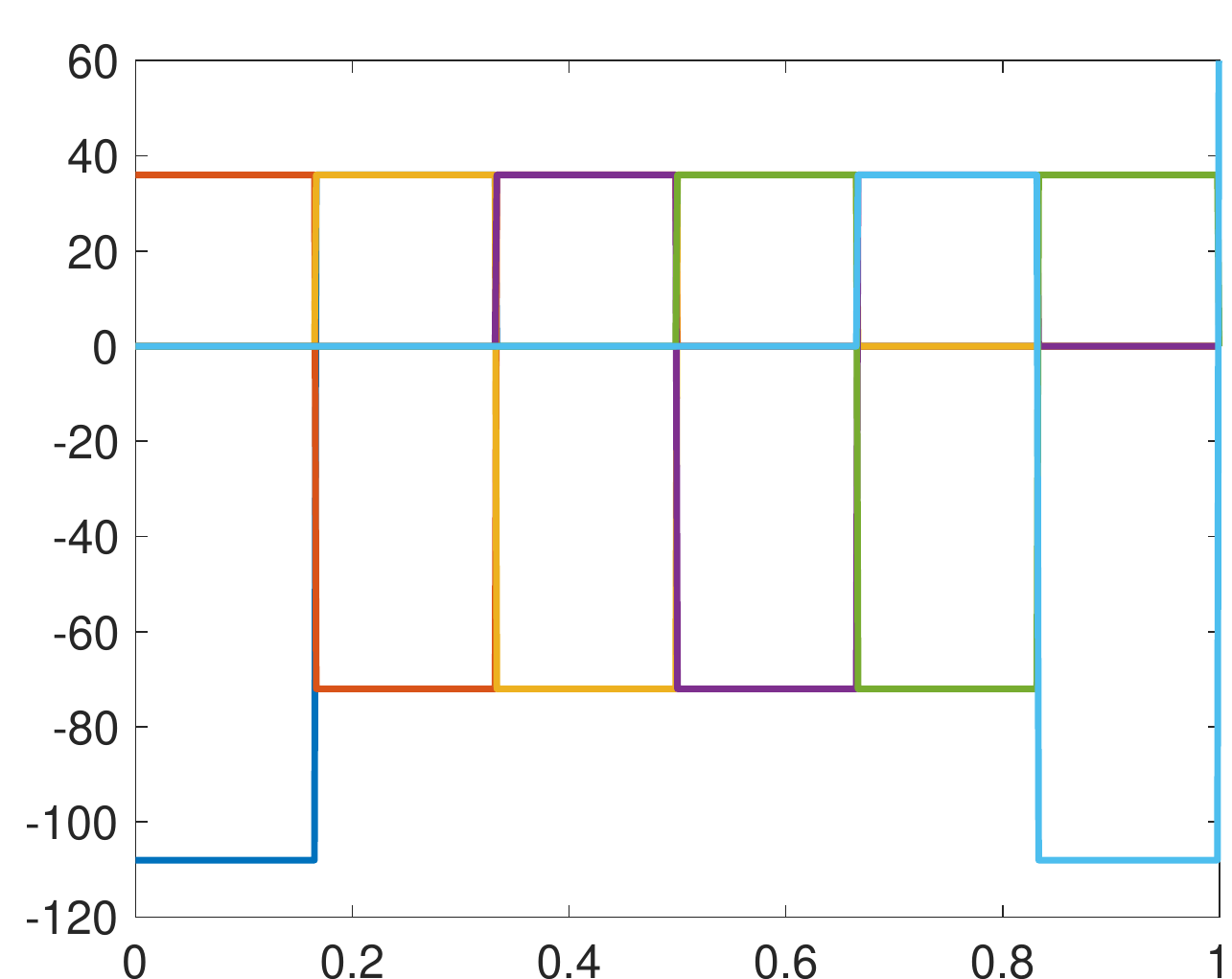}}
\caption{Example~\ref{ex:basis-uniform-even}: B-spline-like functions and their second derivatives for the space $\overline{\mathbb{S}}_{p,\nknots,0}$ with $p=2$ and $\nknots=6$.} \label{fig:basis.uniform.even:a}
\bigskip
\centering
\subfigure[B-spline-like functions]{\includegraphics[height=4.1cm]{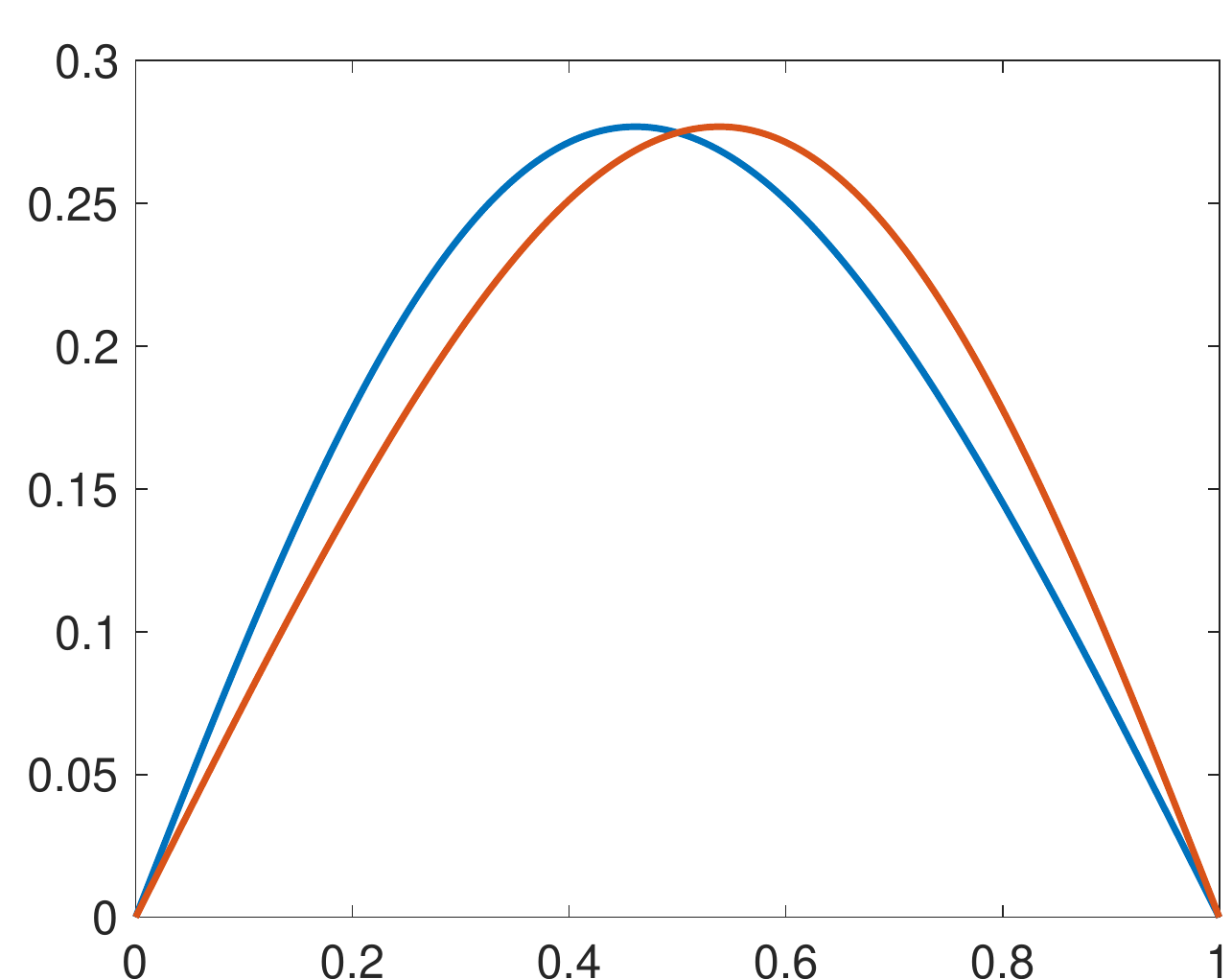}}\hspace*{0.1cm}
\subfigure[second derivatives]{\includegraphics[height=4.1cm]{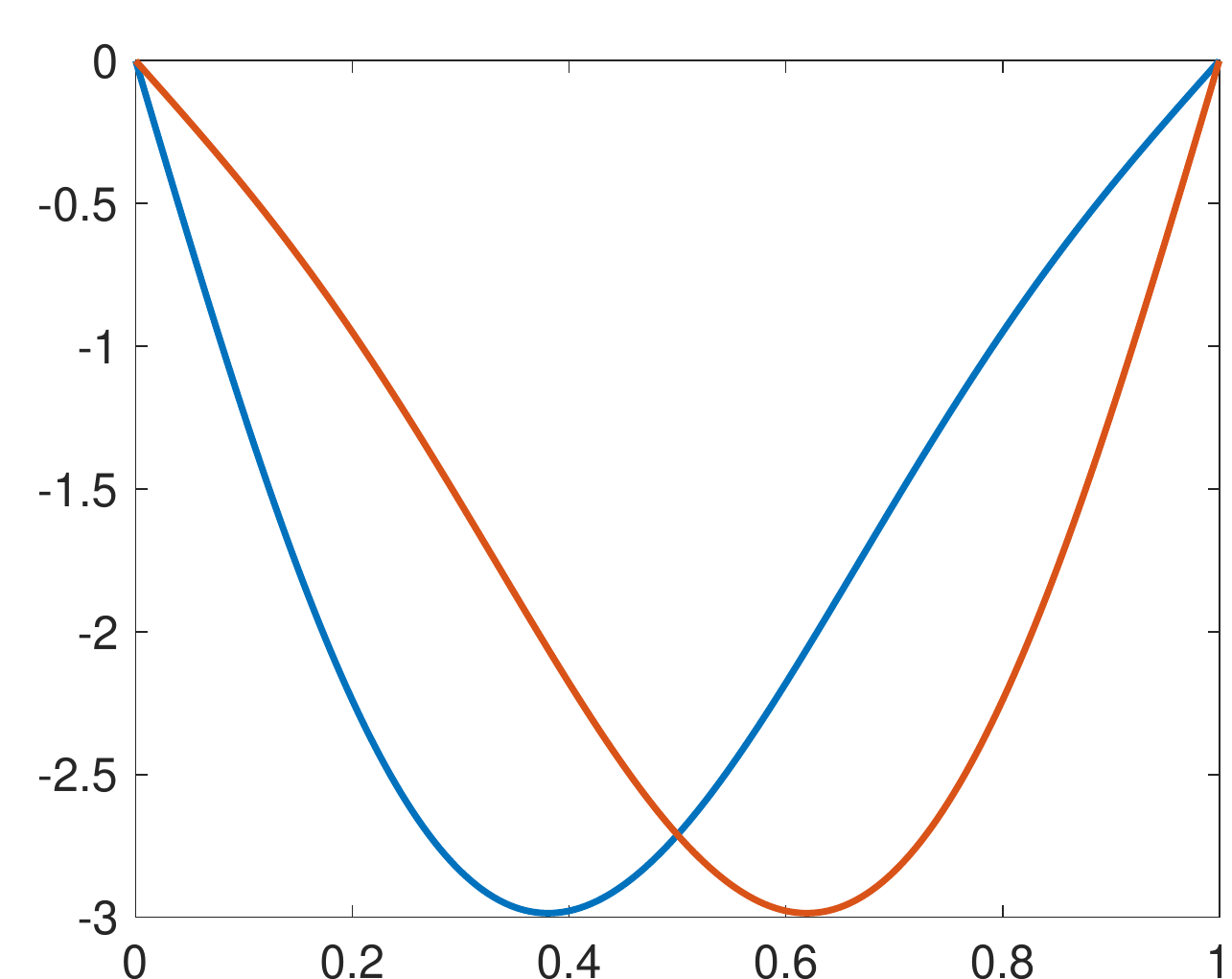}}\hspace*{0.1cm}
\subfigure[fourth derivatives]{\includegraphics[height=4.1cm]{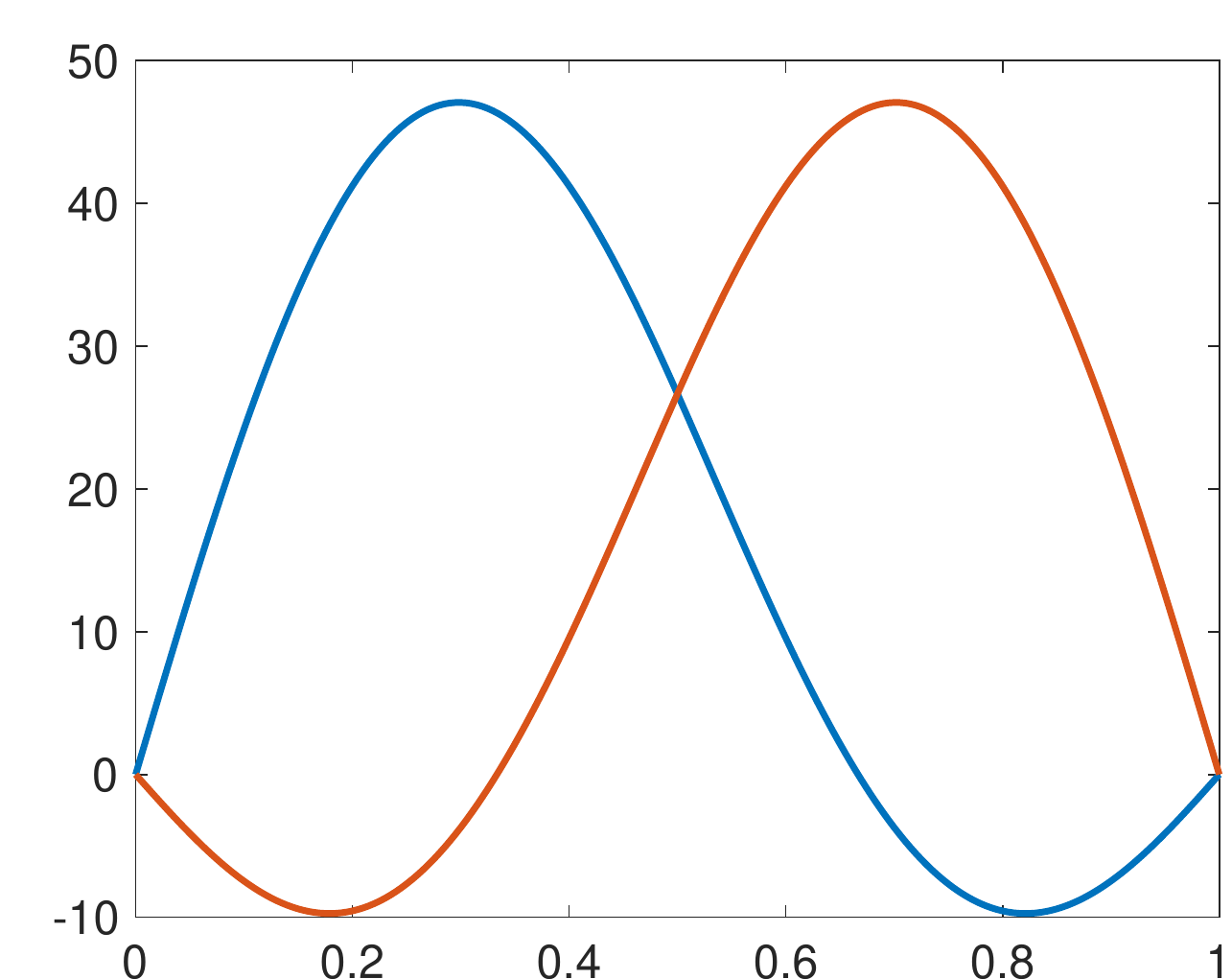}}\\
\subfigure[sixth derivatives]{\includegraphics[height=4.1cm]{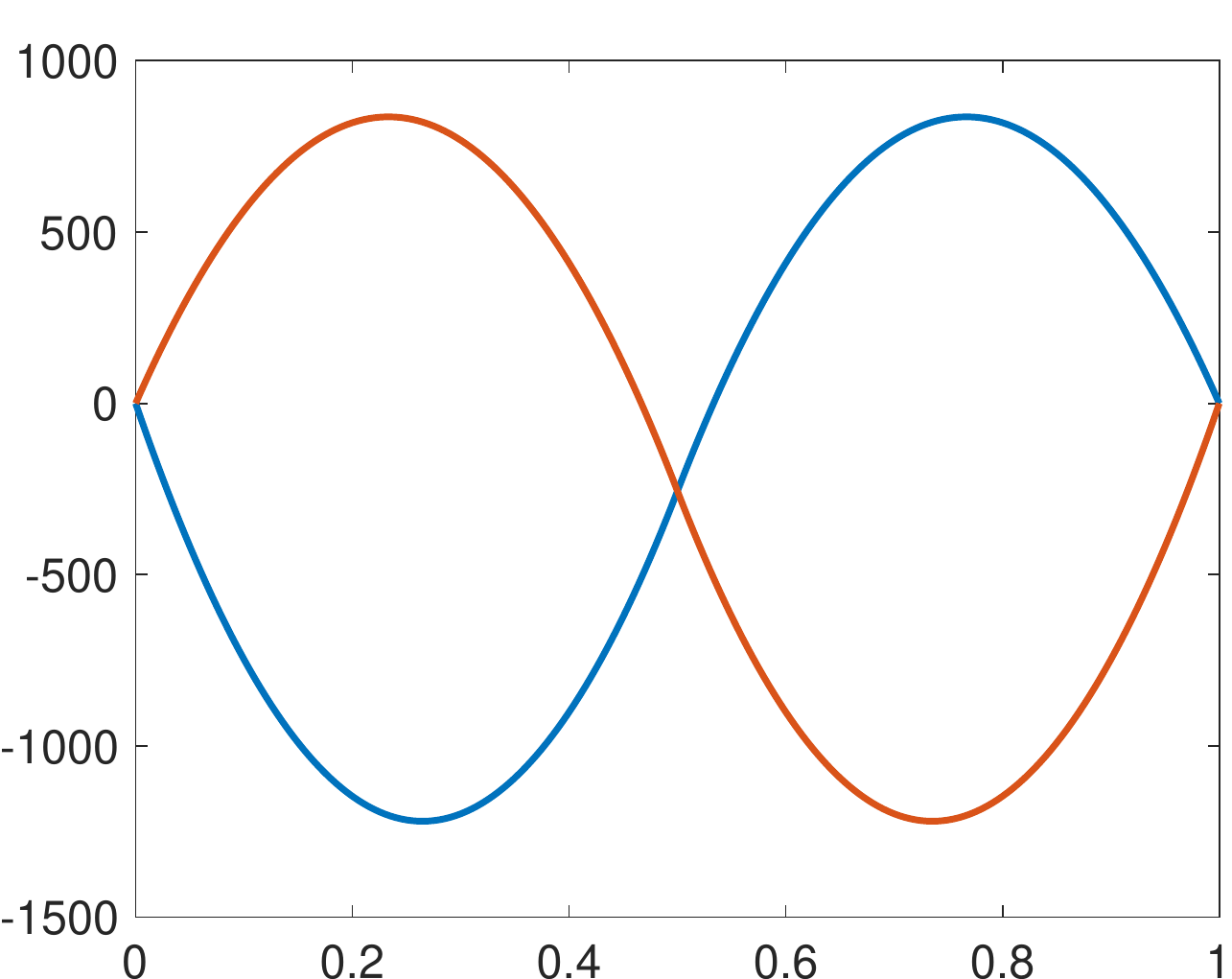}}\hspace*{0.1cm}
\subfigure[eighth derivatives]{\includegraphics[height=4.1cm]{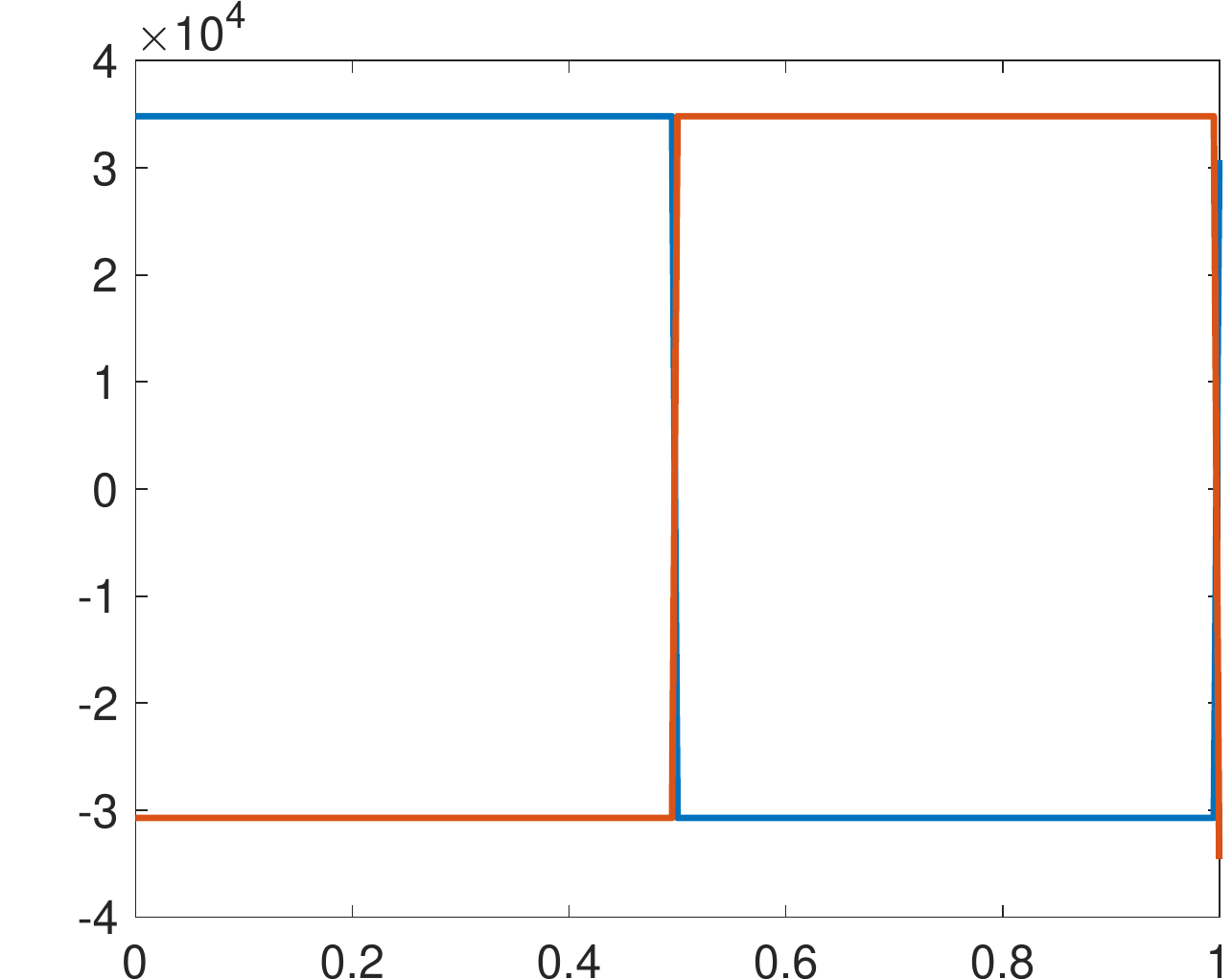}}
\caption{Example~\ref{ex:basis-uniform-even}: B-spline-like functions and their even order derivatives for the space $\overline{\mathbb{S}}_{p,\nknots,0}$ with $p=8$ and $\nknots=2$.} \label{fig:basis.uniform.even:b}
\end{figure}
\begin{example}
 	\label{ex:basis-uniform-even}
 	For $p=2$ and $\nknots=6$ the matrix in \eqref{eq:basis-uniform-even} has $6$ rows and $8$ columns, and it takes the form
 	\begin{equation*}
 	\begin{bmatrix}
 	-1&1&0&0&0&0&0&0
 	\\
 	0&0&1&0&0&0&0&0
 	\\
 	0&0&0&1&0&0&0&0
 	\\
 	0&0&0&0&1&0&0&0
 	\\
 	0&0&0&0&0&1&0&0
 	\\
 	0&0&0&0&0&0&1&-1
 	\end{bmatrix}.
 	\end{equation*}
 	For $p=8$ and $\nknots=2$ the matrix in \eqref{eq:basis-uniform-even} has $2$ rows and $10$ columns, and it takes the form
 	\begin{equation*}
 	\begin{bmatrix}
 	1&0&0&-1&1&0&0&-1&1&0
 	\\
 	0&1&-1&0&0&1&-1&0&0&1
 	\end{bmatrix}.
 	\end{equation*}
 	The graph of the corresponding B-spline-like functions and their even order derivatives are depicted in Figures~\ref{fig:basis.uniform.even:a} and~\ref{fig:basis.uniform.even:b}. One clearly notices that the functions satisfy the boundary conditions of the space $\overline{\mathbb{S}}_{p,\nknots,0}$.
\end{example}

With the same line of arguments as the proof of Proposition~\ref{prop:basis-0} we arrive at the following result.
\begin{proposition}
\label{prp:basis-uniform}
The functions defined in \eqref{eq:B-spline-uniform-even} are a basis of the space $\overline{\mathbb{S}}_{p,\nknots,0}$ for $p$ even.
\end{proposition}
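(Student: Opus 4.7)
The proof follows the same four-step pattern as Proposition~\ref{prop:basis-0}, so I would verify in order: (i) each $\overline{N}^p_{i,\bfxi,0}$ lies in $\mathbb{S}_{p,\bftau}$, (ii) each satisfies the even-derivative boundary conditions defining $\overline{\mathbb{S}}_{p,\nknots,0}=\mathbb{S}_{p,\bftau,0}^{p-1}$, namely $s^{(\alpha)}(0)=s^{(\alpha)}(1)=0$ for all even $\alpha$ with $0\le\alpha\le p-2$, (iii) the collection is linearly independent, and (iv) its cardinality matches $\dim\overline{\mathbb{S}}_{p,\nknots,0}$. Step (i) is immediate from the definition \eqref{eq:basis-uniform-even}. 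Step (iv) is a direct count: $\dim\mathbb{S}_{p,\bftau}=\nknots+p$ and the boundary conditions impose $p$ independent constraints ($p/2$ at each endpoint), so $\dim\overline{\mathbb{S}}_{p,\nknots,0}=\nknots$, which agrees with the number of functions in \eqref{eq:B-spline-uniform-even}.

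The crucial step is (ii). Combining \eqref{eq:BcardB} (with $\xi_i=i/\nknots$, so $N^p_{i,\bfxi}(x)=\cardB_p(\nknots x-i)$) and the midpoint symmetry \eqref{eq:symmetry} of $\cardB_p$, one obtains the reflection identities
\begin{equation*}
N^p_{i,\bfxi}(-x)=N^p_{-p-1-i,\bfxi}(x), \qquad N^p_{i,\bfxi}(2-x)=N^p_{2\nknots-p-1-i,\bfxi}(x).
\end{equation*}
These say that reflection about $x=0$ and $x=1$ simply permutes cardinal B-splines on the uniform grid. The blocks $\overline{L}_\nknots=[\,I_\nknots\,|\,-J_\nknots\,]$ and $\overline{R}_\nknots=[\,-J_\nknots\,|\,I_\nknots\,]$ are engineered so that the $p/2$ boundary rows on the left of \eqref{eq:basis-uniform-even} yield combinations of the form $N^p_{j,\bfxi}-N^p_{-p-1-j,\bfxi}$, hence antisymmetric about $x=0$, and symmetrically the $p/2$ boundary rows on the right are antisymmetric about $x=1$. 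A function antisymmetric about a point has vanishing even-order derivatives there, which is exactly the required boundary condition. Rows arising purely from the central $I_\nknots$ block correspond to interior B-splines whose support lies strictly inside $(0,1)$, so they satisfy the boundary conditions trivially.

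Step (iii) is then routine: the cardinal B-splines $N^p_{-p,\bfxi},\ldots,N^p_{\nknots-1,\bfxi}$ are linearly independent, and the transition matrix in \eqref{eq:basis-uniform-even} has full row rank because each row contains a leading $1$ from a distinct column of the central $I_\nknots$ block (and no other row has a nonzero entry in that same column). The main obstacle, and the only delicate point, is the index bookkeeping in step (ii): one must verify that the $-J_\nknots$ pattern in $\overline{L}_\nknots$ and $\overline{R}_\nknots$ pairs each $+N^p_{j,\bfxi}$ with its mirror image $-N^p_{-p-1-j,\bfxi}$ (resp.\ $-N^p_{2\nknots-p-1-j,\bfxi}$) across the relevant boundary. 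This follows by tracking which B-spline index corresponds to each column of the truncated concatenations of $\overline{L}_\nknots$ (resp.\ $\overline{R}_\nknots$) and matching it against the reflection map $i\mapsto -p-1-i$ (resp.\ $i\mapsto 2\nknots-p-1-i$); this is straightforward but tedious and can be checked case by case on the $p/2$ boundary rows.
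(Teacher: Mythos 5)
Your proposal is correct and follows essentially the same route as the paper, which proves this proposition by invoking verbatim the argument of Proposition~\ref{prop:basis-0}: membership in $\mathbb{S}_{p,\bftau}$, verification of the boundary conditions via the symmetry \eqref{eq:symmetry} of the cardinal B-spline, linear independence from the full row rank of the extraction matrix together with the linear independence of the translated cardinal B-splines, and a dimension count. Your explicit reflection identities $N^p_{i,\bfxi}(-x)=N^p_{-p-1-i,\bfxi}(x)$ and $N^p_{i,\bfxi}(2-x)=N^p_{2\nknots-p-1-i,\bfxi}(x)$ are a correct and slightly more detailed formalization of the ``direct check'' the paper leaves to the reader.
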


\begin{remark}
The basis functions in \eqref{eq:B-spline-0-odd}, \eqref{eq:B-spline-0-even} and \eqref{eq:B-spline-uniform-even} are defined for any number of elements $\nknots>2$ and any degree $ p$. While the case $p\gg \nknots$ has a theoretical interest for analyzing the convergence in $p$, the most interesting practical case is $\nknots\gg p$. When $\nknots$ is large with respect to $p$ the boundary constraints at the two ends of the interval involve disjoint sets of (scaled cardinal) B-splines and the construction of the bases in \eqref{eq:B-spline-0-odd}, \eqref{eq:B-spline-0-even} and \eqref{eq:B-spline-uniform-even} is particularly easy. Only few basis functions near the ends have to be modified and the matrices in \eqref{eq:basis-0-odd}, \eqref{eq:basis-0-even} and \eqref{eq:basis-uniform-even} are basically identity matrices with the addition of very few columns to the left and to the right. The added columns have either zero entries or come from exchange matrices; see Examples~\ref{ex:basis-0-odd}--\ref{ex:basis-uniform-even}. 
\end{remark}

\begin{remark}
A similar construction of B-spline-like bases was proposed in \cite{Hiemstra:2021} for the reduced spline spaces considered in that paper. It is also an extraction procedure, but in terms of open-knot B-splines instead of cardinal B-splines. As a consequence, the corresponding extraction matrices are not known in explicit form and need to be computed algorithmically (in the spirit of the MDB-spline construction \cite{Speleers:2019,Toshniwal:2020}). There is also a restriction on the minimum number of elements so that the boundary constraints at the two ends of the interval are well separated.
\end{remark}

 
\section{Numerical examples}
\label{sec:numerics}
In this section we consider some numerical tests to show the potential of outlier-free spline Galerkin discretizations. For the sake of brevity, we will just focus on problems with Dirichlet boundary conditions; the remaining cases are completely analogous.

\subsection{Univariate problems}\label{sec:numerics-1D}
We first show the numerical performance of the presented strategies in the univariate setting. We consider both the eigenvalue problem \eqref{eq:prob-eigenv-1D} and second-order problems of the form
\begin{equation}
\label{eq:second-order-prob}
\left\{ \begin{aligned}
- u'' &= f, \quad \text{in } (0,1), \\
u(0)&=u(1)=0,
\end{aligned} \right.
\end{equation}
and we approximately solve them by means of Galerkin discretizations in the outlier-free optimal spline space $\mathbb{S}_{p,n,0}^\opt$ and the alternative reduced spline space $\overline{\mathbb{S}}_{p,n,0}$. 
We also compare them with the full spline space $\mathbb{S}_{p,\bftau,0}^0$
of the same dimension $n$, defined by
$$
\tau_i:=\frac{i}{\nknots}, \quad i=0,\ldots, \nknots, \quad \nknots:=n-p+2,
$$
and denote this space by $\mathbb{S}_{p,n,0}$.
Observe that the grid size in the considered discretization spaces of dimension $n$ is different, ranging from
$\frac{1}{n+1}$ to $\frac{1}{n-p+2}$.

\begin{figure}[t!]
\centering
\subfigure[$e_{\omega,\indeigk}$ in $\mathbb{S}_{p,200,0}^\opt$]{\includegraphics[height=4.1cm]{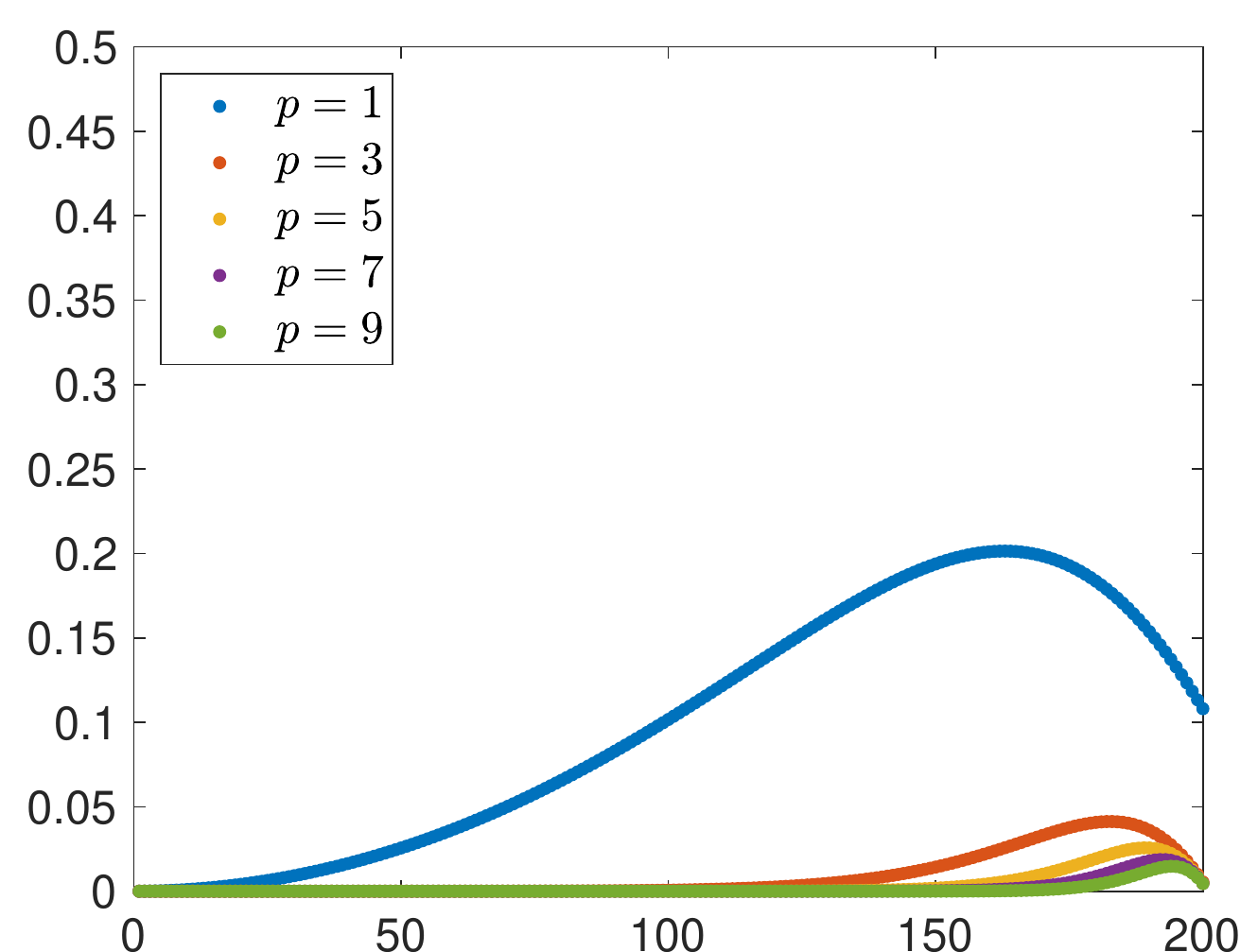}}\hspace*{0.1cm}
\subfigure[$e_{\omega,\indeigk}$ in $\mathbb{S}_{p,200,0}$]{\includegraphics[height=4.1cm]{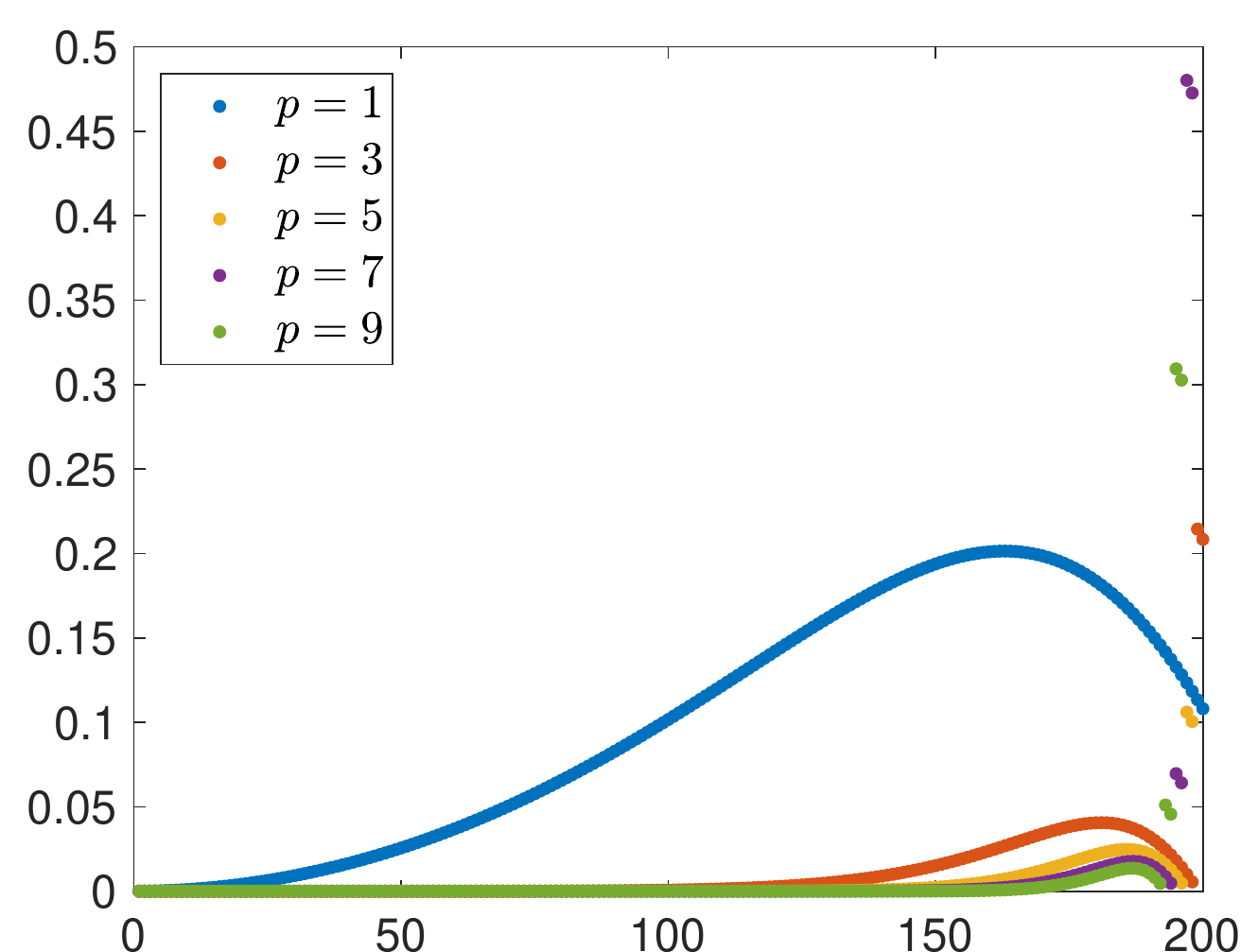}}\hspace*{0.1cm}
\subfigure[zoom out for $\mathbb{S}_{p,200,0}$]{\includegraphics[height=4.1cm]{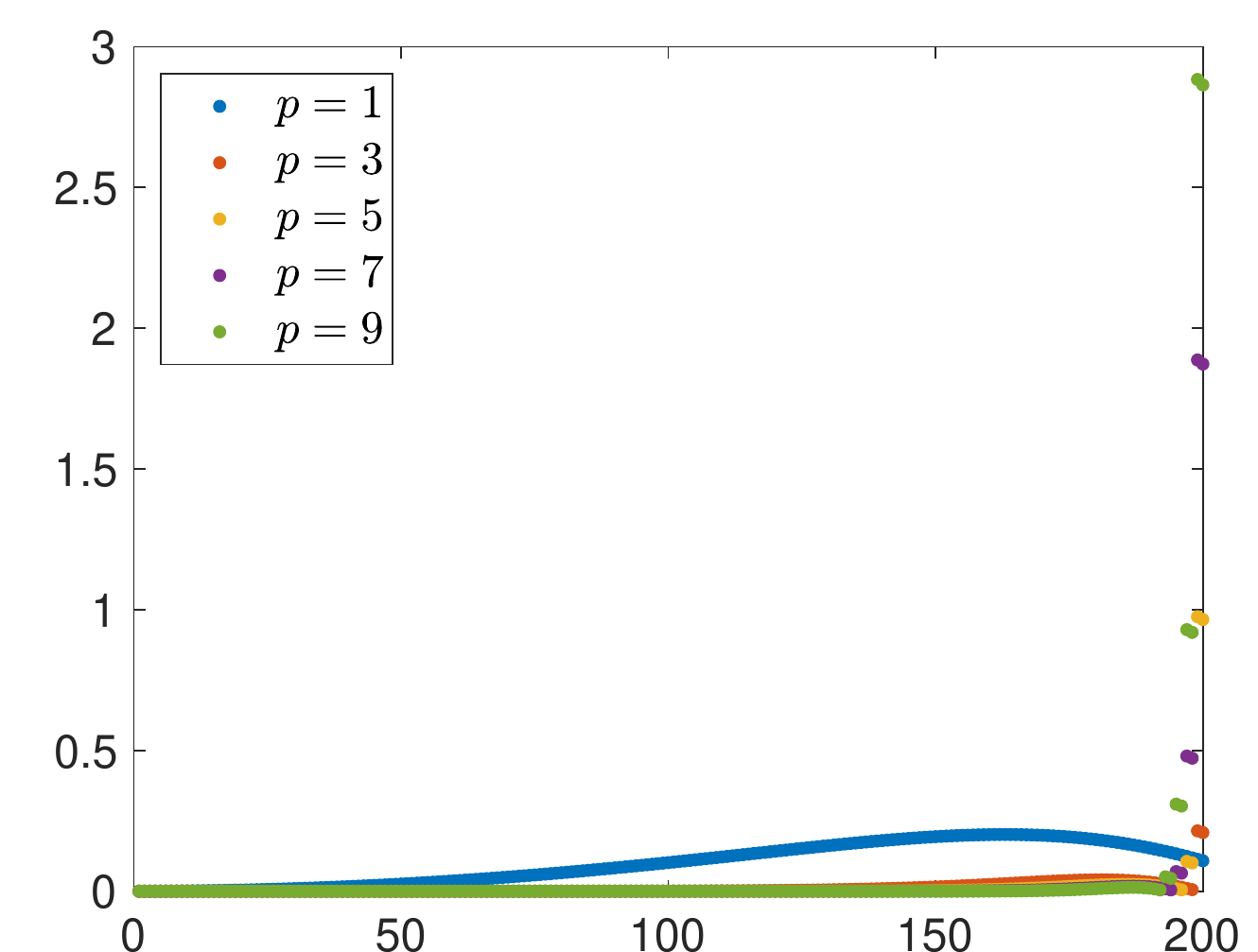}}\\
\subfigure[$e_{u,\indeigk}$ in $\mathbb{S}_{p,200,0}^\opt$]{\includegraphics[height=4.1cm]{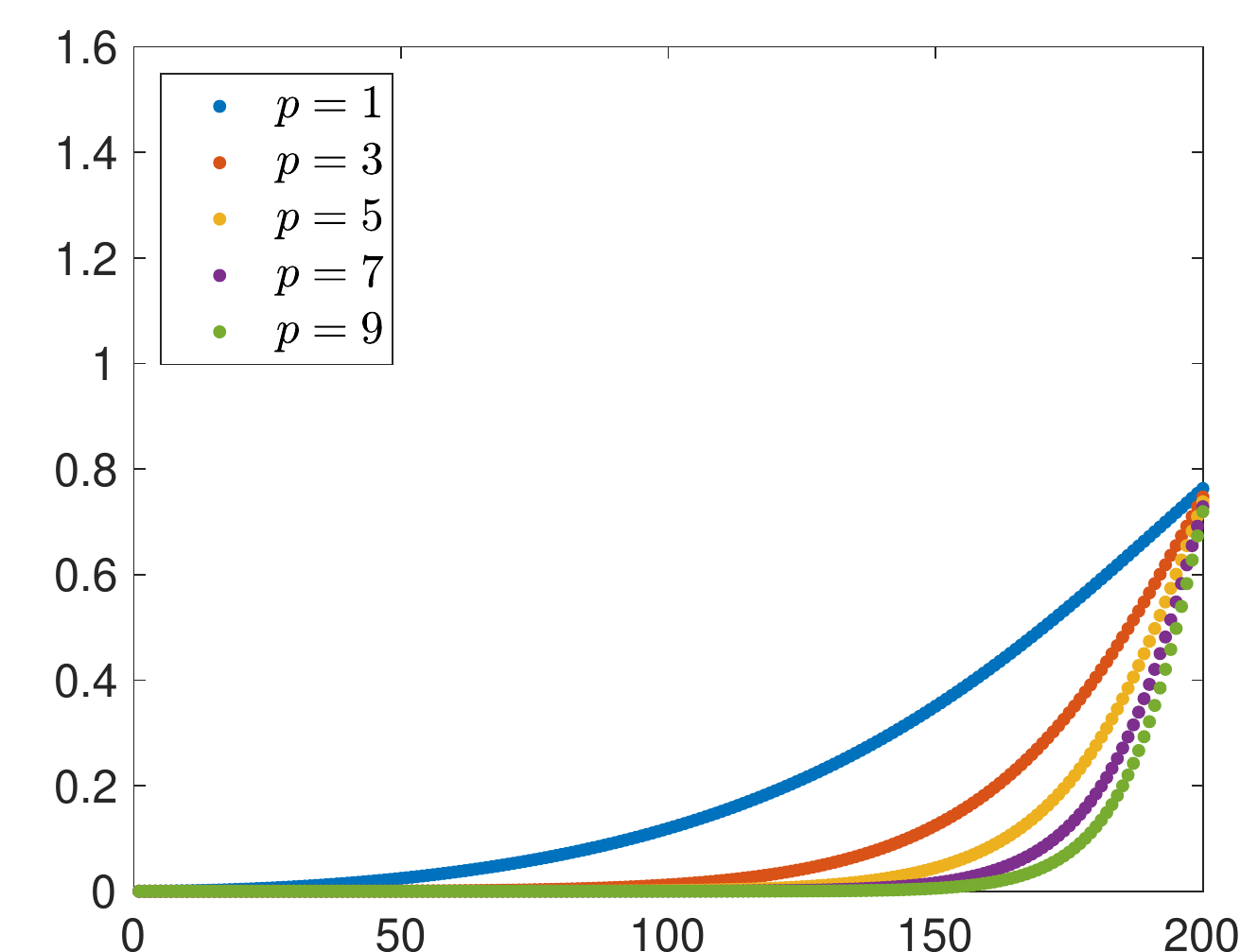}}\hspace*{0.1cm}
\subfigure[$e_{u,\indeigk}$ in $\mathbb{S}_{p,200,0}$]{\includegraphics[height=4.1cm]{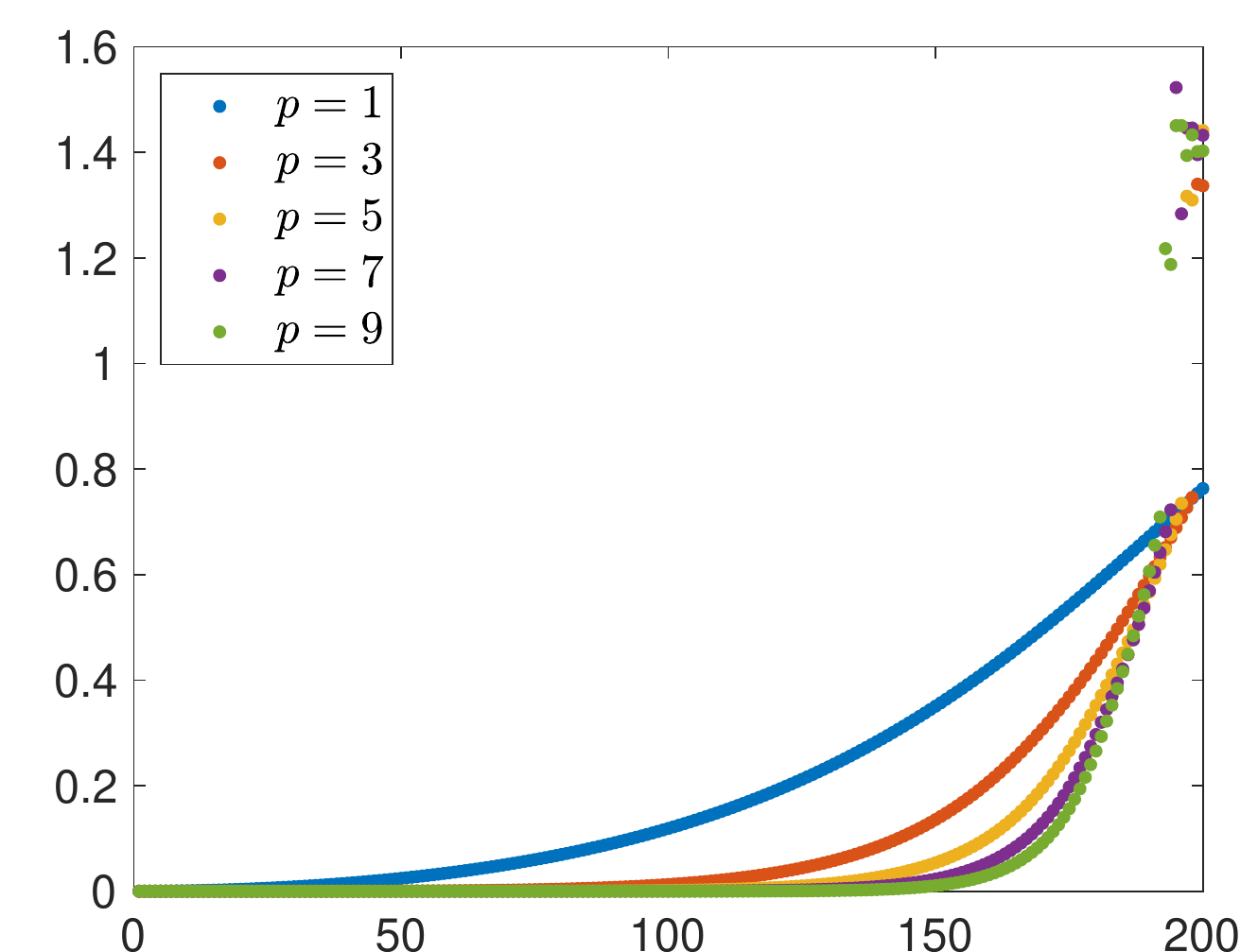}}
\caption{Example~\ref{ex:eigenvalues1D}: Relative frequency errors $e_{\omega,\indeigk}$ in \eqref{eq:error-eigenvalues1D} and $L^2$ relative eigenfunction errors $e_{u,\indeigk}$ in \eqref{eq:error-eigenfunctions1D} corresponding to the spline spaces $\mathbb{S}_{p,n,0}^\opt$ and $\mathbb{S}_{p,n,0}$ for odd degrees $p$ and $n=200$. The errors are ordered according to increasing exact frequencies. No outliers are observed for $\mathbb{S}_{p,n,0}^\opt$. Some outliers of $\mathbb{S}_{p,n,0}$ are outside the visible range in (b) as illustrated in (c).} \label{fig:eigenvalues1D.odd}
\end{figure}
\begin{figure}[t!]
\centering
\subfigure[$e_{\omega,\indeigk}$ in $\mathbb{S}_{p,200,0}^\opt$]{\includegraphics[height=4.1cm]{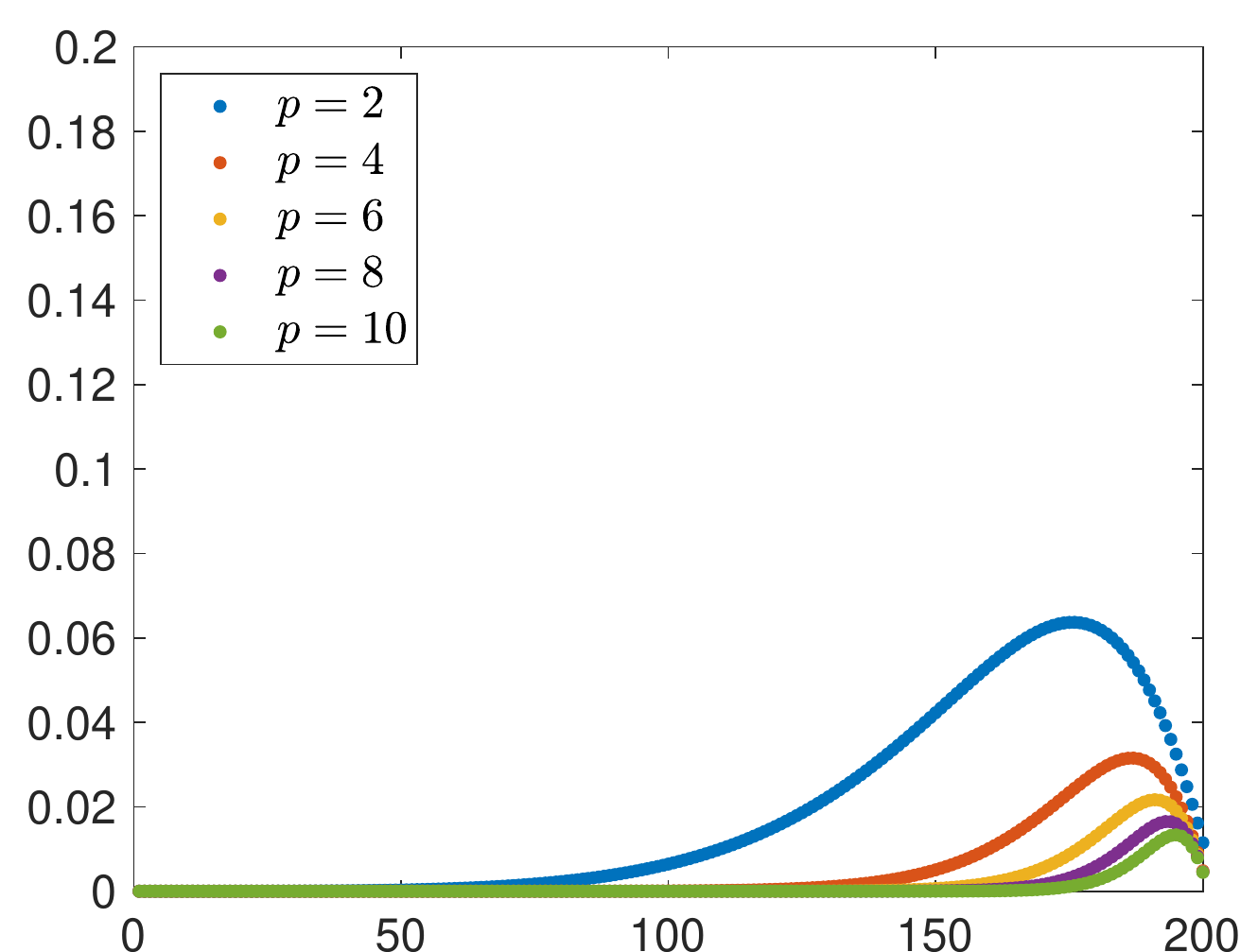}}\hspace*{0.1cm}
\subfigure[$e_{\omega,\indeigk}$ in $\overline{\mathbb{S}}_{p,200,0}$]{\includegraphics[height=4.1cm]{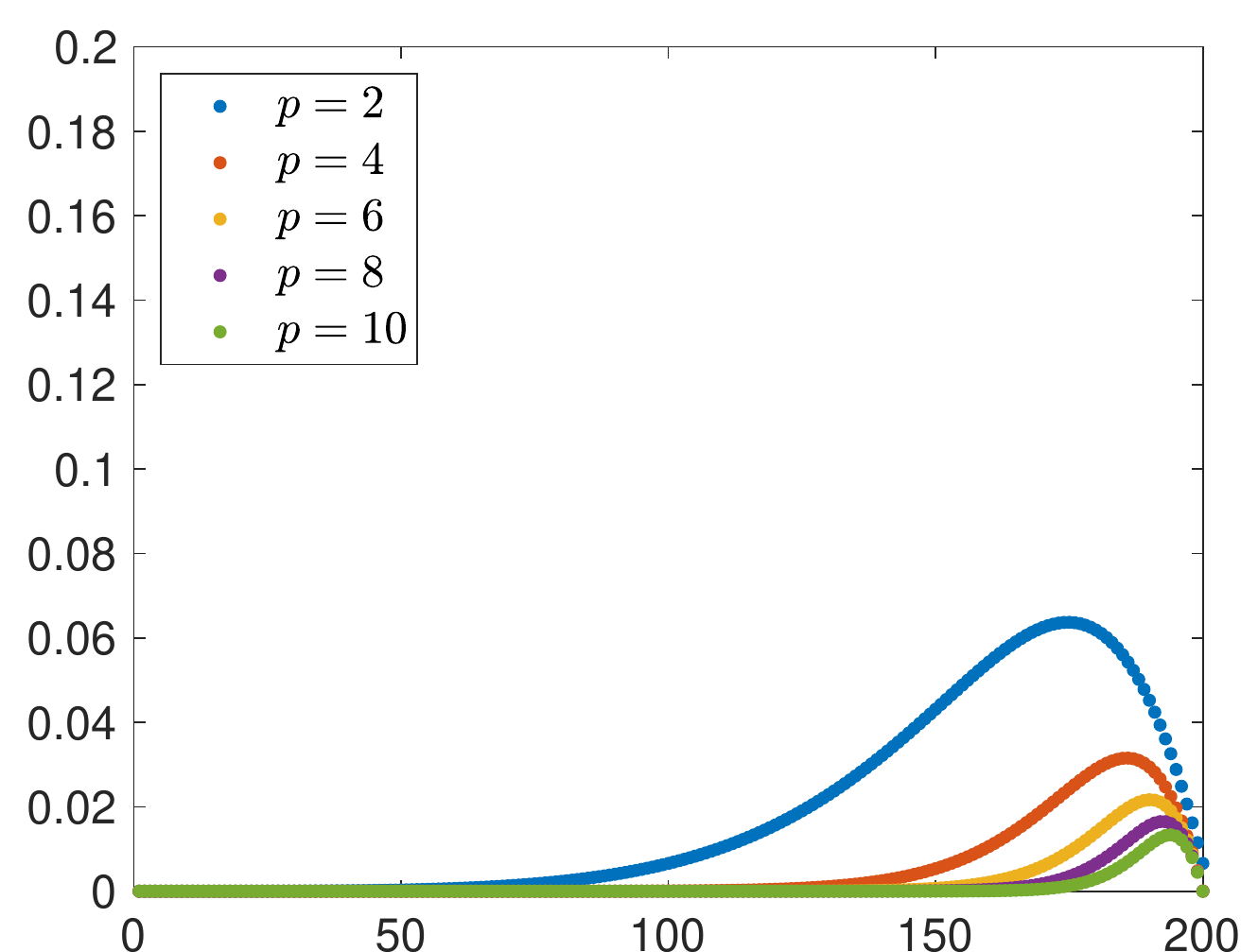}}\hspace*{0.1cm}
\subfigure[$e_{\omega,\indeigk}$ in $\mathbb{S}_{p,200,0}$]{\includegraphics[height=4.1cm]{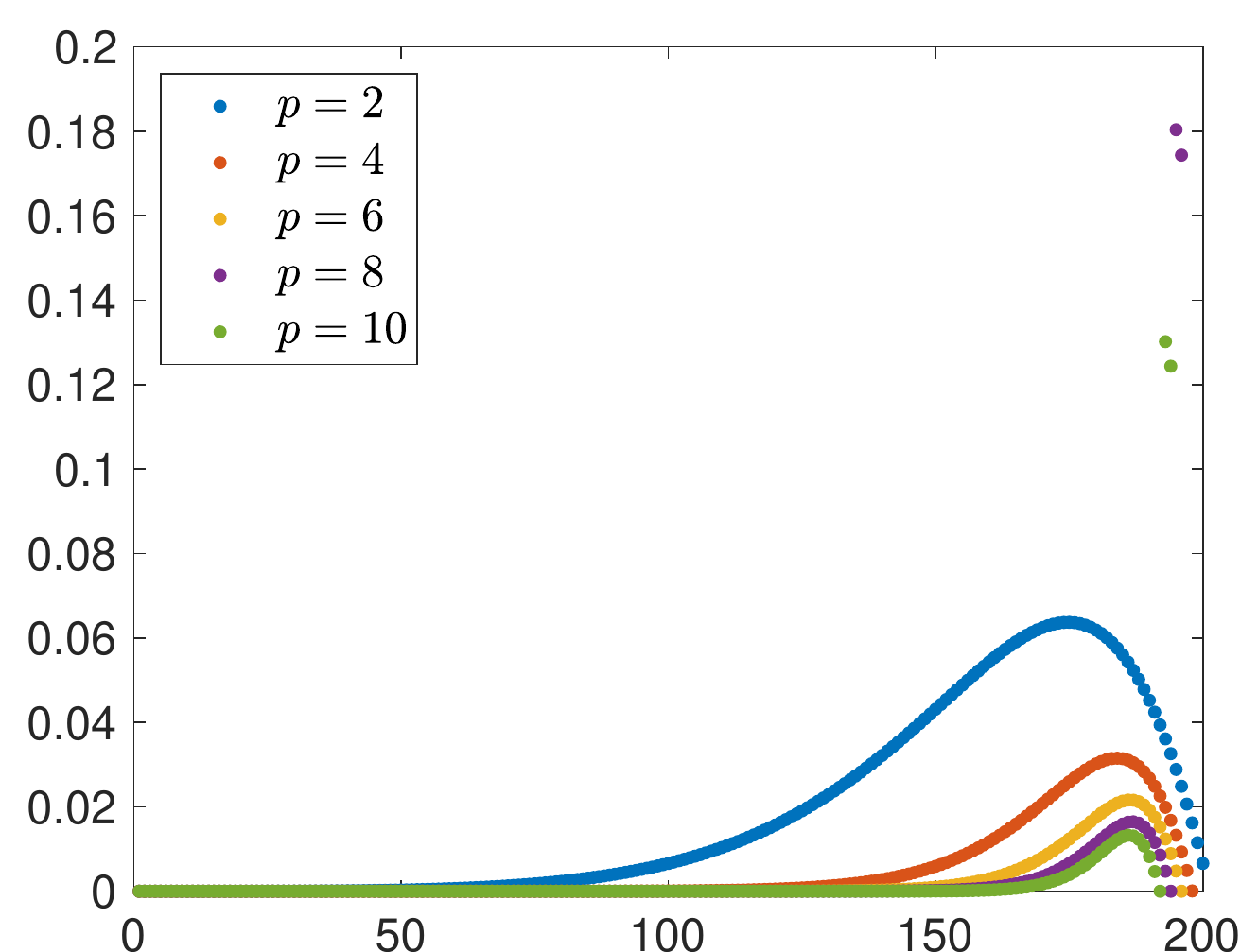}}\\
\subfigure[$e_{u,\indeigk}$ in $\mathbb{S}_{p,200,0}^\opt$]{\includegraphics[height=4.1cm]{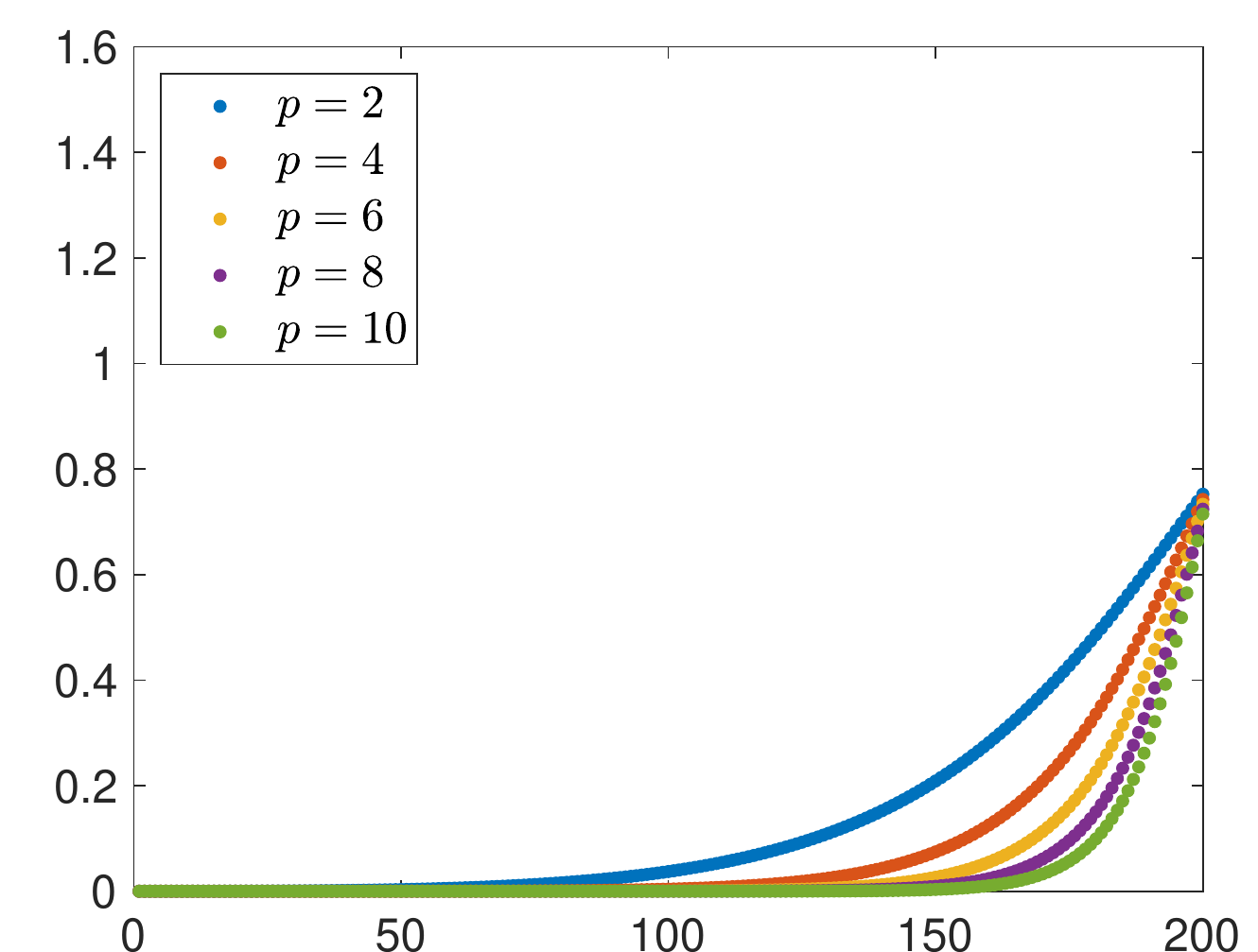}}\hspace*{0.1cm}
\subfigure[$e_{u,\indeigk}$ in $\overline{\mathbb{S}}_{p,200,0}$]{\includegraphics[height=4.1cm]{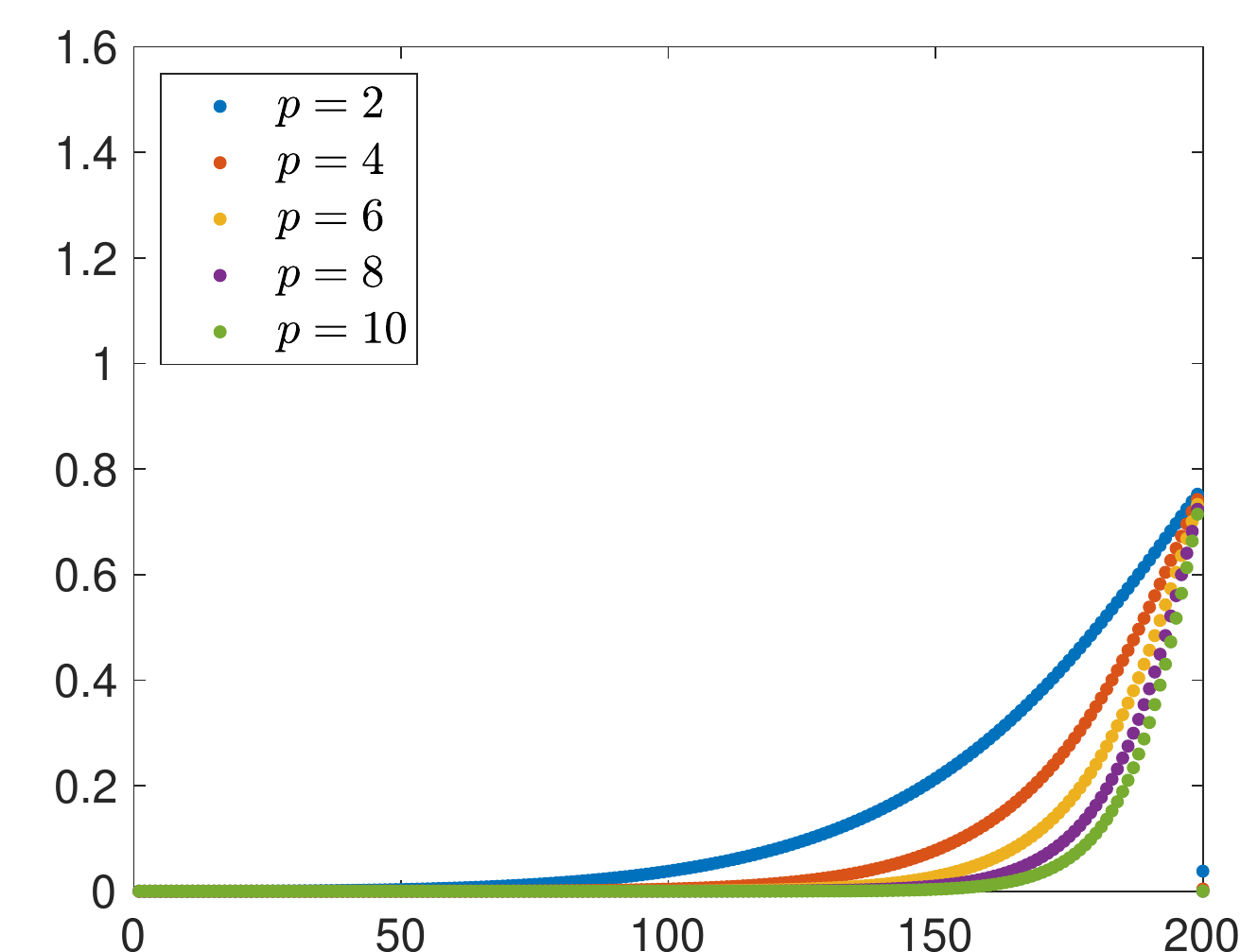}}\hspace*{0.1cm}
\subfigure[$e_{u,\indeigk}$ in $\mathbb{S}_{p,200,0}$]{\includegraphics[height=4.1cm]{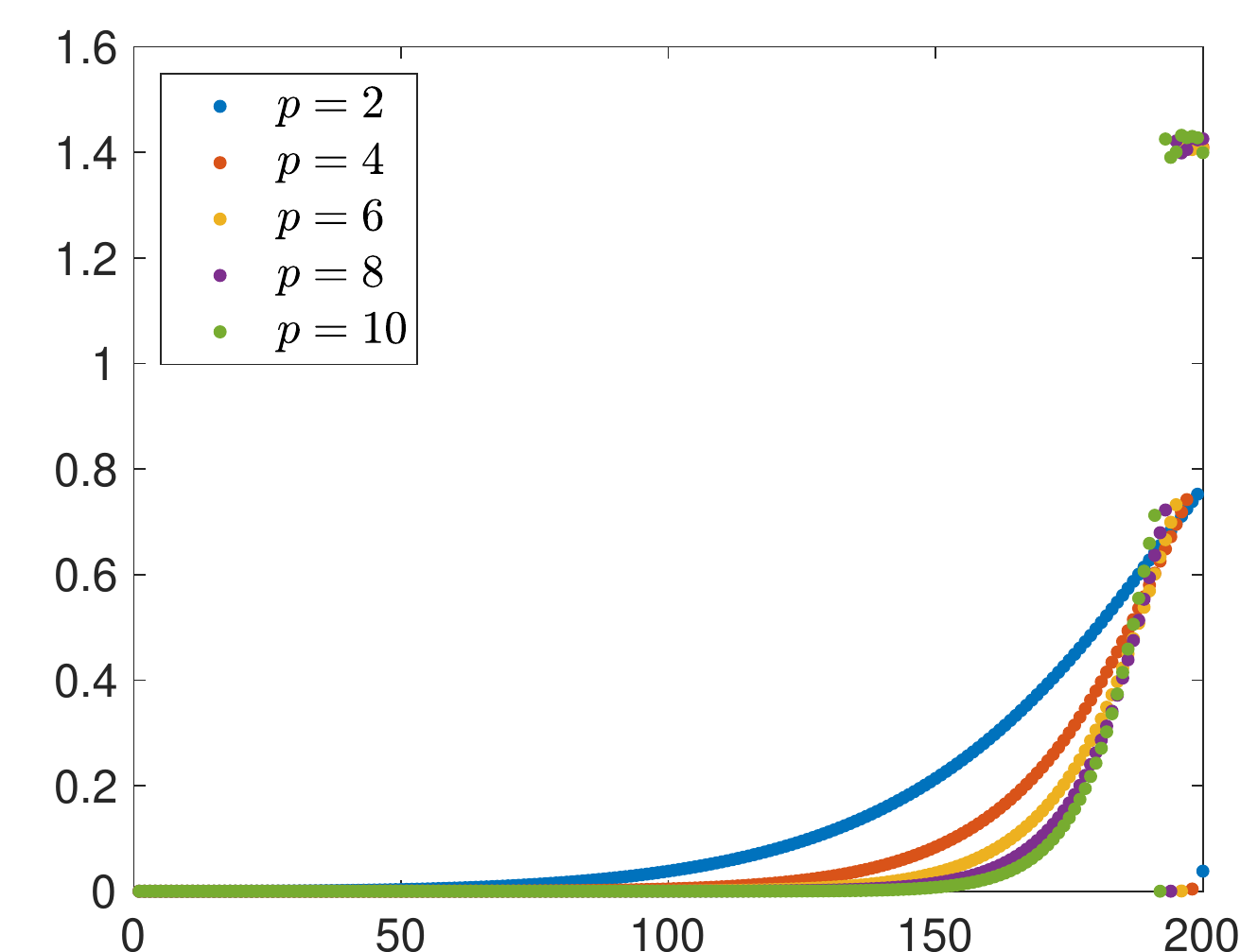}}
\caption{Example~\ref{ex:eigenvalues1D}: Relative frequency errors $e_{\omega,\indeigk}$ in \eqref{eq:error-eigenvalues1D} and $L^2$ relative eigenfunction errors $e_{u,\indeigk}$ in \eqref{eq:error-eigenfunctions1D} corresponding to the spline spaces $\mathbb{S}_{p,n,0}^\opt$, $\overline{\mathbb{S}}_{p,n,0}$ and $\mathbb{S}_{p,n,0}$ for even degrees $p$ and $n=200$. The errors are ordered according to increasing exact frequencies. No outliers are observed for $\mathbb{S}_{p,n,0}^\opt$ and $\overline{\mathbb{S}}_{p,n,0}$. Some outliers of $\mathbb{S}_{p,n,0}$ are outside the visible range in (c); they are not shown for clarity of the figure.} \label{fig:eigenvalues1D.even}
\end{figure}

\begin{example}\label{ex:eigenvalues1D}
In this example we show the performance of the discretizations for approximating the eigenvalue problem \eqref{eq:prob-eigenv-1D}. Let $\omega_{h,\indeigk}$ be the approximate value of the frequency $\omega_\indeigk$; see \eqref{eq:eig-Laplace-type-0-BC}. Here we assume that the exact frequencies and the approximate ones are sorted in ascending order to retrieve their matching.
In Figures~\ref{fig:eigenvalues1D.odd} and~\ref{fig:eigenvalues1D.even} we depict the relative frequency errors 
\begin{equation}\label{eq:error-eigenvalues1D}
e_{\omega,\indeigk}:=\frac{\omega_{h,\indeigk}-\omega_\indeigk}{\omega_\indeigk}, \quad \indeigk=1,\ldots,n,
\end{equation}
obtained by the Galerkin approximations in the spline spaces $\mathbb{S}_{p,n,0}^\opt$, $\overline{\mathbb{S}}_{p,n,0}$ and $\mathbb{S}_{p,n,0}$ for various degrees and $n=200$. We clearly notice that the optimal spline space $\mathbb{S}_{p,n,0}^\opt$ captures all the eigenvalues without any outlier, still maintaining the accuracy of the full spline space. A similar behavior is observed for the reduced spline space $\overline{\mathbb{S}}_{p,n,0}$.
In the same figures we also report the corresponding $L^2$ relative errors for the eigenfunction approximations
\begin{equation}\label{eq:error-eigenfunctions1D}
e_{u,\indeigk}:=\frac{\|u_\indeigk-u_{h,\indeigk}\|}{\|u_\indeigk\|}, \quad \indeigk=1,\ldots,n.
\end{equation}
\end{example}

\begin{figure}[t!]
\centering
\subfigure[$\mathbb{S}_{p,n,0}^\opt$]{\includegraphics[height=4.1cm]{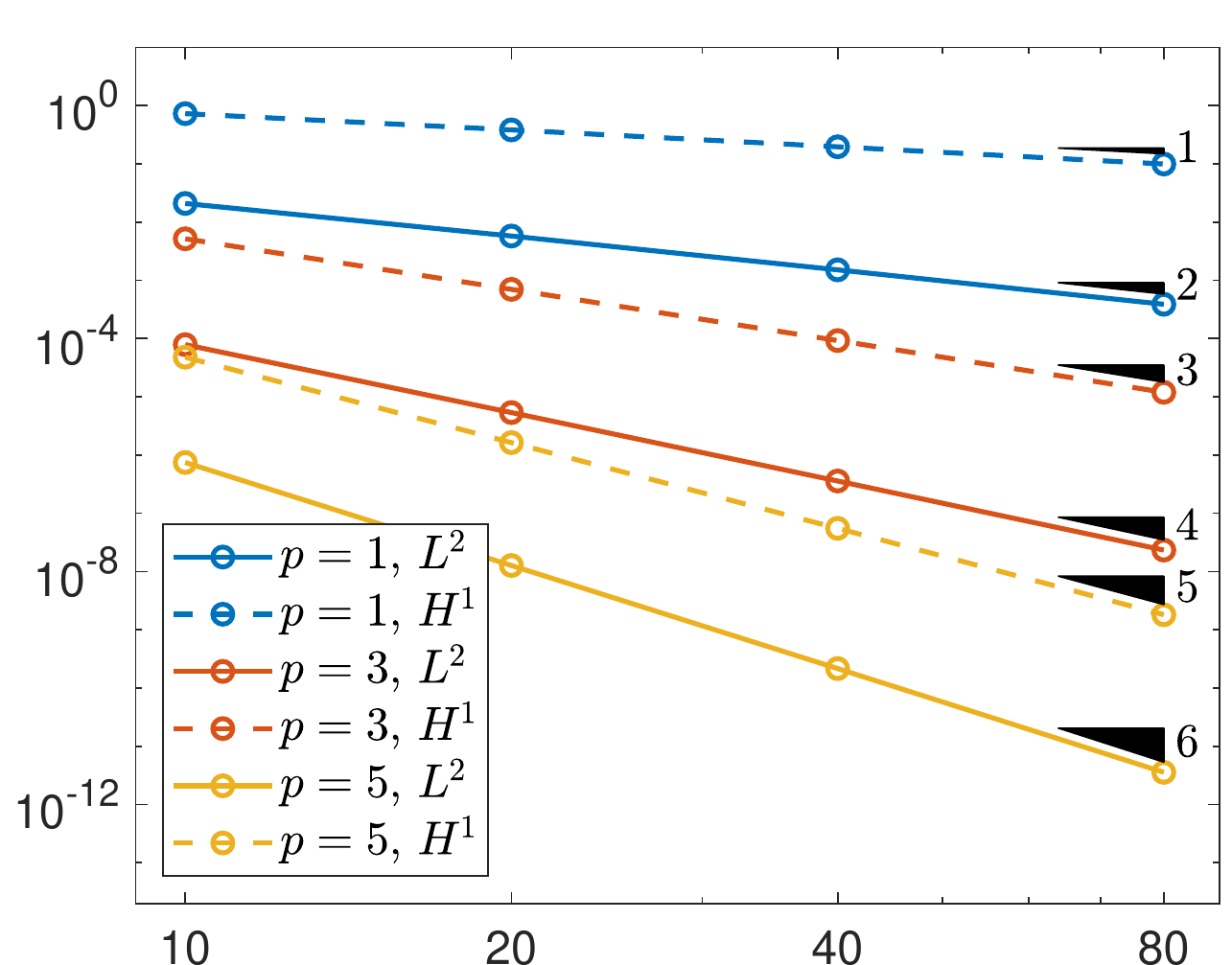}}\hspace*{0.1cm}
\subfigure[$\mathbb{S}_{p,n,0}$]{\includegraphics[height=4.1cm]{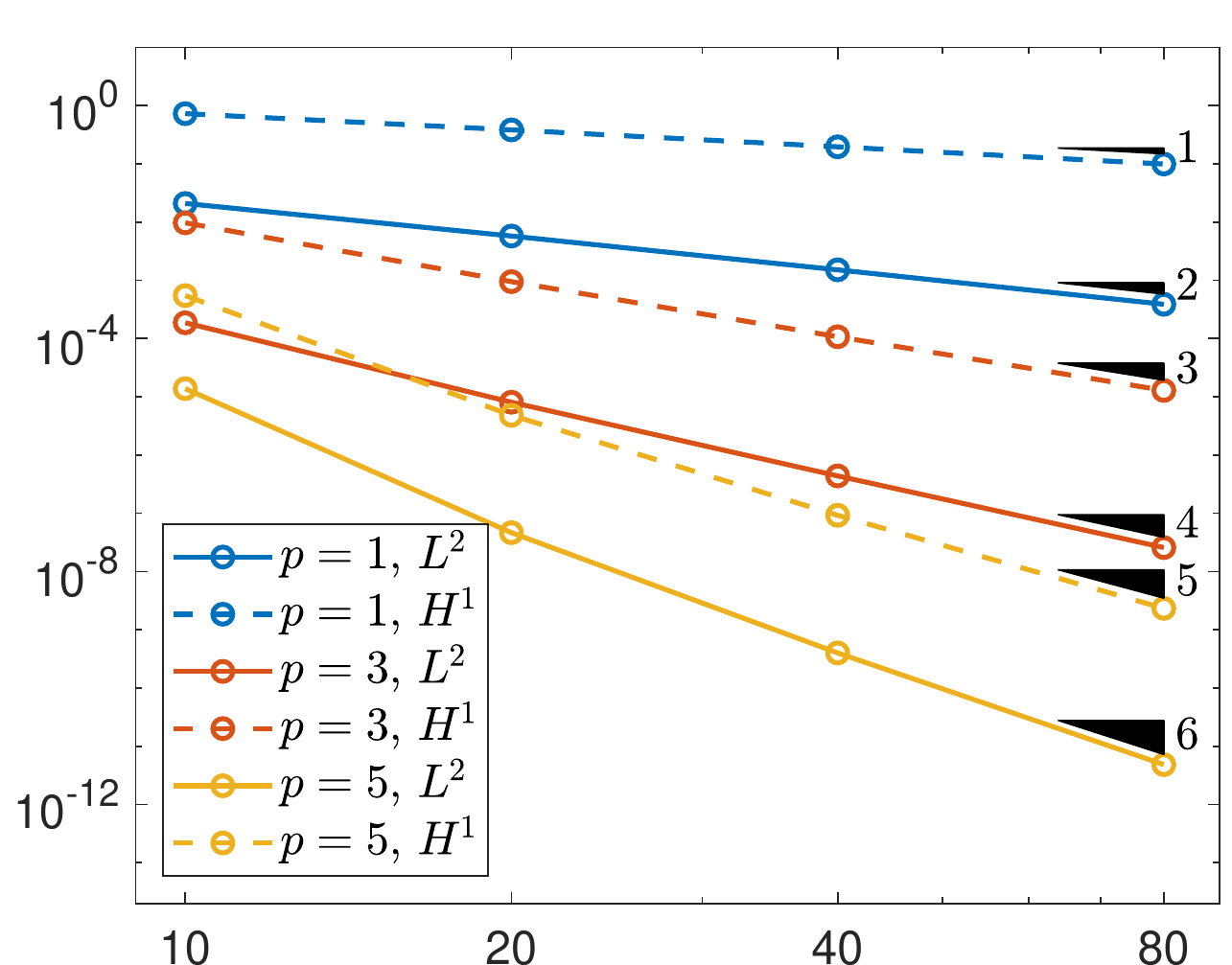}}
\caption{Example~\ref{ex:convergence1D-sin}: $L^2$ and $H^1$ error convergence in the spline spaces $\mathbb{S}_{p,n,0}^\opt$ and $\mathbb{S}_{p,n,0}$ in terms of $n$, for odd degrees $p$. The reference convergence order in $n$ is indicated by black triangles.} \label{ex:convergence1D-sin:a}
\bigskip
\centering
\subfigure[$\mathbb{S}_{p,n,0}^\opt$]{\includegraphics[height=4.1cm]{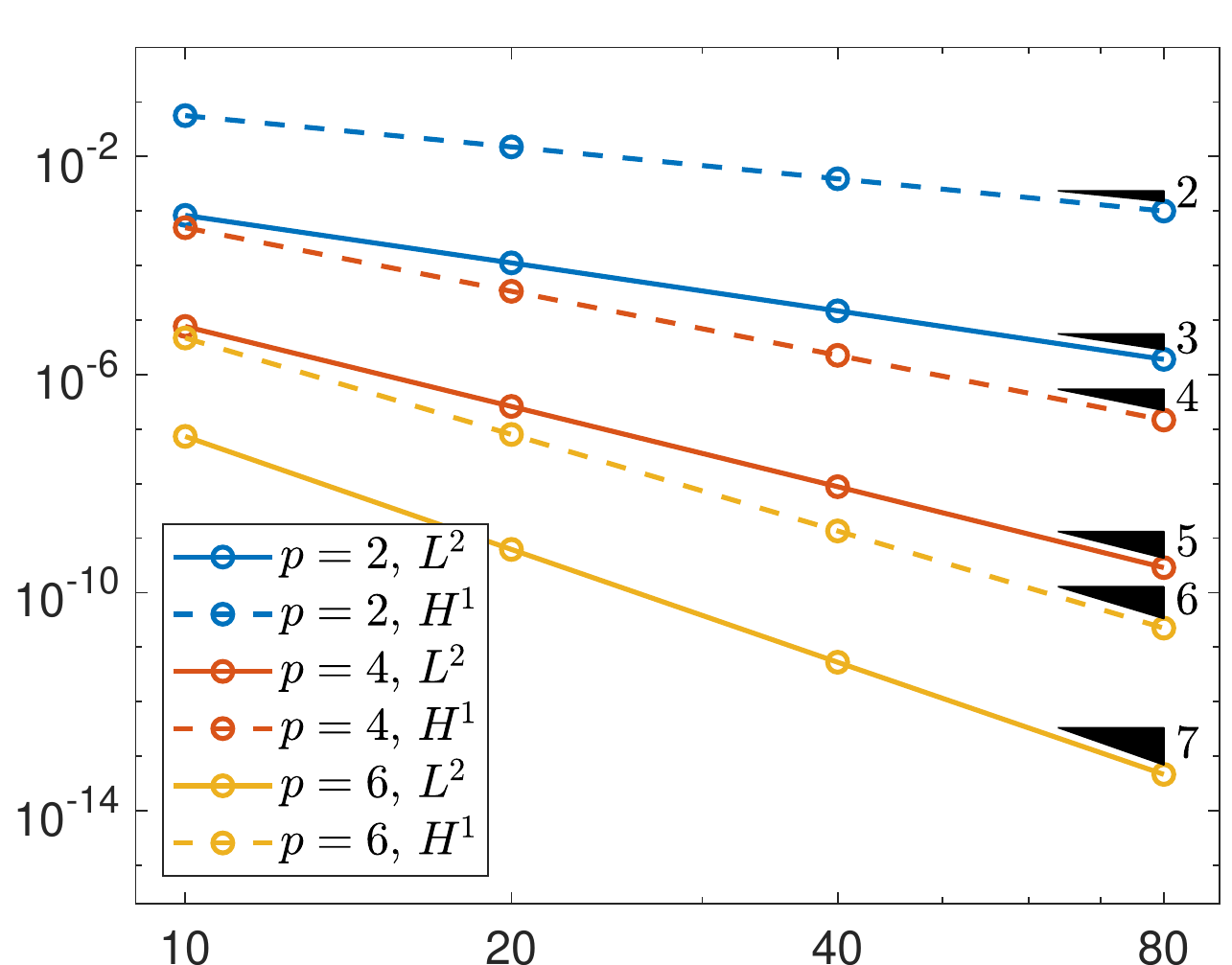}}\hspace*{0.1cm}
\subfigure[$\overline{\mathbb{S}}_{p,n,0}$]{\includegraphics[height=4.1cm]{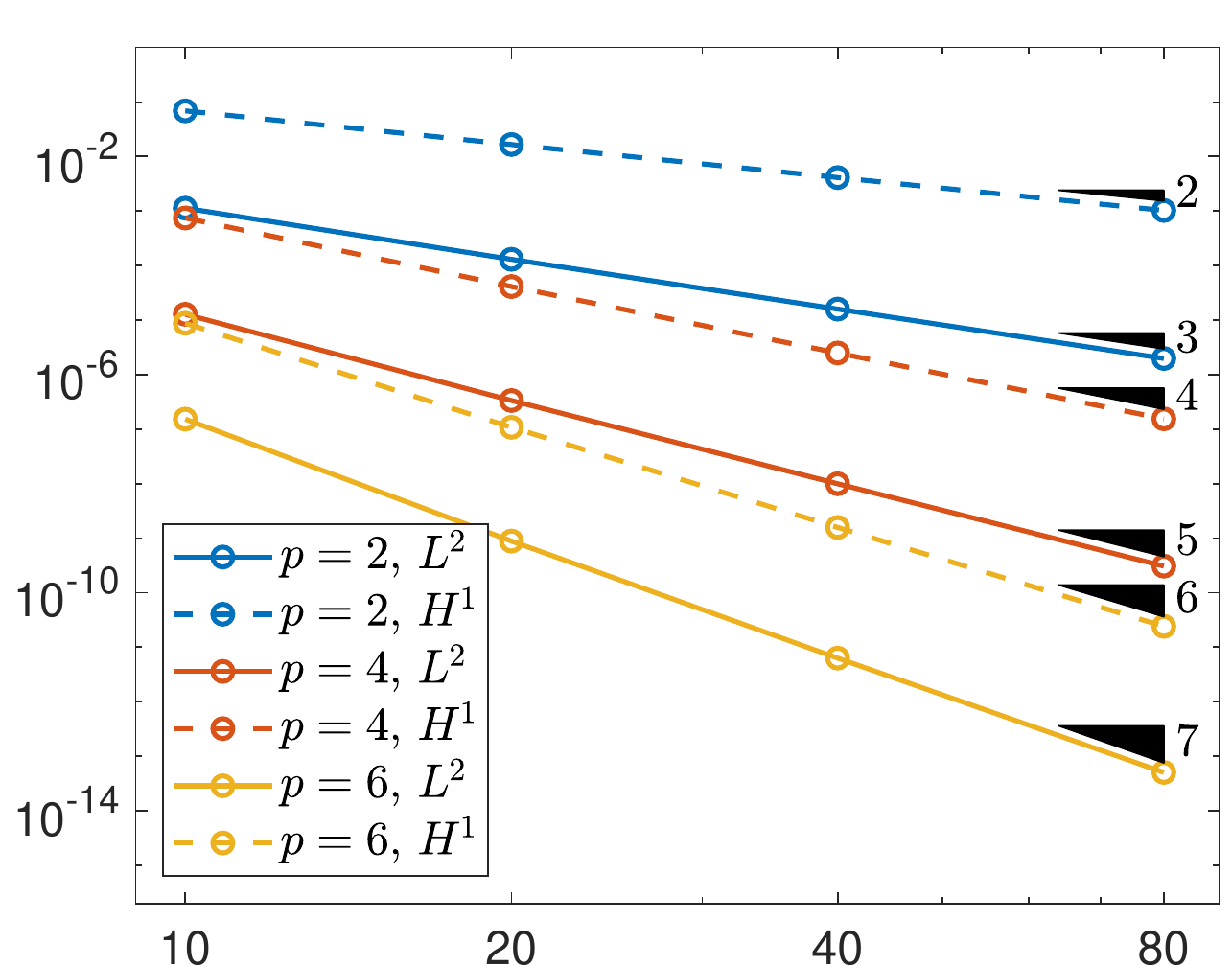}}\hspace*{0.1cm}
\subfigure[$\mathbb{S}_{p,n,0}$]{\includegraphics[height=4.1cm]{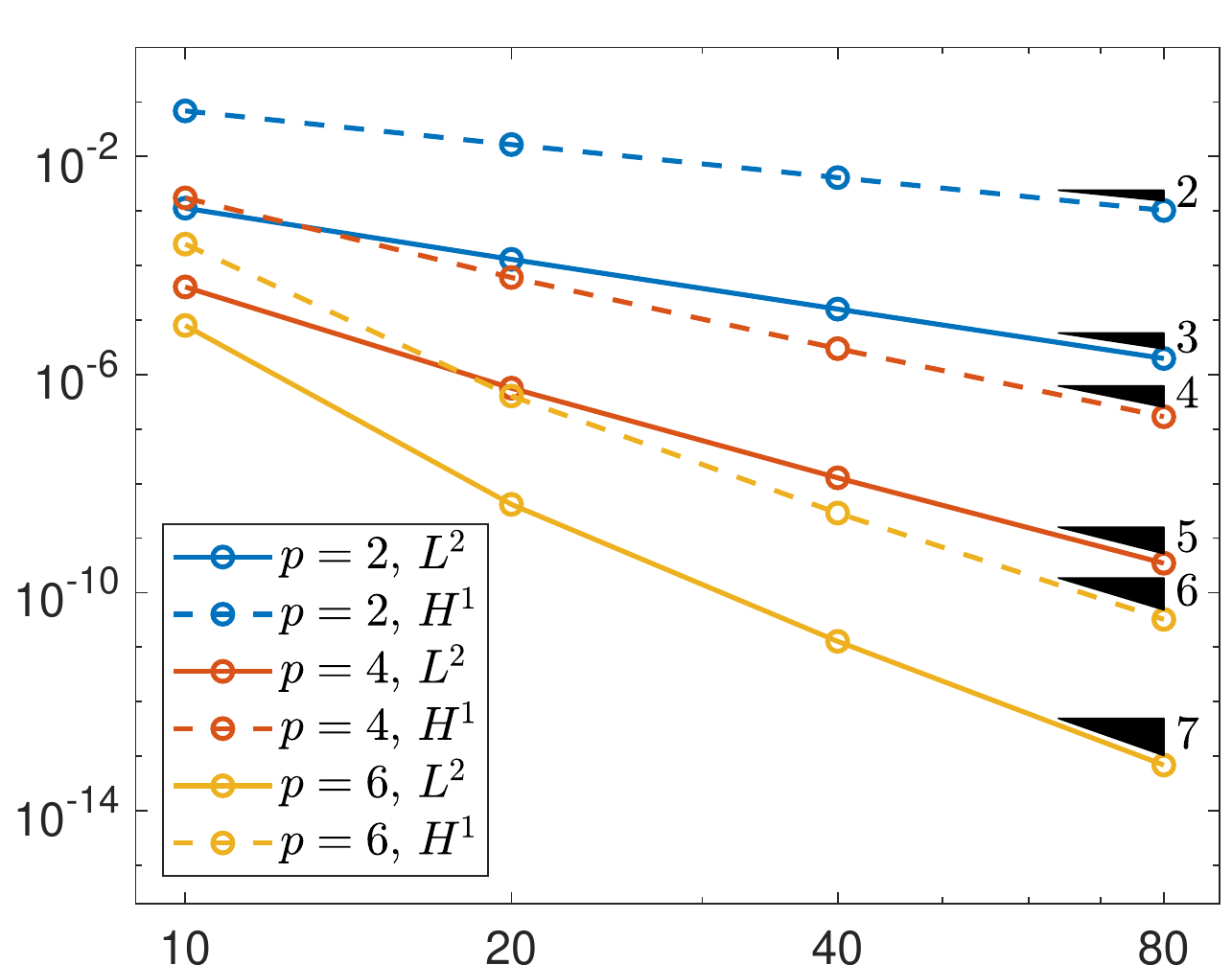}}
\caption{Example~\ref{ex:convergence1D-sin}: $L^2$ and $H^1$ error convergence in the spline spaces $\mathbb{S}_{p,n,0}^\opt$, $\overline{\mathbb{S}}_{p,n,0}$ and $\mathbb{S}_{p,n,0}$ in terms of $n$, for even degrees $p$. The reference convergence order in $n$ is indicated by black triangles.} \label{ex:convergence1D-sin:b}
\end{figure}

\begin{example}\label{ex:convergence1D-sin}
To test the approximation properties of the reduced spline spaces $\mathbb{S}_{p,n,0}^\opt$ and $\overline{\mathbb{S}}_{p,n,0}$ we consider problem \eqref{eq:second-order-prob} with the manufactured solution 
$$u(x)=\sin(2\pi x).$$
The exact solution satisfies all the additional conditions on high-order derivatives defining both the spaces $\mathbb{S}_{p,n,0}^\opt$ and $\overline{\mathbb{S}}_{p,n,0}$. In Figures~\ref{ex:convergence1D-sin:a} and~\ref{ex:convergence1D-sin:b} we depict the $L^2$ and $H^1$ error of the approximate solutions in the spline spaces $\mathbb{S}_{p,n,0}^\opt$, $\overline{\mathbb{S}}_{p,n,0}$ and $\mathbb{S}_{p,n,0}$ in terms of $n$, for various values of $p$. For fixed $p$, the three spaces achieve exactly the same error convergence orders as expected: $p+1$ in the $L^2$-norm and $p$ in the $H^1$-norm. Note that in this example, per degree of freedom, the error obtained in $\mathbb{S}_{p,n,0}^\opt$ is noticeably better than in $\mathbb{S}_{p,n,0}$ and also slightly better than in $\overline{\mathbb{S}}_{p,n,0}$ ($p$ even), especially for smaller values of $n$. This can be (partially) attributed to the finer grid spacing.
\end{example}

\begin{figure}[t!]
\centering
\subfigure[$\mathbb{S}_{p,n,0}^\opt$]{\includegraphics[height=4.1cm]{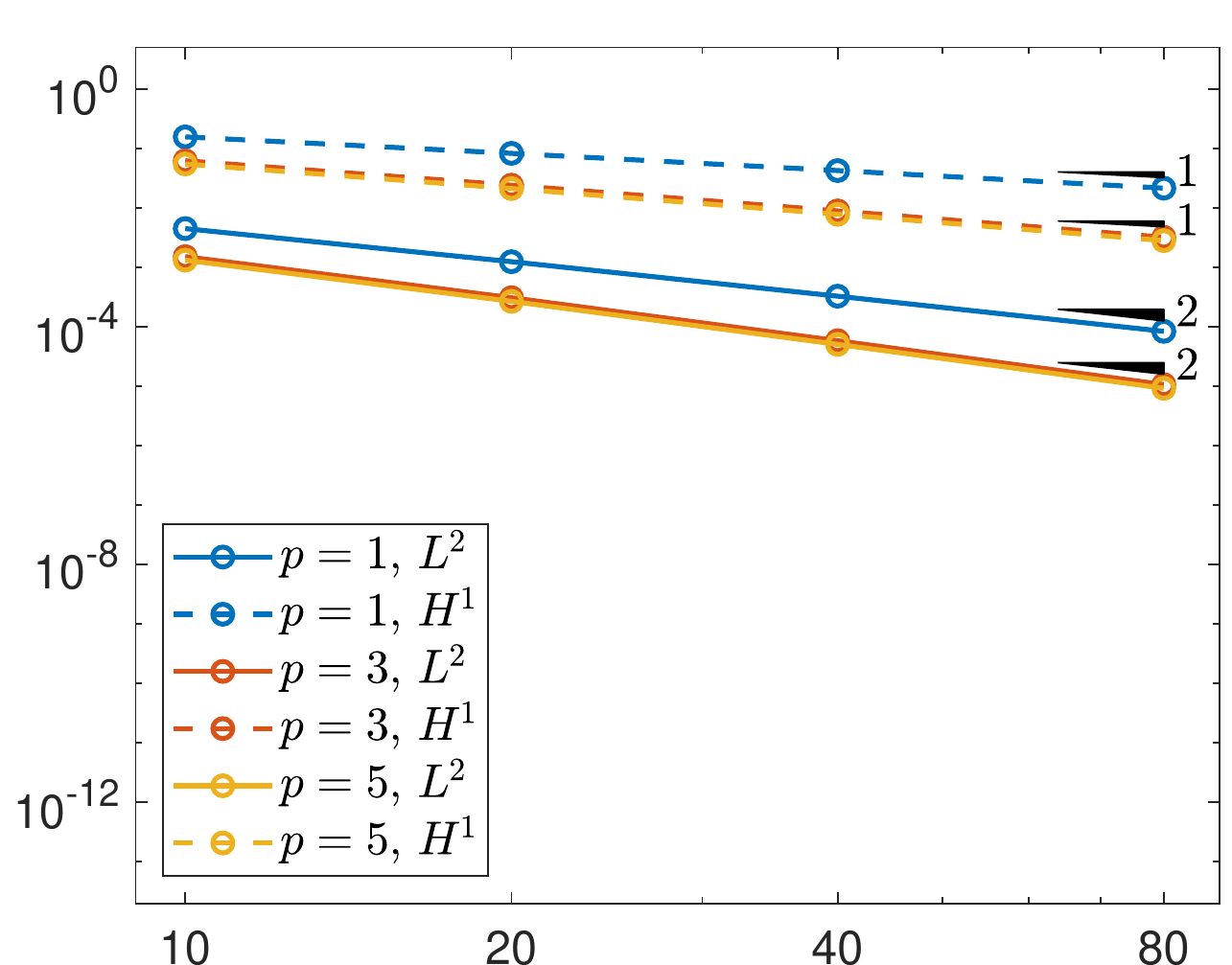}}\hspace*{0.1cm}
\subfigure[$\mathbb{S}_{p,n,0}^\opt$ with correction]{\includegraphics[height=4.1cm]{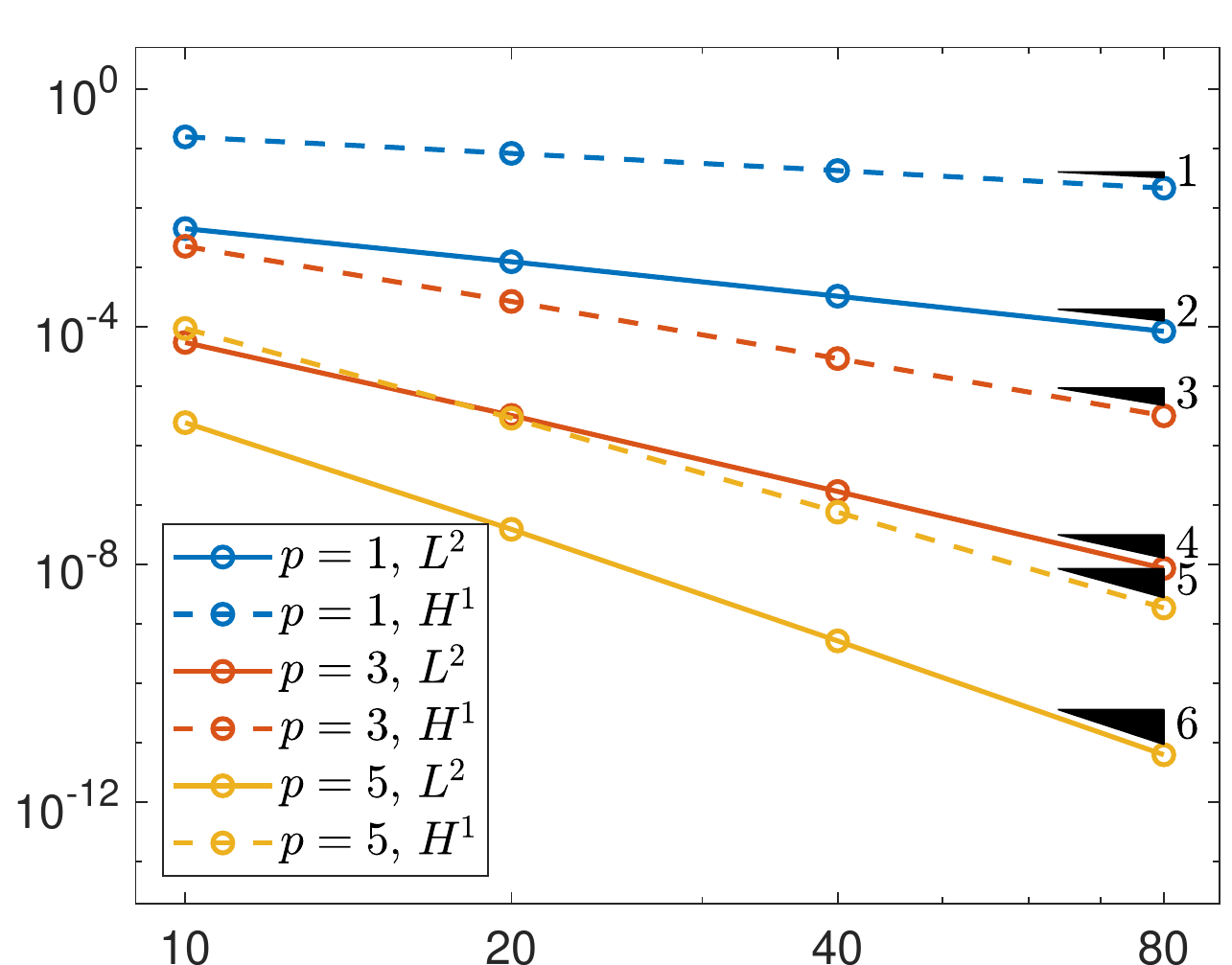}}\hspace*{0.1cm}
\subfigure[$\mathbb{S}_{p,n,0}$]{\includegraphics[height=4.1cm]{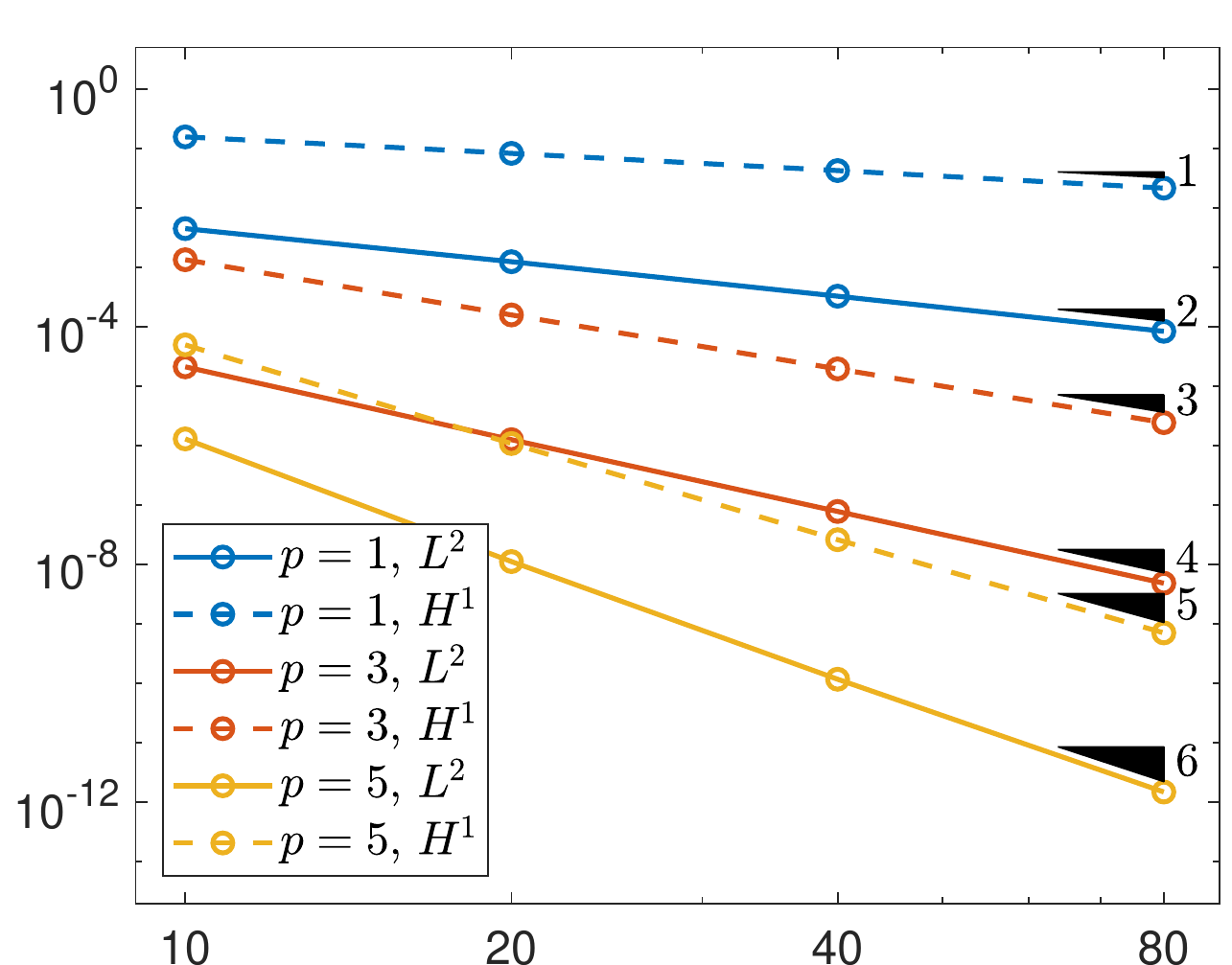}}
\caption{Example~\ref{ex:convergence1D-boundary}: $L^2$ and $H^1$ error convergence in the spline spaces $\mathbb{S}_{p,n,0}^\opt$ and $\mathbb{S}_{p,n,0}$ in terms of $n$, for odd degrees $p$. Both without and with boundary data correction are considered for the space $\mathbb{S}_{p,n,0}^\opt$. The reference convergence order in $n$ is indicated by black triangles.} \label{ex:convergence1D-boundary:a}
\bigskip
\centering
\subfigure[$\mathbb{S}_{p,n,0}^\opt$]{\includegraphics[height=4.1cm]{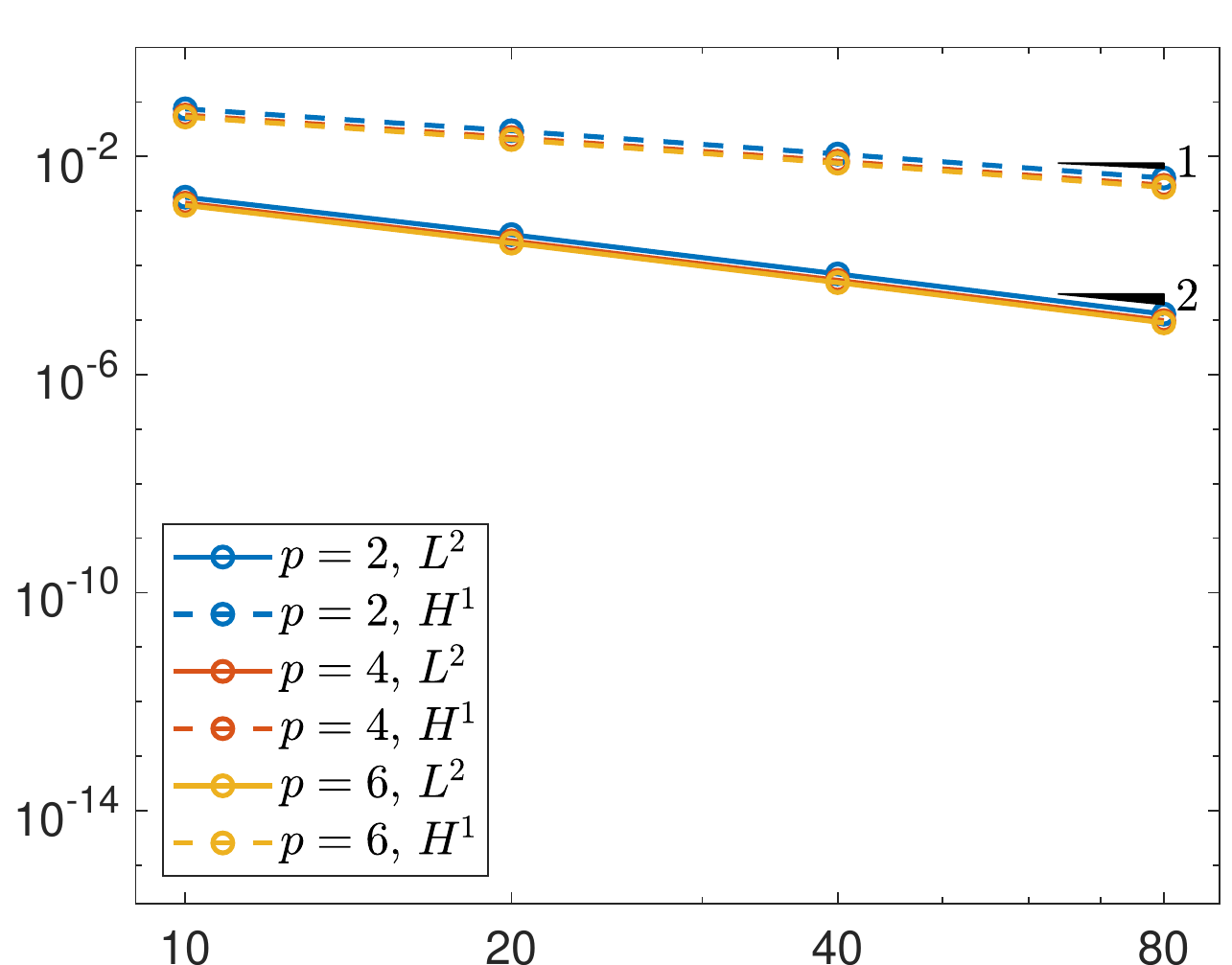}}\hspace*{0.1cm}
\subfigure[$\overline{\mathbb{S}}_{p,n,0}$]{\includegraphics[height=4.1cm]{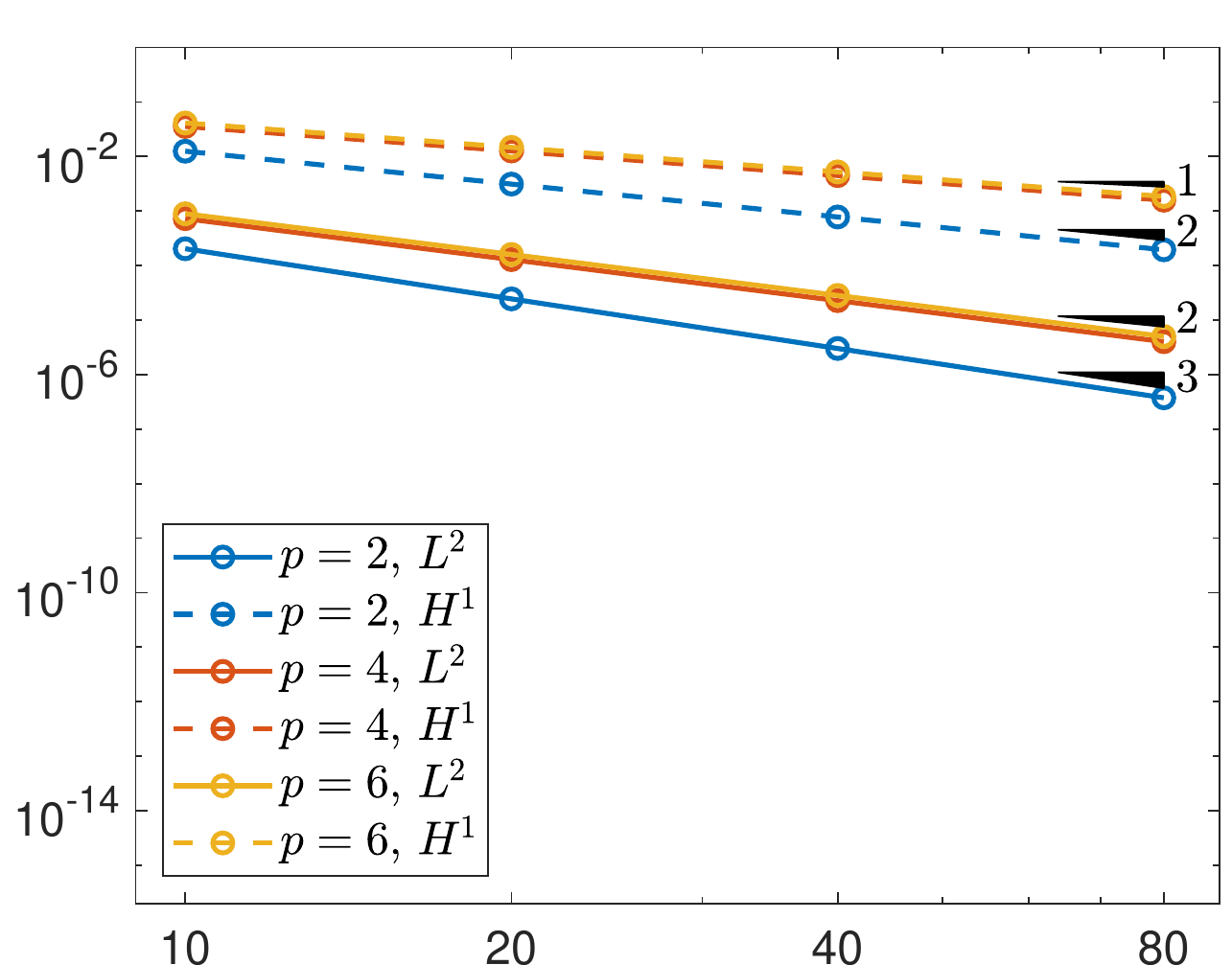}} \\
\subfigure[$\mathbb{S}_{p,n,0}^\opt$ with correction]{\includegraphics[height=4.1cm]{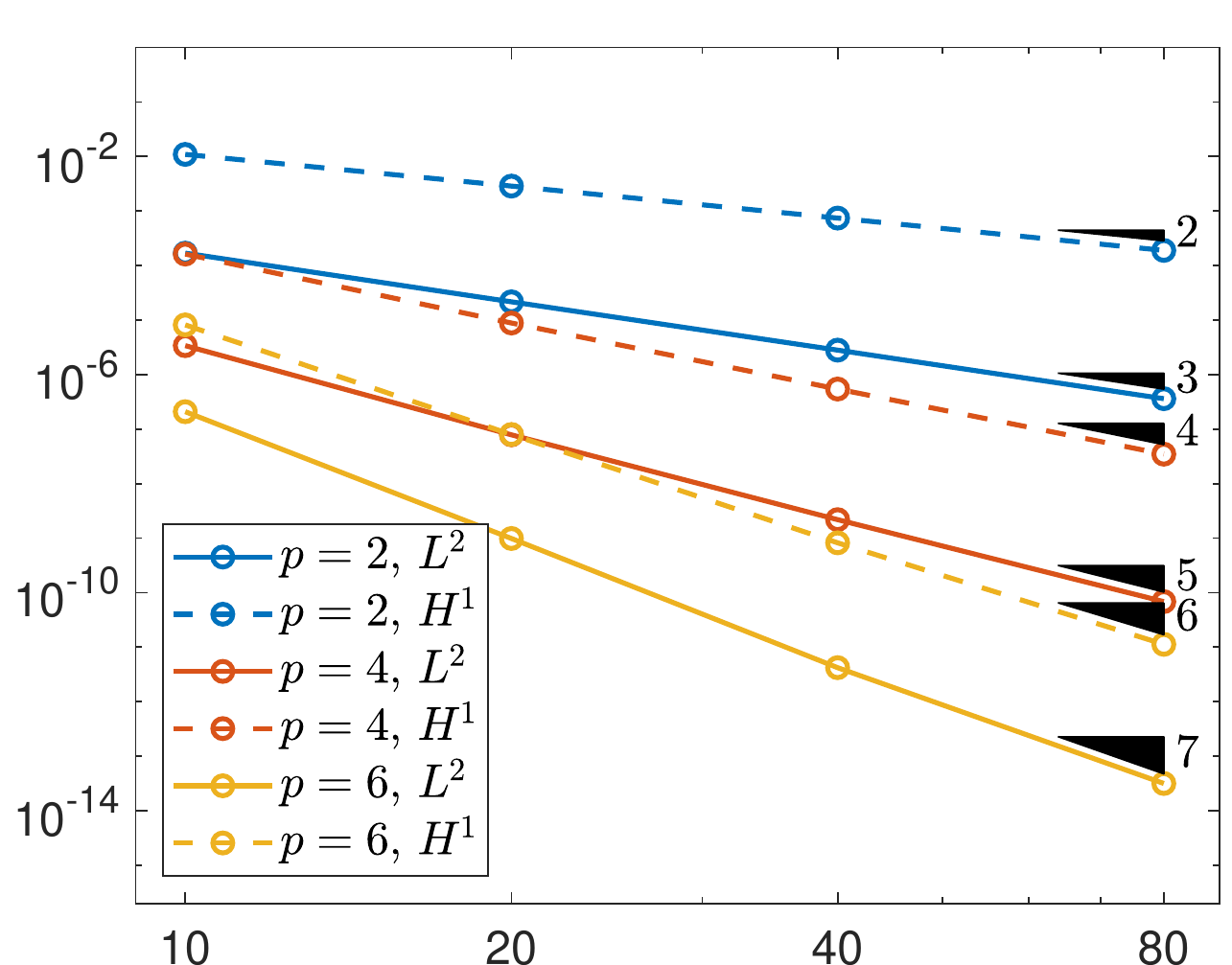}}\hspace*{0.1cm}
\subfigure[$\overline{\mathbb{S}}_{p,n,0}$ with correction]{\includegraphics[height=4.1cm]{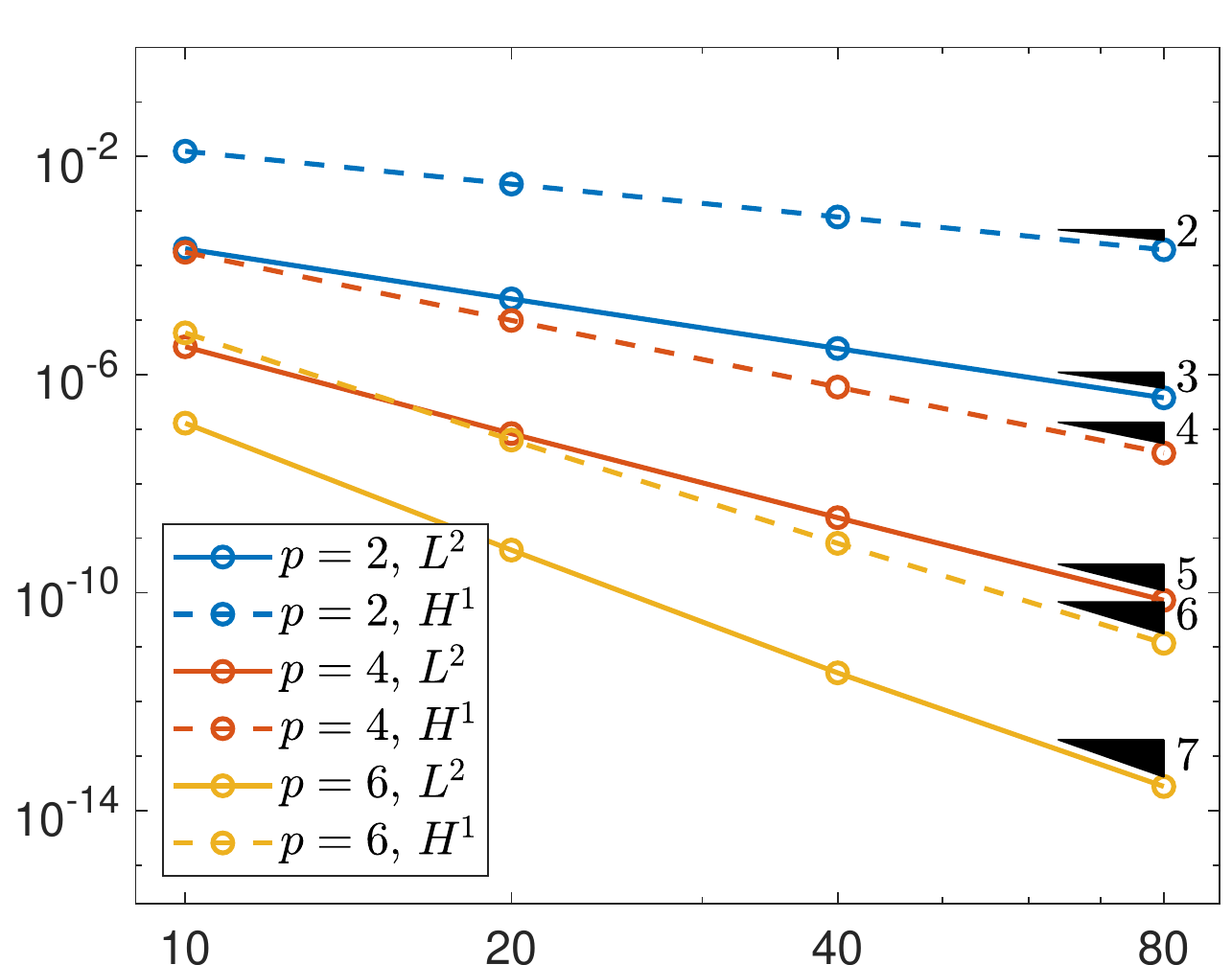}}\hspace*{0.1cm}
\subfigure[$\mathbb{S}_{p,n,0}$]{\includegraphics[height=4.1cm]{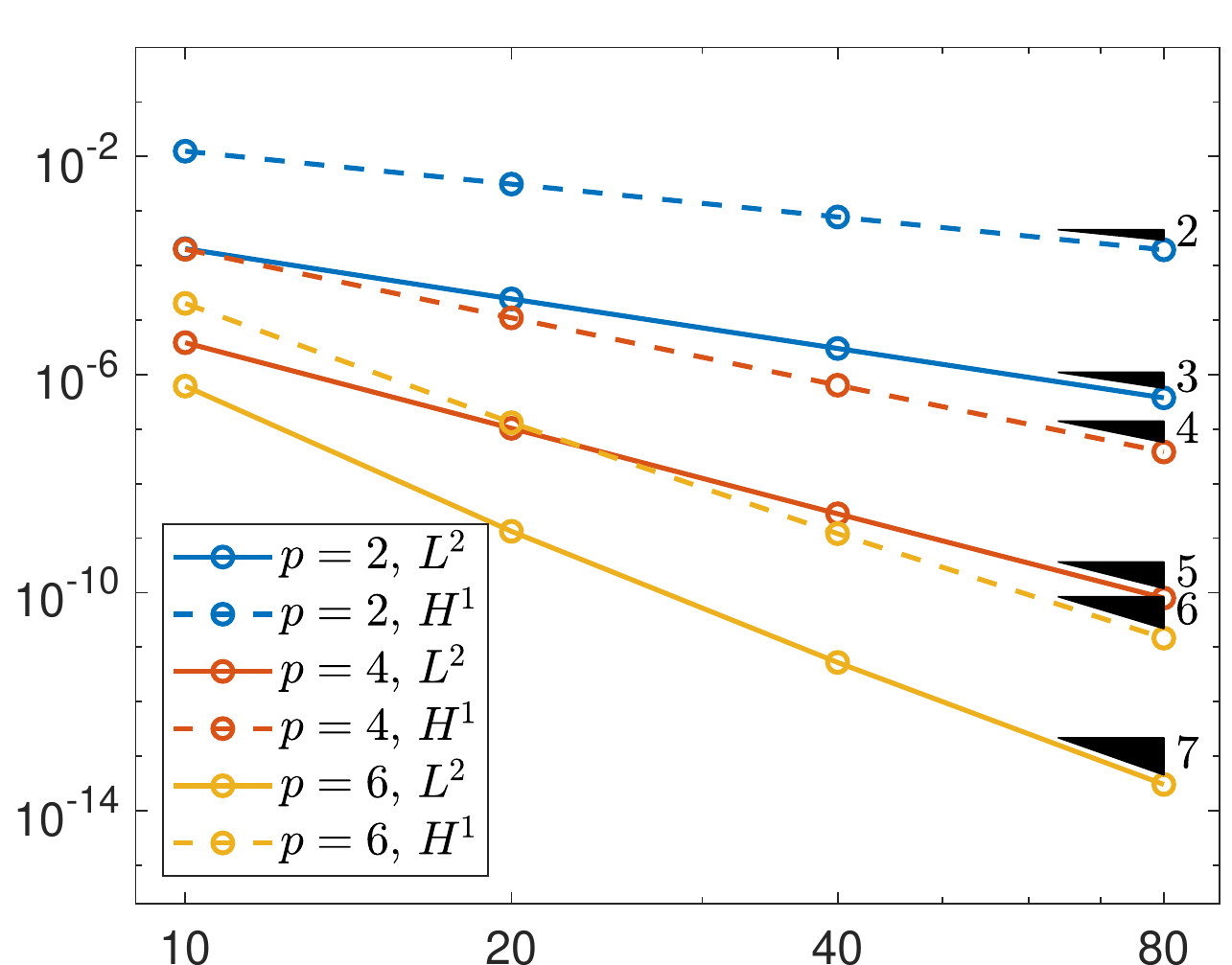}}
\caption{Example~\ref{ex:convergence1D-boundary}: $L^2$ and $H^1$ error convergence in the spline spaces $\mathbb{S}_{p,n,0}^\opt$, $\overline{\mathbb{S}}_{p,n,0}$ and $\mathbb{S}_{p,n,0}$ in terms of $n$, for even degrees $p$. Both without and with boundary data correction are considered for the spaces $\mathbb{S}_{p,n,0}^\opt$ and $\overline{\mathbb{S}}_{p,n,0}$. The reference convergence order in $n$ is indicated by black triangles.} \label{ex:convergence1D-boundary:b}
\end{figure}

\begin{example}\label{ex:convergence1D-boundary}
As a test for the strategy presented in Section~\ref{sec:general-BC-1D}, we consider problem \eqref{eq:second-order-prob} with the manufactured solution 
$$u(x)=1-\frac{15}{16}x-\frac{1}{(x+1)^4}.$$
The exact solution does not satisfy the additional conditions on high-order derivatives defining the spaces $\mathbb{S}_{p,n,0}^\opt$ and $\overline{\mathbb{S}}_{p,n,0}$. In Figures~\ref{ex:convergence1D-boundary:a} and~\ref{ex:convergence1D-boundary:b} we depict the convergence of the approximate solutions in the spline spaces $\mathbb{S}_{p,n,0}^\opt$, $\overline{\mathbb{S}}_{p,n,0}$ and $\mathbb{S}_{p,n,0}$ in terms of $n$, for various values of $p$. There is a substantial loss of accuracy for $p>2$ when approximating the solution in the reduced spline spaces. However, the full convergence order ($p+1$ in the $L^2$-norm and $p$ in the $H^1$-norm) is recovered by applying the boundary data correction described in Section~\ref{sec:general-BC-1D}. Note that in this example, per degree of freedom, the error obtained in $\mathbb{S}_{p,n,0}$ is slightly worse for $p$ even, but better for $p$ odd. 
\end{example}

\subsection{Multivariate problems}
We now test the numerical performance of the presented strategies in the bivariate setting. We consider both the eigenvalue problem \eqref{eq:prob-eigenv} and second-order problems of the form \eqref{eq:Laplace} with $d=2$.
We approximately solve them by means of Galerkin discretizations in the reduced tensor-product spline spaces $\mathbb{S}_{p,n,0}^\opt\otimes\mathbb{S}_{p,n,0}^\opt$ and $\overline{\mathbb{S}}_{p,n,0}\otimes \overline{\mathbb{S}}_{p,n,0}$. Just like in Section~\ref{sec:numerics-1D}, we also compare them with the full tensor-product spline space $\mathbb{S}_{p,n,0}\otimes\mathbb{S}_{p,n,0}$. All these spaces have the same dimension~$n^2$.

\begin{figure}[t!]
\centering
\subfigure[$e_{\omega,\indeigkone,\indeigktwo}$ in $\mathbb{S}_{p,50,0}^\opt\otimes\mathbb{S}_{p,50,0}^\opt$]{\includegraphics[height=4.1cm]{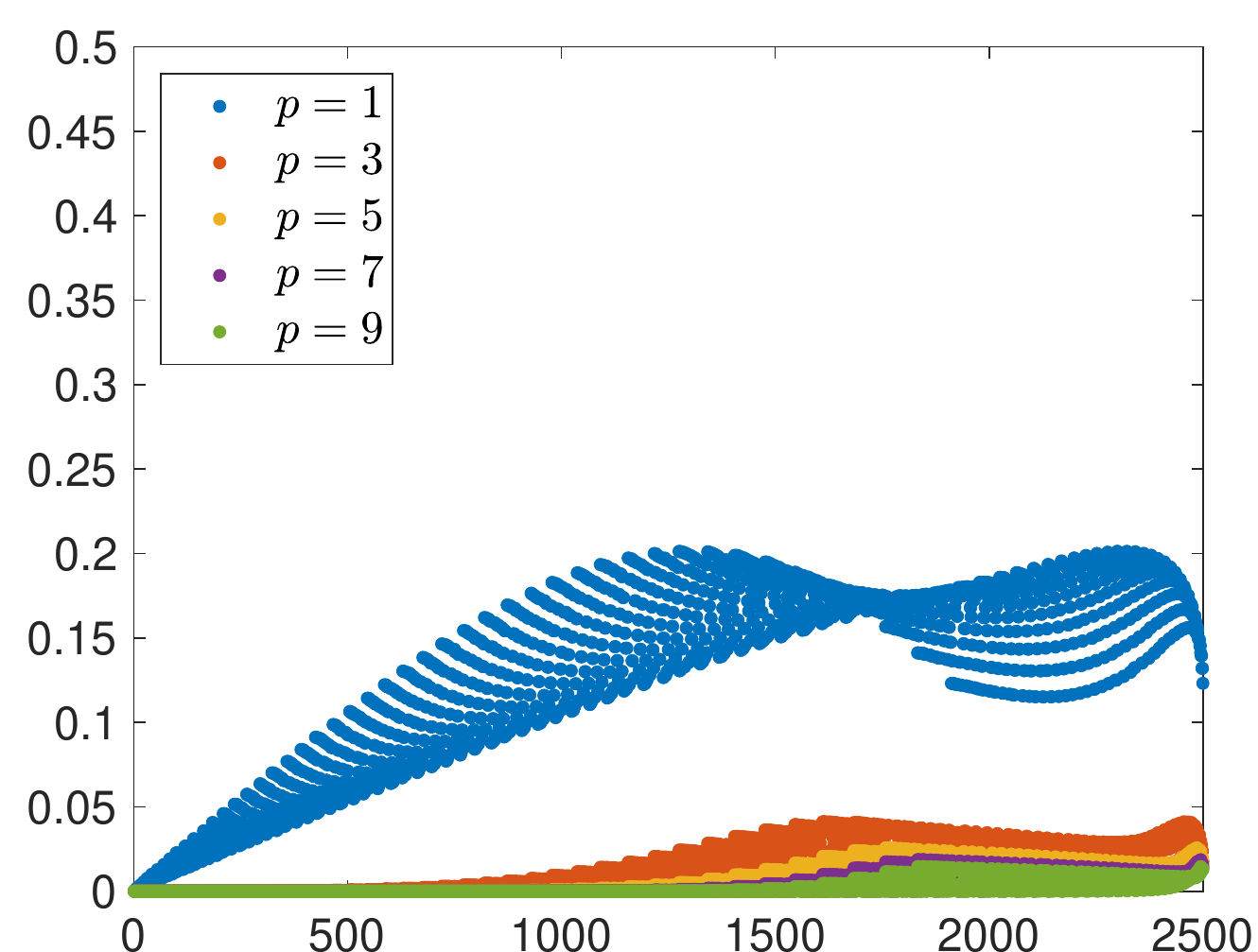}}\hspace*{0.1cm}
\subfigure[$e_{\omega,\indeigkone,\indeigktwo}$ in $\mathbb{S}_{p,50,0}\otimes\mathbb{S}_{p,50,0}$]{\includegraphics[height=4.1cm]{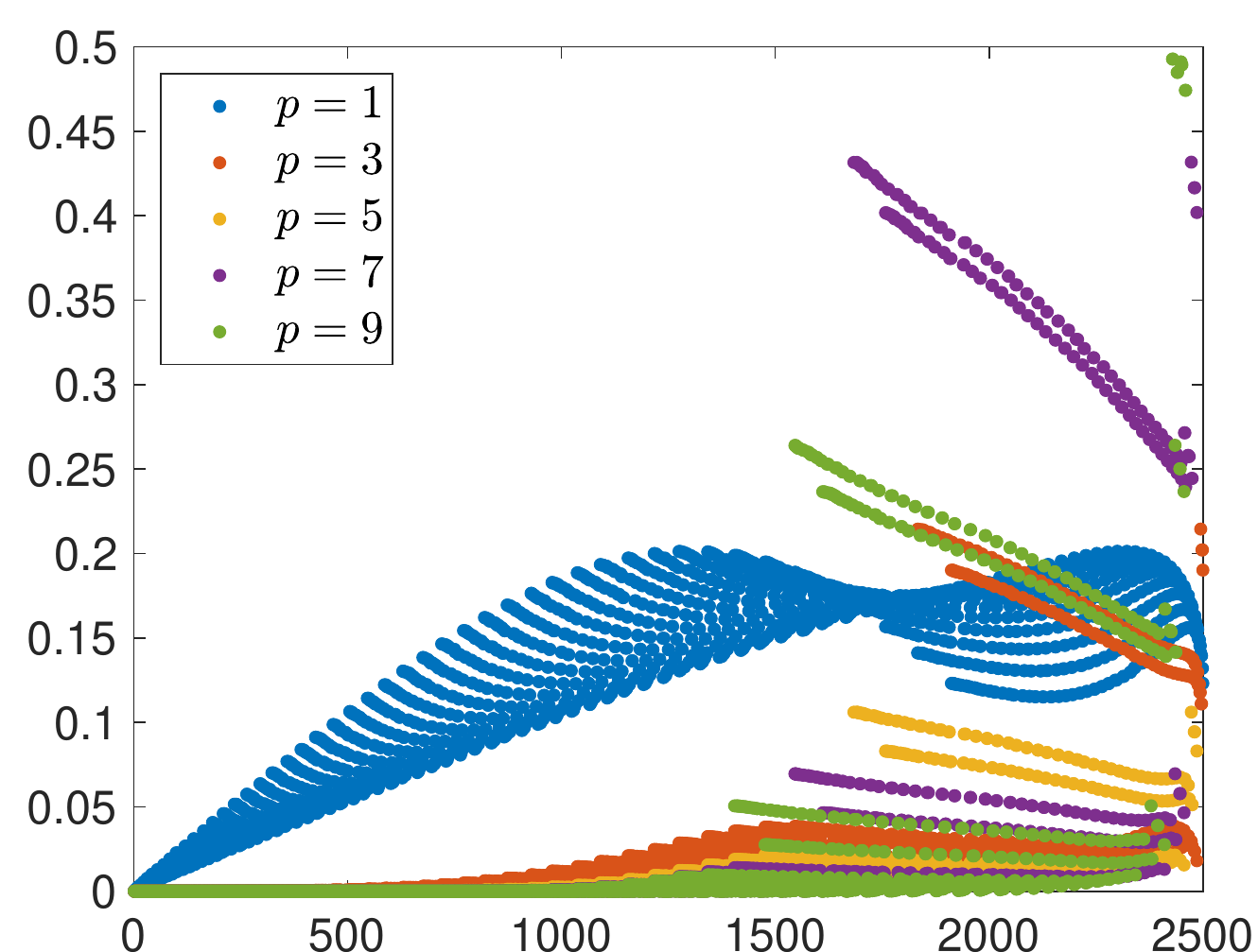}}\hspace*{0.1cm}
\subfigure[zoom out for $\mathbb{S}_{p,50,0}\otimes\mathbb{S}_{p,50,0}$]{\includegraphics[height=4.1cm]{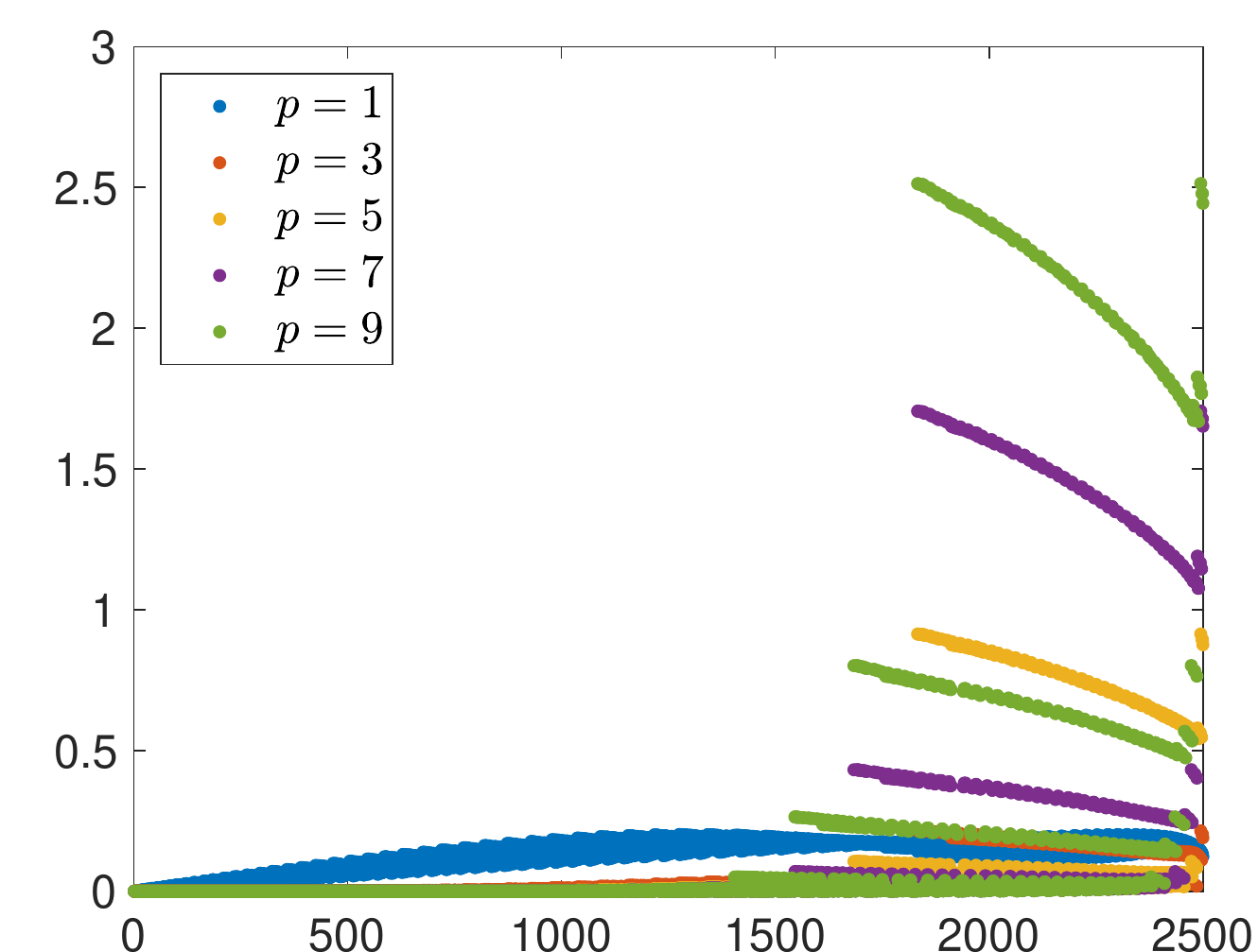}}\\
\subfigure[$e_{u,\indeigkone,\indeigktwo}$ in $\mathbb{S}_{p,50,0}^\opt\otimes\mathbb{S}_{p,50,0}^\opt$]{\includegraphics[height=4.1cm]{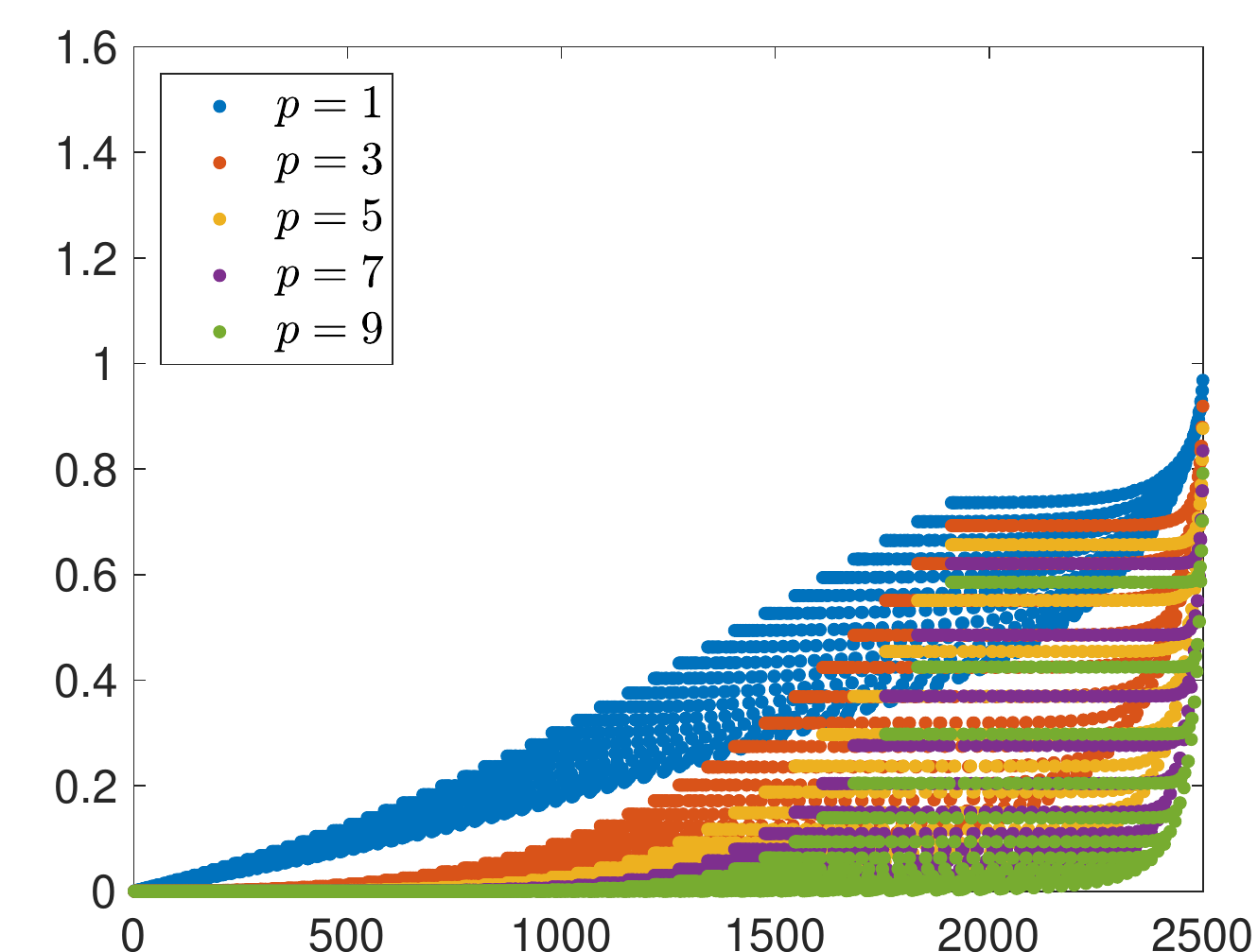}}\hspace*{0.1cm}
\subfigure[$e_{u,\indeigkone,\indeigktwo}$ in $\mathbb{S}_{p,50,0}\otimes\mathbb{S}_{p,50,0}$]{\includegraphics[height=4.1cm]{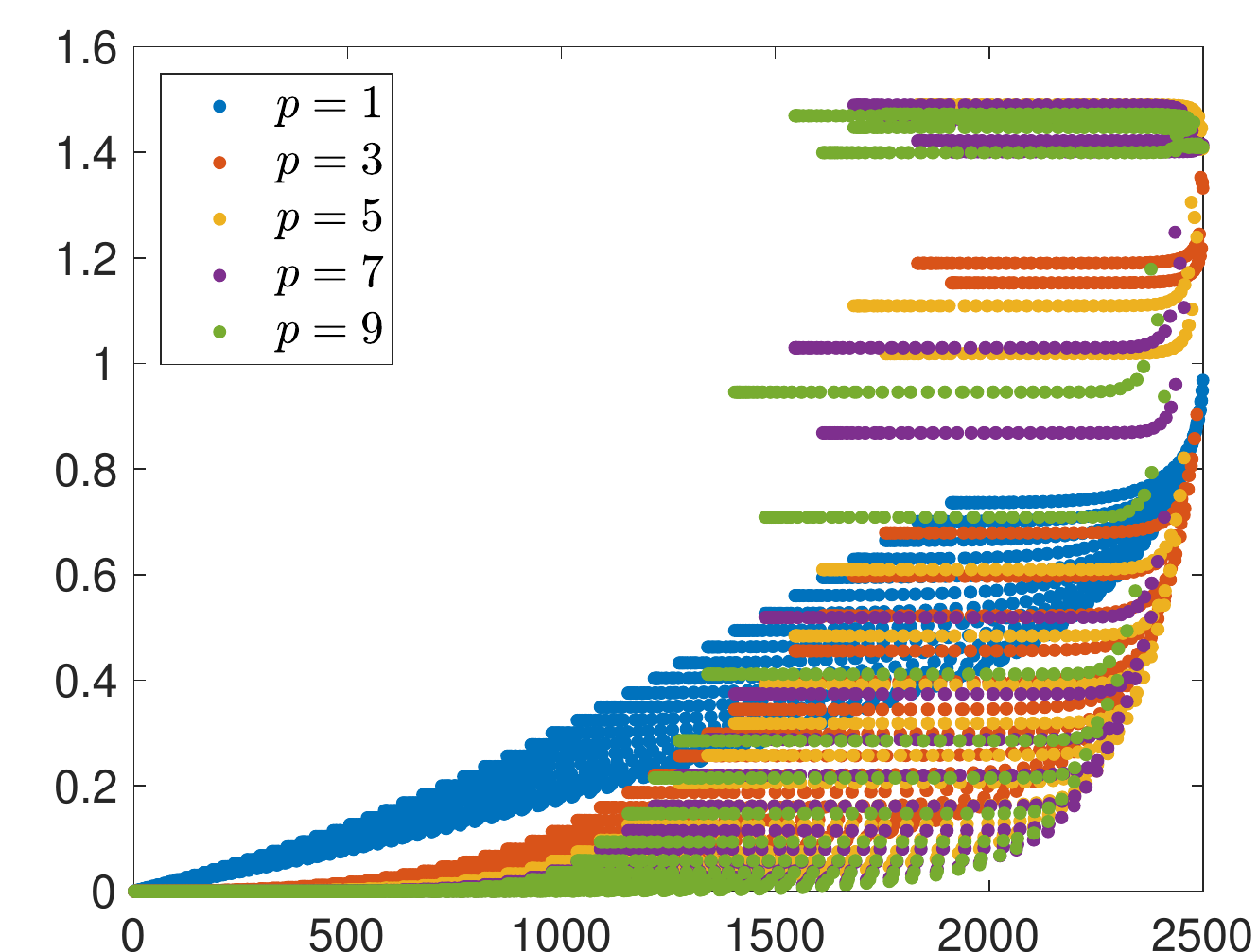}}
\caption{Example~\ref{ex:eigenvalues2D}: Relative frequency errors $e_{\omega,\indeigkone,\indeigktwo}$ in \eqref{eq:error-eigenvalues2D} and $L^2$ relative eigenfunction errors $e_{u,\indeigkone,\indeigktwo}$ in \eqref{eq:error-eigenfunctions2D} corresponding to the spline spaces $\mathbb{S}_{p,n,0}^\opt\otimes\mathbb{S}_{p,n,0}^\opt$ and $\mathbb{S}_{p,n,0}\otimes\mathbb{S}_{p,n,0}$ for odd degrees $p$ and $n=50$. The errors are ordered according to increasing exact frequencies. No outliers are observed for $\mathbb{S}_{p,n,0}^\opt\otimes\mathbb{S}_{p,n,0}^\opt$. Several outliers of $\mathbb{S}_{p,n,0}\otimes\mathbb{S}_{p,n,0}$ are outside the visible range in (b) as illustrated in (c).} \label{fig:eigenvalues2D.odd}
\end{figure}
\begin{figure}[t!]
\centering
\subfigure[$e_{\omega,\indeigkone,\indeigktwo}$ in $\mathbb{S}_{p,50,0}^\opt\otimes\mathbb{S}_{p,50,0}^\opt$]{\includegraphics[height=4.1cm]{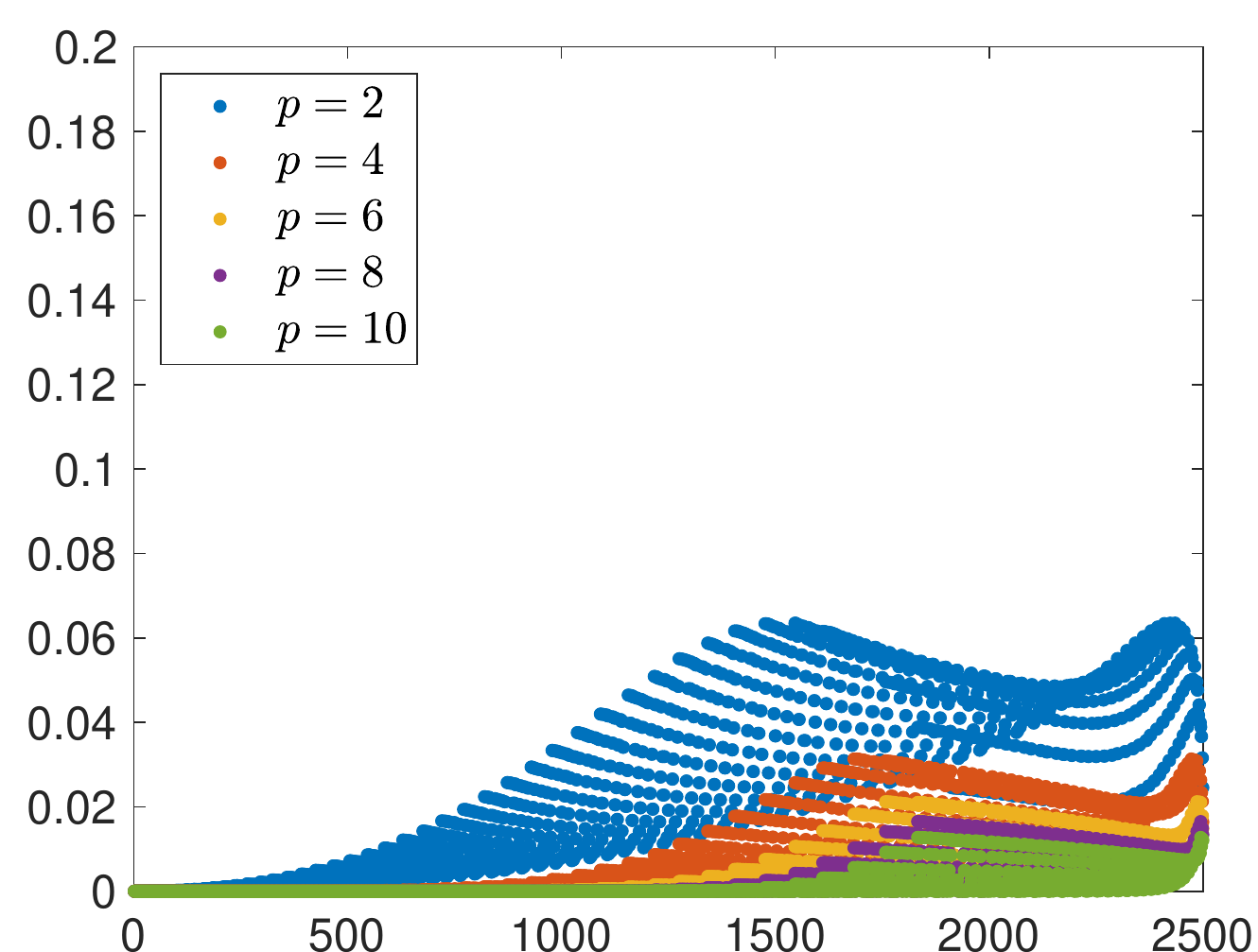}}\hspace*{0.1cm}
\subfigure[$e_{\omega,\indeigkone,\indeigktwo}$ in $\overline{\mathbb{S}}_{p,50,0}\otimes\overline{\mathbb{S}}_{p,50,0}$]{\includegraphics[height=4.1cm]{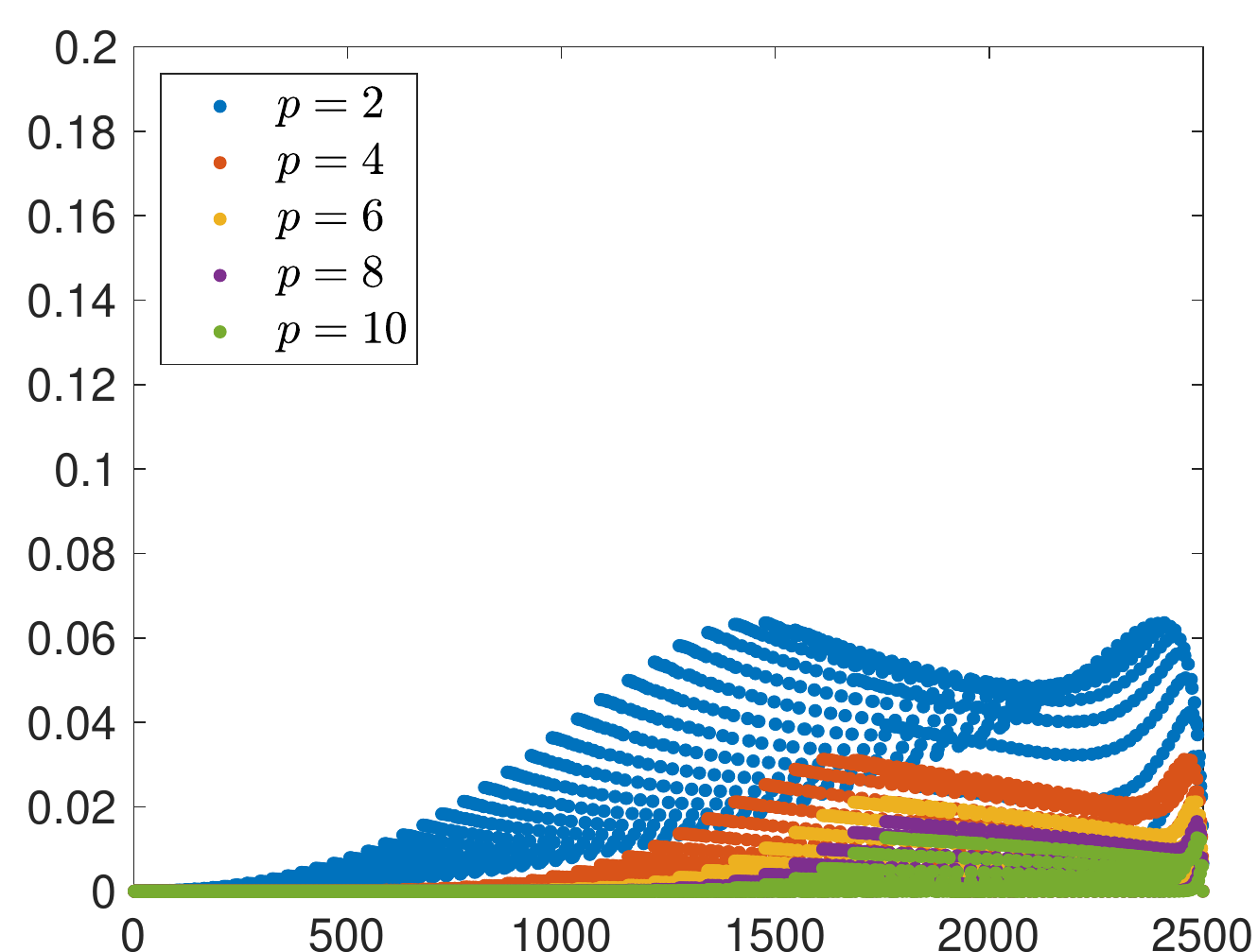}}\hspace*{0.1cm}
\subfigure[$e_{\omega,\indeigkone,\indeigktwo}$ in $\mathbb{S}_{p,50,0}\otimes\mathbb{S}_{p,50,0}$]{\includegraphics[height=4.1cm]{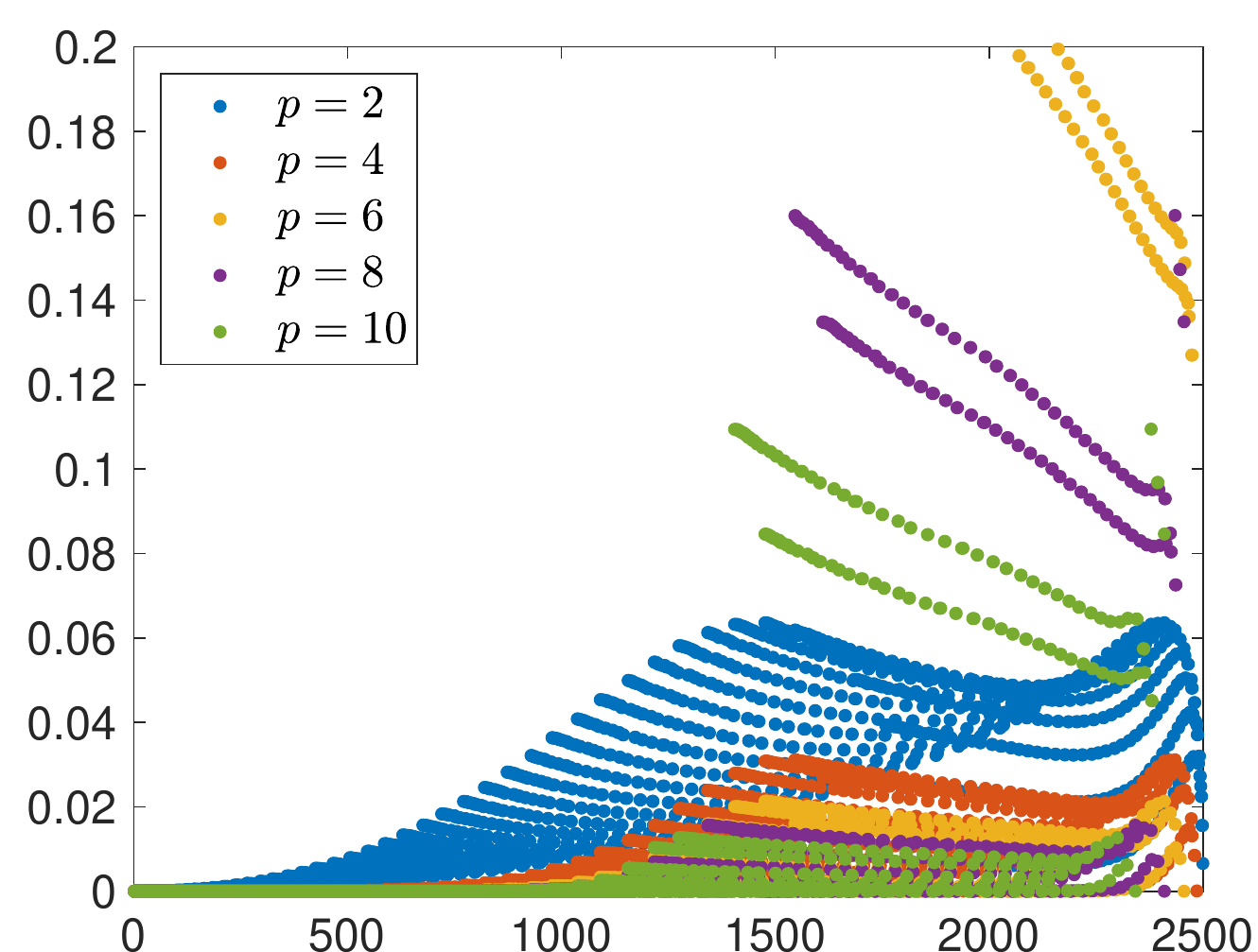}}\\
\subfigure[$e_{u,\indeigkone,\indeigktwo}$ in $\mathbb{S}_{p,50,0}^\opt\otimes\mathbb{S}_{p,50,0}^\opt$]{\includegraphics[height=4.1cm]{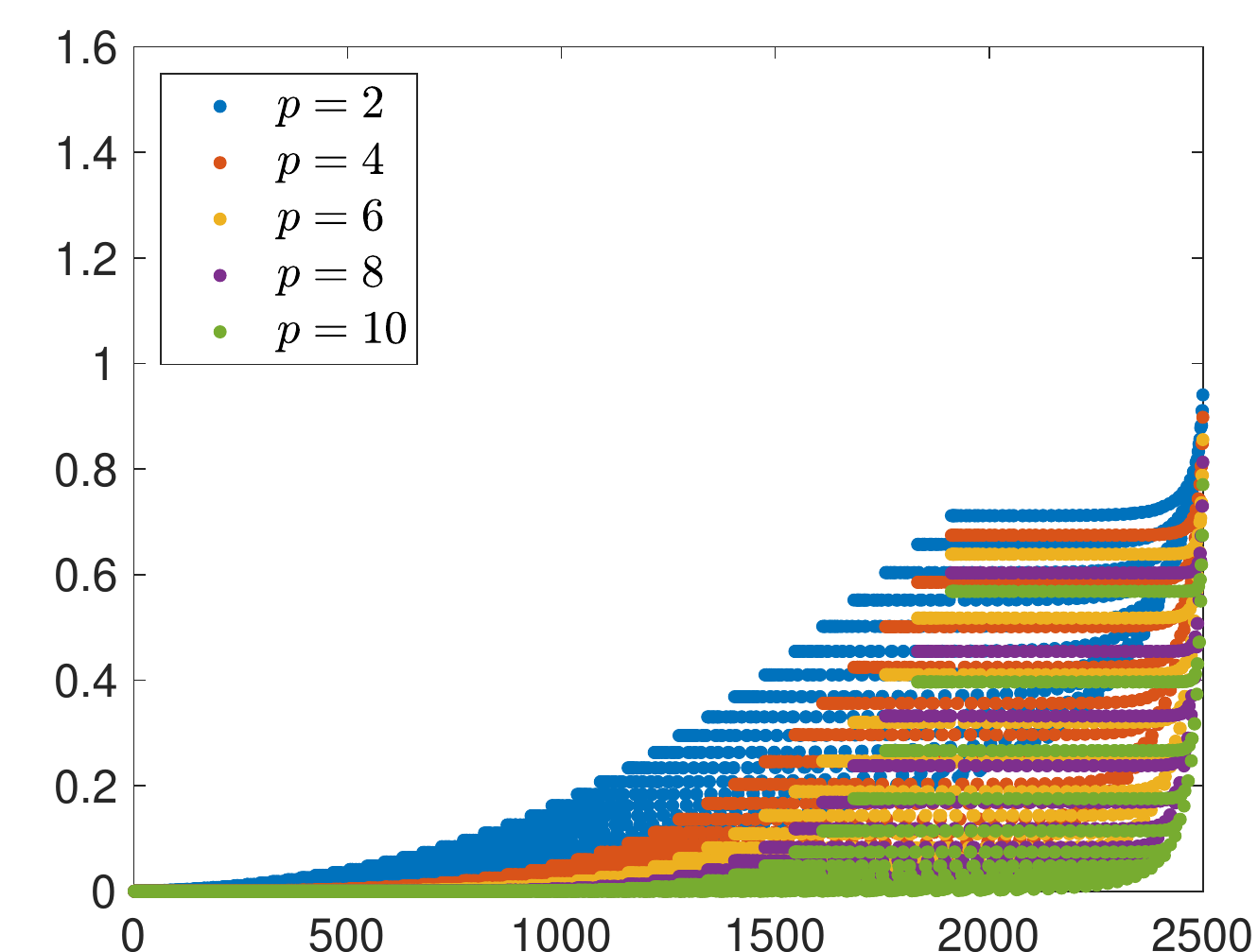}}\hspace*{0.1cm}
\subfigure[$e_{u,\indeigkone,\indeigktwo}$ in $\overline{\mathbb{S}}_{p,50,0}\otimes\overline{\mathbb{S}}_{p,50,0}$]{\includegraphics[height=4.1cm]{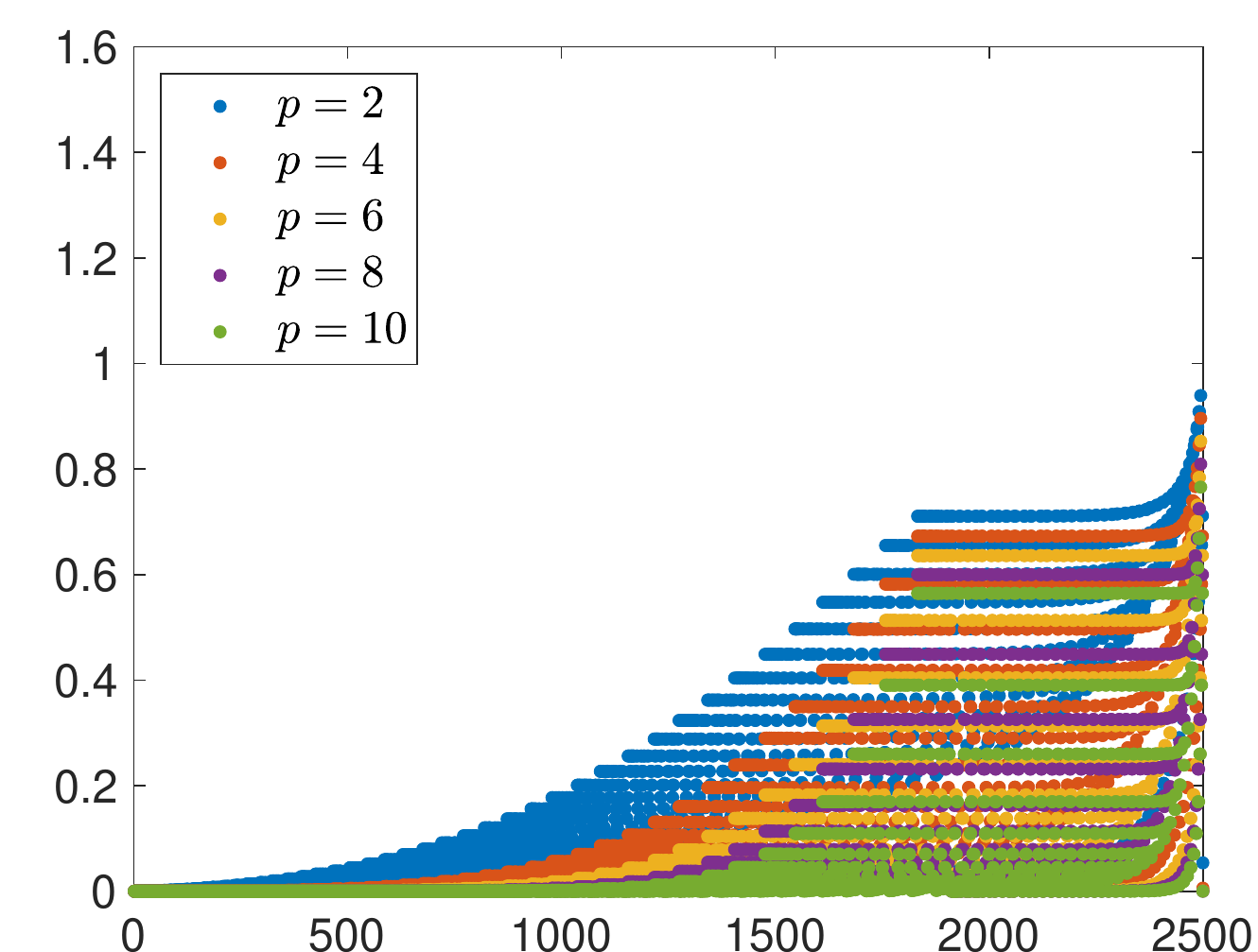}}\hspace*{0.1cm}
\subfigure[$e_{u,\indeigkone,\indeigktwo}$ in $\mathbb{S}_{p,50,0}\otimes\mathbb{S}_{p,50,0}$]{\includegraphics[height=4.1cm]{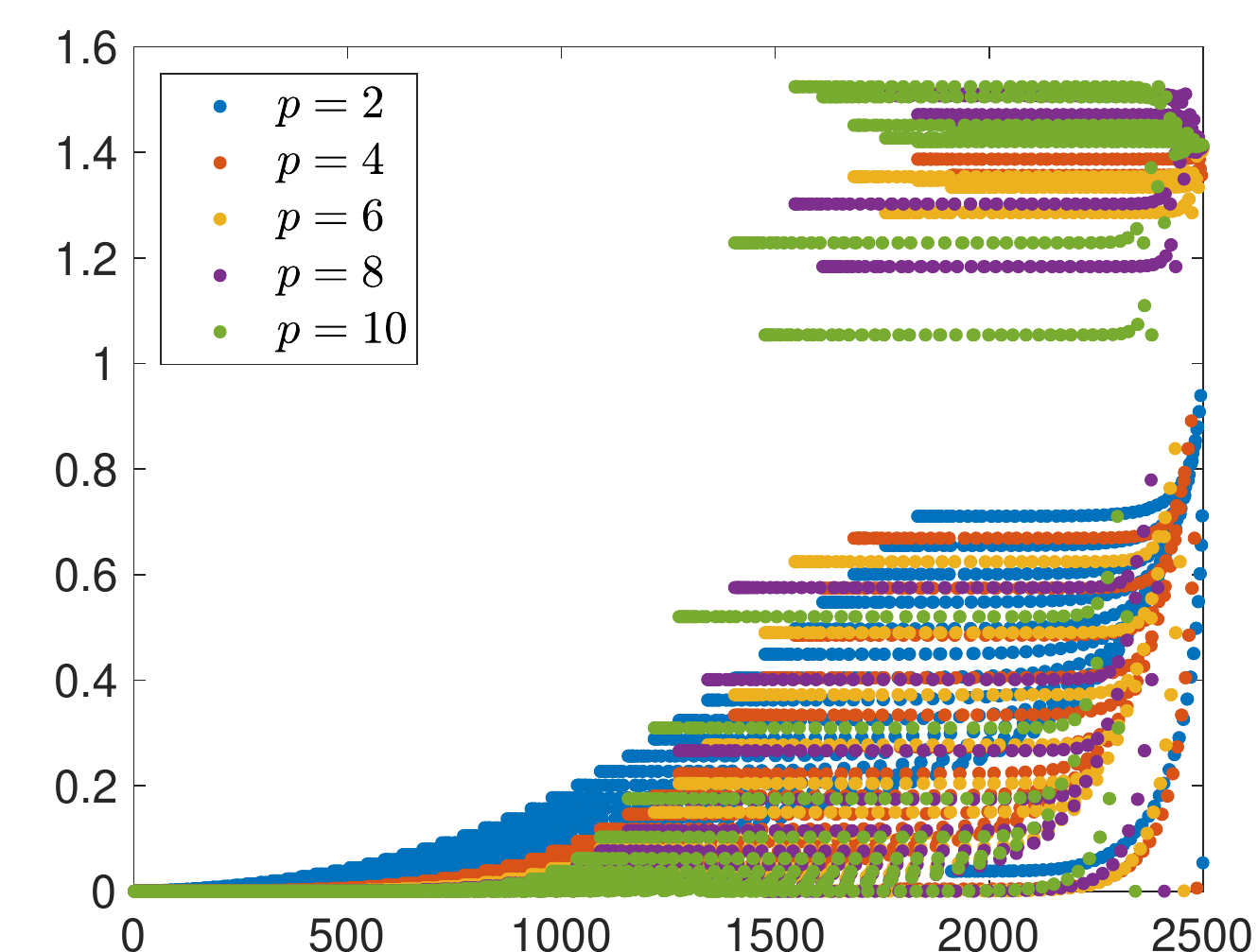}}
\caption{Example~\ref{ex:eigenvalues2D}: Relative frequency errors $e_{\omega,\indeigkone,\indeigktwo}$ in \eqref{eq:error-eigenvalues2D} and $L^2$ relative eigenfunction errors $e_{u,\indeigkone,\indeigktwo}$ in \eqref{eq:error-eigenfunctions2D} corresponding to the spline spaces $\mathbb{S}_{p,n,0}^\opt\otimes\mathbb{S}_{p,n,0}^\opt$, $\overline{\mathbb{S}}_{p,n,0}\otimes\overline{\mathbb{S}}_{p,n,0}$ and $\mathbb{S}_{p,n,0}\otimes\mathbb{S}_{p,n,0}$ for even degrees $p$ and $n=50$. The errors are ordered according to increasing exact frequencies. No outliers are observed for $\mathbb{S}_{p,n,0}^\opt\otimes\mathbb{S}_{p,n,0}^\opt$ and $\overline{\mathbb{S}}_{p,n,0}\otimes\overline{\mathbb{S}}_{p,n,0}$. Several outliers of $\mathbb{S}_{p,n,0}\otimes\mathbb{S}_{p,n,0}$ are outside the visible range in (c); they are not shown for clarity of the figure.} \label{fig:eigenvalues2D.even}
\end{figure}
\begin{figure}[t!]
\centering
\subfigure[$e_{\omega,\indeigkone,\indeigktwo}$ in $\mathbb{S}_{1,50,0}^\opt\otimes\mathbb{S}_{1,50,0}^\opt$]{\includegraphics[height=4.1cm]{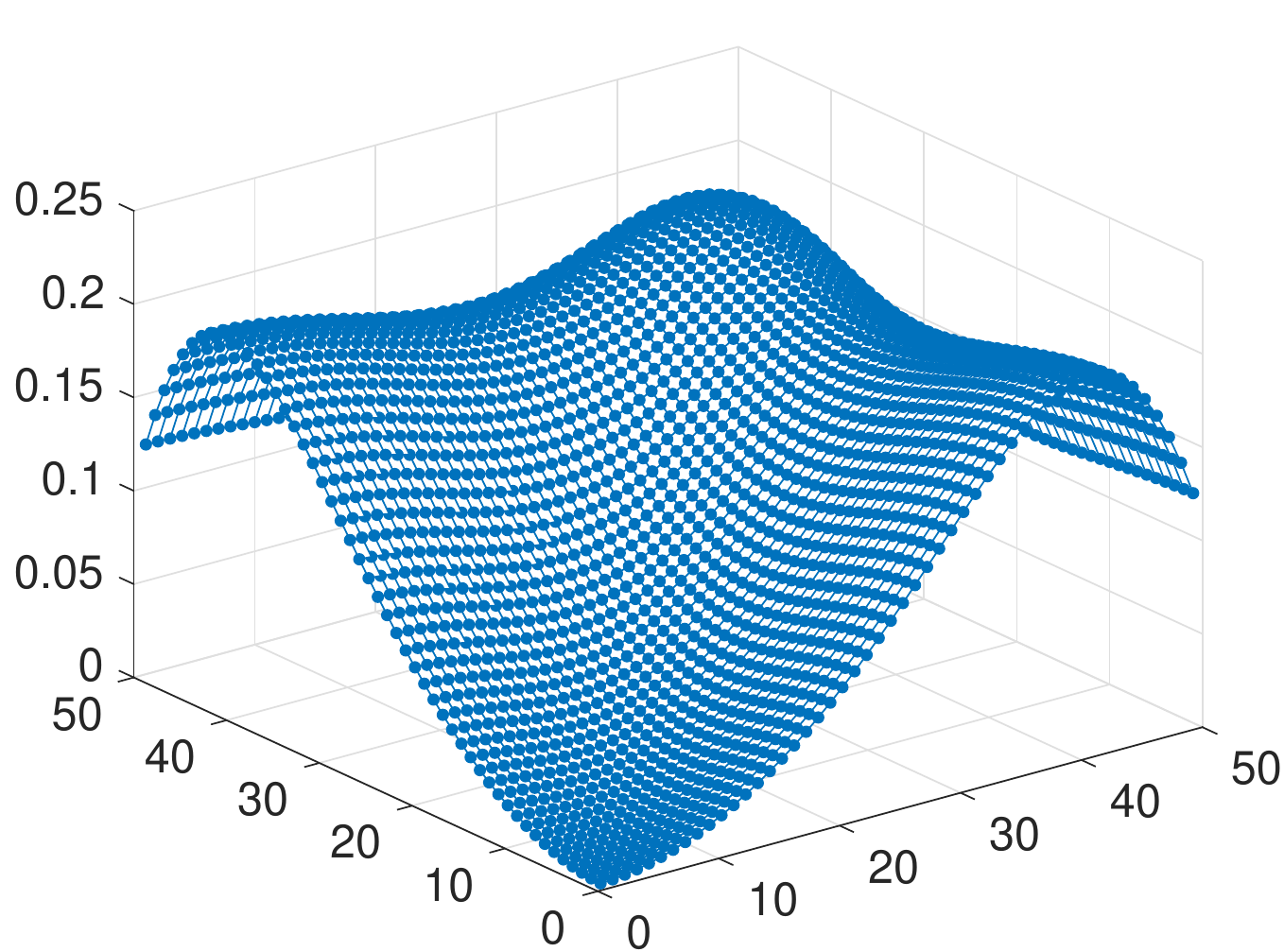}}\hspace*{0.1cm}
\subfigure[$e_{\omega,\indeigkone,\indeigktwo}$ in $\mathbb{S}_{3,50,0}^\opt\otimes\mathbb{S}_{3,50,0}^\opt$]{\includegraphics[height=4.1cm]{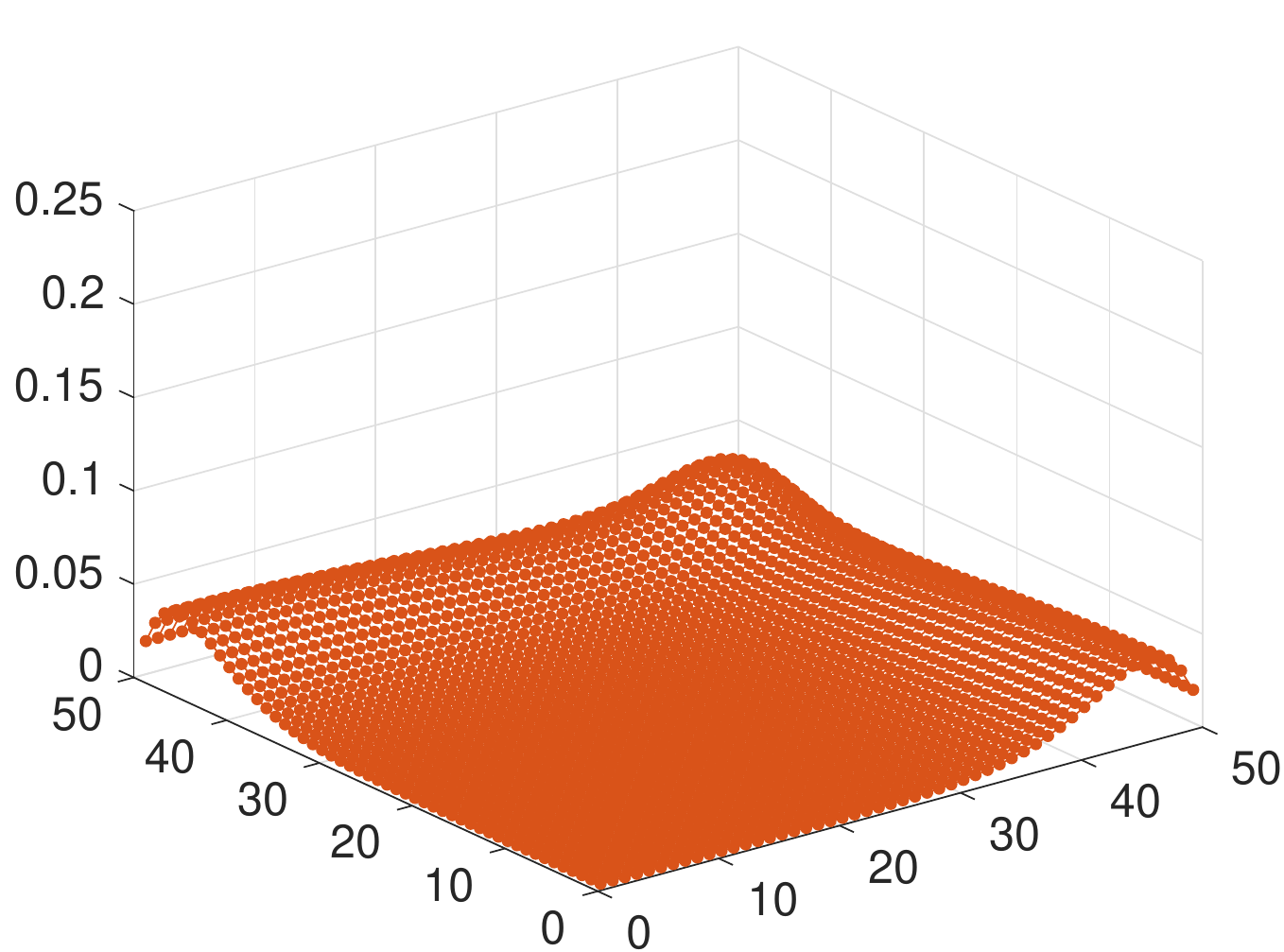}}\hspace*{0.1cm}
\subfigure[$e_{\omega,\indeigkone,\indeigktwo}$ in $\mathbb{S}_{5,50,0}^\opt\otimes\mathbb{S}_{5,50,0}^\opt$]{\includegraphics[height=4.1cm]{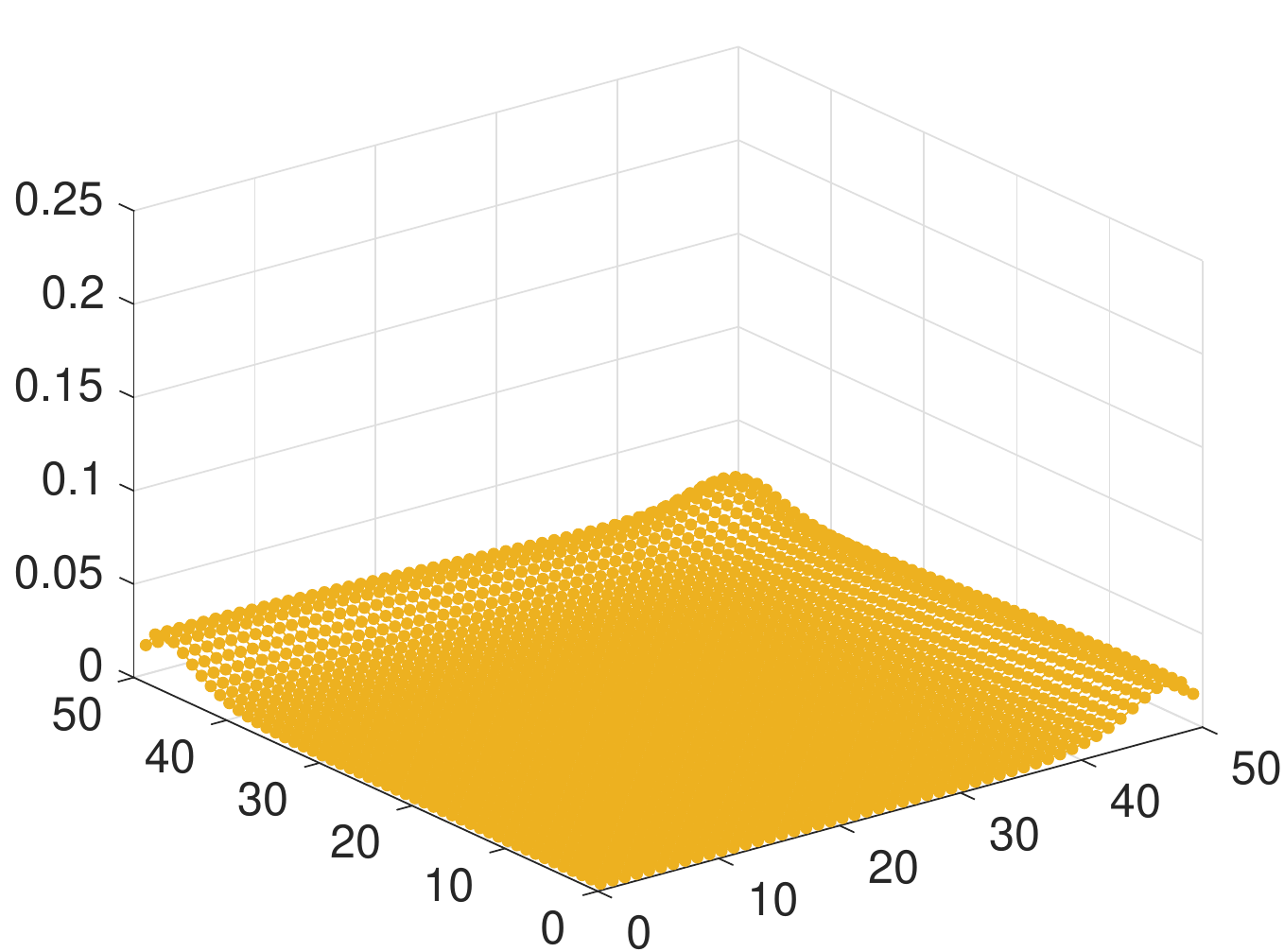}}\\
\subfigure[$e_{u,\indeigkone,\indeigktwo}$ in $\mathbb{S}_{1,50,0}^\opt\otimes\mathbb{S}_{1,50,0}^\opt$]{\includegraphics[height=4.1cm]{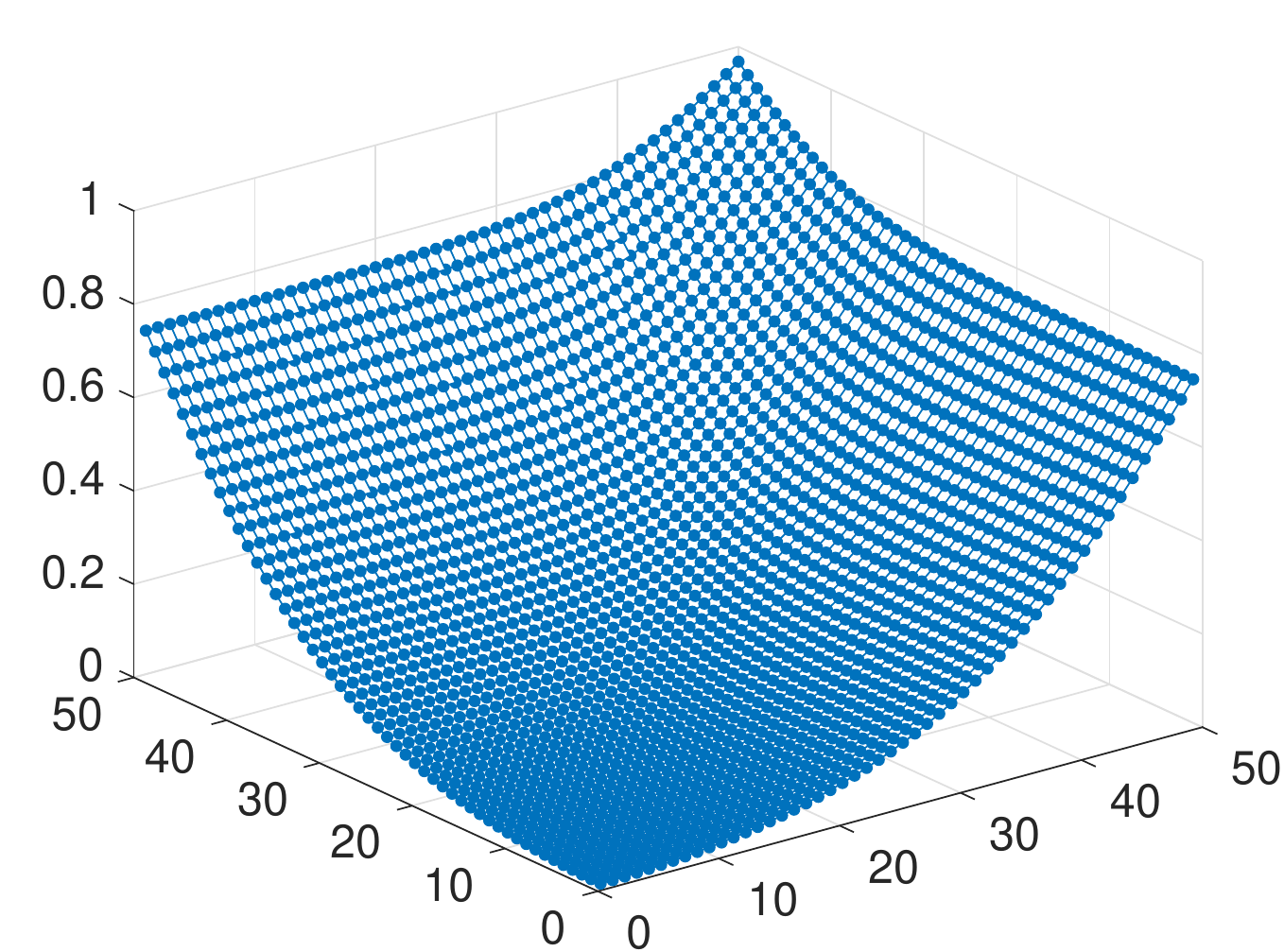}}\hspace*{0.1cm}
\subfigure[$e_{u,\indeigkone,\indeigktwo}$ in $\mathbb{S}_{3,50,0}^\opt\otimes\mathbb{S}_{3,50,0}^\opt$]{\includegraphics[height=4.1cm]{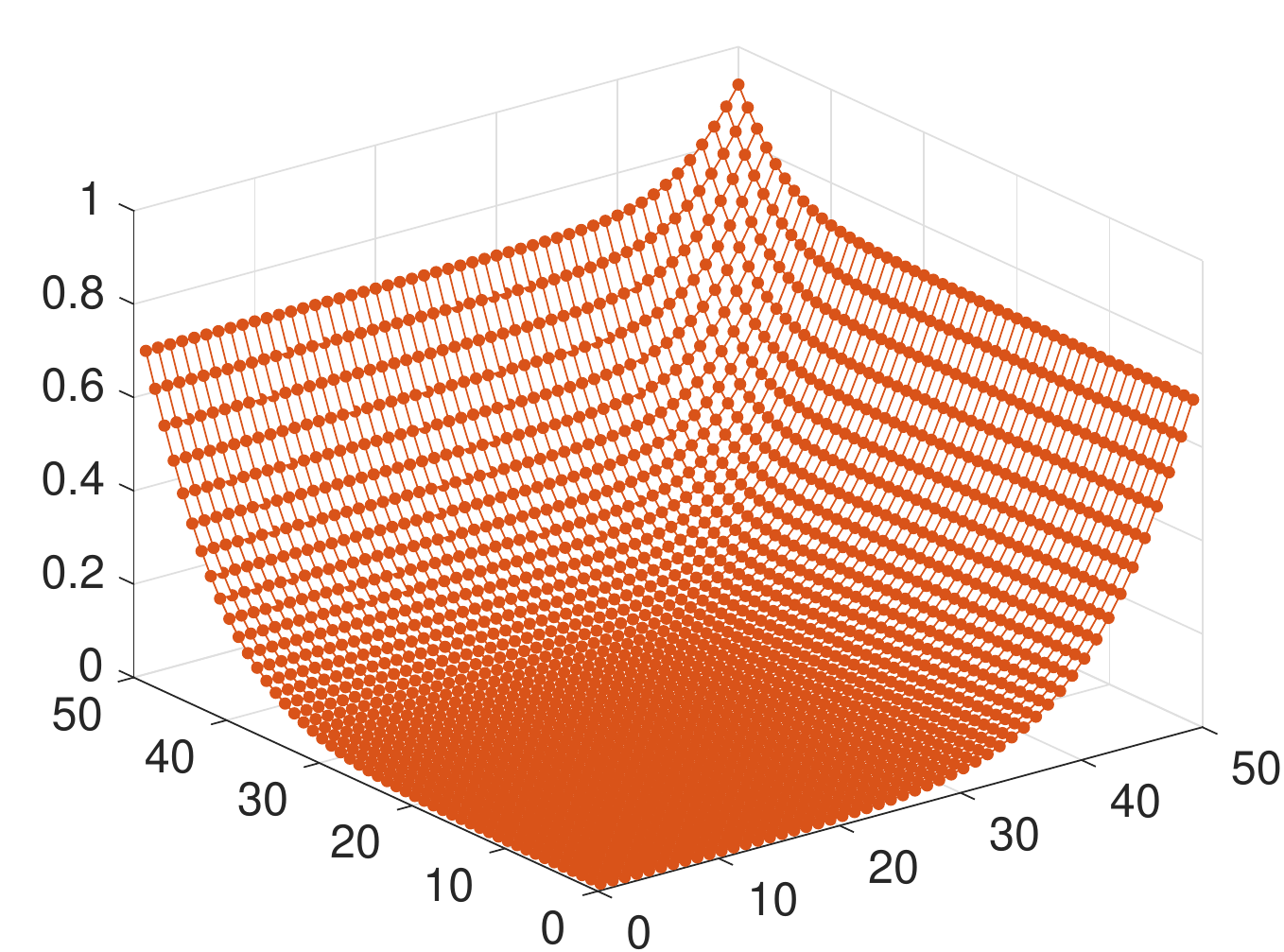}}\hspace*{0.1cm}
\subfigure[$e_{u,\indeigkone,\indeigktwo}$ in $\mathbb{S}_{5,50,0}^\opt\otimes\mathbb{S}_{5,50,0}^\opt$]{\includegraphics[height=4.1cm]{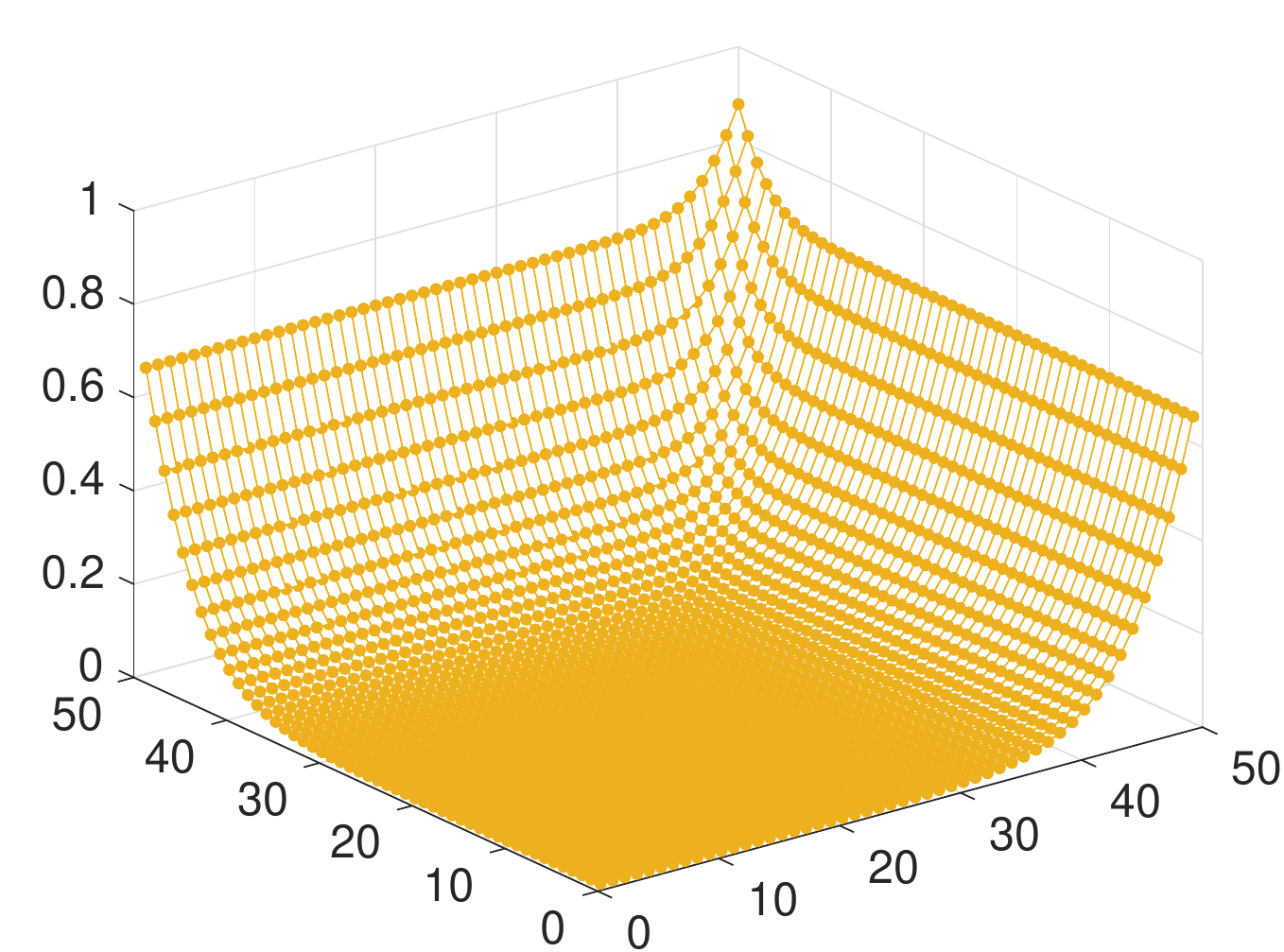}}
\caption{Example~\ref{ex:eigenvalues2D}: Relative frequency errors $e_{\omega,\indeigkone,\indeigktwo}$ in \eqref{eq:error-eigenvalues2D} and $L^2$ relative eigenfunction errors $e_{u,\indeigkone,\indeigktwo}$ in \eqref{eq:error-eigenfunctions2D} corresponding to the spline space $\mathbb{S}_{p,n,0}^\opt\otimes\mathbb{S}_{p,n,0}^\opt$ for odd degrees $p$ and $n=50$. The errors are ordered according to the decomposition of the exact frequencies in  \eqref{eq:eig-Laplace2D} in terms of increasing values of their univariate counterparts. No outliers are observed.} \label{fig:eigenvalues2D.odd.surface}
\end{figure}
\begin{figure}[t!]
\centering
\subfigure[$e_{\omega,\indeigkone,\indeigktwo}$ in $\mathbb{S}_{2,50,0}^\opt\otimes\mathbb{S}_{2,50,0}^\opt$]{\includegraphics[height=4.1cm]{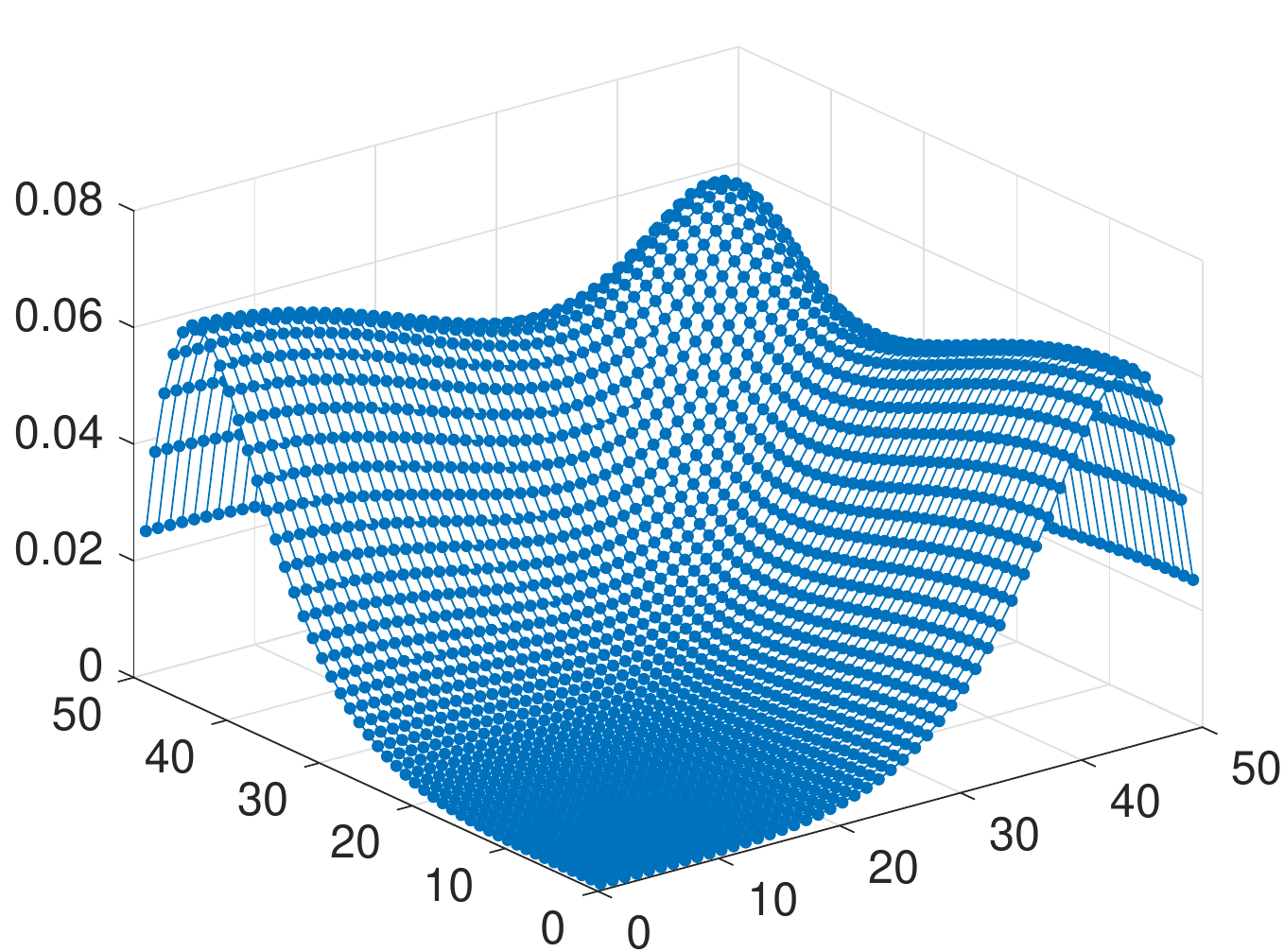}}\hspace*{0.1cm}
\subfigure[$e_{\omega,\indeigkone,\indeigktwo}$ in $\mathbb{S}_{4,50,0}^\opt\otimes\mathbb{S}_{4,50,0}^\opt$]{\includegraphics[height=4.1cm]{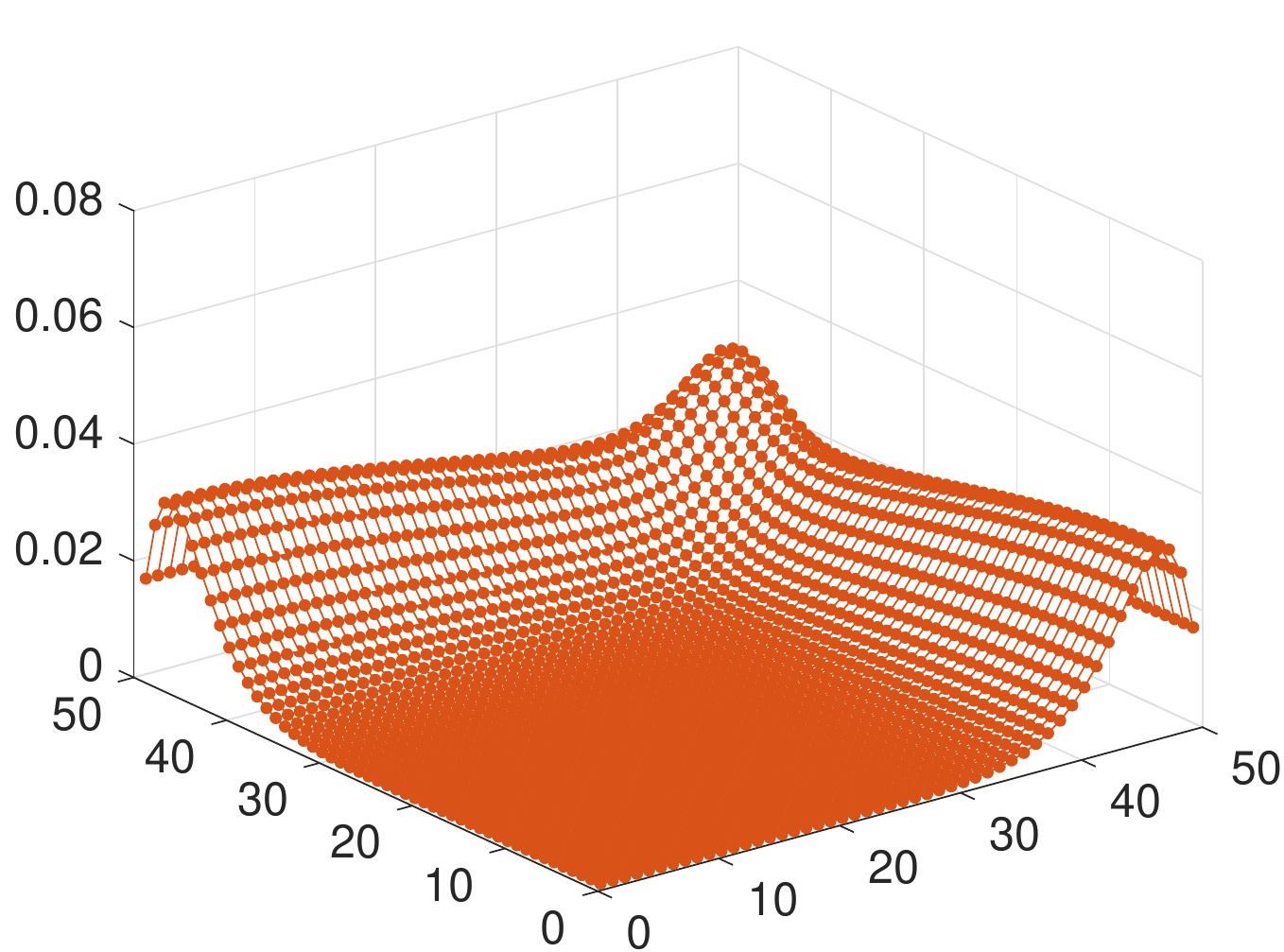}}\hspace*{0.1cm}
\subfigure[$e_{\omega,\indeigkone,\indeigktwo}$ in $\mathbb{S}_{6,50,0}^\opt\otimes\mathbb{S}_{6,50,0}^\opt$]{\includegraphics[height=4.1cm]{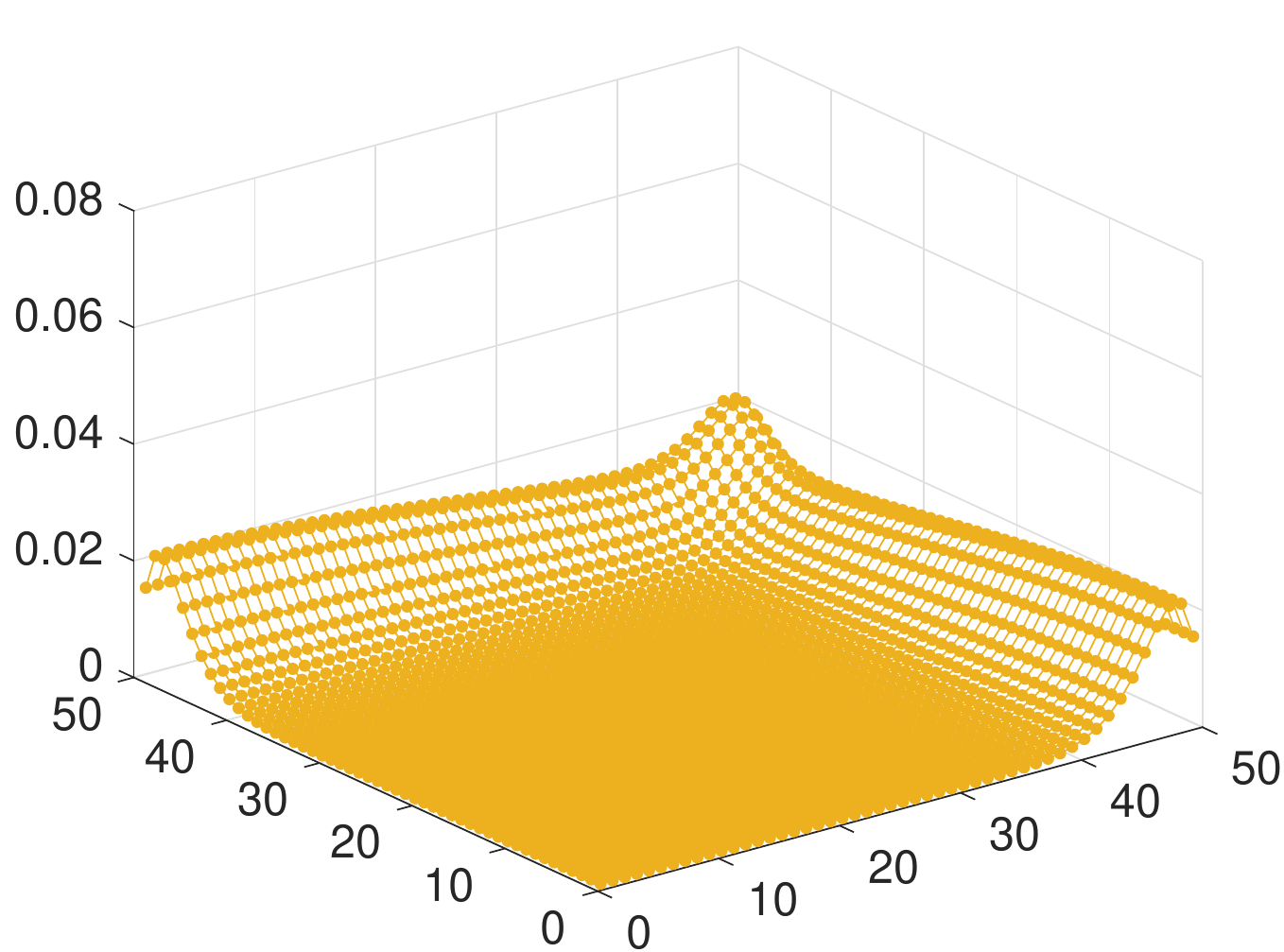}}\\
\subfigure[$e_{u,\indeigkone,\indeigktwo}$ in $\mathbb{S}_{2,50,0}^\opt\otimes\mathbb{S}_{2,50,0}^\opt$]{\includegraphics[height=4.1cm]{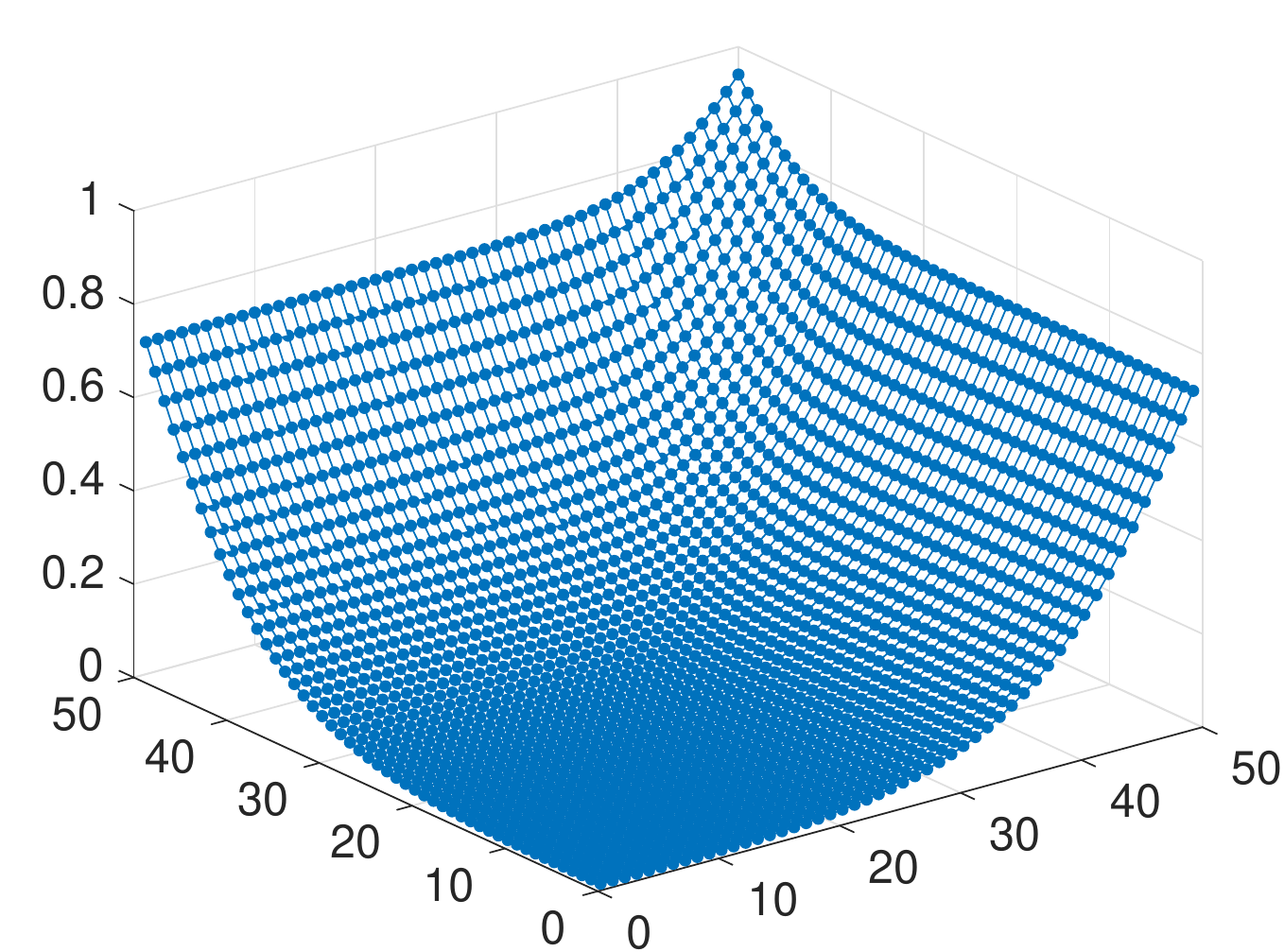}}\hspace*{0.1cm}
\subfigure[$e_{u,\indeigkone,\indeigktwo}$ in $\mathbb{S}_{4,50,0}^\opt\otimes\mathbb{S}_{4,50,0}^\opt$]{\includegraphics[height=4.1cm]{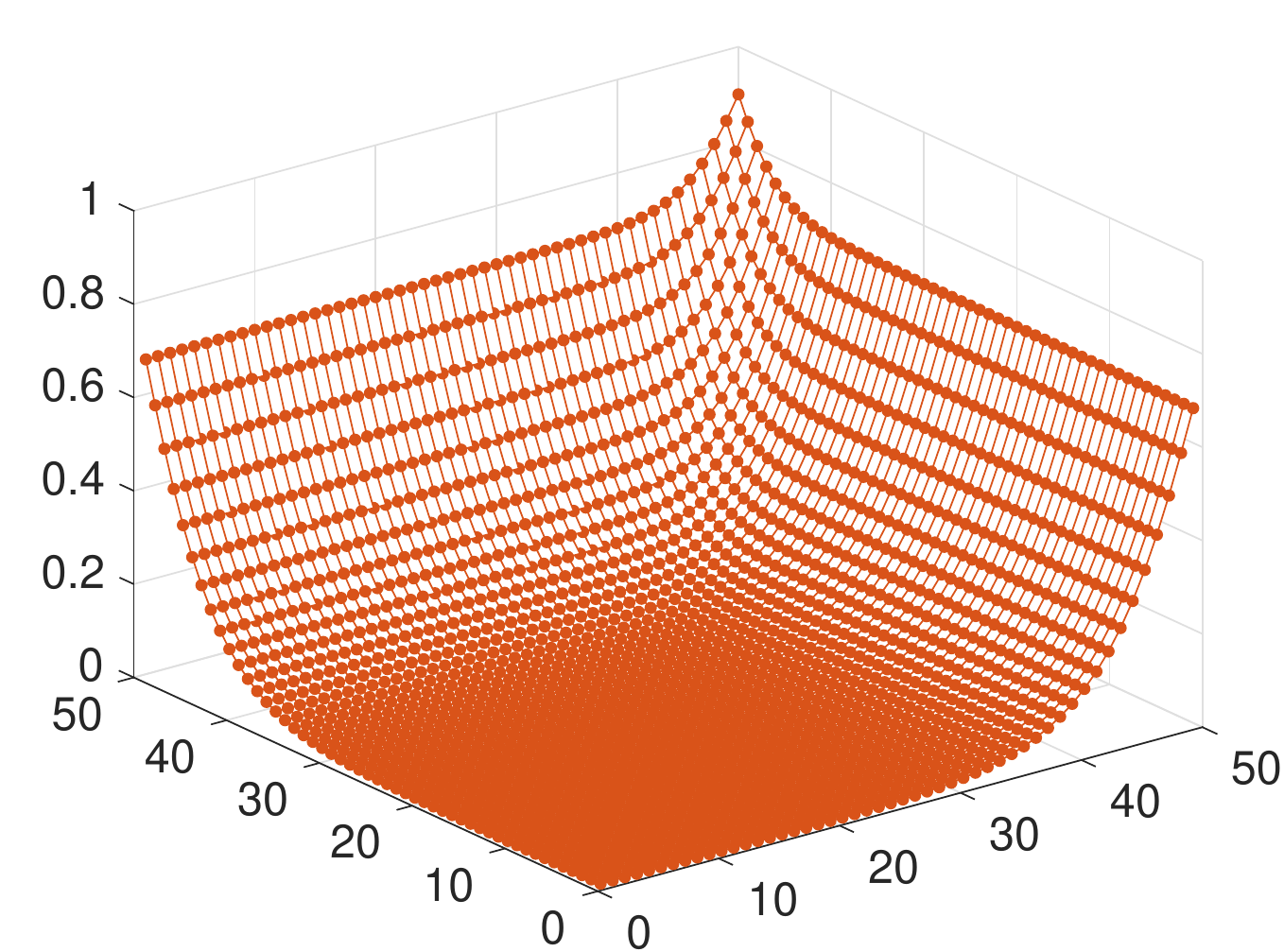}}\hspace*{0.1cm}
\subfigure[$e_{u,\indeigkone,\indeigktwo}$ in $\mathbb{S}_{6,50,0}^\opt\otimes\mathbb{S}_{6,50,0}^\opt$]{\includegraphics[height=4.1cm]{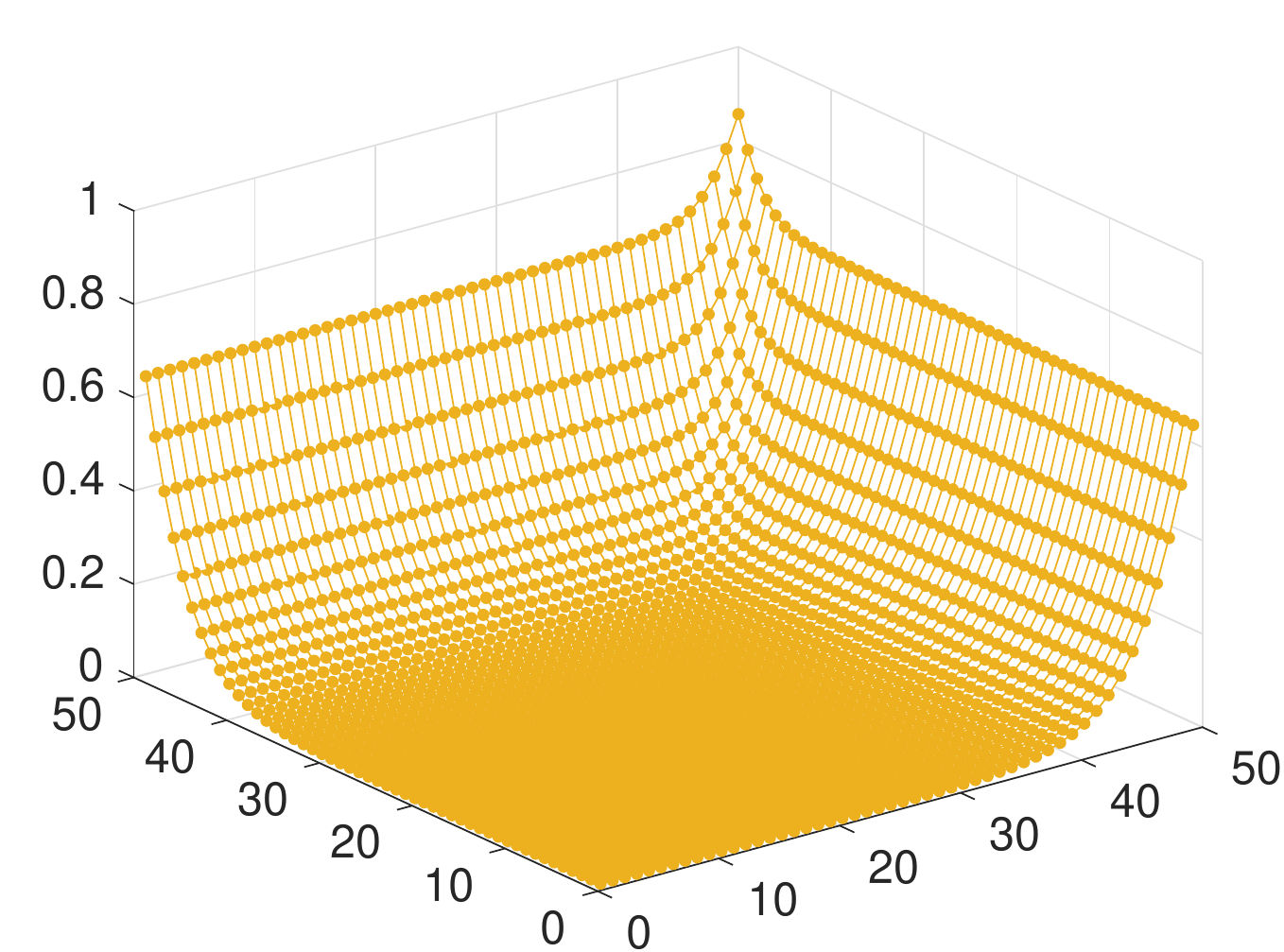}}
\caption{Example~\ref{ex:eigenvalues2D}: Relative frequency errors $e_{\omega,\indeigkone,\indeigktwo}$ in \eqref{eq:error-eigenvalues2D} and $L^2$ relative eigenfunction errors $e_{u,\indeigkone,\indeigktwo}$ in \eqref{eq:error-eigenfunctions2D} corresponding to the spline space $\mathbb{S}_{p,n,0}^\opt\otimes\mathbb{S}_{p,n,0}^\opt$ for even degrees $p$ and $n=50$. The errors are ordered according to the decomposition of the exact frequencies in \eqref{eq:eig-Laplace2D} in terms of increasing values of their univariate counterparts. No outliers are observed.} \label{fig:eigenvalues2D.even.surface}
\end{figure}

\begin{example}\label{ex:eigenvalues2D}
In this example we show the performance of the discretizations for approximating the eigenvalue problem \eqref{eq:prob-eigenv}.
Let $\omega_{h_1,h_2,\indeigkone,\indeigktwo}$ be the approximate value of the frequency $\omega_{\indeigkone,\indeigktwo}$; see \eqref{eq:eig-Laplace2D-type-0-BC}. Here we assume that the matching between the exact frequencies and the approximate ones is retrieved by considering their decomposition in \eqref{eq:eig-Laplace2D} and \eqref{eq:approx-eig-Laplace2D}, respectively, both in terms of increasing values of their univariate counterparts.
In Figures~\ref{fig:eigenvalues2D.odd} and~\ref{fig:eigenvalues2D.even} we depict the relative frequency errors 
\begin{equation}\label{eq:error-eigenvalues2D}
e_{\omega,\indeigkone,\indeigktwo}:=
\frac{\omega_{h_1,h_2,\indeigkone,\indeigktwo}-\omega_{\indeigkone,\indeigktwo}}{\omega_{\indeigkone,\indeigktwo}}, \quad \indeigkone,\indeigktwo=1,\ldots,n,
\end{equation}
obtained by the Galerkin approximations in $\mathbb{S}_{p,n,0}^\opt\otimes\mathbb{S}_{p,n,0}^\opt$, $\overline{\mathbb{S}}_{p,n,0}\otimes \overline{\mathbb{S}}_{p,n,0}$ and $\mathbb{S}_{p,n,0}\otimes\mathbb{S}_{p,n,0}$ for various degrees and $n=50$. 
We clearly notice that the reduced spaces $\mathbb{S}_{p,n,0}^\opt\otimes\mathbb{S}_{p,n,0}^\opt$ and $\overline{\mathbb{S}}_{p,n,0}\otimes \overline{\mathbb{S}}_{p,n,0}$ capture all the eigenvalues without any outlier still maintaining the accuracy of the full tensor-product spline space. 
In the same figures we also report the corresponding $L^2$ relative errors for the eigenfunction approximations
\begin{equation}\label{eq:error-eigenfunctions2D}
e_{u,\indeigkone,\indeigktwo}:=\frac{\|u_{\indeigkone,\indeigktwo}-u_{h_1,h_2,\indeigkone,\indeigktwo}\|}{\|u_{\indeigkone,\indeigktwo}\|}, \quad \indeigkone,\indeigktwo=1,\ldots,n.
\end{equation}
Finally, we remark that the frequency errors and eigenfunction errors seem to approximately lie on several smooth subcurves instead of a single smooth curve as in the univariate case (see Example~\ref{ex:eigenvalues1D}). This can be simply explained by the fact that they actually lie on a bivariate smooth surface due to the bivariate nature of the problem; see \eqref{eq:eig-Laplace2D} and \eqref{eq:approx-eig-Laplace2D}. This is illustrated in Figures~\ref{fig:eigenvalues2D.odd.surface} and~\ref{fig:eigenvalues2D.even.surface} for the space $\mathbb{S}_{p,n,0}^\opt\otimes\mathbb{S}_{p,n,0}^\opt$ considering various degrees.
\end{example}

\begin{remark}\label{rmk:matching_frequencies}
A natural matching between the exact frequencies and the approximate ones is obtained by sorting them both in ascending order. In the univariate case, this leads to satisfactory results both for the relative frequency errors and for the relative eigenfunction errors; see Example~\ref{ex:eigenvalues1D}. Moreover, these errors approximately lie on a smooth curve. In the multivariate setting, the situation is more complicated. Such matching seems to give good results for the relative frequency errors, even if they do not resemble a smooth curve anymore; we refer the reader to \cite{Cottrell:2006,Deng:2021} for examples. The corresponding relative eigenfunction errors, however, indicate that there could be a substantial mismatch. This problem has been resolved by the matching strategy adopted in \cite{Hiemstra:2021}, and followed here, which exploits the decompositions in \eqref{eq:eig-Laplace2D} and \eqref{eq:approx-eig-Laplace2D}. It gives satisfactory results both for the relative frequency errors and for the relative eigenfunction errors; see Example~\ref{ex:eigenvalues2D}. Similarly to the univariate case, the corresponding errors approximately lie on a smooth surface.
Although the matching strategy is the same as in \cite{Hiemstra:2021}, we have visualized the errors in a different way. In \cite{Hiemstra:2021} the plots are obtained by a sorting of both the relative frequency errors and the relative eigenfunction errors. We believe, on the other hand, that it is important to keep track of the magnitude of the frequencies in order to show and understand what is the behavior of the error for low and for high frequencies (and the corresponding eigenfunctions) and also for consistency with the univariate plots.
\end{remark}

\begin{figure}[t!]
\centering
\subfigure[$\mathbb{S}_{p,n,0}^\opt\otimes\mathbb{S}_{p,n,0}^\opt$]{\includegraphics[height=4.1cm]{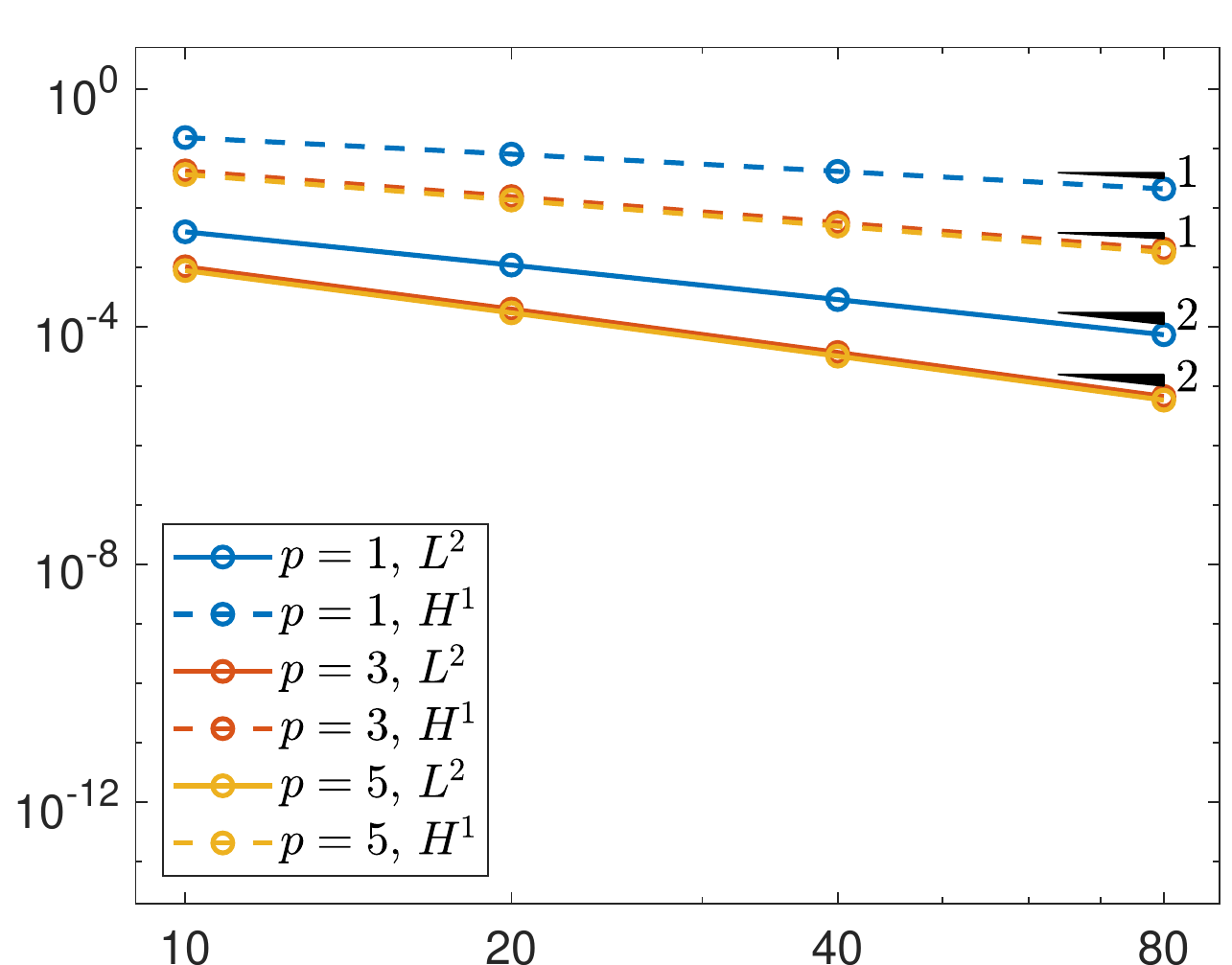}}\hspace*{0.1cm}
\subfigure[$\mathbb{S}_{p,n,0}^\opt\otimes\mathbb{S}_{p,n,0}^\opt$ with correction]{\includegraphics[height=4.1cm]{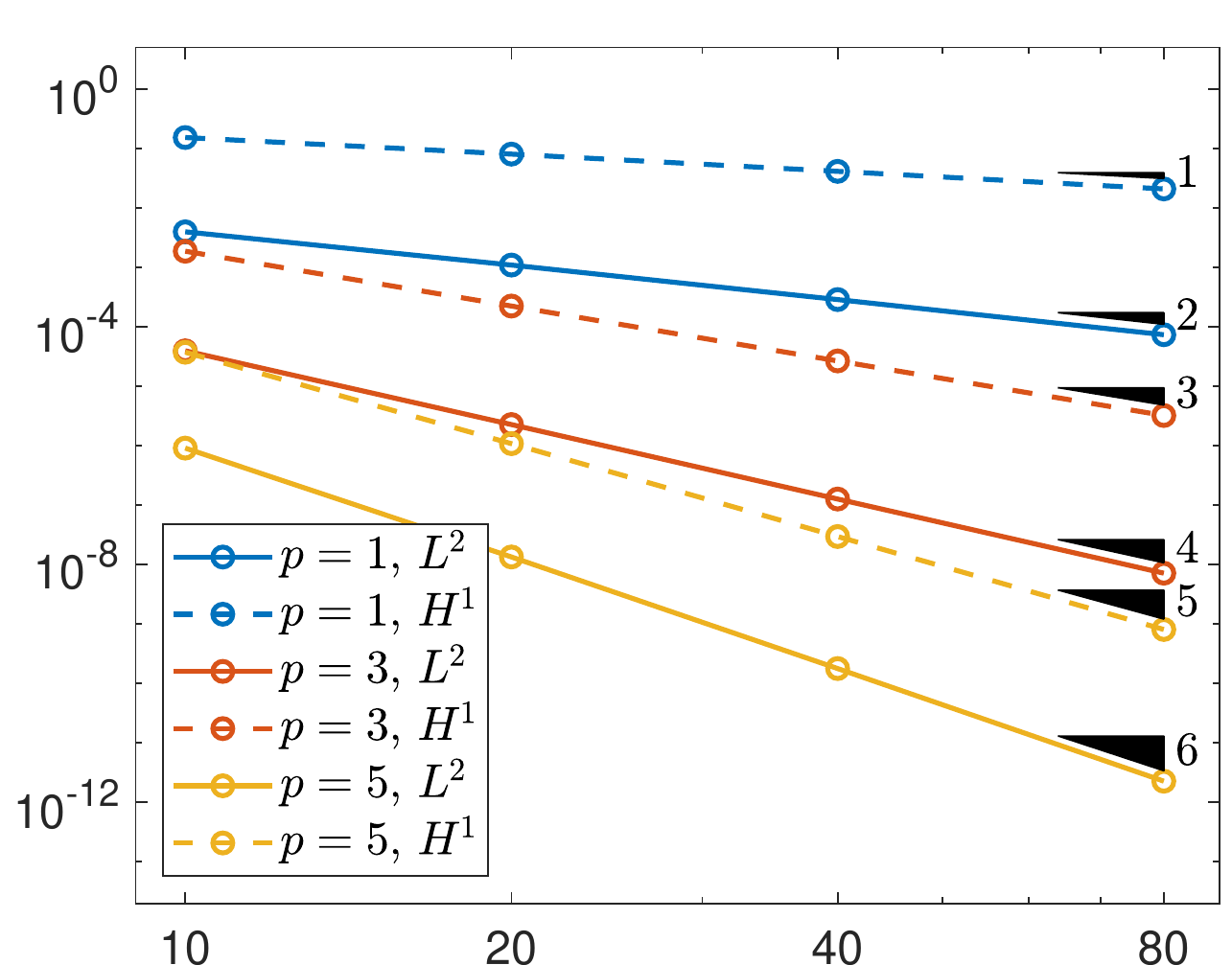}}\hspace*{0.1cm}
\subfigure[$\mathbb{S}_{p,n,0}\otimes\mathbb{S}_{p,n,0}$]{\includegraphics[height=4.1cm]{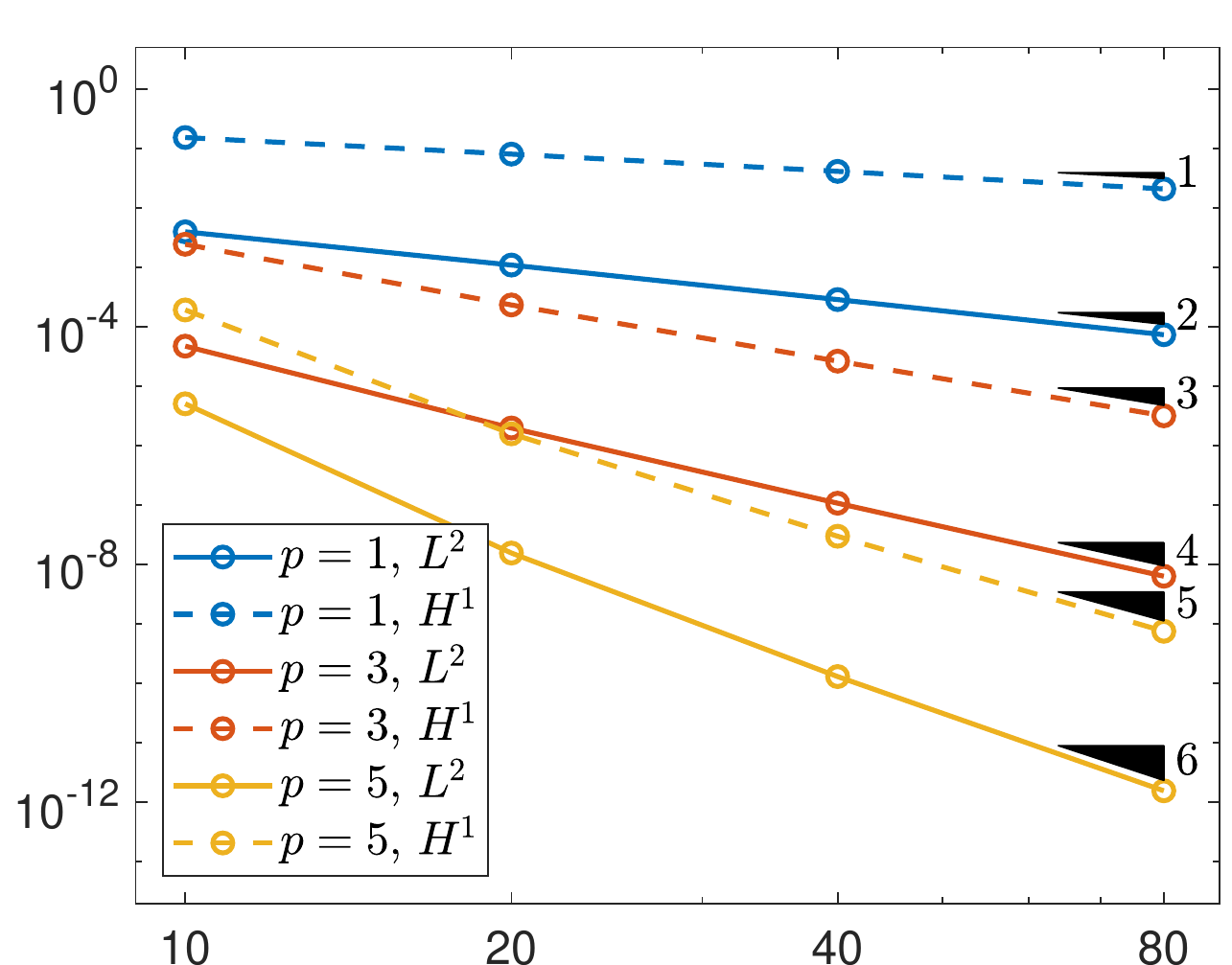}}
\caption{Example~\ref{ex:convergence2D-boundary}: $L^2$ and $H^1$ error convergence in the spline spaces $\mathbb{S}_{p,n,0}^\opt\otimes\mathbb{S}_{p,n,0}^\opt$ and $\mathbb{S}_{p,n,0}\otimes\mathbb{S}_{p,n,0}$ in terms of $n$, for odd degrees $p$. Both without and with boundary data correction are considered for the space $\mathbb{S}_{p,n,0}^\opt\otimes\mathbb{S}_{p,n,0}^\opt$. The reference convergence order in $n$ is indicated by black triangles.} \label{ex:convergence2D-boundary:a}
\bigskip
\centering
\subfigure[$\mathbb{S}_{p,n,0}^\opt\otimes\mathbb{S}_{p,n,0}^\opt$]{\includegraphics[height=4.1cm]{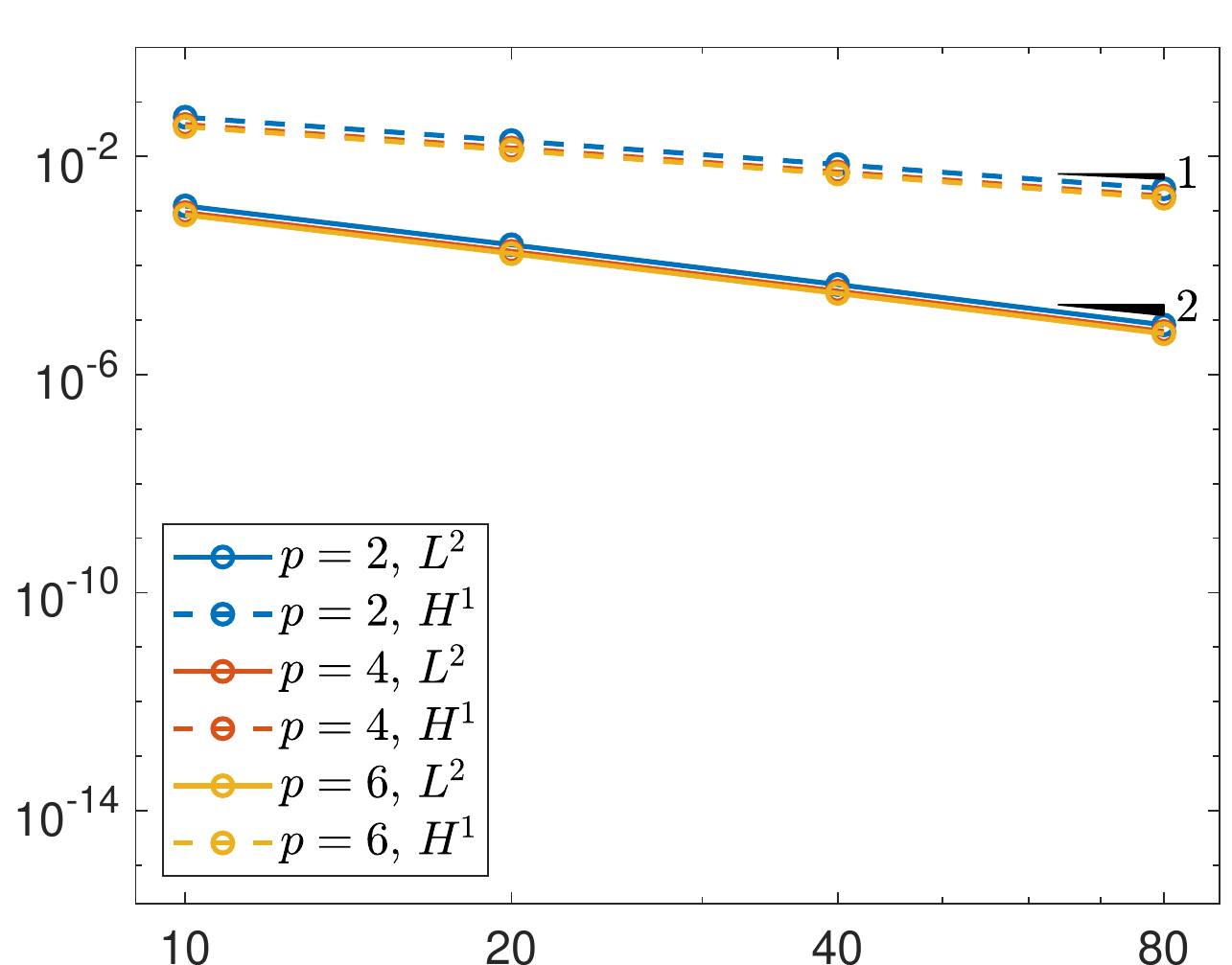}}\hspace*{0.1cm}
\subfigure[$\overline{\mathbb{S}}_{p,n,0}\otimes \overline{\mathbb{S}}_{p,n,0}$]{\includegraphics[height=4.1cm]{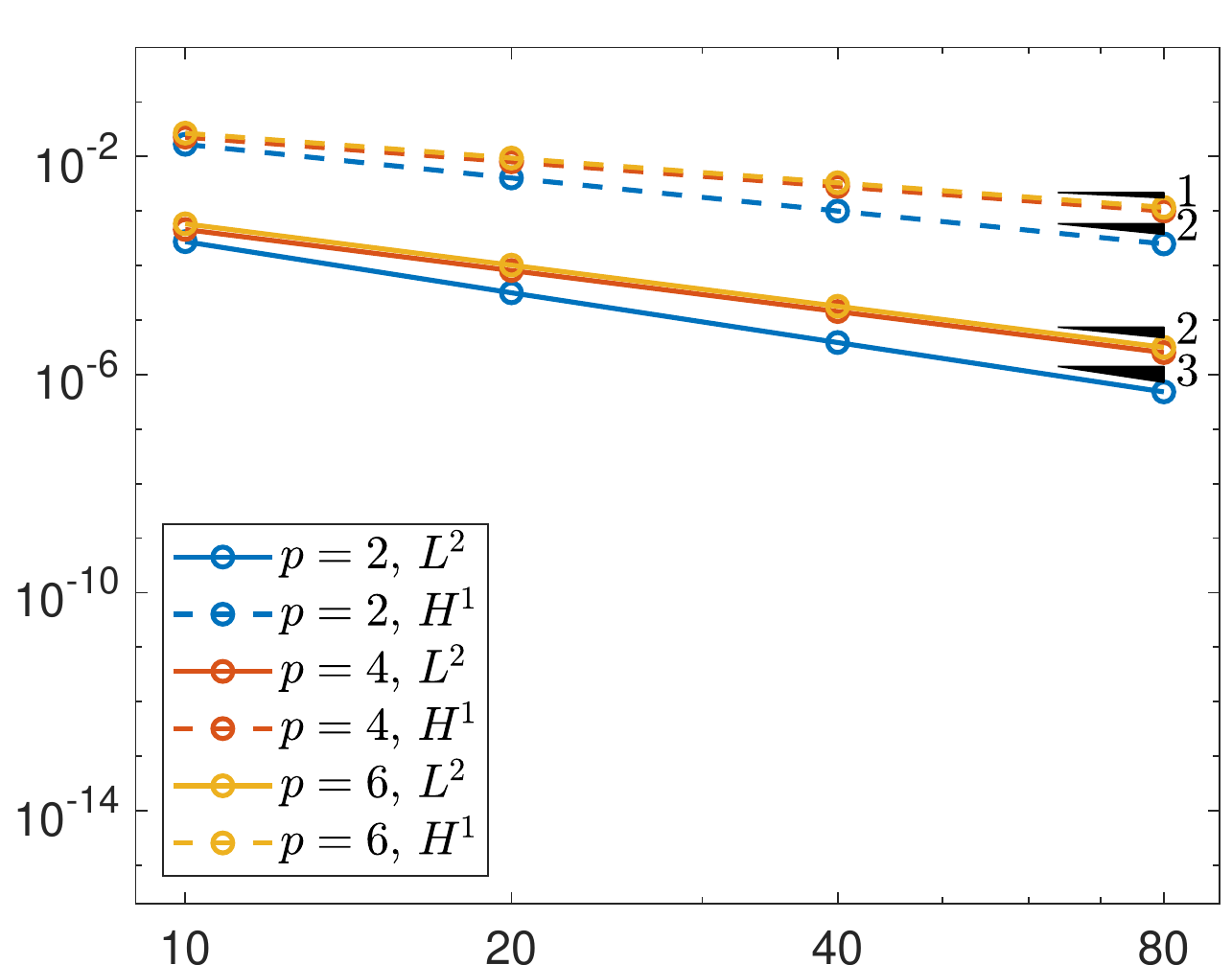}} \\
\subfigure[$\mathbb{S}_{p,n,0}^\opt\otimes\mathbb{S}_{p,n,0}^\opt$ with correction]{\includegraphics[height=4.1cm]{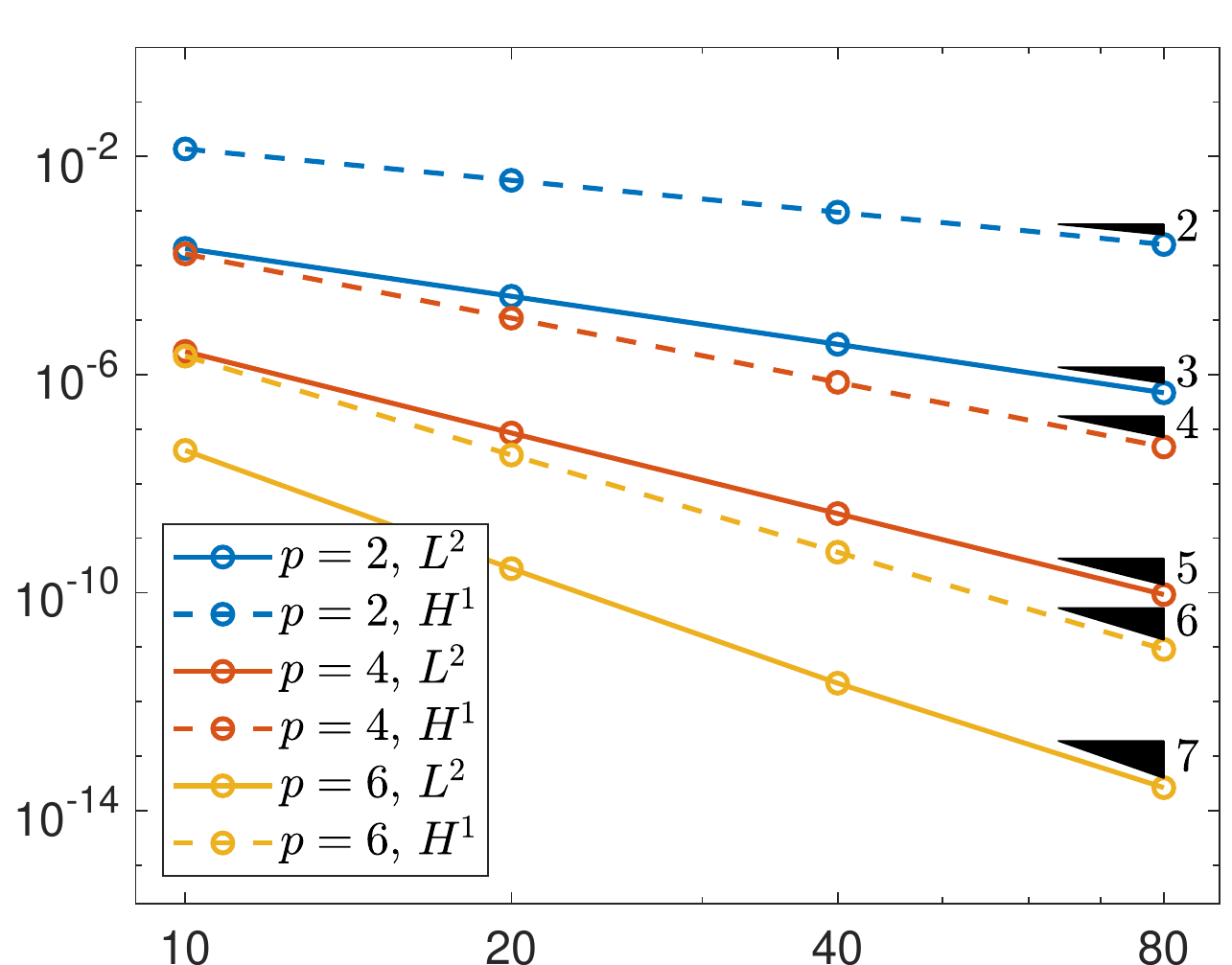}}\hspace*{0.1cm}
\subfigure[$\overline{\mathbb{S}}_{p,n,0}\otimes \overline{\mathbb{S}}_{p,n,0}$ with correction]{\includegraphics[height=4.1cm]{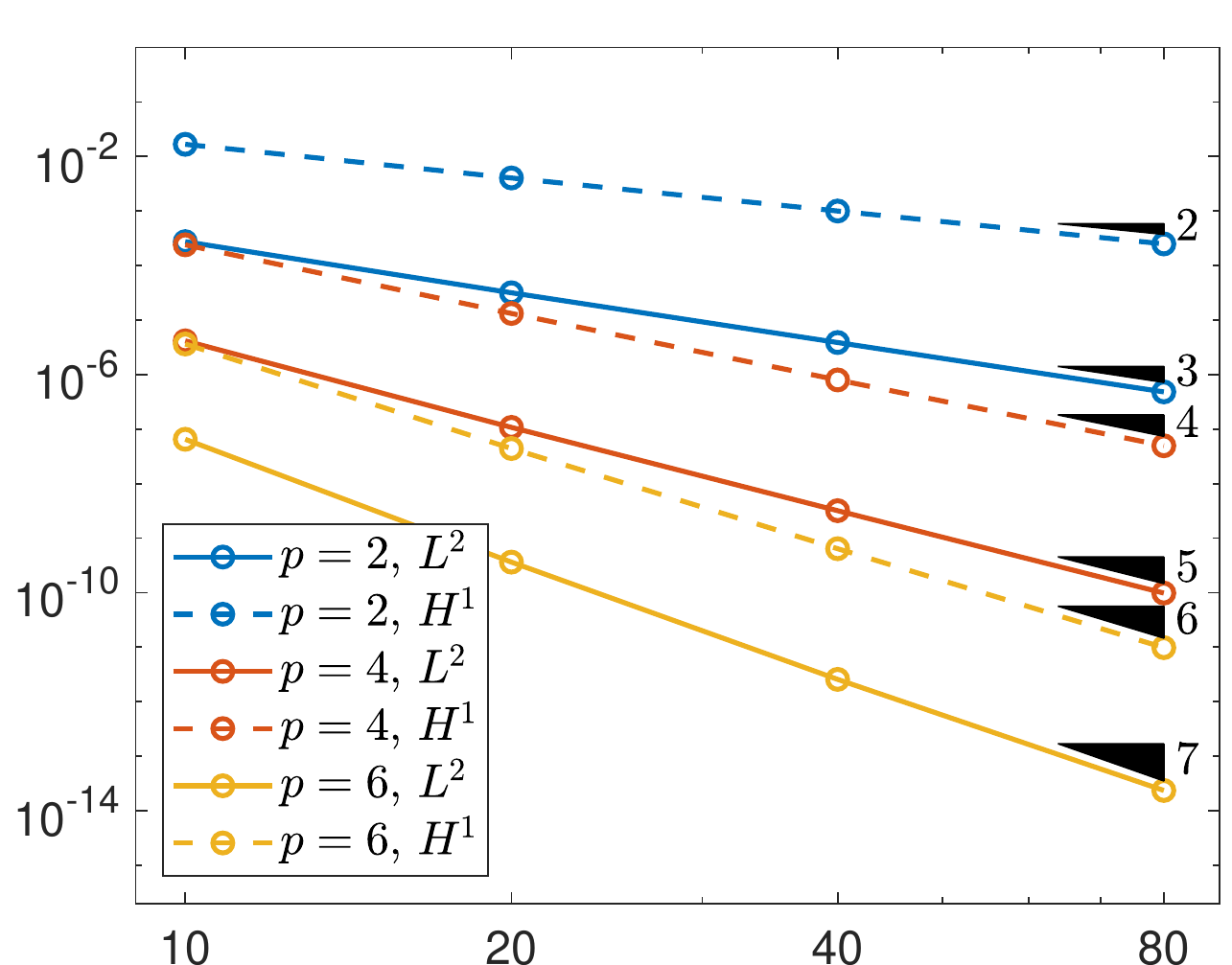}}\hspace*{0.1cm}
\subfigure[$\mathbb{S}_{p,n,0}\otimes\mathbb{S}_{p,n,0}$]{\includegraphics[height=4.1cm]{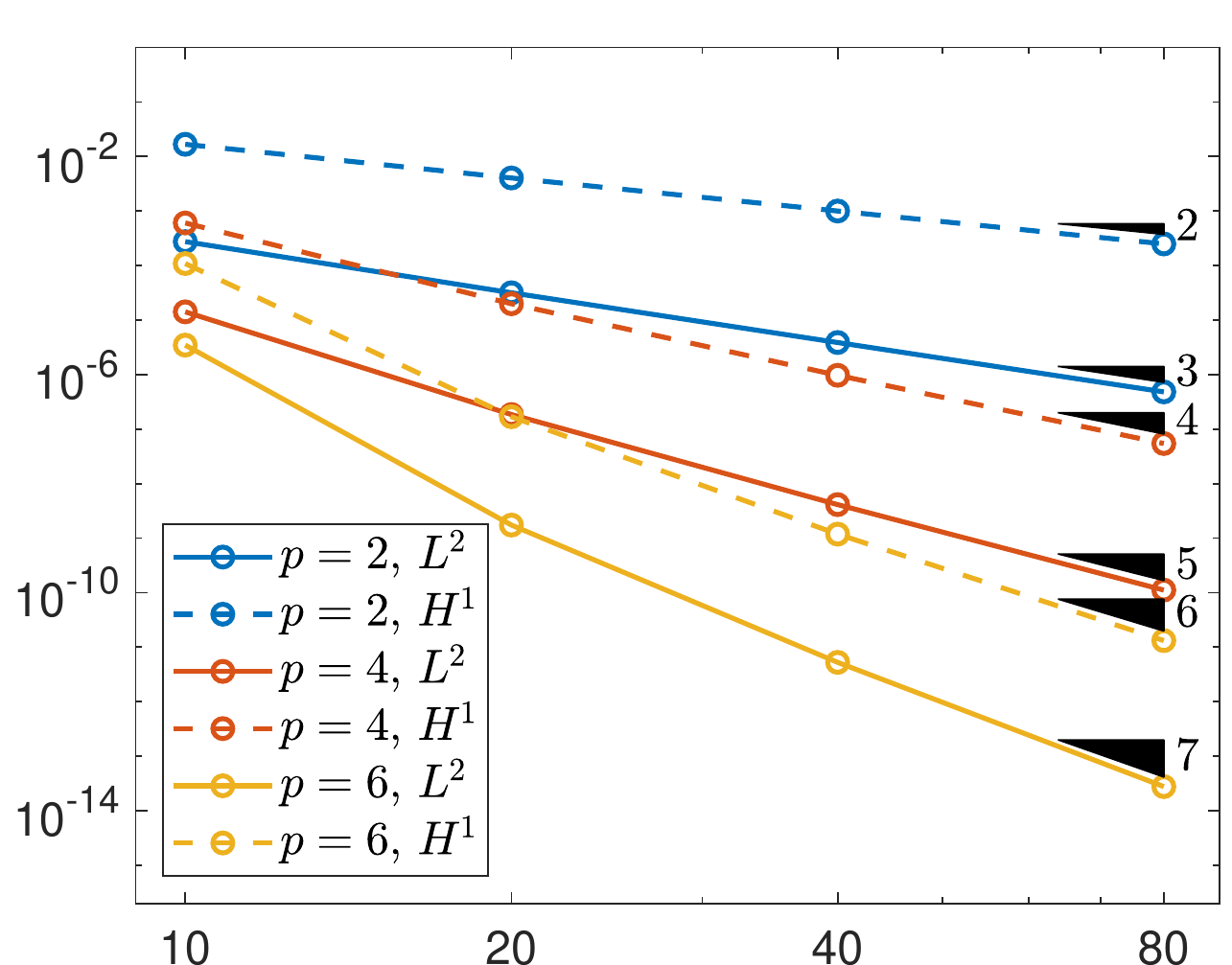}}
\caption{Example~\ref{ex:convergence2D-boundary}: $L^2$ and $H^1$ error convergence in the spline spaces $\mathbb{S}_{p,n,0}^\opt\otimes\mathbb{S}_{p,n,0}^\opt$, $\overline{\mathbb{S}}_{p,n,0}\otimes \overline{\mathbb{S}}_{p,n,0}$ and $\mathbb{S}_{p,n,0}\otimes\mathbb{S}_{p,n,0}$ in terms of $n$, for even degrees $p$. Both without and with boundary data correction are considered for the spaces $\mathbb{S}_{p,n,0}^\opt\otimes\mathbb{S}_{p,n,0}^\opt$ and $\overline{\mathbb{S}}_{p,n,0}\otimes \overline{\mathbb{S}}_{p,n,0}$. The reference convergence order in $n$ is indicated by black triangles.} \label{ex:convergence2D-boundary:b}
\end{figure}

\begin{example}\label{ex:convergence2D-boundary}
As a test for the strategy presented in Section~\ref{sec:general-BC-2D} in the bivariate setting, we consider problem \eqref{eq:Laplace} with the manufactured solution 
$$u(x_1,x_2)=x_1(1-\cos(2\pi x_1))(1-e^{x_2})(1-e^{1-x_2}).$$
The exact solution does not satisfy the additional conditions on high-order derivatives defining the spaces $\mathbb{S}_{p,n,0}^\opt\otimes\mathbb{S}_{p,n,0}^\opt$ and $\overline{\mathbb{S}}_{p,n,0}\otimes \overline{\mathbb{S}}_{p,n,0}$. In Figures~\ref{ex:convergence2D-boundary:a} and~\ref{ex:convergence2D-boundary:b} we depict the convergence of the approximate solutions in the spline spaces $\mathbb{S}_{p,n,0}^\opt\otimes\mathbb{S}_{p,n,0}^\opt$, $\overline{\mathbb{S}}_{p,n,0}\otimes \overline{\mathbb{S}}_{p,n,0}$ and $\mathbb{S}_{p,n,0}\otimes\mathbb{S}_{p,n,0}$ in terms of $n$, for various values of $p$. Like in the univariate setting, we see a substantial loss of accuracy for $p>2$ when approximating the solution in the reduced spline spaces. However, the full convergence order is recovered by applying the boundary data correction described in Section~\ref{sec:general-BC-2D}.
\end{example}

\section{Conclusion}
\label{sec:conclusion}

In this paper we have presented and theoretically analyzed the use of optimal spline subspaces in isogeometric Galerkin discretizations of eigenvalue problems related to the Laplace operator subject to standard homogeneous boundary conditions (Dirichlet/Neumann/mixed) in the univariate and in the multivariate tensor-product case.
By completing the theory started in \cite{Sande:2019} for periodic boundary conditions, we have proved that in the optimal spline subspaces described in \cite{Floater:2017,Floater:2018}, as suggested in \cite{Sande:2019,Sande:2020}, the whole spectrum is well approximated and no spurious values appear, thus resulting in accurate outlier-free discretizations.

The main contribution of the paper is twofold:
\begin{itemize}
\item we have provided explicit error estimates for Ritz projectors in the considered optimal spline subspaces;
\item by exploiting the above estimates, we have derived explicit error estimates for the approximated eigenfunctions and frequencies in terms of the exact ones.
\end{itemize}
The first item shows that the optimal spline subspaces possess full approximation power, while the second item implies that, for a fixed number of degrees of freedom, there is convergence in $p$ of the whole discrete spectrum (thus no outliers).
The optimal spaces we are dealing with are subspaces of the standard (maximally smooth) spline space defined on certain uniform knot sequences (whose structure depends on the parity of the degree $p$). The subspaces are identified and simply described by mimicking the behavior of vanishing derivatives (up to order $p$) of the exact eigenfunctions at the boundary. 

It turns out that the optimal spline subspaces analyzed here are very similar (and actually identical in several cases) to those proposed as trial spaces for outlier removal in \cite{Hiemstra:2021}; see also \cite{Deng:2021}.
However, for a fixed type of boundary condition (Dirichlet/Neumann/mixed) the subspaces in \cite{Hiemstra:2021} can slightly differ from ours in the partition and in the maximum order of vanishing derivatives at the boundary, depending on the parity of the degree $p$; see Section~\ref{sec:outlier-free}.
The subspaces in \cite{Hiemstra:2021} were previously introduced for uniform knot sequences in \cite{Sogn:2018,Takacs:2016}, and further analyzed for general knot sequences in \cite[Section~5.2]{Sande:2020}. A complete error analysis
for such subspaces --- when different from the optimal ones addressed in the present paper --- is worth to be subject of future research. Nevertheless, their strong similarity with the optimal subspaces discussed here suggests that they are ``almost optimal'' and motivates their effectiveness for outlier removal, which was already clearly numerically demonstrated in \cite{Hiemstra:2021} and also in Section~\ref{sec:numerics}.

As a side result, we have also illustrated that the outlier-free optimal spline subspaces can be exploited to obtain discretization schemes for general (non-homogeneous) problems with full approximation power by providing a suitable compensation of the boundary conditions.

Moreover, the discussed optimal spline subspaces can be equipped with B-spline-like bases. We have provided an explicit expression of these bases in terms of cardinal B-splines and almost trivial extraction operators; see Section~\ref{sec:bsplines}. This makes the optimal subspaces completely equivalent to the full spline space from the computational point of view.

Summarizing, the results of this paper fully uncover and theoretically explain the relation between optimal spline subspaces and eigenvalue/eigenfunction convergence in isogeometric Galerkin discretizations, and fix the outlier issue both from the theoretical and the computational point of view.
Finally, we remark that the paper focuses on Galerkin discretizations but similar results can be expected when dealing with collocation methods.

\section*{Acknowledgements}
This work was supported 
by the Beyond Borders Programme of the University of Rome Tor Vergata through the project ASTRID (CUP E84I19002250005)
and
by the MIUR Excellence Department Project awarded to the Department of Mathematics, University of Rome Tor Vergata (CUP E83C18000100006).
The authors are members of Gruppo Nazionale per il Calcolo Scientifico, Istituto Nazionale di Alta Matematica.

\bibliography{nwidths}

\appendix
\section{Convergence to eigenvalues and eigenfunctions} 
\label{Appendix:A}

We extend the results of \cite[Section~7]{Sande:2019} in order to prove that there are no outliers in the Galerkin eigenvalue approximation for abstract eigenvalue problems when using optimal subspaces. The first part of this appendix is very similar to \cite[Section~7]{Sande:2019}.

For $f\in L^2$, let $K$ be the integral operator
$$ K f(x) := \int_a^b K(x,y) f(y)\,\d y. $$
As in \cite{Pinkus:85}, we use the notation $K(x,y)$ for the kernel of~$K$. We will only consider kernels that are continuous or piecewise continuous.
We denote by $K^*$ the adjoint, or dual, of the operator $K$,
defined by
$$ (f,K^\ast g) = (Kf, g). $$
The operators $K^\ast K$ and $K K^\ast$ are self-adjoint, positive semi-definite,
and have eigenvalues
\begin{equation*}
\lambda_1 \ge \lambda_2 \ge \cdots \ge \lambda_\indeigk \ge \cdots \ge 0,
\end{equation*}
with corresponding orthonormal eigenfunctions
\begin{align}
K^\ast K \phi_\indeigk = \lambda_\indeigk \phi_\indeigk, \quad \indeigk=1,2,\ldots,\label{eq:phi}
\\
K K^\ast \psi_\indeigk = \lambda_\indeigk \psi_\indeigk, \quad \indeigk=1,2,\ldots.\label{eq:psi}
\end{align}
For $\lambda_\indeigk>0$ we have
\begin{equation} \label{eq:phi-psi}
\psi_\indeigk=\left(\lambda_\indeigk\right)^{-1/2}K\phi_\indeigk, \quad 
\phi_\indeigk=\left(\lambda_\indeigk\right)^{-1/2}K^*\psi_\indeigk.
\end{equation}

We are interested in the function classes $A^r$ and $A^r_*$ studied in \cite{Floater:2018,Sande:2019}.
For an arbitrary integral operator $K$, they can be defined as 
$$A^1:=A:=K(B), \quad A^1_*:=A_*:=K^*(B),$$
and, for $r\geq 2$,
\begin{equation}\label{eq:Ar}
A^r:=K(A^{r-1}_*),\quad A^r_*:=K^*(A^{r-1}),
\end{equation}
where $B$ denotes the unit ball in $L^2(a,b)$.
As stated in \cite[Theorem~3]{Floater:2018}, we have that
\begin{equation}
\label{eq:nwidth-r}
d_n(A^r_*)=d_n(A^r)=d_n(A)^r=(\lambda_{n+1})^{r/2},
\end{equation}
and the space $[\psi_1,\ldots,\psi_n]$ is optimal for $A^r$, while the space $[\phi_1,\ldots,\phi_n]$ is optimal for $A^r_*$, for all $r\geq 1$. Moreover, let $\mathbb{X}_0$ and $\mathbb{Y}_0$ be any finite-dimensional subspaces of $L^2$ and define the subspaces $\mathbb{X}_\prec$ and $\mathbb{Y}_\prec$ in an analogous way to \eqref{eq:Ar}, by
\begin{equation}\label{eq:Xs}
\mathbb{X}_\prec:=K(\mathbb{Y}_{\prec-1}), \quad \mathbb{Y}_\prec:=K^*(\mathbb{X}_{\prec-1}),
\end{equation}
for $\prec \geq 1$. Finally, for any $\prec\geq0$, let $X_\prec$ be the $L^2$-projector onto $\mathbb{X}_\prec$ and let $Y_\prec$ be the $L^2$-projector onto $\mathbb{Y}_\prec$.
It was then shown in \cite[Theorem~4]{Floater:2018} that if $\mathbb{X}_0$ is optimal for the $n$-width of $A$ and $\mathbb{Y}_0$ is optimal for the $n$-width of $A_*$ then, for $r\geq 1$,
\begin{itemize}
	\item the subspaces $\mathbb{X}_\prec$ are optimal for the $n$-width of $A^{r}$, and
	\item the subspaces $\mathbb{Y}_\prec$ are optimal for the $n$-width of $A^{r}_*$,
\end{itemize}
for all $\prec\geq r-1$.
In other words,
if $\mathbb{X}_0$ is optimal for the $n$-width of $A$ and $\mathbb{Y}_0$ is optimal for the $n$-width of $A_*$ then from the definition of optimal space (see Section~\ref{sec:outlier-free}) and \eqref{eq:nwidth-r}
we immediately get for all $\prec\geq r-1$,
\begin{align}
\label{eq:err-optimal}
\|(I-X_\prec)u\|&\leq d_n(A)^r=(\lambda_{n+1})^{r/2}, \quad \forall u\in A^r,
\\
\label{eq:err-optimal*}
\|(I-Y_\prec)u\|&\leq d_n(A_*)^r=(\lambda_{n+1})^{r/2}, \quad \forall u\in A_*^r.
\end{align}

In the following we consider the non-trivial case $\lambda_{n+1}>0$.
The relations \eqref{eq:phi-psi} imply that
\begin{equation}\label{eq:A-elem}
\left(\lambda_\indeigk\right)^{r/2}\psi_\indeigk\in  A^r, \quad  \left(\lambda_\indeigk\right)^{r/2}\phi_\indeigk\in  A_*^r,\quad 
\forall r\geq 1.
\end{equation}
Thus, \eqref{eq:err-optimal}--\eqref{eq:err-optimal*} give us the following result \cite[Theorem~7.1]{Sande:2019}.
\begin{theorem}\label{thm:eig}
	Suppose $\mathbb{X}_0$ is optimal for the $n$-width of $A$ and $\mathbb{Y}_0$ is optimal for the $n$-width of $A_*$. Let $X_\prec$ be the $L^2$-projector onto $\mathbb{X}_\prec$ and $Y_\prec$ be the $L^2$-projector onto $\mathbb{Y}_\prec$. Then, 
	\begin{align*}
	\|(I-X_\prec)\psi_\indeigk\| \leq \left(\frac{\lambda_{n+1}}{\lambda_\indeigk}\right)^{(\prec+1)/2}, 
	\quad\quad
\|(I-Y_\prec)\phi_\indeigk\| \leq \left(\frac{\lambda_{n+1}}{\lambda_\indeigk}\right)^{(\prec+1)/2}.
	\end{align*}
\end{theorem}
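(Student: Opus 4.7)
The plan is to derive both bounds as immediate consequences of the optimality estimates \eqref{eq:err-optimal}--\eqref{eq:err-optimal*} combined with the explicit memberships \eqref{eq:A-elem} of the suitably scaled eigenfunctions in the classes $A^r$ and $A_*^r$. Since the two inequalities are symmetric under the interchange $(K,X_\prec,\psi_\indeigk)\leftrightarrow(K^*,Y_\prec,\phi_\indeigk)$, I would spell out only the first one and note that the second follows identically.

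First I would fix the exponent $r:=\prec+1$, which trivially satisfies the hypothesis $\prec\geq r-1$ required by \eqref{eq:err-optimal}. By \eqref{eq:A-elem}, the rescaled eigenfunction $(\lambda_\indeigk)^{(\prec+1)/2}\psi_\indeigk$ belongs to $A^{\prec+1}$. Applying the optimal-subspace error estimate \eqref{eq:err-optimal} to this element and using the linearity of $I-X_\prec$ gives
$$(\lambda_\indeigk)^{(\prec+1)/2}\,\|(I-X_\prec)\psi_\indeigk\|\;\leq\;(\lambda_{n+1})^{(\prec+1)/2}.$$
Dividing by $(\lambda_\indeigk)^{(\prec+1)/2}$, which is strictly positive for $\indeigk\leq n$ under the standing assumption $\lambda_{n+1}>0$ (since $\lambda_\indeigk\geq\lambda_{n+1}>0$), yields the desired bound. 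The analogous argument, using $(\lambda_\indeigk)^{(\prec+1)/2}\phi_\indeigk\in A_*^{\prec+1}$ together with \eqref{eq:err-optimal*}, produces the twin estimate for $\|(I-Y_\prec)\phi_\indeigk\|$.

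There is essentially no obstacle here: the genuine work has already been absorbed into the prior results, namely the identification of the optimal subspaces $\mathbb{X}_\prec$ and $\mathbb{Y}_\prec$ for the $n$-widths of $A^r$ and $A_*^r$ from \cite{Floater:2018}, and the characterization \eqref{eq:A-elem} of the eigenfunctions as extremal elements obtained through \eqref{eq:phi-psi}. The only point requiring a small amount of care is the index shift $r=\prec+1$, which must saturate the admissibility condition $\prec\geq r-1$; this saturation is precisely what produces the sharp exponent $(\prec+1)/2$ in the final inequality.
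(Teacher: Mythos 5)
Your proposal is correct and follows exactly the route the paper takes: the paper derives Theorem~\ref{thm:eig} directly from \eqref{eq:err-optimal}--\eqref{eq:err-optimal*} applied to the scaled eigenfunctions $(\lambda_\indeigk)^{r/2}\psi_\indeigk\in A^r$ and $(\lambda_\indeigk)^{r/2}\phi_\indeigk\in A_*^r$ of \eqref{eq:A-elem}, with the choice $r=\prec+1$ saturating the admissibility condition $\prec\geq r-1$. Your additional remark on the positivity of $\lambda_\indeigk$ is consistent with the paper's standing assumption $\lambda_{n+1}>0$, so nothing is missing.
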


Let $\partial$ denote the derivative operator. From now on, we consider integral operators $K$ such that 
\begin{equation}
\label{eq:assumption-K}
\partial K\big|_{A_*^r}=I, \quad \partial K^*\big|_{A^r}=-I, \quad r\geq 0,
\end{equation}
where we let $A^0:=\{u\in B : u\perp \ker(K^*)\}$ and $A^0_*:=\{u\in B : u\perp \ker(K)\}$. We further assume
\begin{equation}\label{assumption-X0}
\mathbb{X}_{0}\perp\ker K^*,\quad \mathbb{Y}_{0}\perp\ker{K}.
\end{equation}
This assumption is true in all non-trivial cases when dealing with optimal spaces.

Then, for $\prec\geq1$ we define the Ritz projectors
\begin{align}\label{def:genRitz}
R_{\mathbb{X}_\prec}:K(L^2(a,b))\to\mathbb{X}_\prec,\quad R_{\mathbb{X}_\prec} &:= KY_{\prec-1}\partial, \\
\label{def:genRitz*}
R_{\mathbb{Y}_\prec}:K^*(L^2(a,b))\to\mathbb{Y}_\prec,\quad R_{\mathbb{Y}_\prec} &:= -K^*X_{\prec-1}\partial.
\end{align}
Note that from \eqref{eq:assumption-K}, for any $u\in K(L^2(a,b))$ the projector $R_{\mathbb{X}_\prec}$ satisfies
\begin{equation*}
(\partial R_{\mathbb{X}_\prec} u, \partial v) = (\partial u,\partial v), \quad \forall v\in \mathbb{X}_\prec,
\end{equation*}
and similarly for any $u\in K^*(L^2(a,b))$,
\begin{equation*}
(\partial R_{\mathbb{Y}_\prec} u, \partial v) = (\partial u,\partial v), \quad \forall v\in \mathbb{Y}_\prec.
\end{equation*}
The projections in \eqref{def:genRitz} and \eqref{def:genRitz*} satisfy the stability estimates
\begin{equation}\label{ineq:stab:a}
\begin{aligned}
\|\partial R_{\mathbb{X}_\prec} u\|&= \|Y_{\prec -1}\partial u\|\leq  \|\partial u\|, 
\\
\|\partial R_{\mathbb{Y}_\prec} v\|&= \|X_{\prec -1}\partial v\|\leq  \|\partial v\|, 
\end{aligned}
\end{equation}
for all $u\in K(L^2(a,b))$ and $v\in K^*(L^2(a,b))$. These stability estimates will be useful in Appendix~\ref{Appendix:B}.

Using Theorem~\ref{thm:eig} we can now prove the following proposition. The method of proof is based on those found in \cite{Floater:2018,Sande:2019,Sande:2020}.

\begin{proposition}\label{prop:err}
	Suppose $\mathbb{X}_0$ is optimal for the $n$-width of $A$ and $\mathbb{Y}_0$ is optimal for the $n$-width of $A_*$. Let $R_{\mathbb{X}_\prec}$ and $R_{\mathbb{Y}_\prec}$ be the projectors in \eqref{def:genRitz} and \eqref{def:genRitz*}. Then, for $\prec\geq\max\{ 1,r-1\}$,
\begin{equation*}
	\begin{aligned}
	\|u-R_{\mathbb{X}_\prec}u\| &\leq (\lambda_{n+1})^{r/2},
	\quad 
	\|\partial u-\partial R_{\mathbb{X}_\prec}u \| \leq (\lambda_{n+1})^{(r-1)/2},
	\quad \forall u\in A^r,
	\\
	\|u-R_{\mathbb{Y}_\prec}u\| &\leq (\lambda_{n+1})^{r/2},
	\quad 
	\|\partial u-\partial R_{\mathbb{Y}_\prec} u\| \leq (\lambda_{n+1})^{(r-1)/2},
	\quad \forall u\in A_*^r.
	\end{aligned}
\end{equation*}	
\end{proposition}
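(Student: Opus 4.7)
The plan is to reduce both estimates to the optimal-subspace bounds \eqref{eq:err-optimal}--\eqref{eq:err-optimal*} already available for the $L^2$-projectors $X_\prec, Y_\prec$. The key identity comes from the defining factorization \eqref{eq:Ar} of $A^r$: every $u\in A^r$ can be written as $u=Kf$ with $f\in A_*^{r-1}$. Assumption \eqref{eq:assumption-K} then gives $\partial u = \partial K f = f$, so the Ritz projector takes the simple form $R_{\mathbb{X}_\prec} u = K Y_{\prec-1} f$ and
$$u - R_{\mathbb{X}_\prec} u = K(f - Y_{\prec-1} f).$$

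For the derivative bound I apply $\partial$ to this identity. Every element of $\mathbb{Y}_{\prec-1}$ lies in $(\ker K)^\perp$: for $\prec-1=0$ this is \eqref{assumption-X0}, and for $\prec-1\geq 1$ it follows from $\mathbb{Y}_{\prec-1}=K^*(\mathbb{X}_{\prec-2}) \subset \operatorname{range}(K^*) = (\ker K)^\perp$. Since also $f\in A_*^{r-1}\subset(\ker K)^\perp$, the difference $f - Y_{\prec-1} f$ is orthogonal to $\ker K$, and extending $\partial K|_{A_*^0}=I$ by linearity to $(\ker K)^\perp$ I obtain $\partial K(f - Y_{\prec-1} f) = f - Y_{\prec-1} f$. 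Hence $\|\partial(u-R_{\mathbb{X}_\prec}u)\| = \|f - Y_{\prec-1}f\|$, which is at most $(\lambda_{n+1})^{(r-1)/2}$ by \eqref{eq:err-optimal*} applied to $f\in A_*^{r-1}$ (valid since $\prec-1\geq r-2$; for $r=1$ one uses only the trivial $\|f\|\leq 1$).

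For the $L^2$ bound I use duality. For any $w\in L^2$, adjointness combined with $(f - Y_{\prec-1} f) \perp \mathbb{Y}_{\prec-1}$ yields
$$(u - R_{\mathbb{X}_\prec} u,\, w) = (f - Y_{\prec-1} f,\, K^* w - Y_{\prec-1} K^* w).$$
The first factor was just estimated; the second is bounded by $\sqrt{\lambda_{n+1}}\,\|w\|$ because $K^* w/\|w\| \in A_*$, so \eqref{eq:err-optimal*} with exponent $1$ applies (requiring $\prec\geq 1$). Cauchy--Schwarz, followed by choosing $w = u - R_{\mathbb{X}_\prec} u$, then gives $\|u-R_{\mathbb{X}_\prec}u\|\leq (\lambda_{n+1})^{r/2}$. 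The two invocations of optimality --- one with exponent $1$ and one with exponent $r-1$ --- are precisely what forces the hypothesis $\prec \geq \max\{1, r-1\}$.

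The estimates for $R_{\mathbb{Y}_\prec}$ on $A_*^r$ follow by swapping the roles of $K\leftrightarrow K^*$ and $\mathbb{X}_\bullet\leftrightarrow\mathbb{Y}_\bullet$: write $v=K^*g$ with $g\in A^{r-1}$, use $\partial K^*|_{A^r}=-I$ to obtain $R_{\mathbb{Y}_\prec} v = K^* X_{\prec-1} g$, and repeat the argument with \eqref{eq:err-optimal} in place of \eqref{eq:err-optimal*}; the sign change is cosmetic for norm estimates. I do not anticipate a real obstacle. The only subtlety worth flagging is verifying that $f - Y_{\prec-1}f$ sits in the domain on which $\partial K$ is a genuine left inverse, which is why the orthogonality of every $\mathbb{Y}_\prec$ to $\ker K$ (via \eqref{assumption-X0} at the base case and via the recursion \eqref{eq:Xs} afterward) must be recorded explicitly.
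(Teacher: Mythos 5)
Your proof is correct and follows essentially the same route as the paper's: the same factorization $u-R_{\mathbb{X}_\prec}u=K(I-Y_{\prec-1})\partial u$, the same two invocations of optimality (one for $A_*^1=K^*(B)$ yielding the factor $\lambda_{n+1}^{1/2}$, one for $A_*^{r-1}$ yielding $(\lambda_{n+1})^{(r-1)/2}$), and the same identification $\partial(u-R_{\mathbb{X}_\prec}u)=(I-Y_{\prec-1})\partial u$ for the derivative bound, where your explicit check that $f-Y_{\prec-1}f\perp\ker K$ is a welcome clarification of a step the paper leaves implicit. Your duality/Cauchy--Schwarz presentation of the $L^2$ estimate is simply the variational form of the paper's operator-norm step $\|K(I-Y_{\prec-1})\|=\|(I-Y_{\prec-1})K^*\|=d_n(A_*^1)$ combined with the idempotency of $I-Y_{\prec-1}$, so the two arguments coincide.
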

\begin{proof}
	The two cases are analogous and so we only look at $u\in A^r$. For  $u\in A^r$ we have $u=Kf$  for some $f\in A_*^{r-1}$. 
	From assumption \eqref{eq:assumption-K} we get $f=\partial u$. Using the definition of $R_{\mathbb{X}_\prec}$ and $(I-Y_{\prec-1})^2=(I-Y_{\prec-1})$ we have
	\begin{align*}
	\|(I-R_{\mathbb{X}_\prec})u\| &= \|(K-KY_{\prec-1})f\| = \|K(I-Y_{\prec-1})^2f\|
	\\
	&\leq \|K(I-Y_{\prec-1})\|\,\|(I-Y_{\prec-1})f\| 
	\\
	&= \|(I-Y_{\prec-1})K^*\|\,\|(I-Y_{\prec-1})f\|,
\end{align*}	
where $\|(I-Y_{\prec-1})K^*\|$ is the $L^2$-norm operator of $(I-Y_{\prec-1})K^*$.
From \cite[Theorem~4]{Floater:2018} we know that $\mathbb{Y}_{\prec-1}$ is optimal for $A^1_*=K^*(B)$  and so $\|(I-Y_{\prec-1})K^*\|=d_n(A^1_*)=\lambda_{n+1}^{1/2}$. The result follows by applying \eqref{eq:err-optimal*} to $\|(I-Y_{\prec-1})f\|$.	

Let us now focus on the error bound for the derivative. For $u\in A^r$  we have $\partial u\in A^{r-1}_*$ and from \eqref{eq:assumption-K}, \eqref{def:genRitz} and \eqref{eq:err-optimal*}, 
$$
\|\partial u-\partial R_{\mathbb{X}_\prec}u \|=\|\partial u-Y_{\prec-1}\partial u \|\leq(\lambda_{n+1})^{(r-1)/2}.
$$
This completes the proof.
\end{proof}

Observe that the case $r=1$ of the above proposition implies the stability estimates
\begin{equation}\label{ineq:stab:b}
\begin{aligned}
\|R_{\mathbb{X}_\prec} u\|&\leq \|u\|+ \|R_{\mathbb{X}_\prec} u-u\|\leq \|u\| + \lambda_{n+1}^{1/2}\|\partial u\|,
\\
\|R_{\mathbb{Y}_\prec} v\|&\leq \|v\|+ \|R_{\mathbb{Y}_\prec} v-v\|\leq \|v\| + \lambda_{n+1}^{1/2}\|\partial v\|,
\end{aligned}
\end{equation}
for all $u\in K(L^2(a,b))$ and $v\in K^*(L^2(a,b))$. These stability estimates will be useful in Appendix~\ref{Appendix:B}.

Using \eqref{eq:phi-psi} we get
\begin{equation*}
\partial\psi_\indeigk=\lambda_\indeigk^{-1/2}\phi_\indeigk, \quad  \partial\phi_\indeigk=-\lambda_\indeigk^{-1/2}\psi_\indeigk, \ 
\end{equation*}
so that
$$
\|\partial\psi_\indeigk\|=\|\partial\phi_\indeigk\|=\lambda_\indeigk^{-1/2}.
$$
From \eqref{eq:A-elem} and Proposition~\ref{prop:err} we obtain the following result.

\begin{corollary}\label{cor:eig}
Suppose $\mathbb{X}_0$ is optimal for the $n$-width of $A$ and $\mathbb{Y}_0$ is optimal for the $n$-width of $A_*$. Let $R_{\mathbb{X}_\prec}$ and $R_{\mathbb{Y}_\prec}$ be the projectors in \eqref{def:genRitz} and \eqref{def:genRitz*}. Then, for $\prec\geq 1$, 
\begin{align*}
\frac{\|(I-R_{\mathbb{X}_\prec})\psi_\indeigk\|}{\|\psi_\indeigk\|} &\leq \left(\frac{\lambda_{n+1}}{\lambda_\indeigk}\right)^{(\prec+1)/2},
\quad
\frac{\|\partial(I-R_{\mathbb{X}_\prec})\psi_\indeigk\|}{\|\partial\psi_\indeigk\|} \leq \left(\frac{\lambda_{n+1}}{\lambda_\indeigk}\right)^{\prec/2},
\\
\frac{\|(I-R_{\mathbb{Y}_\prec})\phi_\indeigk\|}{\|\phi_\indeigk\|} &\leq \left(\frac{\lambda_{n+1}}{\lambda_\indeigk}\right)^{(\prec+1)/2},
\quad
\frac{\|\partial(I-R_{\mathbb{Y}_\prec})\phi_\indeigk\|}{\|\partial\phi_\indeigk\|} \leq \left(\frac{\lambda_{n+1}}{\lambda_\indeigk}\right)^{\prec/2}.
\end{align*}
\end{corollary}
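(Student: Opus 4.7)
\textbf{Proof plan for Corollary~\ref{cor:eig}.} The plan is to reduce everything to Proposition~\ref{prop:err} by using the key membership facts \eqref{eq:A-elem}, which say that the scaled eigenfunctions $(\lambda_\indeigk)^{r/2}\psi_\indeigk$ and $(\lambda_\indeigk)^{r/2}\phi_\indeigk$ lie in $A^r$ and $A_*^r$ respectively for every $r\geq 1$. Since the Ritz projectors $R_{\mathbb{X}_\prec}$ and $R_{\mathbb{Y}_\prec}$ are linear, the scaling factors $(\lambda_\indeigk)^{r/2}$ will pass through both sides of the projection-error inequalities and ultimately produce the quotient $(\lambda_{n+1}/\lambda_\indeigk)^{\,\cdot}$ that appears in the statement.

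First I would choose $r=\prec+1$ in Proposition~\ref{prop:err} (the condition $\prec\geq \max\{1,r-1\}$ is exactly $\prec\geq 1$, which is the hypothesis we are given). Then from \eqref{eq:A-elem} the function $(\lambda_\indeigk)^{(\prec+1)/2}\psi_\indeigk$ belongs to $A^{\prec+1}$, so Proposition~\ref{prop:err} applied to it yields
\begin{equation*}
(\lambda_\indeigk)^{(\prec+1)/2}\,\|(I-R_{\mathbb{X}_\prec})\psi_\indeigk\| \;\leq\; (\lambda_{n+1})^{(\prec+1)/2},
\end{equation*}
by linearity of $R_{\mathbb{X}_\prec}$. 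Dividing by $(\lambda_\indeigk)^{(\prec+1)/2}$ and using $\|\psi_\indeigk\|=1$ gives the first bound. For the derivative bound, Proposition~\ref{prop:err} also gives
\begin{equation*}
(\lambda_\indeigk)^{(\prec+1)/2}\,\|\partial(I-R_{\mathbb{X}_\prec})\psi_\indeigk\| \;\leq\; (\lambda_{n+1})^{\prec/2},
\end{equation*}
and dividing by $\|\partial\psi_\indeigk\|=(\lambda_\indeigk)^{-1/2}$ (which comes directly from the identities $\partial\psi_\indeigk=(\lambda_\indeigk)^{-1/2}\phi_\indeigk$ and $\|\phi_\indeigk\|=1$ noted just before the statement) produces exactly $(\lambda_{n+1}/\lambda_\indeigk)^{\prec/2}$.

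The $\phi_\indeigk$ case is entirely symmetric: $(\lambda_\indeigk)^{(\prec+1)/2}\phi_\indeigk\in A_*^{\prec+1}$ by the other half of \eqref{eq:A-elem}, Proposition~\ref{prop:err} applied to $R_{\mathbb{Y}_\prec}$ gives the analogous two estimates, and $\|\phi_\indeigk\|=1$ together with $\|\partial\phi_\indeigk\|=(\lambda_\indeigk)^{-1/2}$ (from $\partial\phi_\indeigk=-(\lambda_\indeigk)^{-1/2}\psi_\indeigk$) finishes the normalization. There is no real obstacle in the argument; the only thing to double-check is that the range of admissible $r$ in Proposition~\ref{prop:err} covers $r=\prec+1$ for every $\prec\geq 1$, which it does, so the corollary is an immediate specialization of the proposition to the scaled eigenfunctions.
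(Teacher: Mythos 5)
Your proposal is correct and follows exactly the route the paper intends: the paper's proof is precisely the one-line observation that \eqref{eq:A-elem} combined with Proposition~\ref{prop:err} (applied with $r=\prec+1$, for which the hypothesis $\prec\geq\max\{1,r-1\}$ reduces to $\prec\geq1$) gives the stated bounds after dividing by $\|\psi_\indeigk\|=\|\phi_\indeigk\|=1$ and $\|\partial\psi_\indeigk\|=\|\partial\phi_\indeigk\|=\lambda_\indeigk^{-1/2}$. Your write-up simply makes the linearity and normalization steps explicit, which is exactly what the paper leaves to the reader.
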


\begin{example}\label{ex:our-classes}
It was shown in \cite{Floater:2018} that the function classes $A^r_0$ and $A^r_2$ in Section~\ref{sec:outlier-free} are examples of the function classes $A^r_*$ and $A^r$ in \eqref{eq:Ar}, while $A^r_1$ is of the form $\PP_0\oplus A^r$. 	
More precisely,
the classes $A^r_2$ are the function classes $A^r$ in \eqref{eq:Ar} obtained by using the integral operator 
\begin{equation}
\label{eq:K}
Kf(x):=\int_0^xf(y)\,\d y,
\end{equation}
thus implying
$$
K^*f(x)=\int_x^1f(y)\,\d y.
$$
To construct the remaining function classes $A_0^r$ and $A_1^r$ we consider the operator
\begin{equation}
\label{eq:K1}
K_1:=(I-P_0)K,
\end{equation}
where $P_0$ denotes the $L^2$-projector onto constant functions and $K$ is given in \eqref{eq:K}. Then, see \cite[Section~8]{Floater:2018},
$$A_1^r=\PP_0\oplus A^r,\quad A_0^r=A^r_*,$$ 
where $A^r, A^r_*$ are as in \eqref{eq:Ar} with $K$ replaced by $K_1$.
On this concern it is important to note that
\begin{equation*}
A_0^1= \{u\in H_0^1:\|u'\|\leq 1, \ u(0)=u(1)=0\}
=\{K_1^*f: \ \|f\|\leq 1, \ f\perp 1\}=K_1^*(B).
\end{equation*}
From \cite[Theorem~2]{Floater:2018} we know that the spaces $ \mathbb{S}_{p,n,i}^\opt$ in \eqref{eq:opt-spaces} are optimal spaces of the form \eqref{eq:Xs}, with $\prec=p$, for $A_i^r$, $i=0,2,$ for $r\geq 1$ and $p\geq r-1$. On the other hand, the space $\mathbb{S}_{p,n,1}^\opt$ is, similarly to $A_1^r$, of the form $\mathbb{P}_0\oplus \mathbb{X}_{p}$; it is optimal for $A_1^r$ for $p\geq r-1$.
Note that the integral operators in \eqref{eq:K} and \eqref{eq:K1} satisfy the conditions in \eqref{eq:assumption-K} and the spline spaces satisfy assumption \eqref{assumption-X0}. 
Therefore, Proposition~\ref{prop:err} holds for the function classes in Section~\ref{sec:outlier-free} equipped with the optimal spaces \eqref{eq:opt-spaces}.
\end{example}	

Let us now consider the following eigenvalue problem: find $\psi\in K(L^2(a,b))$ and $\mu\in\RR$ such that
\begin{equation}\label{abstr-eig-prob}
(\partial \psi, \partial v) = \mu (\psi, v), \quad \forall v\in K(L^2(a,b)),
\end{equation}
and its discretization given by: find $\psi_\prec\in \mathbb{X}_\prec$ and $\mu_\prec\in\RR$ such that
\begin{equation}\label{abstr-eig-prob-disc}
(\partial \psi_\prec, \partial v) = \mu_\prec (\psi_\prec, v), \quad \forall v\in \mathbb{X}_\prec.
\end{equation}
Note that, if $\phi_\indeigk$, $\lambda_\indeigk$ are as in \eqref{eq:phi} then $K\phi_\indeigk$ and $\mu_\indeigk:=\frac{1}{\lambda_\indeigk}$ solve the eigenvalue problem \eqref{abstr-eig-prob}. We order the eigenvalues in \eqref{abstr-eig-prob} and \eqref{abstr-eig-prob-disc} increasingly.

Combining Corollary~\ref{cor:eig} with a classical argument from \cite{Strang:2008} we can prove an error estimate for the eigenvalues, but first we need the following lemma.
\begin{lemma}\label{lem:dim}
	Let $\mathbb{X}$ be any $n$-dimensional subspace such that the error estimate 
	\begin{equation}\label{weird-ineq}
	\|u-Pu\| \leq c \|\partial u\|, \quad \forall u\in[\psi_1,\ldots,\psi_n],
	\end{equation}
	holds for some projector $P$ onto $\mathbb{X}$. Then, for any $\indeigk\leq n$ such that $\mu_\indeigk^{-1/2}>c$, the dimension of the range-space $P([\psi_1,\ldots,\psi_\indeigk])$ is equal to $\indeigk$.
\end{lemma}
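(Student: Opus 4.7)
The plan is to show that the restriction of $P$ to the $\indeigk$-dimensional subspace $V:=[\psi_1,\ldots,\psi_\indeigk]$ is injective; since $\dim V=\indeigk$, this at once yields $\dim P(V)=\indeigk$.

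First, I would argue by contradiction. Suppose there exists a nonzero $u=\sum_{i=1}^\indeigk a_i\psi_i\in V$ with $Pu=0$. Then $u-Pu=u$, and the hypothesis \eqref{weird-ineq} gives the Poincaré-type bound $\|u\|\leq c\,\|\partial u\|$.

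Next, I would evaluate both norms explicitly. Since $\{\psi_i\}$ is orthonormal (as eigenfunctions of the self-adjoint operator $KK^*$ in \eqref{eq:psi}) we have $\|u\|^2=\sum_{i=1}^\indeigk a_i^2$. For the derivative I would use \eqref{eq:phi-psi} together with \eqref{eq:assumption-K}, which give $\partial\psi_i=\lambda_i^{-1/2}\partial K\phi_i=\lambda_i^{-1/2}\phi_i$; combined with the orthonormality of $\{\phi_i\}$ this leads to
\begin{equation*}
\|\partial u\|^2 = \sum_{i=1}^\indeigk a_i^2\lambda_i^{-1} = \sum_{i=1}^\indeigk a_i^2\mu_i \leq \mu_\indeigk\,\|u\|^2,
\end{equation*}
where the last inequality uses the increasing ordering $\mu_1\leq\cdots\leq\mu_\indeigk$.

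Finally, inserting this into $\|u\|\leq c\,\|\partial u\|$ produces $\|u\|^2\leq c^2\mu_\indeigk\|u\|^2$, i.e., $c\geq\mu_\indeigk^{-1/2}$, contradicting the standing assumption $\mu_\indeigk^{-1/2}>c$. Hence $\ker P\cap V=\{0\}$, and the conclusion follows. The argument is essentially a one-line Rayleigh-quotient computation once the correct formulas for $\|u\|$ and $\|\partial u\|$ on the span of eigenfunctions are in place; I do not foresee any real obstacle beyond being careful that the identity $\partial\psi_i=\lambda_i^{-1/2}\phi_i$ (which needs \eqref{eq:assumption-K}) is available in the abstract setting of this appendix.
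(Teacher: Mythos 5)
Your proof is correct and follows essentially the same contradiction argument as the paper: assume a nonzero $u\in[\psi_1,\ldots,\psi_\indeigk]$ with $Pu=0$, compute $\|\partial u\|^2=\sum_i a_i^2\mu_i\leq\mu_\indeigk\|u\|^2$ via orthonormality, and contradict \eqref{weird-ineq}. The only cosmetic difference is that the paper bounds by $\mu_j$ for the largest nonzero index $j$ rather than directly by $\mu_\indeigk$, which changes nothing in the conclusion.
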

\begin{proof}
	Assume that $\dim P([\psi_1,\ldots,\psi_\indeigk])<\indeigk$. Then there exists a non-zero $$u=\alpha_1\psi_1+\ldots+\alpha_\indeigk\psi_\indeigk$$ such that $Pu=0$. If we let $j$ denote the largest index such that $\alpha_j\neq 0$ then
	\begin{equation*}
	\|\partial u\|^2 = \alpha_1^2\mu_1+\ldots+\alpha_\indeigk^2\mu_\indeigk 
	\leq \mu_j(\alpha_1^2+\ldots+\alpha_j^2)
	= \mu_j\|u\|^2 = \mu_j\|u-Pu\|^2,
	\end{equation*}
	which contradicts \eqref{weird-ineq} since $\mu_j^{-1/2}\geq \mu_\indeigk^{-1/2}>c$.
\end{proof}

\begin{corollary}\label{cor:Strang}
	For any $\indeigk=1,\ldots,n$, let $\mu_\indeigk$ be the $\indeigk$-th eigenvalue of \eqref{abstr-eig-prob} and let $\mu_{\prec,\indeigk}$ be the $\indeigk$-th eigenvalue of \eqref{abstr-eig-prob-disc}. Assume $\mu_1\leq\dots\leq \mu_n<\mu_{n+1}$. If $\mathbb{X}_0$ is optimal for the $n$-width of $A$ and $\mathbb{Y}_0$ is optimal for the $n$-width of $A_*$, then
	\begin{equation*}
	\mu_\indeigk \leq \mu_{\prec,\indeigk} \leq \frac{\mu_\indeigk}{\left(1-\left(\frac{\mu_\indeigk}{\mu_{n+1}}\right)^{(\prec+1)/2}\right)^2}.
	\end{equation*}
\end{corollary}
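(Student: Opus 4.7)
The plan is to establish the two bounds separately via the Courant--Fischer minimax characterisation
\[
\mu_{\prec,\indeigk} = \min_{\substack{V\subset \mathbb{X}_\prec\\ \dim V=\indeigk}}\ \max_{0\neq v\in V}\frac{\|\partial v\|^2}{\|v\|^2},\qquad \mu_\indeigk = \min_{\substack{V\subset K(L^2)\\ \dim V=\indeigk}}\ \max_{0\neq v\in V}\frac{\|\partial v\|^2}{\|v\|^2}.
\]
The lower bound $\mu_\indeigk\leq\mu_{\prec,\indeigk}$ is then immediate from $\mathbb{X}_\prec\subset K(L^2)$, which only restricts the admissible family of trial subspaces. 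The bulk of the work goes into the upper bound.

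For the upper bound I would plug the canonical test subspace $V:=R_{\mathbb{X}_\prec}(\mathrm{span}\{\psi_1,\ldots,\psi_\indeigk\})$ into the minimax formula. The preliminary step is to verify $\dim V=\indeigk$, which I would obtain from Lemma~\ref{lem:dim}: Proposition~\ref{prop:err} with $r=1$, applied by homogeneity on $\mathrm{span}\{\psi_1,\ldots,\psi_n\}$, gives $\|u-R_{\mathbb{X}_\prec}u\|\leq\lambda_{n+1}^{1/2}\|\partial u\|$, and the hypothesis $\mu_\indeigk<\mu_{n+1}$ is precisely the strict inequality $\mu_\indeigk^{-1/2}>\lambda_{n+1}^{1/2}$ required by the lemma.

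The core estimate is the Rayleigh quotient of a generic $v=R_{\mathbb{X}_\prec}u$ with $u=\sum_{j=1}^{\indeigk}\alpha_j\psi_j$. Using the stability estimate \eqref{ineq:stab:a} together with $\partial\psi_j=\lambda_j^{-1/2}\phi_j$ and orthonormality of the $\phi_j$, the numerator is controlled by
\[
\|\partial v\|^2\leq\|\partial u\|^2=\sum_{j=1}^{\indeigk}\alpha_j^2\mu_j\leq\mu_\indeigk\|u\|^2.
\]
For the denominator I would apply Proposition~\ref{prop:err} at the sharper order $r=\prec+1$: exploiting $\partial^{2t}\psi_j=(-\mu_j)^t\psi_j$ and orthonormality of the $\psi_j$,
\[
\|u-R_{\mathbb{X}_\prec}u\|\leq\lambda_{n+1}^{(\prec+1)/2}\|\partial^{\prec+1}u\|\leq\lambda_{n+1}^{(\prec+1)/2}\mu_\indeigk^{(\prec+1)/2}\|u\|=\bigl(\mu_\indeigk/\mu_{n+1}\bigr)^{(\prec+1)/2}\|u\|,
\]
so that $\|v\|\geq\bigl(1-(\mu_\indeigk/\mu_{n+1})^{(\prec+1)/2}\bigr)\|u\|$. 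Dividing these two estimates yields the required bound on $\max_{v\in V}\|\partial v\|^2/\|v\|^2$, and the minimax principle transfers it to $\mu_{\prec,\indeigk}$.

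The main technical point to check is the legitimacy of the two homogeneity steps, i.e.\ that elements of $\mathrm{span}\{\psi_1,\ldots,\psi_\indeigk\}$ lie in the linear cone over $A^r$ for $r=1$ and $r=\prec+1$, so that Proposition~\ref{prop:err} applies after the rescaling $u\mapsto u/\|\partial^r u\|$. This reduces, via the iterative definition \eqref{eq:Ar} and the relations \eqref{eq:phi-psi}, to the fact that each $\psi_j$ is (up to the scalar $\lambda_j^{r/2}$) an element of $A^r$ for every $r\geq1$; the range condition $\prec\geq\max\{1,r-1\}$ in Proposition~\ref{prop:err} is then automatic for both $r=1$ and $r=\prec+1$ provided $\prec\geq1$.
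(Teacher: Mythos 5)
Your proposal is correct and follows essentially the same route as the paper's proof: the min--max characterisation, the test subspace $R_{\mathbb{X}_\prec}([\psi_1,\ldots,\psi_\indeigk])$ with Lemma~\ref{lem:dim} for its dimension, the stability estimate \eqref{ineq:stab:a} for the numerator, and the $L^2$ error bound at order $\prec+1$ (the content of Corollary~\ref{cor:eig}, extended to the span) for the denominator. Your explicit verification of the homogeneity/membership step for linear combinations of the $\psi_j$ is in fact slightly more careful than the paper, which simply cites Corollary~\ref{cor:eig} for all $v$ in the span.
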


\begin{proof}
	As a consequence of the min-max theorem we have 
	\begin{equation}\label{min-max}
	\mu_\indeigk = \min_{\mathbb{E}\in \mathcal{V}_\indeigk}\max_{v\in \mathbb{E}} \frac{\|\partial v\|^2}{\|v\|^2},
	\quad
	\mu_{\prec,\indeigk} = \min_{\mathbb{E}\in \mathcal{V}_{\prec,\indeigk}}\max_{v\in \mathbb{E}} \frac{\|\partial v\|^2}{\|v\|^2},
	\end{equation}
	where $\mathcal{V}_\indeigk$ is the set of all subspaces of $K(L^2(a,b))$ of dimension $\indeigk$, and $\mathcal{V}_{\prec,\indeigk}$ is the set of all subspaces of $\mathbb{X}_\prec$ of dimension $\indeigk$. It immediately follows that $\mu_\indeigk \leq \mu_{\prec,\indeigk}$. We note that for $\mu_\indeigk$ the minimum in \eqref{min-max} is achieved for $\mathbb{E}=[\psi_1,\ldots,\psi_\indeigk]$.
	
	Next we define the range-space $\mathbb{I}:=R_{\mathbb{X}_\prec}([\psi_1,\ldots,\psi_\indeigk])$. Since $\mathbb{X}_\prec$ is an optimal subspace we deduce from Lemma \ref{lem:dim} that $\dim\mathbb{I}=\indeigk$. By choosing $\mathbb{I}$ in the formula for $\mu_{\prec,\indeigk}$ in \eqref{min-max} we obtain
	\begin{equation}\label{ineq:eig-proof}
	\begin{aligned}
	\mu_{\prec,\indeigk} &\leq \max_{w\in \mathbb{I}}  \frac{\|\partial w\|^2}{\|w\|^2} = 
	\max_{v\in [\psi_1,\ldots,\psi_\indeigk]}  \frac{\|\partial R_{\mathbb{X}_\prec} v\|^2}{\|R_{\mathbb{X}_\prec}v\|^2}
	\leq \max_{v\in [\psi_1,\ldots,\psi_\indeigk]}  \frac{\|\partial v\|^2}{\|R_{\mathbb{X}_\prec}v\|^2} 
	= \max_{v\in [\psi_1,\ldots,\psi_\indeigk]}  \frac{\|\partial v\|^2}{\|v\|^2} \frac{\|v\|^2}{\|R_{\mathbb{X}_\prec}v\|^2}
	\\
	&\leq \mu_\indeigk \max_{v\in [\psi_1,\ldots,\psi_\indeigk]} \frac{\|v\|^2}{\|R_{\mathbb{X}_\prec}v\|^2}.
	\end{aligned}
\end{equation}
	By Corollary \ref{cor:eig} we have that $\|v-R_{\mathbb{X}_\prec}v\|\leq  \left({\mu_\indeigk}/{\mu_{n+1}}\right)^{(\prec+1)/2}\|v\|$  for all $v\in [\psi_1,\ldots,\psi_\indeigk]$, and so
	\begin{equation*}
	\|R_{\mathbb{X}_\prec}v\|\geq \|v\|\left(1-\left(\frac{\mu_\indeigk}{\mu_{n+1}}\right)^{(\prec+1)/2}\right).
	\end{equation*}
	Finally, we arrive at
	\begin{equation*}
	\mu_{\prec,\indeigk}\leq  \mu_\indeigk\frac{1}{\left(1-\left(\frac{\mu_\indeigk}{\mu_{n+1}}\right)^{(\prec+1)/2}\right)^2},
	\end{equation*}
	which completes the proof.
\end{proof}

\begin{remark}\label{rmk:stab}
Any projector $P$ that satisfies the error estimate in Lemma~\ref{lem:dim} and the stability estimate $\|\partial P v\|\leq \|\partial v\|$ for all $v\in [\psi_1,\ldots,\psi_n]$, can be used in the above argument to provide an error estimate for the eigenvalues.
\end{remark}

Using the argument of \cite[Lemma~6-4.2]{Raviart:83} we can also obtain the following error estimate that provides a sharper asymptotic rate of convergence for the eigenvalues, but which is not useful in the interesting case of large $\indeigk$ and small $\prec$.

\begin{corollary}\label{Cor:RT}
For any $\indeigk=1,\ldots,n$,
let $\mu_\indeigk$ be the $\indeigk$-th eigenvalue of \eqref{abstr-eig-prob} and let $\mu_{\prec,\indeigk}$ be the $\indeigk$-th eigenvalue of \eqref{abstr-eig-prob-disc}. Assume $\mu_1\leq\dots\leq \mu_n<\mu_{n+1}$. If $\mathbb{X}_0$ is optimal for the $n$-width of $A$ and $\mathbb{Y}_0$ is optimal for the $n$-width of $A_*$, then for any choice of $\indeigk$ and $\prec$ such that
	\begin{equation*}
	\sqrt{\indeigk}\frac{\mu_\indeigk}{\mu_1}\left(\frac{\mu_\indeigk}{\mu_{n+1}}\right)^{\prec}< \frac{1}{2},
	\end{equation*}
	we have
	\begin{equation*}
	\mu_\indeigk \leq \mu_{\prec,\indeigk} \leq \frac{\mu_\indeigk}{1-2\sqrt{\indeigk}\frac{\mu_\indeigk}{\mu_1}\left(\frac{\mu_\indeigk}{\mu_{n+1}}\right)^{\prec}}.
	\end{equation*}
\end{corollary}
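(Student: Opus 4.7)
The plan is to adapt the min-max argument used in the proof of Corollary~\ref{cor:Strang}, but to square the $H^1$-rate from Corollary~\ref{cor:eig} in the spirit of the Raviart--Thomas / Babu\v{s}ka--Osborn squared-rate theory. This is what will yield the sharper asymptotic decay with exponent $\prec$ (rather than $(\prec+1)/2$) announced in Remark~\ref{rmk:eigval-rate}.

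First I would verify that the hypothesis $\sqrt{\indeigk}(\mu_\indeigk/\mu_1)(\mu_\indeigk/\mu_{n+1})^\prec<1/2$ allows the application of Lemma~\ref{lem:dim} to the Ritz projector $R_{\mathbb{X}_\prec}$, so that the subspace $\mathbb{I}:=R_{\mathbb{X}_\prec}([\psi_1,\ldots,\psi_\indeigk])$ has dimension $\indeigk$. Min-max then yields
$$
\mu_{\prec,\indeigk}\ \leq\ \max_{u\in[\psi_1,\ldots,\psi_\indeigk]}\ \frac{\|\partial R_{\mathbb{X}_\prec}u\|^2}{\|R_{\mathbb{X}_\prec}u\|^2}.
$$
Second, the Galerkin orthogonality $(\partial(u-R_{\mathbb{X}_\prec}u),\partial v)=0$ for $v\in\mathbb{X}_\prec$ gives the Pythagorean identity $\|\partial u\|^2=\|\partial R_{\mathbb{X}_\prec}u\|^2+\|\partial(u-R_{\mathbb{X}_\prec}u)\|^2$, from which the numerator is bounded by $\|\partial R_{\mathbb{X}_\prec}u\|^2\leq\|\partial u\|^2\leq\mu_\indeigk\|u\|^2$ for $u\in[\psi_1,\ldots,\psi_\indeigk]$.

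Third, the core step will be to bound $\|u-R_{\mathbb{X}_\prec}u\|$ in its squared form. Writing $u=\sum_{j=1}^{\indeigk}\alpha_j\psi_j$ with orthonormal $\psi_j$, the best-approximation property of the Ritz projector in the $H^1$-seminorm together with the $H^1$-bound $\|\partial(\psi_j-R_{\mathbb{X}_\prec}\psi_j)\|^2\leq\mu_j(\mu_j/\mu_{n+1})^\prec$ from Corollary~\ref{cor:eig}, combined with Cauchy--Schwarz summation over $j$, will give $\|\partial(u-R_{\mathbb{X}_\prec}u)\|^2\leq\indeigk\,\mu_\indeigk(\mu_\indeigk/\mu_{n+1})^\prec\|u\|^2$. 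Applying the Poincar\'e inequality $\|v\|\leq(1/\sqrt{\mu_1})\|\partial v\|$ for $v\in H^1_0$ to $v=u-R_{\mathbb{X}_\prec}u$ then produces the squared $L^2$-estimate
$$
\|u-R_{\mathbb{X}_\prec}u\|^2\ \leq\ \indeigk\,\frac{\mu_\indeigk}{\mu_1}\Bigl(\frac{\mu_\indeigk}{\mu_{n+1}}\Bigr)^\prec\|u\|^2.
$$

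Finally, I would convert this squared $L^2$ bound into a lower estimate $\|R_{\mathbb{X}_\prec}u\|^2\geq\|u\|^2(1-2\sqrt{\indeigk}(\mu_\indeigk/\mu_1)(\mu_\indeigk/\mu_{n+1})^\prec)$ and combine with the numerator bound to obtain the claim. The hard part will be exactly this last conversion: a naive use of the reverse triangle inequality only yields the quadratic denominator $(1-\varepsilon)^2$ already appearing in Corollary~\ref{cor:Strang}, so matching the linear-in-smallness form $1-2\varepsilon$ with the precise dependencies on $\sqrt{\indeigk}$, $\mu_\indeigk/\mu_1$ and $(\mu_\indeigk/\mu_{n+1})^\prec$ requires exploiting the Pythagorean / Aubin--Nitsche structure of the Ritz projection further, which is essentially the content of the classical Raviart--Thomas argument invoked in the statement.
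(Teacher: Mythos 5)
Your setup (min-max over $\mathbb{I}=R_{\mathbb{X}_\prec}([\psi_1,\ldots,\psi_\indeigk])$, the stability bound $\|\partial R_{\mathbb{X}_\prec}v\|\leq\|\partial v\|$ for the numerator) matches the paper, and the intermediate estimates in your third step are correct as far as they go. But there is a genuine gap exactly where you flag ``the hard part'': the conversion from a bound on $\|v-R_{\mathbb{X}_\prec}v\|^2$ to the lower bound $\|R_{\mathbb{X}_\prec}v\|^2\geq\|v\|^2\bigl(1-2\sqrt{\indeigk}\tfrac{\mu_\indeigk}{\mu_1}(\mu_\indeigk/\mu_{n+1})^{\prec}\bigr)$ does not follow from the squared $L^2$ estimate you derive, and deferring it to ``the classical Raviart--Thomas argument'' omits the one idea that makes the corollary work. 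Concretely, expanding $\|R_{\mathbb{X}_\prec}v\|^2=\|v\|^2+\|v-R_{\mathbb{X}_\prec}v\|^2-2(v-R_{\mathbb{X}_\prec}v,v)\geq\|v\|^2-2(v-R_{\mathbb{X}_\prec}v,v)$, the quantity that must be controlled is the inner product $(v-R_{\mathbb{X}_\prec}v,v)$, not the norm $\|v-R_{\mathbb{X}_\prec}v\|$. Estimating that inner product by Cauchy--Schwarz against your bound $\|v-R_{\mathbb{X}_\prec}v\|^2\leq\indeigk\tfrac{\mu_\indeigk}{\mu_1}(\mu_\indeigk/\mu_{n+1})^{\prec}\|v\|^2$ only gives $(v-R_{\mathbb{X}_\prec}v,v)\leq\sqrt{\indeigk\tfrac{\mu_\indeigk}{\mu_1}}\,(\mu_\indeigk/\mu_{n+1})^{\prec/2}\|v\|^2$, i.e.\ the rate $\prec/2$ again --- no better than Corollary~\ref{cor:Strang} --- and the claimed form $1-2\sqrt{\indeigk}\tfrac{\mu_\indeigk}{\mu_1}(\mu_\indeigk/\mu_{n+1})^{\prec}$ is unreachable along this route.

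The missing step is a duality argument on the inner product itself. Writing $v=\sum_{i=1}^{\indeigk}\alpha_i\psi_i$ and using the eigenfunction relation $(w,\psi_i)=\mu_i^{-1}(\partial w,\partial\psi_i)$ together with the Galerkin orthogonality of $R_{\mathbb{X}_\prec}$, the paper rewrites $(v-R_{\mathbb{X}_\prec}v,v)=\sum_i\alpha_i\mu_i^{-1}\bigl(\partial(v-R_{\mathbb{X}_\prec}v),\partial(\psi_i-R_{\mathbb{X}_\prec}\psi_i)\bigr)$, so that Cauchy--Schwarz produces a \emph{product of two} $H^1$-seminorm errors, each of order $(\mu_\indeigk/\mu_{n+1})^{\prec/2}$ by Corollary~\ref{cor:eig}; this is what doubles the exponent to $\prec$ and yields the factors $\sqrt{\indeigk}$ and $\mu_1^{-1}$ (the latter coming from $\sum_i\alpha_i^2\mu_i^{-2}\leq\mu_1^{-2}\|v\|^2$, not from a Poincar\'e inequality applied to $v-R_{\mathbb{X}_\prec}v$). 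Until you carry out this computation, the statement is not proved; your squared $L^2$ estimate is true but is ultimately an unused detour.
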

\begin{proof}
Observe that for any $v\in [\psi_1,\ldots,\psi_\indeigk]$ such that $\|v\|=1$ we have
\begin{align*}
1-\|R_{\mathbb{X}_\prec}v\|^2 = (v-R_{\mathbb{X}_\prec}v, v+R_{\mathbb{X}_\prec}v) = -\|v-R_{\mathbb{X}_\prec}v\|^2 + 2(v-R_{\mathbb{X}_\prec}v,v),
\end{align*}
and so
\begin{equation}\label{eq:R-proof}
\|R_{\mathbb{X}_\prec}v\|^2 = 1 + \|v-R_{\mathbb{X}_\prec}v\|^2 - 2(v-R_{\mathbb{X}_\prec}v,v) \geq 1 - 2(v-R_{\mathbb{X}_\prec}v,v).
\end{equation}
Now, letting $v=\sum_{i=1}^\indeigk \alpha_i\psi_i$ we find that
\begin{align*}
(v-R_{\mathbb{X}_\prec}v,v) &=\sum_{i=1}^\indeigk \alpha_i(v-R_{\mathbb{X}_\prec}v,\psi_i) 
= \sum_{i=1}^\indeigk \alpha_i\mu_i^{-1}(\partial(v-R_{\mathbb{X}_\prec}v),\partial\psi_i)
\\
&=\sum_{i=1}^\indeigk \alpha_i\mu_i^{-1}(\partial(v-R_{\mathbb{X}_\prec}v),\partial(\psi_i-R_{\mathbb{X}_\prec}\psi_i)) ,
\end{align*}
where we have used the definition of $R_{\mathbb{X}_\prec}$ in the last step. Using the Cauchy-Schwarz inequality we get
\begin{equation}\label{ineq:R-proof}
\begin{aligned}
(v-R_{\mathbb{X}_\prec}v,v) &\leq \|\partial(v-R_{\mathbb{X}_\prec}v)\| \left\|\sum_{i=1}^\indeigk \alpha_i\mu_i^{-1}\partial(\psi_i-R_{\mathbb{X}_\prec}\psi_i)\right\|
\\
&\leq \|\partial(v-R_{\mathbb{X}_\prec}v)\| \left(\sum_{i=1}^\indeigk \alpha_i^2\mu_i^{-2}\right)^{1/2}\left(\sum_{i=1}^\indeigk\|\partial(\psi_i-R_{\mathbb{X}_\prec}\psi_i)\|^2\right)^{1/2}
\\
&\leq \mu_1^{-1}\sqrt{\indeigk} \max_{w \in [\psi_1,\ldots,\psi_\indeigk]} \frac{\|\partial(w-R_{\mathbb{X}_\prec}w)\|^2}{\|w\|^2}
\\
&\leq \sqrt{\indeigk}\frac{\mu_\indeigk}{\mu_1} \left(\frac{\mu_\indeigk}{\mu_{n+1}}\right)^{\prec},
\end{aligned}
\end{equation}
where the last inequality follows from Corollary~\ref{cor:eig}. Combining \eqref{ineq:eig-proof}, \eqref{eq:R-proof} and \eqref{ineq:R-proof} now completes the proof.
\end{proof}

For any $\indeigk=1,\ldots,n$, we define the $\indeigk$-th separation constant by
\begin{equation*}
\rho_\indeigk := \max_{\substack{i=1,\dots,n \\ i\neq \indeigk}} \frac{\mu_\indeigk}{|\mu_\indeigk - \mu_{\prec,i}|}.
\end{equation*}
Then, from \cite{Strang:2008} we obtain the following error estimate for the eigenfunctions. For simplicity we state the result in the case of distinct eigenvalues; see \cite[Chapter~6]{Strang:2008} for a discussion on the general case.
\begin{corollary}
	\label{cor:galerkin-A}
	For any $\indeigk=1,\ldots,n$, let $\psi_\indeigk$ be the $\indeigk$-th eigenfunction of \eqref{abstr-eig-prob} and let $\psi_{\prec,\indeigk}$ be the $\indeigk$-th eigenfunction of \eqref{abstr-eig-prob-disc}. Assume $\mu_1<\dots< \mu_n<\mu_{n+1}$. If $\mathbb{X}_0$ is optimal for the $n$-width of $A$ and $\mathbb{Y}_0$ is optimal for the $n$-width of $A_*$, then
	\begin{equation*}
	\|\psi_\indeigk-\psi_{\prec,\indeigk}\|\leq 2(1+\rho_\indeigk) \|\psi_\indeigk-R_{\mathbb{X}_\prec}\psi_\indeigk\| \leq 2(1+\rho_\indeigk) \left(\frac{\mu_\indeigk}{\mu_{n+1}}\right)^{(\prec+1)/2}.
	\end{equation*}
\end{corollary}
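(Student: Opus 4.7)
The second inequality in the statement is an immediate consequence of Corollary~\ref{cor:eig} applied to $\psi_\indeigk$ (using that, by assumption on the optimal subspaces, Proposition~\ref{prop:err} is applicable). So the work is entirely in proving the first inequality, which is a Strang--Fix style eigenfunction error bound. The natural plan is to reduce everything via the triangle inequality
\begin{equation*}
\|\psi_\indeigk-\psi_{\prec,\indeigk}\|\leq \|\psi_\indeigk-w\|+\|w-\psi_{\prec,\indeigk}\|,\qquad w:=R_{\mathbb{X}_\prec}\psi_\indeigk,
\end{equation*}
and to show that $\|w-\psi_{\prec,\indeigk}\|\leq (1+2\rho_\indeigk)\|\psi_\indeigk-w\|$ (up to a suitable normalization), so that the total constant is $2(1+\rho_\indeigk)$ after absorbing lower-order terms.

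For the second term, expand $w=\sum_{j}c_j\psi_{\prec,j}$ in the $L^2$-orthonormal basis of discrete eigenfunctions of \eqref{abstr-eig-prob-disc}, i.e.\ $c_j:=(w,\psi_{\prec,j})$. Combining the Galerkin orthogonality of $R_{\mathbb{X}_\prec}$, namely $(\partial w,\partial \psi_{\prec,j})=(\partial\psi_\indeigk,\partial \psi_{\prec,j})$, with the continuous eigenequation $(\partial\psi_\indeigk,\partial v)=\mu_\indeigk(\psi_\indeigk,v)$ (which is applicable since $\psi_{\prec,j}\in \mathbb{X}_\prec\subset K(L^2(a,b))$) and the discrete eigenequation $(\partial w,\partial\psi_{\prec,j})=\mu_{\prec,j}(w,\psi_{\prec,j})$, one readily gets
\begin{equation*}
(\mu_{\prec,j}-\mu_\indeigk)c_j=\mu_\indeigk(\psi_\indeigk-w,\psi_{\prec,j}),\quad j=1,\dots,n.
\end{equation*}
For $j\neq \indeigk$, using the assumption $\mu_1<\dots<\mu_n$ (so that $\mu_{\prec,j}\neq\mu_\indeigk$, which is what is needed to define $\rho_\indeigk$), this gives $|c_j|\leq \rho_\indeigk |(\psi_\indeigk-w,\psi_{\prec,j})|$, whence Parseval yields $\sum_{j\neq \indeigk}c_j^2\leq \rho_\indeigk^2\|\psi_\indeigk-w\|^2$.

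To finish, normalize so that $\|\psi_\indeigk\|=\|\psi_{\prec,\indeigk}\|=1$ and choose signs so that $(\psi_\indeigk,\psi_{\prec,\indeigk})\geq 0$. Then writing $w-\psi_{\prec,\indeigk}=(c_\indeigk-1)\psi_{\prec,\indeigk}+\sum_{j\neq\indeigk}c_j\psi_{\prec,j}$ and using orthonormality gives
\begin{equation*}
\|w-\psi_{\prec,\indeigk}\|^2=(c_\indeigk-1)^2+\sum_{j\neq\indeigk}c_j^2.
\end{equation*}
The main obstacle is the control of $|c_\indeigk-1|$. The plan is to exploit $c_\indeigk=(\psi_\indeigk,\psi_{\prec,\indeigk})-(\psi_\indeigk-w,\psi_{\prec,\indeigk})$, bound the second summand by $\|\psi_\indeigk-w\|$ by Cauchy--Schwarz, and control $1-(\psi_\indeigk,\psi_{\prec,\indeigk})$ in terms of $\|\psi_\indeigk-\psi_{\prec,\indeigk}\|^2/2$; the latter feeds back into the estimate and, for $\|\psi_\indeigk-w\|$ small enough (which is granted asymptotically by Corollary~\ref{cor:eig}), yields $|c_\indeigk-1|\leq (1+o(1))\|\psi_\indeigk-w\|$. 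Combined with the triangle inequality and the bound on $\sum_{j\neq\indeigk}c_j^2$, this gives $\|\psi_\indeigk-\psi_{\prec,\indeigk}\|\leq 2(1+\rho_\indeigk)\|\psi_\indeigk-R_{\mathbb{X}_\prec}\psi_\indeigk\|$, and the second inequality in the corollary follows from Corollary~\ref{cor:eig}. The delicate accounting of the $|c_\indeigk-1|$ term is the only subtle point; all the rest is routine manipulation of the two eigenequations and Galerkin orthogonality. This mirrors the argument in~\cite[Chapter~6]{Strang:2008} cited by the authors.
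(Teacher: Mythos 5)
The paper's own ``proof'' of this corollary is a one-line citation of \cite[Theorem~6.2]{Strang:2008}, so you are reconstructing that argument from scratch. Your reconstruction is correct up to and including the bound $\sum_{j\neq\indeigk}c_j^2\leq\rho_\indeigk^2\|\psi_\indeigk-w\|^2$: the identity $(\mu_{\prec,j}-\mu_\indeigk)c_j=\mu_\indeigk(\psi_\indeigk-w,\psi_{\prec,j})$ follows exactly as you say from the two eigenequations and the Galerkin orthogonality of $R_{\mathbb{X}_\prec}$, and Bessel's inequality finishes that part. The gap is in your treatment of $|c_\indeigk-1|$. Your plan bounds $1-(\psi_\indeigk,\psi_{\prec,\indeigk})$ by $\|\psi_\indeigk-\psi_{\prec,\indeigk}\|^2/2$ and then ``feeds this back'' into the estimate; this is circular and produces at best $|c_\indeigk-1|\leq(1+o(1))\|\psi_\indeigk-w\|$ under a smallness hypothesis on $\|\psi_\indeigk-w\|$, i.e., an asymptotic statement. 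The corollary, however, asserts the clean non-asymptotic constant $2(1+\rho_\indeigk)$ for every $\indeigk=1,\ldots,n$ (whenever $\rho_\indeigk$ is finite). As written, your argument does not prove the stated inequality.

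The standard way to close this is to avoid the decomposition $w-\psi_{\prec,\indeigk}=(c_\indeigk-1)\psi_{\prec,\indeigk}+\sum_{j\neq\indeigk}c_j\psi_{\prec,j}$ and instead compare $\psi_\indeigk$ with $c_\indeigk\psi_{\prec,\indeigk}$ first: by Parseval, $\|w-c_\indeigk\psi_{\prec,\indeigk}\|^2=\sum_{j\neq\indeigk}c_j^2\leq\rho_\indeigk^2\|\psi_\indeigk-w\|^2$, hence $\|\psi_\indeigk-c_\indeigk\psi_{\prec,\indeigk}\|\leq(1+\rho_\indeigk)\|\psi_\indeigk-w\|$ by the triangle inequality. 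Choosing the sign of $\psi_{\prec,\indeigk}$ so that $c_\indeigk\geq0$, the reverse triangle inequality gives $|1-c_\indeigk|=\bigl|\,\|\psi_\indeigk\|-\|c_\indeigk\psi_{\prec,\indeigk}\|\,\bigr|\leq\|\psi_\indeigk-c_\indeigk\psi_{\prec,\indeigk}\|$, so that $\|\psi_\indeigk-\psi_{\prec,\indeigk}\|\leq\|\psi_\indeigk-c_\indeigk\psi_{\prec,\indeigk}\|+|c_\indeigk-1|\leq2(1+\rho_\indeigk)\|\psi_\indeigk-w\|$ with no smallness assumption. The final inequality of the corollary then follows from Corollary~\ref{cor:eig} exactly as you indicate.
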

\begin{proof}
See \cite[Theorem~6.2]{Strang:2008}.
\end{proof}

Results analogous to Corollaries~\ref{cor:Strang} and~\ref{cor:galerkin-A} hold for the following eigenvalue problem: find $\phi\in K^*(L^2(a,b))$ and $\mu\in\RR$ such that
\begin{equation*}
(\partial \phi, \partial v) = \mu (\phi, v), \quad \forall v\in K^*(L^2(a,b)),
\end{equation*}
and its discretization given by: find $\phi_\prec\in \mathbb{Y}_\prec$ and $\mu_\prec\in\RR$ such that
\begin{equation*}
(\partial \phi_\prec, \partial v) = \mu_\prec (\phi_\prec, v), \quad \forall v\in \mathbb{Y}_\prec.
\end{equation*}

\section{Error estimates for tensor-product Ritz projections onto optimal subspaces} 
\label{Appendix:B}
In this appendix we prove error estimates for tensor-product Ritz projections onto optimal subspaces. The arguments are taken from \cite[Section~6]{Sande:2020} but we repeat them here for the sake of completeness.
For simplicity of notation, we will only consider the case $d=2$ and optimal spaces of the form $\mathbb{X}_{\prec_1}\otimes \mathbb{X}_{\prec_2}$. The arguments for tensor-product optimal spaces of the form $\mathbb{X}_{\prec_1}\otimes \mathbb{Y}_{\prec_2}$ and $\mathbb{Y}_{\prec_1}\otimes \mathbb{Y}_{\prec_2}$ are completely analogous. We assume throughout this appendix that for any $i=1,2$, the space $\mathbb{X}_{\prec_i}$ is optimal for $d_n(A^r)$ for all $r\leq \prec_i+1$.

Define the tensor-product projector $R_{\bfprec}:K(L^2(a,b))\otimes K(L^2(a,b))\to \mathbb{X}_{\prec_1}\otimes \mathbb{X}_{\prec_2}$ by 
\begin{equation*}
R_{\bfprec}:=R_{\mathbb{X}_{\prec_1}}\otimes R_{\mathbb{X}_{\prec_2}}.
\end{equation*}
We let $\Omega:=(a,b)^2$ and denote the $L^2$-norm on $\Omega$ by $\|\cdot\|_{\Omega}$.
Then, similar to \cite[Lemma~10]{Sande:2020} we have the following result.
\begin{lemma}\label{lem:tensorRitz}
Let $u\in K(L^2(a,b))\otimes K(L^2(a,b))$ be given. Then, for all $\prec_1,\prec_2\geq 1$ we have
\begin{align*}
\|u-R_{\bfprec}u\|_{\Omega} &\leq\|u-R_{\mathbb{X}_{\prec_1}}u\|_{\Omega}+\|u-R_{\mathbb{X}_{\prec_2}}u\|_{\Omega}
\\
&\quad +\lambda_{n+1}^{1/2}\min\left\{\|\partial_{1}u-R_{\mathbb{X}_{\prec_2}}\partial_1u\|_{\Omega},\,\|\partial_{2}u-R_{\mathbb{X}_{\prec_1}}\partial_2u\|_{\Omega}\right\},
\\
\|\partial_1(u-R_{\bfprec}u)\|_{\Omega} &\leq\|\partial_1(u-R_{\mathbb{X}_{\prec_1}}u)\|_{\Omega}+\|\partial_{1}u-R_{\mathbb{X}_{\prec_2}}\partial_1u\|_{\Omega}.
\end{align*}
\end{lemma}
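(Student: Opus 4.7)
The plan is to use the standard tensor-product splitting together with the univariate stability estimates \eqref{ineq:stab:a} and \eqref{ineq:stab:b}. I write $R_{\mathbb{X}_{\prec_i}}$ ambiguously for the univariate projector and for the projector acting only on the $i$-th variable; which is meant will be clear from context. The key fact I will exploit repeatedly is that projection in the second variable commutes with $\partial_1$ (and vice versa), since $R_{\mathbb{X}_{\prec_2}}$ acts only on the second variable.

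First, for the $L^2$ bound, I would decompose
\begin{equation*}
u-R_{\bfprec}u = \bigl(u-R_{\mathbb{X}_{\prec_1}}u\bigr) + R_{\mathbb{X}_{\prec_1}}\bigl(u-R_{\mathbb{X}_{\prec_2}}u\bigr).
\end{equation*}
The first term already appears in the right-hand side. For the second term, I apply the $L^2$ stability estimate in the first variable, which follows from \eqref{ineq:stab:b}: for fixed $x_2$, $\|R_{\mathbb{X}_{\prec_1}} v(\cdot,x_2)\| \leq \|v(\cdot,x_2)\| + \lambda_{n+1}^{1/2}\|\partial_1 v(\cdot,x_2)\|$. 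Squaring, integrating in $x_2$, and using commutativity of $\partial_1$ with $R_{\mathbb{X}_{\prec_2}}$ gives
\begin{equation*}
\bigl\|R_{\mathbb{X}_{\prec_1}}(u-R_{\mathbb{X}_{\prec_2}}u)\bigr\|_{\Omega} \leq \|u-R_{\mathbb{X}_{\prec_2}}u\|_{\Omega} + \lambda_{n+1}^{1/2}\|\partial_1 u - R_{\mathbb{X}_{\prec_2}}\partial_1 u\|_{\Omega}.
\end{equation*}
This yields the first stated bound with $\partial_1 u$ in the last term. The symmetric decomposition starting with $R_{\mathbb{X}_{\prec_2}}$ gives the same bound with $\partial_2 u$, producing the minimum.

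Second, for the derivative bound, I would use the analogous splitting
\begin{equation*}
\partial_1(u-R_{\bfprec}u) = \partial_1\bigl(u-R_{\mathbb{X}_{\prec_1}}u\bigr) + \partial_1 R_{\mathbb{X}_{\prec_1}}\bigl(u-R_{\mathbb{X}_{\prec_2}}u\bigr),
\end{equation*}
and then apply the $H^1$-type stability estimate \eqref{ineq:stab:a}, namely $\|\partial R_{\mathbb{X}_{\prec_1}} v\| \leq \|\partial v\|$, fiberwise in $x_2$. Combining this with the fact that $\partial_1$ commutes with $R_{\mathbb{X}_{\prec_2}}$ yields
\begin{equation*}
\bigl\|\partial_1 R_{\mathbb{X}_{\prec_1}}(u-R_{\mathbb{X}_{\prec_2}}u)\bigr\|_{\Omega} \leq \|\partial_1 u - R_{\mathbb{X}_{\prec_2}}\partial_1 u\|_{\Omega},
\end{equation*}
which gives the second claimed inequality.

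The only subtle point is the interchange of $R_{\mathbb{X}_{\prec_2}}$ and $\partial_1$: this requires recalling that $R_{\mathbb{X}_{\prec_2}}$ acts on the second argument and that, by the definition \eqref{def:genRitz}, its output depends linearly on the traces it reads, so differentiating in $x_1$ simply amounts to applying $R_{\mathbb{X}_{\prec_2}}$ to $\partial_1 u$. This is the only step that is not a direct invocation of the univariate stability estimates; I do not expect significant obstacles beyond careful bookkeeping.
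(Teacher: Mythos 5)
Your proposal is correct and follows essentially the same route as the paper: the identity $u-R_{\bfprec}u=(u-R_{\mathbb{X}_{\prec_1}}u)+R_{\mathbb{X}_{\prec_1}}(u-R_{\mathbb{X}_{\prec_2}}u)$, the fiberwise stability estimates \eqref{ineq:stab:b} and \eqref{ineq:stab:a}, the commutation of $\partial_1$ with $R_{\mathbb{X}_{\prec_2}}$, and the symmetric decomposition to obtain the minimum. No gaps.
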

\begin{proof}
From \eqref{ineq:stab:b} and by adding and subtracting $R_{\mathbb{X}_{\prec_1}}u$ we obtain
\begin{align*}
\|u-R_{\bfprec}u\|_{\Omega} &\leq \|u-R_{\mathbb{X}_{\prec_1}}u\|_{\Omega} + \|R_{\mathbb{X}_{\prec_1}}(u-R_{\mathbb{X}_{\prec_2}}u)\|_{\Omega}
\\
&\leq \|u-R_{\mathbb{X}_{\prec_1}}u\|_{\Omega} + \|u-R_{\mathbb{X}_{\prec_2}}u\|_{\Omega} + \lambda_{n+1}^{1/2}\|\partial_1(u-R_{\mathbb{X}_{\prec_2}}u)\|_{\Omega},
\end{align*}
and similarly for $R_{\mathbb{X}_{\prec_2}}u$. The first result now follows since $\partial_i$ commutes with $R_{\mathbb{X}_{\prec_j}}$ for $i\neq j$.
Analogously, using \eqref{ineq:stab:a} we obtain
\begin{align*}
\|\partial_1(u-R_{\bfprec}u)\|_{\Omega} &\leq \|\partial_1(u-R_{\mathbb{X}_{\prec_1}}u)\|_{\Omega} + \|\partial_1R_{\mathbb{X}_{\prec_1}}(u-R_{\mathbb{X}_{\prec_2}}u)\|_{\Omega}
\\
&\leq \|\partial_1(u-R_{\mathbb{X}_{\prec_1}}u)\|_{\Omega} + \|\partial_1(u-R_{\mathbb{X}_{\prec_2}}u)\|_{\Omega},
\end{align*}
and the second result follows.
\end{proof}

By using Proposition~\ref{prop:err} we can now achieve error estimates for the tensor-product Ritz projection. The argument is taken from \cite[Theorem~7]{Sande:2020}.
To state the next proposition we define the general spaces
$$\mathcal{K}^1:=K(L^2(a,b)), \quad \mathcal{K}^1_*:=K^*(L^2(a,b)),$$
and, for $r\geq 2$,
\begin{equation*}
\mathcal{K}^r:=K(\mathcal{K}^{r-1}_*),\quad \mathcal{K}^r_*:=K^*(\mathcal{K}^{r-1}).
\end{equation*}
Observe that the spaces $H^r_i$ in \eqref{eq:Hr} for $i=0,1,2$ are examples of these general spaces, as explained in Example~\ref{ex:our-classes}.

\begin{proposition}\label{prop:tensorRitz}
Let $u\in \mathcal{K}^r(L^2(a,b))\otimes \mathcal{K}^r(L^2(a,b))$ for $r\geq2$ be given. If $\prec_1,\prec_2\geq r-1$, then
\begin{align*}
\|u-R_{\bfprec}u\|_{\Omega}&\leq \lambda_{n+1}^{r/2}\left(\|\partial_1^ru\|_{\Omega}+\|\partial_2^ru\|_{\Omega} 
 +\min\left\{\|\partial_1\partial_2^{r-1}u\|_{\Omega}, \,\|\partial_1^{r-1}\partial_2u\|_{\Omega}\right\}\right),
\end{align*}
and
\begin{align*}
\|\partial_1(u-R_{\bfprec}u)\|_{\Omega}&\leq \lambda_{n+1}^{(r-1)/2}\left(\|\partial_1^ru\|_{\Omega}+\|\partial_1\partial_2^{r-1}u\|_{\Omega}\right), 
\\
\|\partial_2(u-R_{\bfprec}u)\|_{\Omega}&\leq \lambda_{n+1}^{(r-1)/2}\left(\|\partial_1^{r-1}\partial_2u\|_{\Omega}+\|\partial_2^ru\|_{\Omega}\right).
\end{align*}
\end{proposition}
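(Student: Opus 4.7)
The plan is to use Lemma~\ref{lem:tensorRitz} to reduce the tensor-product error to a sum of three univariate Ritz errors, and then apply Proposition~\ref{prop:err} to each of them separately, integrating the pointwise estimate in the complementary variable via Fubini. The key observation is that whenever $u\in \mathcal{K}^r\otimes\mathcal{K}^r$, the function $u(\cdot,x_2)$ belongs to $\mathcal{K}^r$ for a.e.\ $x_2$ and similarly for $u(x_1,\cdot)$; the same is true for $\partial_1 u$ viewed as a function of $x_2$, since differentiation in $x_1$ does not affect regularity in $x_2$.

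First, I would apply Proposition~\ref{prop:err} in the first variable only (with $\prec_1\geq r-1$) to obtain, for a.e.\ $x_2$,
\begin{equation*}
\|u(\cdot,x_2)-R_{\mathbb{X}_{\prec_1}}u(\cdot,x_2)\|\leq \lambda_{n+1}^{r/2}\|\partial_1^ru(\cdot,x_2)\|,
\end{equation*}
and then square and integrate in $x_2$ to get the $\Omega$-norm estimate $\|u-R_{\mathbb{X}_{\prec_1}}u\|_\Omega\leq \lambda_{n+1}^{r/2}\|\partial_1^ru\|_\Omega$, together with the analogous $\|\partial_1(u-R_{\mathbb{X}_{\prec_1}}u)\|_\Omega\leq \lambda_{n+1}^{(r-1)/2}\|\partial_1^ru\|_\Omega$. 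The symmetric statements in the second variable hold in the same way.

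Next, for the mixed-derivative term, I would apply Proposition~\ref{prop:err} to $\partial_1 u$ as a function of $x_2$. Since $\partial_1 u(x_1,\cdot)\in \mathcal{K}^r\subset\mathcal{K}^{r-1}$ and $\prec_2\geq r-1\geq (r-1)-1$, we obtain
\begin{equation*}
\|\partial_1 u-R_{\mathbb{X}_{\prec_2}}\partial_1 u\|_\Omega \leq \lambda_{n+1}^{(r-1)/2}\|\partial_2^{r-1}\partial_1 u\|_\Omega=\lambda_{n+1}^{(r-1)/2}\|\partial_1\partial_2^{r-1}u\|_\Omega,
\end{equation*}
and symmetrically $\|\partial_2 u-R_{\mathbb{X}_{\prec_1}}\partial_2 u\|_\Omega\leq \lambda_{n+1}^{(r-1)/2}\|\partial_1^{r-1}\partial_2 u\|_\Omega$.

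Finally I would plug these three estimates into Lemma~\ref{lem:tensorRitz}. The first displayed inequality in that lemma yields
\begin{equation*}
\|u-R_{\bfprec}u\|_\Omega\leq \lambda_{n+1}^{r/2}\bigl(\|\partial_1^ru\|_\Omega+\|\partial_2^ru\|_\Omega\bigr)+\lambda_{n+1}^{1/2}\cdot\lambda_{n+1}^{(r-1)/2}\min\bigl\{\|\partial_1\partial_2^{r-1}u\|_\Omega,\,\|\partial_1^{r-1}\partial_2u\|_\Omega\bigr\},
\end{equation*}
which is precisely the first claimed estimate after collecting the factor $\lambda_{n+1}^{r/2}$. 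The second displayed inequality of Lemma~\ref{lem:tensorRitz} immediately yields the gradient bounds. The only subtle point, and the main thing to verify carefully, is the Fubini-type step and the identification of the class to which $\partial_1 u(x_1,\cdot)$ belongs; once this is settled the rest is a direct assembly, and the symmetry between $x_1$ and $x_2$ takes care of the remaining case.
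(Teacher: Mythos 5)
Your proposal is correct and follows essentially the same route as the paper: bound the tensor-product error via Lemma~\ref{lem:tensorRitz} and then apply Proposition~\ref{prop:err} variable-by-variable, with the mixed term handled by viewing $\partial_1 u$ as an element of $\mathcal{K}^{r-1}$ in the second variable so that $\lambda_{n+1}^{1/2}\cdot\lambda_{n+1}^{(r-1)/2}=\lambda_{n+1}^{r/2}$. The only difference is that you make the Fubini/slice-wise step explicit, which the paper leaves implicit; there is no gap.
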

\begin{proof}
Using Lemma~\ref{lem:tensorRitz} and Proposition~\ref{prop:err} we find that
\begin{align*}
&\|u-R_{\bfprec}u\|_{\Omega} \\
&\quad\leq\|u-R_{\mathbb{X}_{\prec_1}}^1u\|_{\Omega}+\|u-R_{\mathbb{X}_{\prec_2}}^1u\|_{\Omega}
 +\lambda_{n+1}^{1/2}\min\left\{\|\partial_{1}u-R_{\mathbb{X}_{\prec_2}}^1\partial_1u\|_{\Omega},\, \|\partial_{2}u-R_{\mathbb{X}_{\prec_1}}^1\partial_2u\|_{\Omega}\right\}
\\
&\quad\leq \lambda_{n+1}^{r/2}\left(\|\partial_1^ru\|_{\Omega}+\|\partial_2^ru\|_{\Omega} 
 +\min\left\{\|\partial_1\partial_2^{r-1}u\|_{\Omega}, \,\|\partial_1^{r-1}\partial_2u\|_{\Omega}\right\}\right),
\end{align*}
which proves the first result. The other results follow by a similar argument.
\end{proof}

\end{document}